\newtheorem{thm}{Theorem}[section]
\newtheorem{lemma}[thm]{Lemma}
\newtheorem{prop}[thm]{Proposition}
\newtheorem{defn}[thm]{Definition}
\newtheorem{remark}[thm]{Remark}
\numberwithin{equation}{section}
\newcommand{\formula}[2][nolabel]
{\ifthenelse{\equal{#1}{nolabel}}
 {\begin{align*} #2 \end{align*}}
 {\ifthenelse{\equal{#1}{}}
  {\begin{align} #2 \end{align}}
  {\begin{align} \label{#1} #2 \end{align}}
 }
}
 \def\({\left(} \def\){\right)} 
   \def\<{\langle} \def\>{\rangle} 
\def\pf{{\medskip\noindent {\bf Proof. }}}
\def\qed{{\hfill $\Box$ \bigskip}}
\def\R{{\mathbb R}}
\def\P{{\mathbb P}}
\def\E{{\mathbb E}}
\def\1{{\bf 1}}
\newcommand{\gener}{{\cal L}}
\renewcommand{\H}{\mathbb{H}}
\newcommand{\cal}[1]{\mathcal{#1}}
 \def\sB {{\cal B}}
  \def\sL {{\cal L}}
 \def\bB {{\mathbb B}}
 \def\bN {{\mathbb N}} 
\def\bP {{\mathbb P}}  \def\bR {{\mathbb R}}
\def\R {{\mathbb R}}
\def\nn{\nonumber}
\def\wt{\widetilde}
\def\wh{\widehat}
\def\E{{\mathbb E}}
\def\P{{\mathbb P}}
\def\bea{\begin{align*}}
\def\eea{\end{align*}}
\def\bee{\begin{equation}}
\def\eee{\end{equation}}
\def\eps{\varepsilon}
\def\wh{\widehat}
 \def\({\left(} \def\){\right)}  \def\<{\langle} \def\>{\rangle}
\begin{document}
\bibliographystyle{plain}

\title[
Estimates of Dirichlet heat kernels  for SBMs]{ \bf
Estimates of Dirichlet heat kernels  for subordinate Brownian motions}

\author[P.\ Kim]{Panki Kim}
\address[Kim]{Department of Mathematical Sciences,
Seoul National University,
Building 27, 1 Gwanak-ro, Gwanak-gu
Seoul 08826, Republic of Korea}
\curraddr{}
\thanks{
This work was supported by the National Research Foundation of Korea(NRF) grant funded by the Korea government(MSIP) (No. 2016R1E1A1A01941893).
}
\email{pkim@snu.ac.kr}

\author[A.\ Mimica]{Ante Mimica\,$(\dagger)$}
\address[Mimica]{Ante Mimica, *20-Jan-1981\,-- 
	$\dagger$\,9-Jun-2016 
	\\
\url{https://web.math.pmf.unizg.hr/~amimica/}}

 \date{}

\maketitle

\begin{abstract}
In this paper, we discuss estimates of transition densities of subordinate Brownian motions in open subsets of Euclidean space.
When $D$ is  a $C^{1,1}$ domain, we establish sharp two-sided estimates
 for the transition densities of a large class of  subordinate Brownian motions in  $D$ whose scaling order is not necessarily strictly below $2$. Our estimates are explicit and written in terms of the dimension,  the Euclidean distance between two points,  the distance to the boundary and the Laplace exponent of the corresponding subordinator only. 
\end{abstract}

\bigskip
\noindent {\bf AMS 2010 Mathematics Subject Classification}: Primary
60J35, 60J50, 60J75; Secondary 47G20

\bigskip\noindent
{\bf Keywords and phrases}:
Dirichlet heat kernel, transition density,
Laplace exponent, L\' evy measure, subordinator, subordinate Brownian motion

\bigskip

\section{Introduction}

Transition densities of  L\'evy processes killed upon leaving an open set $D$ are
Dirichlet heat kernels of
the generators of such  processes on $D$.
For example, the classical Dirichlet heat kernel, which is  the fundamental solution of the heat equation
in $D$ with zero boundary values, is the transition density of Brownian motion killed upon leaving $D$. 
Since, except in some special cases, explicit forms of the Dirichlet heat kernels are impossible to obtain, 
obtaining sharp estimates of the Dirichlet heat kernels
 has been a  fundamental problem both in probability theory and in analysis. 
 
After the fundamental work in \cite{CKS}, sharp two-sided
estimates for the Dirichlet heat kernel $p_D(t, x, y)$ of non-local operators
in
open sets have been studied a lot (see \cite{BG, BGR, BGR2, BGR3, CK, CKSc, CKSi,  CKS1, CKS4, CKS5, CKS50, CKS7, CKS9, CKS6, CT, K, KK, KSo}). In particular, very recently in \cite{BGR3, CKS9}, sharp two-sided
estimates of $p_D(t,x,y)$ were obtained for a large class of rotationally symmetric L\'evy processes when the radial parts of their characteristic exponents satisfy weak scaling conditions whose upper scaling exponent is strictly less than 2. A still remaining open question in this direction is that,  when the upper scaling exponent is not strictly less than 2, for 
how general discontinuous L\'evy processes one can prove sharp two-sided estimates for their Dirichlet heat kernels. 
In this paper we investigate this question for subordinate Brownian motions, which form a very large class of L\'evy processes.

A subordinate Brownian motion in ${\mathbb R}^d$ is a L\'evy process which
can be obtained by replacing the time of Brownian motion in ${\mathbb R}^d$ by
an independent subordinator (i.e., an increasing L\'evy process starting from 0).
The subordinator used to define the subordinate Brownian motion
$X$ can be interpreted as ``operational'' time or ``intrinsic''
time. For this reason, subordinate Brownian motions have been used
in applied fields a lot.

To obtain the sharp  Dirichlet heat kernel estimates, it is necessary to know the sharp heat kernel estimates in $\R^d$.
Recently heat kernel estimates for  discontinuous Markov processes  have been a very active research area and, 
for a large class of purely  discontinuous Markov processes, the sharp heat kernel estimates were obtained in \cite{BGR1, CKK, CKK3, CK8, CK2, CKW0,  KS15, Szt11, Szt17}. 
But except \cite{M, Szt17},  for the estimates of the heat kernel, a common assumption on the purely  discontinuous Markov processes in $\R^d$ considered so far  
is that their weak scaling orders were always strictly between 0 and 2. 
Very recently in \cite{M}, the second-named author considered a large class of
purely  discontinuous subordinate Brownian motions whose  weak scaling order  is between 0 and 2   {\it including 2}, and succeeded in obtaining sharp heat kernel estimates of such processes. In this sense, 
the results in \cite{M} extend earlier works in \cite{BGR1}. 

Motivated by \cite{M}, the main purpose of this paper is to establish sharp  two-sided  estimates  of $p_D(t,x,y)$  for a large class of  subordinate Brownian motions in $C^{1,1}$ open set whose weak scaling order is not necessarily strictly below $2$. Our estimates are explicit and written in terms of  the dimension $d$, the Euclidian distance $|x-y|$ for $x,y \in D$,  the distance to the boundary of $D$ for $x,y \in D$ and the Laplace exponent of the corresponding subordinator only.  See Section \ref{s:ex} for examples, in particular, \eqref{e:example1}--\eqref{e:example2} for estimates of the Dirichlet heat kernels.

This paper is also motivated by  \cite{BGR2, CK}, and, 
several  results and ideas in  \cite{CK, M} will be used here. It is shown in \cite{BGR2} that, when 
weak scaling orders of characteristic exponents of unimodal  L\'evy processes in $\R^d$ are  strictly below $2$,
 sharp estimates on the survival probabilities 
 for the unimodal  L\'evy processes  can be obtained  without the information on sharp two-sided estimates
 for the Dirichlet heat kernels. Such estimates  in \cite{BGR2} can not be used in the setting of this paper.

We will use the symbol ``$:=$,'' which is read as ``is defined to be.''
In this paper,  
for $a,b\in \R$ we denote $a\wedge b:=\min\{a,b\}$ and $a\vee b:=\max\{a,b\}$.  By $B(x,r)=\{y\in\R^d: |x-y|<r\}$ we denote the open ball around $x\in \R^d$ with radius $r>0$\,. We also use convention $0^{-1}=+\infty$. 
For any open set $V$, we denote by $\delta_V (x)$
the distance of a point $x$ to $V^c$. We sometimes write point $z=(z_1, \dots, z_d)\in \bR^d$ as
$(\wt z, z_d)$ with $\wt z \in \bR^{d-1}$.

Let $B=(B_t,\, t\ge 0)$ be a Brownian motion in
$\R^d$ whose infinitesimal generator is $\Delta$  
and
let $S=(S_t,\, t\ge 0)$ be a subordinator
which is independent of $B$.
The process $X=(X_t:\, t\ge 0)$ defined by  $X_t=B_{S_t}$ is a
rotationally  invariant (unimodal) L\'evy process in $\R^d$ and is called  a  subordinate Brownian motion.
Let {$\phi$} be the {Laplace  exponent of $S$}. 
That is, 
$$\E[\exp\{-\lambda S_t\}]=\exp\{-t\phi(\lambda)\}, \qquad \lambda >0.$$
Then the characteristic exponent of $X$ is   $\Psi(\xi)=\phi(|\xi|^2)$ and the infinitesimal generator $X$ is $\phi(\Delta)=-\phi(-\Delta).$
It is known that the Laplace exponent $\phi$ is a Bernstein function with $\phi(0+)=0$, that is $(-1)^n \phi^{(n)} \le 0$,  for all  $n \ge 1$. 
Thus it 
has a representation 
\begin{align}
\label{e:phi}
 \phi (\lambda) = b \lambda + \int_0^{\infty} (1- e^{-\lambda t})\, \mu (d t) ,
\end{align}
where $b \ge 0$, and $\mu$ is a  measure  
satisfying 
$ \int_0^{\infty} (1 \wedge t ) \mu(d t)< \infty\, , $
which is called the L\'evy measure of $S$ (or $\phi$).
In this paper, we will always assume that $b=0$ and $\mu(0, \infty)=\infty$.
Note that 
$\phi'(\lambda)=\lambda \int_0^{\infty} e^{-\lambda t} \mu (d t)>0$.
Due to the independence of $B$ and $S$, 
the L\'evy measure $\Pi(d x)$ of $X$ has a density $j(|x|)$, 
given by
\begin{equation}\label{eq:jump}
j(r)=\int^{\infty}_{0}(4\pi s)^{-d/2}e^{-\frac{|x|^2}{4s}} \mu(d s), \quad r>0.
 \end{equation}
 It is well known that  there exists $c_0=c_0(d)$ depending only on $d$ such that 
\begin{align}
\label{e:jupper}
j(r) \le  c_0\frac{\phi(r^{-2})}{r^{d}}, \quad r>0 
\end{align}
(see \cite[(15)]{BGR}).
Moreover, since $\mu(0, \infty)=\infty$,
$X$ has transition density $p(t,x,y)=p(t,y-x)=p(t, |y-x|)$ and it is of the form
\begin{equation}\label{eq:heat_kernel}
p(t,x)=\int_{(0,\infty)}(4\pi s)^{-d/2}e^{-\frac{|x|^2}{4s}}\P(S_t\in ds)
\end{equation}
for $x\in \R^d$ and $t>0$\,.

We now introduce the following scaling conditions.

\begin{defn}\label{d:UL}
Suppose  $f$ is a function from $ (0,\infty)$ into $(0,\infty)$.
\begin{itemize}
\item[(1)]
We say that $f$ satisfies the lower scaling condition $ L_a(\gamma, C_L)$ if  there exist $a\ge 0$, $\gamma>0$ and $C_L \in (0, 1]$ such that 
\begin{equation}\label{eq:lowerscaling}
    \frac{f(\lambda t)}{f(\lambda)}\geq C_Lt^{\gamma}\qquad \text{ for all }\quad \lambda>a \text{ and }\ t\geq 1\,.
\end{equation}
We say that $f$ satisfies the lower scaling condition near  infinity if the above constant $a$ is strictly positive  and we say $f$ satisfies the lower scaling condition globally if $a=0$. 
\item[(2) ]
We say $f$ satisfies the upper scaling condition $U_a(\delta, C_U)$  if there exist  $a\ge 0$, $\delta>0$ and $C_{U} \in [1, \infty)$ such that 
\begin{equation}\label{eq:upperscaling}
    \frac{f(\lambda t)}{f(\lambda)}\leq C_{U}t^{\delta}\qquad \text{ for all }\quad \lambda> a \text{ and }\ t\geq 1\,.
\end{equation}
We say $f$ satisfies the upper scaling condition near  infinity  if  the above constant $a$ is strictly positive and we say $f$ satisfies the upper  scaling condition globally if $a=0$. 
\end{itemize}
\end{defn}

For any open set $D\subset \R^d$, the {\it first exit time of $D$} by the process $X$ is defined by the formula 
$\tau_D:= \inf\{t>0: X_t\notin D\}$
and we use $X^D$ to denote the process obtained by killing the process $X$ upon exiting $D$.
By the strong Markov property, it can easily be verified that
\begin{equation}\label{e:hkos1}
p_D(t,x,y)  :=  p(t,x,y) - \E_x [ p(t - \tau_D,
X_{\tau_D},y) : \tau_D < t],\quad 
t>0, x,y \in D,
\end{equation}
is the transition density of $X^D$.
Note that from \eqref{eq:heat_kernel} we see that $ \sup_{|x| \geq \beta, t>0}  p(t,x) < \infty$  for  all  $\beta > 0.$
Using this estimate and the continuity of $p$, it is routine to show that $p_D(t, x, y)$ is symmetric and continuous (see \cite{CZ}).

We say that $D\subset \R^d$ (when $d\ge 2$) is  a $C^{1,1}$ open set with $C^{1, 1}$ characteristics $(R_0, \Lambda)$ if there exist a localization radius $ R_0>0 $ and a constant $\Lambda>0$ such that for every $z\in\partial D$ there exist a $C^{1,1}$-function $\varphi=\varphi_z: \R^{d-1}\to \R$ satisfying $\varphi(0)=0$, $\nabla\varphi (0)=(0, \dots, 0)$, $\| \nabla\varphi \|_\infty \leq \Lambda$, $| \nabla \varphi(x)-\nabla \varphi(w)| \leq \Lambda |x-w|$ and an orthonormal coordinate system $CS_z$ of  $z=(z_1, \cdots, z_{d-1}, z_d):=(\wt z, \, z_d)$ with origin at $z$ such that $ D\cap B(z, R_0 )= \{y=({\tilde y}, y_d) \in B(0, R_0) \mbox{ in } CS_z: y_d > \varphi (\wt y) \}$. 
The pair $( R_0, \Lambda)$ will be called the $C^{1,1}$ characteristics of the open set $D$.
Note that a $C^{1,1}$ open set $D$ with characteristics $(R_0, \Lambda)$ can be unbounded and disconnected, and the distance between two distinct components of $D$ is at least $R_0$.
By a $C^{1,1}$ open set  in $\R$ with a characteristic $R_0>0$, we mean an open set that can be written as the union of disjoint intervals so that the {infimum} of the lengths of all these intervals is {at least $R_0$} and the {infimum} of the distances between these intervals is  {at least $R_0$}.

It is well-known that $C^{1,1}$ open set $D$ with the characteristic $(R_0, \Lambda)$ satisfies the interior and exterior ball conditions with the characteristic $R_1>0$, that is, there exists $R_1>0$ such that the following holds:
for all $x \in D$ with $\delta_D(x) \le R_1$ there exist balls $B_1 \subset D$ and $B_2\subset D^c$ whose radii are $R_1$ such that 
$x\in B_1$ and $\delta_{B_1}(x)=\delta_D(x)=\delta_{\overline{B_2}^c}(x)$.
Without loss of generality 
whenever we consider a $C^{1,1}$ open set $D$ with the characteristic $(R_0, \Lambda)$, 
we will take $R_0$ as the characteristic of the interior and exterior ball conditions of $D$,  that is,  $R_1=R_0$. 

We say that {\it the path distance in a connected open set $U$ is comparable to the Euclidean distance with characteristic $\lambda_1$} if for every $x$ and $y$ in  $U$ there is a rectifiable curve $l$ in $U$ which connects $x$ to $y$ such that the length of $l$ is less than or equal to  $\lambda_1|x-y|$.
Clearly, such a property holds for all bounded $C^{1,1}$ domains (connected open sets), $C^{1,1}$ domains with compact complements, and  a domain consisting of all the points above the graph of a 
bounded globally $C^{1,1}$ function. 

In this paper, for the Laplace exponent $\phi$ of a subordinator, we define the function $H:(0,\infty)\rightarrow [0,\infty)$ by $H(\lambda):=\phi(\lambda)-\lambda\phi'(\lambda)$. The function $H$, which appeared earlier in the work of Jain and Pruitt \cite{JP}, took a central role in \cite{M} in  obtaining the sharp heat kernel estimates of the transition density of the corresponding  subordinate Brownian motion $X$ in $\R^d$. 

Obviously, this function $H$ will also naturally appear in this paper in the estimates of the transition density of $X$ in open subsets. 
Under the weak scaling assumptions on $H$ we will obtain the sharp two-sided estimates of $p_D(t,x,y)$.
Recall that  $\delta_D(x)$ is  the distance between $x$ and the boundary of $D$.

In the main results of this paper, we will impose the following assumption: there exists a positive constant $c>0$ such that 
\begin{align}
\label{e:jdouble}
j(r) \le c j(r+1), \quad r>1.\end{align}

\begin{remark}\label{r:UL0}  {\rm
A Bernstein function $\phi$ is called a complete Bernstein function
if the L\'evy measure $\mu$ has a completely monotone density
$\mu(t)$, i.e., $(-1)^n D^n \mu\ge 0$ for every non-negative integer
$n$. 
Note that, if $\phi$ is a complete Bernstein function then by \cite[Lemma 2.1]{KSV12b}, there exists $c_1
>1$ such that
\begin{equation}\label{ksv-nu infty}
\mu(r)\le c_1\, \mu(r+1), \qquad \forall r>1.
\end{equation}
If $H$ satisfies $ L_a(\gamma, C_L)$ and $ U_a(\delta, C_U)$ with $\delta<2$
then by \cite[Lemma 2.6]{M} and Remark \ref{r:UL}, 
$c^{-2} H(r^{-1}) \le \mu(r, \infty) \le c_2 H(r^{-1})$ for $r<2$. Using the monotonicity of $\mu$ and $ U_a(\delta, C_U)$ of $H$,
it is easy to see that $c^{-3} r^{-1}H(r^{-1}) \le \mu(r) \le c_3 r^{-1} H(r^{-1})$ for $r<2$ (see  the proof \cite[Theorem 13.2.10]{KSV3}).
Therefore, by  \cite[Proposition 13.3.5]{KSV3}, we see that if $\phi$ is a complete Bernstein function and $H$ satisfies $ L_a(\gamma, C_L)$ and $ U_a(\delta, C_U)$ with $\delta<2$, then \eqref{e:jdouble} holds.
}

\end{remark}

We are now ready to state the main result of this paper.

\begin{thm}\label{t:main0}
  Let $S=(S_t)_{t\geq 0}$ be a subordinator with zero drift whose Laplace  exponent  is {$\phi$}  
  and let $X=(X_t)_{t\geq 0}$ be the corresponding  subordinate Brownian motion in $\R^d$. Assume that \eqref{e:jdouble} holds and that   $H$ satisfies $ L_a(\gamma, C_L)$ and $ U_a(\delta, C_U)$ with $\delta<2$ and $\gamma >2^{-1}{\bf 1}_{\delta \ge 1}$
   for some $a > 0$.
Suppose that $D$ is a  $C^{1,1}$ open set in $\R^d$ with characteristics $(R_0, \Lambda)$. 

 \noindent
(a) For every $T>0$, there exist constants
 $c_1, C_0$ and  $a_U>0$ such that 
  for every $(t,x,y) \in (0, T]
\times D \times D$,  
 \begin{align}
& p_{D}(t,x,y) \le  C_0
\left(1\wedge  \frac{1}{ \sqrt{ t\phi(1/\delta_D(x)^2) } }\right)\left(1\wedge  \frac{1}{ \sqrt{ t\phi(1/\delta_D(y)^2) } }\right)  p(t,x/3,y/3) \label{e:tu1} \\
&\le   c_1
 \left(1\wedge  \frac{1}{ \sqrt{ t\phi(1/\delta_D(x)^2) } }\right)\left(1\wedge  \frac{1}{ \sqrt{ t\phi(1/\delta_D(y)^2) } }\right) \nn\\&\qquad   \times
 \left( \phi^{-1}(t^{-1})^{d/2} \wedge
 \left(
 \frac{tH(|x-y|^{-2})}{|x-y|^{d}}+ \phi^{-1}(t^{-1})^{d/2}\exp[-a_U|x-y|^2\phi^{-1}(t^{-1})]\right)\right).
\label{e:tu2}
\end{align}

\noindent
(b) 
When $D$ is an unbounded,  we further assume that  $H$ satisfies $ L_0(\gamma_0, C_L)$ and $ U_0(\delta, C_U)$ with $\delta<2$ and that 
  the path distance in each connected component of $D$
is comparable to the Euclidean distance with characteristic $\lambda_1$. Then
 for every $T>0$ there exist constants
 $c_2,  a_L>0$ such that  for every $(t,x,y) \in (0, T]
\times D \times D$, 

 \begin{align}\label{e:pDl1}
& p_D(t, x, y) \geq c_2^{-1} \left(1\wedge  \frac{1}{ \sqrt{ t\phi(1/\delta_D(x)^2) } }\right)\left(1\wedge  \frac{1}{ \sqrt{ t\phi(1/\delta_D(y)^2) } }\right)\nn\\&  \qquad\times
 \left( \phi^{-1}(t^{-1})^{d/2} \wedge
  \left( \frac{tH(|x-y|^{-2})}{|x-y|^{d}}+ \phi^{-1}(t^{-1})^{d/2}\exp[-a_L|x-y|^2\phi^{-1}(t^{-1})]\right) \right). 
 \end{align}
 
 \noindent
 (c) If $D$ is a bounded $C^{1,1}$ open set,  then for each $T>0$ there exists $c_3\ge 1$ such that 
for every   $(t, x, y)\in [T, \infty)\times D\times D$,
\begin{align*}
c_3^{-1} \, \frac{e^{- t\, \lambda^{D}}}{ \sqrt{ \phi(1/\delta_D(x)^2) \phi(1/\delta_D(y)^2) } } \le p_D(t, x, y) \le c_3\frac{e^{- t\, \lambda^{D}}}{ \sqrt{ \phi(1/\delta_D(x)^2) \phi(1/\delta_D(y)^2) } },
\end{align*}
where $-\lambda^D<0$ is the largest eigenvalue of the generator of $X^D$.
\end{thm}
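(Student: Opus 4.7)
The strategy is to combine sharp survival-probability estimates for $\tau_D$ with the free heat-kernel bounds of \cite{M}, glued together by Chapman--Kolmogorov. The workhorse behind all three parts is the two-sided bound
\[
\Pr_x(\tau_D > t) \asymp 1 \wedge \frac{1}{\sqrt{t\,\phi(1/\delta_D(x)^2)}}
\]
for $t \in (0,T]$, which I would establish first. The upper direction is obtained by localizing near the boundary, dominating $D$ by a half-space via the $C^{1,1}$ characteristics, and applying Dynkin's formula to a supermartingale built from $\phi$; this is where the hypothesis $\gamma > \tfrac{1}{2}\mathbf{1}_{\delta\ge 1}$ is used, ensuring integrability of the relevant Dynkin integrand in the borderline scaling regime. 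The matching lower bound uses the interior-ball condition: $x$ lies in some ball $B\subset D$ of radius comparable to $\delta_D(x)$, and $\Pr_x(\tau_D > t) \ge \Pr_x(\tau_B > t)$, the latter being computable from the free heat-kernel lower bound of \cite{M}.

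Part (a): With the survival upper estimate in hand, split $t$ into three equal pieces and write
\[
p_D(t,x,y) = \int_D\!\!\int_D p_D(t/3,x,z)\,p_D(t/3,z,w)\,p_D(t/3,w,y)\,dz\,dw.
\]
Dominating the middle factor by $p(t/3,z,w)$ and using the radial monotonicity together with the sub-Gaussian/off-diagonal decay of $p(t,\cdot)$, the two $z,w$-integrals absorb into a single evaluation of the free kernel at shifted arguments, producing $p(t,x/3,y/3)$ and the two survival-probability prefactors; this yields \eqref{e:tu1}. Inserting the explicit two-sided free heat-kernel estimate from \cite{M} (already the maximum of the on-diagonal polynomial term and the off-diagonal large-jump plus sub-Gaussian terms) then gives \eqref{e:tu2}.

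Part (b): A L\'evy-system argument together with the on-diagonal lower bound $p(t,x,y) \gtrsim \phi^{-1}(1/t)^{d/2}$ of \cite{M} gives the near-diagonal interior estimate $p_{B(x_0,r)}(t,x,y)\ge c\,\phi^{-1}(1/t)^{d/2}$ whenever $x,y\in\tfrac12 B(x_0,r)$ and $t,r$ are matched by $\phi^{-1}(1/t)\sim r^{-2}$. For general $x,y\in D$, I would pick anchor points $x_0,y_0$ with $\delta_D(x_0)\sim\delta_D(y_0)\sim R_0\wedge|x-y|$, use the survival-probability lower bound to transport mass from $x$ to $x_0$ and from $y_0$ to $y$, and chain interior near-diagonal lower bounds along a rectifiable curve from $x_0$ to $y_0$ of length $\le \lambda_1|x-y|$; the global scaling assumption on $H$ and the path-distance comparability feed in exactly here, because the chain must accommodate $|x-y|$ of arbitrary size. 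A separate application of the L\'evy system, inserting one long jump, produces the $tH(|x-y|^{-2})/|x-y|^d$ contribution, and both terms combine to match \eqref{e:pDl1}.

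Part (c): From (a) and (b), integrating in $y$ gives $\Pr_x(\tau_D > 1)\asymp 1/\sqrt{\phi(1/\delta_D(x)^2)}$, and boundedness of $D$ yields intrinsic ultracontractivity, so the ground state $\varphi_1>0$ is continuous and for $t\ge T$ the spectral expansion gives $p_D(t,x,y) \asymp e^{-\lambda^D t}\varphi_1(x)\varphi_1(y)$. It therefore suffices to prove $\varphi_1(x)\asymp 1/\sqrt{\phi(1/\delta_D(x)^2)}$, which follows by writing $\varphi_1(x) = e^{\lambda^D}\int_D p_D(1,x,y)\varphi_1(y)\,dy$ and bounding $p_D(1,x,y)$ above and below via (a) and (b), using that $\varphi_1$ is bounded and bounded below on any compact subset of $D$. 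The main obstacle throughout is the borderline regime where the upper scaling index of $H$ approaches $2$: polynomial test functions are no longer admissible in the Dynkin argument, and one must work directly with test functions built from $\phi$ via the identity $H=\phi-\lambda\phi'$; this is also precisely where \eqref{e:jdouble} is used, to control L\'evy-system integrals over jumps of size comparable to $\delta_D(x)$ near the boundary.
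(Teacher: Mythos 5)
Your overall architecture (sharp survival probabilities first, then gluing with the free kernel of \cite{M}) matches the paper's, and your sketches of the survival-probability estimates and of part (c) are in the right spirit. But the central step of part (a) as you describe it does not work. From the three-fold Chapman--Kolmogorov identity, dominating the middle factor by $p(t/3,z,w)$ and integrating out the outer factors gives at best
\[
p_D(t,x,y)\;\le\; \P_x(\tau_D>t/3)\,\P_y(\tau_D>t/3)\,\sup_{z,w\in D}p(t/3,z,w),
\]
and the supremum is just the on-diagonal bound $c\,\phi^{-1}(3t^{-1})^{d/2}$: all off-diagonal decay in $|x-y|$ is lost. To keep the decay you would need to know that $p_D(t/3,x,\cdot)$ concentrates near $x$, which is precisely what is not available a priori; the two facts $p_D\le p$ and $\int_D p_D(t/3,x,z)\,dz=\P_x(\tau_D>t/3)$ do not interpolate to $p_D(t/3,x,z)\lesssim \P_x(\tau_D>t/3)\,p(ct,x,z)$. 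The paper instead first proves the \emph{one-sided} factorized bound $p_D(t,x,y)\le c\,(\sqrt{\Phi(\delta_D(x))/t}\wedge 1)\,p(t,|x-y|/3)$ by the strong Markov property at the exit from a small set $U_1$ near $x$ (Lemma \ref{l:u_off1p}, following \cite{CK}): the exit either jumps across the gap, controlled by the L\'evy system and yielding $j(|x-y|/3)$ times a time-convolution of survival probabilities, or lands at intermediate distance, controlled by $\P_x(X_{\tau_{U_1}}\in U_2)\lesssim t^{-1}\E_x[\tau_{U_1}]$ together with the parabolic Harnack inequality applied to $s\mapsto p(s,\cdot-y)$. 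Only after this one-sided bound is the two-fold semigroup identity used to symmetrize it into \eqref{e:tu1}. Without some version of this exit decomposition your argument for \eqref{e:tu1} has a genuine hole.

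A smaller remark on (b): the paper does not chain near-diagonal estimates to produce the sub-Gaussian term; it uses the pointwise domination $p_D\ge q_D$ by the subordinate killed Brownian motion and the known Gaussian-type bounds for the killed Brownian kernel (Proposition \ref{prop:on-D-low}). Your chaining route could perhaps be made to work in the interior, but note that the required boundary factor is $1\wedge\sqrt{\Phi(\delta_D(x))/t}$, not $1\wedge\delta_D(x)/\Phi^{-1}(t)$ (these differ unless $\delta_D(x)\asymp\Phi^{-1}(t)$), and that ``transporting mass from $x$'' requires showing the mass at time $t/3$ is concentrated in a specific good set near $x$ (the paper's Lemma \ref{l:loww_01}, which itself uses the already-proved upper bound \eqref{e:tu1}), not merely that the process survives.
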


We emphasize that we put the assumption $\gamma >2^{-1}{\bf 1}_{\delta \ge 1}$ on
lower scaling condition near  infinity, not globally, i.e.,  we don't assume that $\gamma_0 >2^{-1}{\bf 1}_{\delta \ge 1}$ in Theorem \ref{t:main0}(b). 

When $D$ is a {\it half space-like} domain, we have the global estimates  for all $t>0$ on the Dirichlet heat kernel. 

\begin{thm}\label{t:main1}
  Let $S=(S_t)_{t\geq 0}$ be a subordinator with zero drift whose Laplace  exponent  is {$\phi$}  
  and let $X=(X_t)_{t\geq 0}$ be the corresponding subordinate Brownian motion in $\R^d$.  
  Suppose that $D$ is a domain consisting of all the points above the graph of a 
bounded globally $C^{1,1}$ function and $H$ satisfies $ L_0(\gamma, C_L)$ and $ U_0(\delta, C_U)$ with $\delta<2$.  Then there exist  $c \ge 1$ and  $a_L,a_U>0$  such that both \eqref{e:tu2} and \eqref{e:pDl1} hold for all $(t, x, y)\in (0, \infty) \times D\times D$.
 \end{thm}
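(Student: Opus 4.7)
The plan is to upgrade the short-time estimates from Theorem~\ref{t:main0} to global-in-time estimates via the semigroup property, exploiting the half-space-like structure of $D$ and the global scaling assumptions on $H$. Since $L_0(\gamma,C_L)$ and $U_0(\delta,C_U)$ imply the near-infinity conditions, and since path-distance comparability (with some constant $\lambda_1$ depending only on the $C^{1,1}$ bound of the defining graph) is automatic for a region above a bounded globally $C^{1,1}$ graph, the short-time methods underlying Theorem~\ref{t:main0}(a),(b) apply and yield \eqref{e:tu2} and \eqref{e:pDl1} on $(0,T_0]\times D\times D$ for any fixed $T_0>0$. Fix $T_0=1$; the remaining task is $t>1$.

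For the upper bound on $t>1$, I would apply the symmetric Chapman--Kolmogorov decomposition
\[
p_D(t,x,y)\;=\;\int_D\!\int_D p_D(\tfrac12,x,z)\,p_D(t-1,z,w)\,p_D(\tfrac12,w,y)\,dz\,dw,
\]
bound the two short-time factors using \eqref{e:tu1} to extract the boundary weights $1\wedge(t\phi(\delta_D(x)^{-2}))^{-1/2}$ and $1\wedge(t\phi(\delta_D(y)^{-2}))^{-1/2}$, and bound the middle factor by the free heat kernel $p(t-1,z,w)$. The global-in-time two-sided estimates on $p(t,\cdot,\cdot)$ from \cite{M}, which are valid under the global scaling on $H$, then permit the $(z,w)$-convolution to be carried out and yield the bulk expression appearing in \eqref{e:tu2}.

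For the lower bound on $t>1$, I would use a chain-of-balls argument. Pick interior reference points $x^{\ast},y^{\ast}\in D$ with $\delta_D(x^{\ast})\wedge\delta_D(y^{\ast})\geq\kappa$ for some fixed $\kappa>0$ and $|x-x^{\ast}|\leq c\,\delta_D(x)+c$, $|y-y^{\ast}|\leq c\,\delta_D(y)+c$; this is always possible because $\partial D$ is a bounded graph, so moving upward and sideways eventually reaches the interior. Applying the short-time lower bound \eqref{e:pDl1} at time $1$ to $p_D(1,x,x^{\ast})$ and $p_D(1,y^{\ast},y)$ produces the two boundary factors of \eqref{e:pDl1}, reducing the problem to an interior lower bound on $p_D(t-2,x^{\ast},y^{\ast})$. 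Joining $x^{\ast}$ to $y^{\ast}$ by a rectifiable curve of length at most $\lambda_1|x^{\ast}-y^{\ast}|$ lying inside $D$, covering it by $n\asymp 1+|x^{\ast}-y^{\ast}|\phi^{-1}((t-2)^{-1})$ balls of radius $c/\phi^{-1}((t-2)^{-1})$ contained in $D$, and iterating an interior small-time lower bound along this chain yields the Gaussian-like term $\phi^{-1}(t^{-1})^{d/2}\exp[-a_L|x-y|^2\phi^{-1}(t^{-1})]$; a complementary one-jump argument based on the L\'evy intensity $j$ supplies the term $tH(|x-y|^{-2})/|x-y|^{d}$, and the maximum of the two gives \eqref{e:pDl1}.

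The hard part will be producing the correct long-time exponential constant $a_L$ in the Gaussian-like regime, which requires calibrating the radii and number of chain balls so that the cumulative multiplicative losses from $n$ iterations combine into precisely $\exp[-a_L|x-y|^2\phi^{-1}(t^{-1})]$; ensuring simultaneously that every ball stays inside $D$ forces one to choose the chain to run well above the graph rather than along a straight segment. The global lower scaling $L_0$ is essential throughout, as it prevents $\phi^{-1}(t^{-1})$ from degenerating and keeps all constants uniform as $t\to\infty$.
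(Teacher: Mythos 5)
Your reduction of the problem to $t>1$ is fine, and the short-time part is indeed what Theorem \ref{t:main0} gives. The genuine gap is in how you propose to produce the boundary factors $1\wedge\big(t\,\phi(1/\delta_D(x)^2)\big)^{-1/2}$ for \emph{large} $t$: these carry a $t^{-1/2}$ decay that your decomposition never generates. In the upper bound, applying \eqref{e:tu1} to $p_D(\tfrac12,x,z)$ yields the factor $1\wedge\big(\tfrac12\phi(1/\delta_D(x)^2)\big)^{-1/2}\asymp 1\wedge\Phi(\delta_D(x))^{1/2}$, which for $t\to\infty$ exceeds the required $1\wedge\big(\Phi(\delta_D(x))/t\big)^{1/2}$ by a factor $\sqrt t$; your bound is therefore weaker than \eqref{e:tu2} by a factor of $t$. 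Symmetrically, in the lower bound the reduction to "an interior lower bound on $p_D(t-2,x^{*},y^{*})$" at reference points of \emph{fixed} depth $\kappa$ fails: for large $t$ one has $\Phi^{-1}(t)\gg\kappa$, so $x^{*},y^{*}$ are not interior at the relevant scale and $p_D(t-2,x^{*},y^{*})$ itself decays like $\big(\Phi(\kappa)/t\big)\cdot(\text{interior factor})$; the unweighted interior bound you need there is simply false, and no choice of chain "well above the graph" repairs it unless the reference points are pushed inward to depth $\asymp\Phi^{-1}(t)$ — at which point reaching them from $x$ in time $\asymp t$ costs exactly the factor $\sqrt{\Phi(\delta_D(x))/t}$, i.e., the global-in-time survival probability estimate, which you have not established (Lemma \ref{l:spH} is only proved for $t\le T$ with constants depending on $T$).

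That global-in-time boundary decay is precisely what the paper's route supplies and yours omits. The paper first treats the exact upper half-space, where Proposition \ref{prop:on-D-low}(b) (the subordinate killed Brownian motion comparison, valid for all $t>0$ under $L_0$) already produces the lower bound with the correct $t$-dependent boundary factors, and the upper-bound machinery of \cite[Theorems 4.1 and 4.5]{CK} together with Propositions \ref{step3} and \ref{prop:on-D-low} gives the matching upper bound for all $t$; it then transfers to the graph domain by the "push inward" method of \cite{CT}, comparing $D$ with translated half-spaces at scale $\Phi^{-1}(t)$. If you want to avoid the half-space intermediary, you must at minimum prove $\P_x(\tau_D>t)\asymp 1\wedge\sqrt{\Phi(\delta_D(x))/t}$ for all $t>0$ in your domain and then rerun the arguments of Sections 5 and 6 with that input; as written, your proposal does not close this.
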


 The assumption that  $H$ satisfies $ L_a(\gamma, C_L)$ and $ U_a(\delta, C_U)$ with $\delta<2$ in Theorems \ref{t:main0} and \ref{t:main1} allows us to cover several interesting cases where the 
scaling order of the characteristic exponent $\Psi(\xi)=\phi(|\xi|^2)$  of $X$ is  $2$.

The rest of the paper is organized as follows. In Section 2, we 
revisit \cite{M} and improve one of the main results of \cite{M} in Theorem \ref{tm:main}. This result will be used
in Sections 5--7 to show the sharp two-sided estimates of the Dirichlet heat kernel when $\phi$ satisfies the lower scaling condition near  infinity
or $H(\lambda)=\phi(\lambda)-\lambda\phi'(\lambda)$ satisfies the lower and upper scaling conditions near  infinity.
In Section 3 we first 
show that the scale-invariant parabolic Harnack inequality holds with explicit scaling in terms of Laplace exponent. Then using this we give some preliminary interior lower bound of the Dirichlet heat kernel. 
Using such lower bound of the Dirichlet heat kernel, Theorem \ref{tm:main}, \eqref{prop:off-lower2}, and the estimates on exit probabilities in Section 4 
we prove the estimates of the survival probabilities and the sharp two-sided estimates of the transition density $p_D(t, x, y)$ for the killed process $X^D$. This is
done in Sections 5--6. As an application of Theorem \ref{t:main0}, in Section 7
we establish the estimates on the Green functions in bounded $C^{1,1}$ domain. 
Section 8 contains some examples of subordinate Brownian motions and the sharp two-sided estimates of transition density and Green function of them.

In this paper, we use the following notations.
For a Borel set $W$ in $\R^d$, $\partial W$, $\overline{W}$ and $|W|$ denote the boundary,  the closure and the Lebesgue measure of $W$ in $\R^d$,  respectively. For $s \in R$, $s_+:=s \vee 0$ Throughout the rest of this paper, the positive constants
 $a_0, a_1, T_1, M_0, M_1, \wt R, R_*,R_0, R_1, C, C_i$,
$i=0,1,2,\dots $, can be regarded as fixed, 
while the constants $c_i=c_i(a,b,c,\ldots)$, $i=0,1,2,  \dots$, denote generic constants depending on $a, b, c, \ldots$, whose exact values are unimportant.
They start
anew in each statement and each proof.
The dependence of the constants on $\phi, \gamma, \delta, C_L, C_U$ and the dimension $d \ge 1$, 
may not be mentioned explicitly.

\section{Preliminary Heat kernel estimates in $\R^d$}

Throughout this paper we assume that $\phi$ is the Laplace exponent of a subordinator $S$. Without loss of generality we assume that $\phi(1)=1$. 
In this section we revisit \cite{M} and improve the main result of \cite{M}
for the case that $\phi$ satisfies the lower scaling condition near  infinity.

The Laplace exponent $\phi$ belongs to the class of Bernstein functions 
$$\mathcal{BF}=\{f\in C^\infty(0,\infty)\colon f\geq 0,\,  (-1)^{n-1}f^{(n)}\geq 0,\ n\in \bN\}\,$$ with $\phi(0+)=0$. Thus 
$\phi$ has a unique representation 
\begin{equation}\label{eq:bf-repr}
	\phi(\lambda)=b\lambda+\int_{(0,\infty)} (1-e^{-\lambda y})\mu(dy),
\end{equation}
where $b\geq 0$ and $\mu$ is a L\' evy measure satisfying 
$ \int_0^{\infty} (1 \wedge t ) \mu(d t)< \infty $.
Let $\Phi$ be denote the increasing  function
\bee \label{e:1.9}
\Phi(r):=\frac{1}{\phi(1/r^2)}, \qquad r>0.
\eee

The next Proposition is a particular case of \cite[Proposition 2.4]{M}. Note that there is  a typo in \cite[Proposition 2.4]{M}:
$\alpha\phi^{-1}(\beta^{-1})$  in the display there should be $\alpha\phi^{-1}(\beta t^{-1})$. 

\begin{prop}[{\cite[Proposition 2.4]{M}}]\label{prop:sub-low}
There exist constants $\rho\in (0,1)$ and $\tau>0$ such that for every subordinator $S$,
\[
    \P\left(\frac{1}{2\phi^{-1}( t^{-1})}\leq S_t\leq \frac{1}{\phi^{-1}(\rho t^{-1})}\right)\geq \tau\quad \text{ for all }t>0\,.
\]
\end{prop}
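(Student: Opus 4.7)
The plan is to obtain the two-sided concentration estimate by proving two complementary one-sided tail bounds and then choosing the parameter $\rho$ small enough that the probabilities of the two ``bad'' events sum to strictly less than one. Both tail bounds follow from Chernoff-type (exponential Markov) inequalities; the only subtlety is that the moment generating function of a nontrivial subordinator is typically infinite, so the upper tail has to be handled with a different, uniformly bounded, test function.

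\textbf{Lower tail.} For $\lambda,a>0$, Markov's inequality applied to the positive random variable $e^{-\lambda S_t}$ yields
\[
\P(S_t \leq a) = \P\bigl(e^{-\lambda S_t} \geq e^{-\lambda a}\bigr) \leq e^{\lambda a}\,\E[e^{-\lambda S_t}] = e^{\lambda a - t\phi(\lambda)}.
\]
Taking $\lambda := \phi^{-1}(1/t)$ and $a := 1/(2\phi^{-1}(1/t))$ gives $\lambda a = 1/2$ and $t\phi(\lambda)=1$, hence
\[
\P\bigl(S_t \leq 1/(2\phi^{-1}(1/t))\bigr) \leq e^{-1/2}.
\]

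\textbf{Upper tail.} Since $\E[e^{\lambda S_t}]$ is in general infinite for $\lambda>0$, I would switch to the bounded test function $1-e^{-\lambda S_t} \in [0,1]$. For $\lambda,a>0$, Markov's inequality combined with $1-e^{-x}\leq x$ gives
\[
\P(S_t > a) \leq \frac{\E[1-e^{-\lambda S_t}]}{1-e^{-\lambda a}} = \frac{1-e^{-t\phi(\lambda)}}{1-e^{-\lambda a}} \leq \frac{t\phi(\lambda)}{1-e^{-\lambda a}}.
\]
Choosing $a := 1/\phi^{-1}(\rho/t)$ and $\lambda := 1/a = \phi^{-1}(\rho/t)$, so that $\lambda a = 1$ and $t\phi(\lambda)=\rho$, we obtain
\[
\P\bigl(S_t > 1/\phi^{-1}(\rho/t)\bigr) \leq \frac{\rho}{1-e^{-1}}.
\]

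\textbf{Conclusion.} Both estimates are uniform in $t>0$ and hold for every subordinator. Fix any $\rho\in(0,1)$ with $\rho/(1-e^{-1}) < 1 - e^{-1/2}$ (e.g., $\rho = 1/5$) and set $\tau := 1-e^{-1/2} - \rho/(1-e^{-1}) > 0$. Taking the complement of the union of the two tail events yields
\[
\P\Bigl(\tfrac{1}{2\phi^{-1}(1/t)} \leq S_t \leq \tfrac{1}{\phi^{-1}(\rho/t)}\Bigr) \geq \tau,
\]
which is the desired conclusion. There is no serious obstacle here: the argument is a clean Laplace-transform/Chernoff computation, and the only point requiring any care is the choice of a bounded test function on the upper-tail side (since $\E[e^{+\lambda S_t}]$ need not be finite), together with a matching of the two bounds so their sum stays below $1$ uniformly in $t$.
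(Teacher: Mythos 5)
Your argument is correct, and it is essentially the standard proof of this statement: the paper itself only cites \cite[Proposition 2.4]{M} without proof, and the argument there is the same two-sided Laplace-transform/Chernoff computation (exponential Markov with $e^{-\lambda S_t}$ for the lower tail, and the bounded test function $1-e^{-\lambda S_t}$ together with $1-e^{-x}\le x$ for the upper tail, followed by a union bound and a choice of $\rho$ small). The numerics check out: $1-e^{-1/2}\approx 0.39$ and $\rho/(1-e^{-1})\approx 0.32$ for $\rho=1/5$, so $\tau>0$ uniformly in $t$.
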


We recall the conditions $ L_a(\gamma, C_L)$ and $ U_a(\delta, C_U)$ from Definition \ref{d:UL}.

\begin{remark}\label{r:UL} \rm
Suppose that $f$ is non-decreasing. 

\noindent
(1)
If $f$ satisfies {$ L_b(\gamma, C_L)$} then  $f$ satisfies    { \bf$ L_a(\gamma, (a/b)^{\gamma} C_L)$} for all $a \in (0,  b]$;
\begin{equation}\label{e:lower-scaling}
\frac{f(x \lambda)}{f(\lambda)} \ge C_L(a/b)^{\gamma}x^{\gamma}  \, ,\quad x \ge 1, \lambda\ge a\, .
\end{equation}
In fact, suppose $a \le  \lambda <b$ and $x \ge 1$. Then, 
  $ f(x \lambda)   \ge C_Lx^{\gamma}(\lambda/b)^{\gamma}    f(b) \ge C_Lx^{\gamma}(a/b)  ^{\gamma}f(\lambda)$    if $x \lambda > b$, 
  and  $f(x \lambda)   \ge f(\lambda)  \ge C_L x^{\gamma}(a/b)^{\gamma}f(\lambda)$  if $x \lambda \le b$. 

\noindent
(2)
If $f$ satisfies {$ U_b(\delta, C_U)$}  then  $f$ satisfies   { \bf$ U_a(\delta, C_U{f(b)}/{f(a)})$} for all $a \in (0,  b]$;
\begin{equation}\label{e:upper-scaling}
\frac{f(x \lambda)}{ f(\lambda)} \le C_U\frac{f(b)}{f(a)}x^{\delta}\, ,\quad x \ge 1, \lambda\ge a\, .
\end{equation}
In fact, suppose $a \le  \lambda <b$ and $x \ge 1$. Then, 
  $ f(x \lambda)   \le C_Ux^{\delta} (\lambda/b)^{\delta}   f(b)\le C_Ux^{\delta}    f(b) \le C_Ux^{\delta} f(b)  f(\lambda)/f(a)$    if $x \lambda > b$, 
  and  $f(x \lambda)   \le f(b)  \le C_Ux^{\delta} f(b)  f(\lambda)/f(a)$  if $x \lambda \le b$. 

\end{remark}

Recall that $H(\lambda)=\phi(\lambda)-\lambda\phi'(\lambda)$.
Note that, by the concavity of $\phi$,  $H(\lambda)=\phi(\lambda)-\lambda\phi'(\lambda)\ge 0$. 
Moreover, $H$ is non-decreasing since $H'(\lambda)=-\lambda \phi''(\lambda) \ge 0$.

Using Remark \ref{r:UL}, we have the following. c.f., \cite[Lemma 2.1]{M}. 
\begin{lemma}\label{lem:bf}
\begin{itemize}
\item[(a)] For any $\lambda>0$ and $x\geq 1$,
\[
    \phi(\lambda x)\leq x\phi(\lambda)\qquad \text{ and }\qquad H(\lambda x)\leq x^2H(\lambda)\,.
\]
\item[(b)] 
Assume that the drift $b$ of $\phi$ in the representation (\ref{eq:bf-repr}) is zero. If $H$ satisfies $ L_a(\gamma, C_L)$ (resp. $ U_a(\delta, C_U)$), then $\phi$ satisfies $ L_a(\gamma, C_L)$(resp. $ U_a(\delta \wedge 1, C_U)$).  
Thus if 
either $H$ or $\phi$ satisfies $ L_a(\gamma, C_L)$ and $ U_a(\delta, C_U)$ 
then   for every $M>0$ there exist $c_1, c_2 >0$ such that  
\begin{equation}\label{eqn:poly}
c_1 \Big(\frac Rr\Big)^{2\gamma} \,\leq\,
 \frac{\Phi (R)}{\Phi (r)}  \ \leq \ c_2
\Big(\frac Rr\Big)^{2(\delta \wedge 1)}
\qquad \hbox{for every } 0<r<R<a^{-1}M.
\end{equation}
\end{itemize}
\end{lemma}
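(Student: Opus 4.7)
The plan is to prove both parts by exploiting the L\'evy integral representation of $\phi$ together with a simple integral identity relating $\phi$ and $H$. For part (a), the inequality $\phi(\lambda x)\le x\phi(\lambda)$ for $x\ge 1$ is just the statement that $\lambda\mapsto\phi(\lambda)/\lambda$ is non-increasing, which follows from the concavity of $\phi$ together with $\phi(0+)=0$. For the inequality $H(\lambda x)\le x^2 H(\lambda)$, I would write
\begin{equation*}
H(\lambda)=\int_0^\infty f(\lambda t)\,\mu(dt),\qquad f(u):=1-e^{-u}(1+u),
\end{equation*}
and reduce the claim to showing $u\mapsto f(u)/u^2$ is non-increasing on $(0,\infty)$. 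A short computation gives $(f(u)/u^2)'=(uf'(u)-2f(u))/u^3$ with $uf'(u)-2f(u)=e^{-u}(u^2+2u+2)-2$, which is non-positive because it vanishes at $u=0$ and has derivative $-u^2 e^{-u}\le 0$.

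For part (b), the key identity is
\begin{equation*}
\phi(\lambda)=\lambda\int_\lambda^\infty\frac{H(s)}{s^2}\,ds,\qquad \lambda>0,
\end{equation*}
which follows from $(\phi(\lambda)/\lambda)'=-H(\lambda)/\lambda^2$ upon integrating from $\lambda$ to $\infty$ and using $\phi(\lambda)/\lambda\to b=0$ (the drift vanishes by hypothesis). After the substitution $s=xu$ this becomes $\phi(\lambda x)=\lambda\int_\lambda^\infty H(xu)/u^2\,du$. If $H$ satisfies $L_a(\gamma,C_L)$, then for $u\ge\lambda>a$ and $x\ge 1$ one has $H(xu)\ge C_L x^\gamma H(u)$; pulling the factor $C_L x^\gamma$ out of the integral gives $\phi(\lambda x)\ge C_L x^\gamma\phi(\lambda)$, so $\phi$ satisfies $L_a(\gamma,C_L)$. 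The same argument with $U_a(\delta,C_U)$ in place of $L_a(\gamma,C_L)$ yields $\phi(\lambda x)\le C_U x^\delta\phi(\lambda)$. When $\delta\ge 1$, the weaker exponent $1$ is already guaranteed by part (a), and since $C_U\ge 1$ we obtain $U_a(\delta\wedge 1,C_U)$ for $\phi$ in all cases.

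Finally, for the bound \eqref{eqn:poly} on $\Phi(R)/\Phi(r)=\phi(1/r^2)/\phi(1/R^2)$, I would set $\lambda:=1/R^2$ and $t:=(R/r)^2\ge 1$ so that the ratio becomes $\phi(\lambda t)/\phi(\lambda)$. The condition $R<a^{-1}M$ only gives $\lambda>a^2/M^2$, which may fall below the scaling threshold $a$ when $M>\sqrt{a}$; to cover this case I would invoke Remark~\ref{r:UL}(1)--(2) to extend the lower (resp.\ upper) scaling of $\phi$ from $[a,\infty)$ down to $[a^2/M^2,\infty)$ at the cost of a multiplicative constant depending only on $a,M,\gamma,\delta,C_L,C_U$. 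Applying the extended scaling to $\phi(\lambda t)/\phi(\lambda)$ then yields the displayed two-sided bound with exponents $2\gamma$ and $2(\delta\wedge 1)$. The only delicate point is this last bookkeeping step: one must verify that the constants generated by Remark~\ref{r:UL} are independent of $r$ and $R$, which they are precisely because the extension depends only on the fixed parameters $a$ and $M$.
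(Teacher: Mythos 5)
Your proof is correct and complete, and it follows the standard route: the paper itself gives no written argument for this lemma, deferring to Remark~\ref{r:UL} and the citation of \cite[Lemma 2.1]{M}, which is precisely the combination you have reconstructed. Your two key ingredients --- the representation $H(\lambda)=\int_0^\infty\bigl(1-e^{-\lambda t}(1+\lambda t)\bigr)\,\mu(dt)$ with the monotonicity of $u\mapsto(1-e^{-u}(1+u))/u^2$ for part (a), and the identity $\phi(\lambda)=\lambda\int_\lambda^\infty H(s)s^{-2}\,ds$ (valid since $\phi(\lambda)/\lambda\to b=0$) for part (b), followed by the threshold extension of Remark~\ref{r:UL} to pass from $\lambda>a$ to $\lambda>a^2/M^2$ --- are exactly the intended ones; indeed the same integral identity reappears in the paper in the derivation of \eqref{e:r-di1}.
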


By Remark \ref{r:UL} we also  have  
\begin{lemma}\label{lem:inverse}
If $\phi$ satisfies   $ L_a(\gamma, C_L)$ for some $a>0$, then for every $b \in (0, a]$,
\[
	\frac{\phi^{-1}(\lambda x)}{\phi^{-1}(\lambda)}\leq (a/b) C_L^{-1/\gamma} x^{1/\gamma}\qquad \text{ for all }\quad \lambda>\phi(b),\ x\geq 1\,.
\]
\end{lemma}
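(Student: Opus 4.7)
The plan is a direct computation in two steps: first upgrade the lower scaling condition from threshold $a$ down to threshold $b$ using Remark \ref{r:UL}(1), then invert the resulting inequality by substituting $\phi^{-1}$ at a well-chosen point. Since $\phi$ is a Bernstein function with $\phi'>0$ (noted in the introduction), $\phi$ is strictly increasing on $(0,\infty)$, so $\phi^{-1}$ is well-defined and increasing on the range of $\phi$; this is all the regularity needed.

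Concretely, I would proceed as follows. Applying Remark \ref{r:UL}(1) with the roles of $a$ and $b$ swapped, the assumption that $\phi$ satisfies $L_a(\gamma,C_L)$ together with $b\in(0,a]$ yields that $\phi$ satisfies $L_b(\gamma,(b/a)^\gamma C_L)$, i.e.
\[
\phi(\mu t) \;\ge\; (b/a)^{\gamma}\,C_L\,t^{\gamma}\,\phi(\mu)\qquad \text{for all }\mu>b,\ t\ge 1.
\]
Now fix $\lambda>\phi(b)$ and $x\ge 1$, and set $\mu:=\phi^{-1}(\lambda)$, so $\mu>b$ by monotonicity of $\phi^{-1}$, and $t:=\phi^{-1}(\lambda x)/\phi^{-1}(\lambda)\ge 1$, again by monotonicity since $x\ge 1$. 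Plugging these in and using $\phi(\phi^{-1}(\lambda x))=\lambda x$, $\phi(\phi^{-1}(\lambda))=\lambda$, the display gives
\[
x \;\ge\; (b/a)^{\gamma}\,C_L\left(\frac{\phi^{-1}(\lambda x)}{\phi^{-1}(\lambda)}\right)^{\!\gamma},
\]
and rearranging and taking the $\gamma$-th root delivers the stated bound.

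There is no substantive obstacle here; the only point requiring care is the bookkeeping between Remark \ref{r:UL}(1), whose statement transfers a scaling property from a larger threshold down to a smaller one, and the present lemma, whose threshold is already named $a$ while the target is named $b$. Once that relabeling is done correctly, the inversion is immediate.
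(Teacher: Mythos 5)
Your proof is correct and follows exactly the route the paper intends: the paper gives no details beyond citing Remark \ref{r:UL}, and your argument (transfer the lower scaling from threshold $a$ down to $b$ via Remark \ref{r:UL}(1), then invert by substituting $\mu=\phi^{-1}(\lambda)$ and $t=\phi^{-1}(\lambda x)/\phi^{-1}(\lambda)$) is the standard way to fill in that gap. The relabeling between the remark's thresholds and the lemma's is handled correctly, and the monotonicity facts you invoke are justified earlier in the paper.
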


Throughout this paper, the process $X=(X_t:\, t\ge 0)$ is a subordinate Brownian motion whose characteristic exponent is 
$\phi(|x|^2)$. 
Recall that $x \to  j(|x|)$ is the L\'evy density of the subordinate Brownian motion $X$ defined in \eqref{eq:jump}, which  gives rise to a L\'evy system for $X$ describing the jumps of  $X$; 
For any $x\in \bR^d$, stopping time $\tau$ (with respect to the filtration of $X$), and nonnegative measurable function $f$ on $\bR_+ \times \bR^d\times \bR^d$ with $f(s, y, y)=0$ for all $y\in\bR^d$ and $s\ge 0$ we have
\begin{equation}\label{e:levy}
\E_x \left[\sum_{s\le \tau} f(s,X_{s-}, X_s) \right] = \E_x \left[ \int_0^\tau \left(\int_{\bR^d} f(s,X_s, y) j(|X_s-y|) dy \right) ds \right]
\end{equation}
(e.g., see \cite[Appendix A]{CK2}).

The next lemma holds for every symmetric L\'evy process and it
follows from \cite[(3.2)]{P} and \cite[Corollary 1]{G}.
Recall that $\tau_D$ is the first exit time of $D$ by the process $X$.
\begin{lemma} \label{L:3.3}
 For any  positive
 constants  $a, b$, there exists
$c=c(a,b, \phi)>0$ such that for all $z \in \bR^d$ and $t>0$,
$$
\inf_{y\in B(z, a \Phi^{-1}(t)/2)}\P_y \left(\tau_{B(z,
a \Phi^{-1}(t))}> bt  \right)\, \ge\, c.
$$
\end{lemma}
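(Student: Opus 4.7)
The plan is to reduce the estimate to a translated problem on a ball centred at the starting point, and then invoke the two cited results. Set $r:=a\Phi^{-1}(t)$. For any $y\in B(z,r/2)$, the triangle inequality yields $B(y,r/2)\subseteq B(z,r)$, whence $\tau_{B(z,r)}\ge \tau_{B(y,r/2)}$ under $\P_y$. Since $X$ is a Lévy process, translation invariance gives
$$\P_y(\tau_{B(y,r/2)}>bt)=\P_0(\tau_{B(0,r/2)}>bt),$$
so it suffices to produce a lower bound, uniform in $t>0$, for the right-hand side.

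I would then invoke the Pruitt-type inequality \cite[(3.2)]{P}: for symmetric Lévy processes with characteristic exponent $\Psi$ and any $\rho,s>0$,
$$\P_0(\tau_{B(0,\rho)}\le s)\le c_1\, s\,\Psi^*(1/\rho),\qquad \Psi^*(u):=\sup_{|\xi|\le u}\Psi(\xi).$$
Since here $\Psi(\xi)=\phi(|\xi|^2)$ is radially non-decreasing, $\Psi^*(1/\rho)=\phi(\rho^{-2})$. Taking $\rho=r/2$ and using that $\phi\in\mathcal{BF}$ with $\phi(0+)=0$ satisfies $\phi(c\lambda)\le (c\vee 1)\phi(\lambda)$ for all $c,\lambda>0$ (a direct consequence of concavity), I obtain
$$\phi((r/2)^{-2})=\phi\bigl((4/a^{2})(\Phi^{-1}(t))^{-2}\bigr)\le \frac{(4/a^{2})\vee 1}{t},$$
because $\phi((\Phi^{-1}(t))^{-2})=1/t$ by the definition of $\Phi$. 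Consequently
$$\P_0(\tau_{B(0,r/2)}\le bt)\le C_{a,b}:=c_1 b\,[(4/a^{2})\vee 1].$$

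If $C_{a,b}<1$ this already yields $\P_y(\tau_{B(z,r)}>bt)\ge 1-C_{a,b}$, and we are done. For arbitrary $a,b>0$ (so possibly $C_{a,b}\ge 1$) the missing ingredient is supplied by \cite[Corollary 1]{G}, which upgrades Pruitt's bound into the matching two-sided estimate $\E_x[\tau_{B(x,\rho)}]\asymp 1/\phi(\rho^{-2})$. Combined with the elementary higher-moment inequality $\E_x[\tau_{B(x,\rho)}^{k}]\le k!(\sup_w\E_w[\tau_{B(x,\rho)}])^{k}$ and Paley--Zygmund, this yields
$$\P_0(\tau_{B(0,\rho)}>\alpha/\phi(\rho^{-2}))\ge c_2(\alpha)>0,\qquad \alpha>0,$$
and specialising $\rho=r/2$ and choosing $\alpha$ with $\alpha/\phi((r/2)^{-2})\ge bt$ (always possible by the display above) completes the proof. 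The principal obstacle is precisely this last step: the one-sided Pruitt bound is not automatically positive when $b$ is large, but \cite[Corollary 1]{G} furnishes exactly the complementary lower estimate needed to patch this for all $a,b>0$.
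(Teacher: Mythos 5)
Your reduction to $\P_0(\tau_{B(0,r/2)}>bt)$ and the Pruitt step are fine, and they do dispose of the case where $C_{a,b}<1$; note the paper itself offers no proof of this lemma (it only cites \cite{P} and \cite{G}), so the question is whether your completion for general $a,b$ is sound. It is not. The final step --- deducing $\P_0(\tau_{B(0,\rho)}>\alpha/\phi(\rho^{-2}))\ge c_2(\alpha)>0$ for \emph{all} $\alpha>0$ from the two-sided bound $\E_x[\tau_{B(x,\rho)}]\asymp 1/\phi(\rho^{-2})$, the Khasminskii moment bound $\E[\tau^k]\le k!\,(\sup_w\E_w[\tau])^k$, and Paley--Zygmund --- is a non sequitur. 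Paley--Zygmund only produces a positive lower bound for $\P(\tau>s)$ when $s<\E[\tau]$; here $bt$ can exceed $\sup_w\E_w[\tau_{B(0,\rho)}]\le C(a)\,t$ by an arbitrary factor once $b>C(a)$, and in that regime no collection of moment constraints of the above type can force $\P(\tau>bt)>0$: a random variable equal to its mean with probability one satisfies $\E[\tau^k]\le k!M^k$ and $\E[\tau]=m$ with $m\asymp M$, yet has $\P(\tau>2\E\tau)=0$. (Indeed Markov's inequality shows the true probability is at most $C/b$, so it is genuinely small, and extracting its positivity requires structural, not moment, information.) This is exactly the regime the lemma is invoked in later (e.g.\ in the proof of Lemma \ref{l:spH} with $b=\frac{2}{c_2}\vee 1$), so the gap cannot be waved away.

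What is actually needed to close the large-$b$ case is a Markov-property chaining in which the process is repeatedly returned to the middle of the ball, and the mechanism for that return must come from the structure of $X$, not from moments. Two standard routes: (i) for isotropic unimodal processes, one shows $\P_w\big(X_s\in B(0,\rho/2)\big)\ge \tfrac12\,\P_0\big(|X_s|<\rho/4\big)$ for $w$ in a half-size ball (radial monotonicity of $p(s,\cdot)$ plus a reflection of the half-ball), combines this with Pruitt's bound at the single time scale $s_0\asymp 1/\phi(\rho^{-2})$ to get $\inf_w\P_w(\tau_{B(0,\rho)}>s_0,\ X_{s_0}\in B(0,\rho/2))\ge \tfrac14$, and iterates a bounded number $k=k(a,b)$ of times; (ii) for subordinate Brownian motions specifically, use \eqref{e:pq}: $\P_0(\tau^X_{B(0,\rho)}>T)\ge \E\big[\P_0(\tau^{B}_{B(0,\rho)}>S_T)\big]\ge c_d\,\E[e^{-\lambda_1 S_T/\rho^2}]=c_d\,e^{-T\phi(\lambda_1\rho^{-2})}\ge c_d\,e^{-(\lambda_1\vee 1)\,T\phi(\rho^{-2})}$, which is bounded below by a constant depending only on $a,b,d$ since $T\phi(\rho^{-2})=bt\,\phi(\rho^{-2})\le b\,((4/a^{2})\vee 1)$. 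Either ingredient (the first is essentially what \cite[Corollary 1]{G} supplies) must replace your Paley--Zygmund step.
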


Recall that $X$ has a transition density $p(t,x,y)=p(t,y-x)=p(t, |y-x|)$ of the form \eqref{eq:heat_kernel}. 
We first consider the estimates of $p(t,x)$ under the assumption that $\phi$ satisfies   $ L_a(\gamma, C_L)$ for some $a>0$. Note that 
$ L_a(\gamma, C_L)$ implies $\lim\limits_{\lambda \to\infty}\phi(\lambda)=\infty$.

By our Remark \ref{r:UL} and \cite[Propositions 3.2 and 3.4]{M}, we have  the following two upper bounds.
\begin{prop}\label{prop:on-up}
If $\phi$ satisfies   $ L_a(\gamma, C_L)$ for some $a>0$, then for every $T>0$ there exists $c=c(T)>0$ such that for all $t \le T$ and $x \in \R^d$,
	\[
		p(t,x)\leq  c\, \phi^{-1}(t^{-1})^{d/2}.
	\]
\end{prop}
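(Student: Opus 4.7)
\medskip
\noindent\textbf{Proof plan for Proposition \ref{prop:on-up}.} Since the density is radially non-increasing in $|x|$ (the integrand in \eqref{eq:heat_kernel} is), it suffices to bound $p(t,0)$. From \eqref{eq:heat_kernel},
\[
p(t,0) \;=\; \E\bigl[(4\pi S_t)^{-d/2}\bigr]
\;=\; \frac{(4\pi)^{-d/2}}{\Gamma(d/2)}\int_0^\infty u^{d/2-1}\,\E\bigl[e^{-uS_t}\bigr]\,du
\;=\; \frac{(4\pi)^{-d/2}}{\Gamma(d/2)}\int_0^\infty u^{d/2-1}\,e^{-t\phi(u)}\,du,
\]
using the identity $s^{-d/2}=\Gamma(d/2)^{-1}\int_0^\infty u^{d/2-1}e^{-us}\,du$ and Fubini, together with the defining relation $\E[e^{-uS_t}]=e^{-t\phi(u)}$. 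So the task is to show
\[
I(t):=\int_0^\infty u^{d/2-1}\,e^{-t\phi(u)}\,du \;\le\; c(T)\,\phi^{-1}(t^{-1})^{d/2},\qquad t\in(0,T].
\]

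The main work is the range of small $t$. Set $\Lambda_t:=\phi^{-1}(t^{-1})$ and let $t_0:=1/\phi(a)$, so that $\Lambda_t\ge a$ exactly when $t\le t_0$. For $t\le t_0$ I split $I(t)$ at $\Lambda_t$. On the low piece I simply use $e^{-t\phi(u)}\le 1$ to get
\[
\int_0^{\Lambda_t} u^{d/2-1}\,du \;=\; \tfrac{2}{d}\,\Lambda_t^{d/2}.
\]
On the high piece I invoke the lower scaling $L_a(\gamma,C_L)$: for $u\ge\Lambda_t\ge a$,
\[
\phi(u) \;\ge\; C_L\bigl(u/\Lambda_t\bigr)^{\gamma}\phi(\Lambda_t) \;=\; C_L\,t^{-1}\bigl(u/\Lambda_t\bigr)^{\gamma},
\]
so that after the substitution $v=u/\Lambda_t$,
\[
\int_{\Lambda_t}^\infty u^{d/2-1}\,e^{-t\phi(u)}\,du \;\le\; \Lambda_t^{d/2}\int_1^\infty v^{d/2-1}\,e^{-C_L v^{\gamma}}\,dv \;=\; c_1\,\Lambda_t^{d/2}.
\]
Adding these yields $I(t)\le c_2\,\phi^{-1}(t^{-1})^{d/2}$ for all $t\in(0,t_0]$.

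For $t\in[t_0,T]$, the bound just proved at $t_0$ gives $p(t_0,0)\le c_2\,\phi^{-1}(t_0^{-1})^{d/2}=c_2\,a^{d/2}$, and the Chapman--Kolmogorov identity yields
\[
p(t,0)=\int_{\R^d} p(t-t_0,y)^2\,dy \cdot\, \ldots
\]
Actually a cleaner way: simply use that $I(t)$ is non-increasing in $t$, so $I(t)\le I(t_0)\le c_2\,a^{d/2}$; meanwhile for $t\in[t_0,T]$ one has $\phi^{-1}(t^{-1})\ge\phi^{-1}(T^{-1})>0$, hence $\phi^{-1}(t^{-1})^{d/2}$ is bounded below by a constant $c_3(T)>0$. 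Thus $I(t)\le c_2 a^{d/2} \le (c_2 a^{d/2}/c_3(T))\,\phi^{-1}(t^{-1})^{d/2}$, which absorbs the remaining range into a constant depending on $T$. Combining both regimes gives the claim with $c=c(T)$.

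The only subtle point is that the scaling hypothesis is only assumed near infinity; this is why the argument must be split at $t_0=1/\phi(a)$ and the constant is allowed to depend on $T$. Once $t\le t_0$, the split at $\Lambda_t$ is the standard Laplace-transform trick and the whole estimate is a routine computation.
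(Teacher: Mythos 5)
Your proof is correct and is essentially the standard argument that the paper simply delegates to \cite[Proposition 3.2]{M}: reduce to $p(t,0)=\E[(4\pi S_t)^{-d/2}]$, rewrite this via the Gamma integral and the Laplace transform $\E[e^{-uS_t}]=e^{-t\phi(u)}$, split the resulting integral at $\Lambda_t=\phi^{-1}(t^{-1})$ using $L_a(\gamma,C_L)$ on the tail, and absorb the range $t\in[1/\phi(a),T]$ by the monotonicity of $I(t)$ together with $\phi^{-1}(t^{-1})\ge\phi^{-1}(T^{-1})>0$. Two cosmetic points: delete the abandoned Chapman--Kolmogorov line, and note that invoking the scaling at $\lambda=\Lambda_{t_0}=a$ (the definition requires $\lambda>a$) is harmless, e.g.\ by first replacing $a$ with $a/2$ via Remark \ref{r:UL}(1).
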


\begin{prop}\label{prop:off-up}
If $\phi$ satisfies   $ L_a(\gamma, C_L)$ for some $a>0$, then for every $T>0$ there  exist  
$c_1, c_2>0$ such that for all $t \le T$ and $x \in \R^d$ satisfying $t\phi(|x|^{-2})\leq 1$, 
	\[
		p(t,x)\leq c_1 \left(t|x|^{-d}H(|x|^{-2})+\phi^{-1}(t^{-1})^{d/2}\exp[-c_2|x|^2\phi^{-1}(t^{-1})]\right)\,.
	\]
\end{prop}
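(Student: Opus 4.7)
The proof starts from the subordination formula
\[p(t,x)=\int_0^\infty (4\pi s)^{-d/2}e^{-|x|^2/(4s)}\,\P(S_t\in ds).\]
Writing $\sigma:=\phi^{-1}(t^{-1})$, the hypothesis $t\phi(|x|^{-2})\le 1$ reads $|x|^2\sigma\ge 1$. The plan is to split the integration at two thresholds $s_0:=\alpha/\sigma$ and $s_1:=|x|^2$ (with $\alpha\in(0,1)$ to be chosen small) and to treat the three resulting ranges separately; each will produce at most one of the two terms on the right of the claimed inequality.

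For $s\in(0,s_0]$ the Gaussian exponent satisfies $|x|^2/(4s)\ge |x|^2\sigma/(4\alpha)$, so after pulling out the factor $e^{-|x|^2\sigma/(4\alpha)}$ the remaining integral is dominated by $p(t,0)\le C\sigma^{d/2}$ by Proposition \ref{prop:on-up}; this yields the diagonal-type term $C\sigma^{d/2}e^{-c_2|x|^2\sigma}$. For $s\in[s_1,\infty)$ the bounds $(4\pi s)^{-d/2}\le c|x|^{-d}$ and $e^{-|x|^2/(4s)}\le 1$ reduce the estimate to $c|x|^{-d}\,\P(S_t>|x|^2)$; a one-big-jump argument gives $\P(S_t>u)\le ct\,\mu((u/2,\infty))$ (modulo a term that is already exponential in $|x|^2\sigma$), and the inequality $\mu((r,\infty))\le cH(1/r)$ is immediate from the identity
\[H(\lambda)=\int_0^\infty (1-e^{-\lambda t}(1+\lambda t))\mu(dt)\]
together with the elementary lower bound $1-e^{-u}(1+u)\ge 1-2e^{-1}$ on $[1,\infty)$. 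This piece contributes the jump-type term $ct|x|^{-d}H(|x|^{-2})$.

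The intermediate range $s\in(s_0,s_1)$ is the delicate step. I would integrate by parts against $F_t(s):=\P(S_t\le s)$, obtaining
\[\int_{s_0}^{s_1}g_s(x)\,dF_t(s)=g_{s_0}(x)\P(S_t>s_0)-g_{s_1}(x)\P(S_t>s_1)+\int_{s_0}^{s_1}\partial_s g_s(x)\,\P(S_t>s)\,ds,\]
where $g_s(x):=(4\pi s)^{-d/2}e^{-|x|^2/(4s)}$. The two boundary terms are absorbed into the bounds already derived. For the remaining integral one uses $\P(S_t>s)\le c(tH(1/s)\wedge 1)$ together with $|\partial_s g_s(x)|\le c(|x|^2/s^2+d/s)\,g_s(x)$; the lower scaling of $\phi$ enters via Lemma \ref{lem:inverse} to ensure that the integrand concentrates near $s\sim|x|^2$ (where $H$ is evaluated at $|x|^{-2}$) and near $s\sim 1/\sigma$ (where the Gaussian still decays), and a change of variables $u=|x|^2/(4s)$ then closes the estimate. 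The main obstacle is precisely this intermediate range: the naive bound $g_s(x)\le c|x|^{-d}$ is not dominated by either term of the target inequality (in the off-diagonal regime both $tH(|x|^{-2})$ and $\sigma^{d/2}|x|^d e^{-c|x|^2\sigma}$ may be strictly less than $1$), so one cannot simply multiply by total probability and must genuinely extract either the Gaussian decay or the jump-tail factor on this middle piece.
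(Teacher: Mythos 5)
Your overall skeleton (subordination formula, split of the $s$-integral, on-diagonal bound from Proposition \ref{prop:on-up} for small $s$, the identity $H(\lambda)=\int_0^\infty(1-e^{-\lambda s}(1+\lambda s))\,\mu(ds)$ giving $\mu((r,\infty))\le cH(1/r)$, plus Lemma \ref{lem:bf}(a) to absorb the factor $2$) is sound and is indeed in the spirit of the argument in \cite{M}; note, however, that the paper itself does not reprove this statement at all --- it obtains it by combining Remark \ref{r:UL} with \cite[Propositions 3.2 and 3.4]{M} --- so a blind proof has to reproduce the technical core of \cite{M}, and that is exactly where your proposal has a genuine gap.

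The gap is the tail estimate for the subordinator, which you invoke twice but do not prove. In the range $s\ge|x|^2$ you assert that a one-big-jump argument gives $\P(S_t>u)\le c\,t\,\mu((u/2,\infty))$ ``modulo a term that is already exponential in $|x|^2\phi^{-1}(t^{-1})$''. With the jumps truncated at level $u/2$ this does not follow from the standard exponential Chebyshev bound: to get decay $e^{-c\,u\,\phi^{-1}(t^{-1})}$ you must take the exponential parameter $\lambda\asymp\phi^{-1}(t^{-1})$, and then $t\int_0^{u/2}(e^{\lambda s}-1)\mu(ds)$ contains factors as large as $e^{\lambda u/2}$, so the bound collapses precisely in the off-diagonal regime $|x|^2\phi^{-1}(t^{-1})\gg1$. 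The correct decomposition truncates at level $\asymp 1/\phi^{-1}(t^{-1})$, where the exponential moment is controlled by $t\phi(\phi^{-1}(t^{-1}))=1$ (using $\int_0^{1/\lambda}s\,\mu(ds)\le e\,\phi'(\lambda)$ and $\int_0^{1/\lambda}s^2\mu(ds)\le c\lambda^{-2}H(\lambda)$), and then the jumps of intermediate size, between $1/\phi^{-1}(t^{-1})$ and $u/2$, must be handled by a separate (iterated ``at least two big jumps'') argument; it is exactly in that step that the term $t\,H(|x|^{-2})$ --- rather than the too-large Pruitt-type term $t\,\phi(|x|^{-2})$ --- has to be extracted, and this is the actual content of \cite[Proposition 3.4]{M}. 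The same problem reappears in your intermediate range, where you use $\P(S_t>s)\le c\,(tH(1/s)\wedge1)$ as if it were known: no such upper bound is available in this paper (only the lower bound \eqref{e:p28} from \cite{M} is quoted), it is not elementary, and proving it (or the additive form $\P(S_t\ge u)\le c(tH(1/u)+e^{-cu\phi^{-1}(t^{-1})})$) under the sole hypothesis $L_a(\gamma,C_L)$ on $\phi$ is the heart of the matter. Until these subordinator tail bounds are established, ranges two and three of your decomposition are not closed, so the proposal as it stands does not constitute a proof; the elementary ingredients you do verify (range one, the bound $\mu((r,\infty))\le cH(1/r)$, $|\partial_s g_s|\le c(|x|^2/s^2+1/s)g_s$) are correct but are not where the difficulty lies.
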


\begin{prop}\label{prop:on-low}
If $\phi$ satisfies   $ L_a(\gamma, C_L)$ for some $a>0$, then for every $T>0$ there exists $c=c(T)>0$ such that for all $t \le T$ and $x \in \R^d$,
\[
		p(t,x)\geq c\, \phi^{-1}(t^{-1})^{\frac{d}{2}}\exp[-2^{-1}|x|^{2}\phi^{-1}(t^{-1})].
	\]
	In particular, if additionally $t\phi(M |x|^{-2})\geq  1$ holds for some $M>0$, then we have 
\begin{align}
\label{e:prop:on-low}
   p(t,x)\geq c\, e^{-M/2}\phi^{-1}(t^{-1})^{\frac{d}{2}}\,.
\end{align}	
\end{prop}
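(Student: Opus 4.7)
\textbf{Proof plan for Proposition \ref{prop:on-low}.}

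The plan is to start from the subordination representation
\[
p(t,x)=\int_{(0,\infty)}(4\pi s)^{-d/2}e^{-|x|^2/(4s)}\,\P(S_t\in ds)
\]
and restrict the range of integration to the interval on which Proposition \ref{prop:sub-low} gives a uniform lower bound on the distribution of $S_t$. Concretely, let $\rho\in(0,1)$ and $\tau>0$ be the constants supplied by that proposition, and integrate only over $s\in I_t:=[1/(2\phi^{-1}(t^{-1})),\,1/\phi^{-1}(\rho t^{-1})]$. On $I_t$ the Gaussian factor is at least $\exp[-|x|^{2}\phi^{-1}(t^{-1})/2]$, and the polynomial factor satisfies $(4\pi s)^{-d/2}\ge (4\pi)^{-d/2}\phi^{-1}(\rho t^{-1})^{d/2}$. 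Combining these with $\P(S_t\in I_t)\ge \tau$ yields
\[
p(t,x)\ge c_0\,\phi^{-1}(\rho t^{-1})^{d/2}\exp[-|x|^{2}\phi^{-1}(t^{-1})/2].
\]

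To finish the first assertion it remains to replace $\phi^{-1}(\rho t^{-1})$ by $\phi^{-1}(t^{-1})$ at the cost of a multiplicative constant depending only on $T$. For this I apply Lemma \ref{lem:inverse} with $\lambda=\rho t^{-1}$ and $x=\rho^{-1}\ge 1$: choosing $b\in(0,a]$ small enough that $\phi(b)\le \rho/T$, the assumption $t\le T$ gives $\rho t^{-1}\ge \rho/T\ge \phi(b)$, and hence
\[
\phi^{-1}(t^{-1})=\phi^{-1}(\rho^{-1}\cdot \rho t^{-1})\le (a/b)C_L^{-1/\gamma}\rho^{-1/\gamma}\phi^{-1}(\rho t^{-1}).
\]
Inserting this into the previous display gives the claimed bound with a constant $c=c(T)$.

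For the particular case, the condition $t\phi(M|x|^{-2})\ge 1$ is equivalent, by the monotonicity of $\phi^{-1}$, to $|x|^{2}\phi^{-1}(t^{-1})\le M$; thus the exponential factor in the first bound is at least $e^{-M/2}$, producing \eqref{e:prop:on-low}. I expect no real obstacle here: the only delicate point is the choice of $b$ in Lemma \ref{lem:inverse}, which is forced by the near-infinity nature of the lower scaling assumption and the requirement $t\le T$, and this is why the constant $c$ is allowed to depend on $T$.
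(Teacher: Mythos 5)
Your proposal is correct and follows essentially the same route as the paper: the same subordination formula restricted to the interval $[\,1/(2\phi^{-1}(t^{-1})),\,1/\phi^{-1}(\rho t^{-1})\,]$ from Proposition \ref{prop:sub-low}, followed by Lemma \ref{lem:inverse} (with $\lambda=\rho t^{-1}$, $x=\rho^{-1}$, and a suitable $b\le a$ depending on $T$) to replace $\phi^{-1}(\rho t^{-1})^{d/2}$ by $\phi^{-1}(t^{-1})^{d/2}$, and the elementary equivalence $t\phi(M|x|^{-2})\ge 1\iff |x|^2\phi^{-1}(t^{-1})\le M$ for the second assertion.
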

\proof 
We closely follow the proof of \cite[Proposition 3.5]{M}.
Let  $\rho\in (0,1)$ be the constant in Proposition \ref{prop:sub-low} and,  without loss of generality, we assume $T \ge \rho\phi^{-1}(a)$.
Using (\ref{eq:heat_kernel}) we get
\begin{align}
p(t,x)
    &\geq (4 \pi)^{-d/2} \int_{\left[2^{-1}\phi^{-1}(t^{-1})^{-1},\phi^{-1}(\rho t^{-1})^{-1}\right]}s^{-d/2}e^{-\frac{|x|^2}{4s}}\P(S_t\in ds)\nonumber \\
  &\geq (4 \pi)^{-d/2} \phi^{-1}(\rho t^{-1})^{d/2}e^{-\frac12 |x|^2\phi^{-1}(t^{-1})}\P\left(2^{-1}\phi^{-1}(t^{-1})^{-1}\leq S_t\leq\phi^{-1}(\rho t^{-1})^{-1}\right)
\label{eq:tmp_2034}.
\end{align}

Let $b=\phi^{-1}(\rho/T)$. Note that, by Lemma \ref{lem:inverse}, we have that for $0<t<T=\rho\phi(b)^{-1}$,
\begin{align}
\label{e:prg1}
  \phi^{-1}(\rho t^{-1})=\phi^{-1}(t^{-1})\frac{\phi^{-1}(\rho t^{-1})}{\phi^{-1}(t^{-1})}\geq (b/a) C_L^{1/\gamma}\rho^{1/\gamma} \,\phi^{-1}(t^{-1}).
\end{align}
Using \eqref{e:prg1}, Proposition \ref{prop:sub-low} and  \eqref{eq:tmp_2034} we get
\[
    p(t,x)\geq c_2 e^{-\frac12 |x|^{2}\phi^{-1}(t^{-1})}\phi^{-1}(t^{-1})^{d/2}\,. 
\]
\qed

We now revisit \cite{M}.
\begin{thm}\label{tm:main}
 Let $S=(S_t)_{t\geq 0}$ be a subordinator with zero drift whose Laplace  exponent  is {$\phi$}  
  and let $X=(X_t)_{t\geq 0}$ be the corresponding  subordinate Brownian motion in $\R^d$ and $p(t,x,y)=p(t,y-x)$ be   the transition density of $X$. 

   If $\phi$ satisfies   $ L_a(\gamma, C_L)$ for some $a>0$, then for every $T>0$ there exist $c_1=c_1(T, a)>1$ and
   $c_2=c_2(T, a)>0$
   such that for all $t \le T$ and $x \in \R^d$, 
  \begin{align}\label{e:t:up1}
 p(t,x) \leq c_1 \left( \phi^{-1}(t^{-1})^{d/2} \wedge \big( t|x|^{-d}H(|x|^{-2})+ \phi^{-1}(t^{-1})^{d/2}e^{-c_2|x|^2\phi^{-1}(t^{-1})}\big)\right),
 \end{align}
   \begin{align}\label{e:t:up1j}
 j(|x|) \leq c_1 |x|^{-d}H(|x|^{-2}),
  \end{align}
and
  \begin{align}\label{e:t:uplow}
c_1^{-1} \phi^{-1}(t^{-1})^{d/2}  \le p(t,x) \leq c_1 \phi^{-1}(t^{-1})^{d/2} , \quad \text{ if } t\phi(|x|^{-2})\geq 1.
 \end{align}
\end{thm}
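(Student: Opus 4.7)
The plan is to obtain the three inequalities in Theorem~\ref{tm:main} by assembling what is already available from Propositions~\ref{prop:on-up}, \ref{prop:off-up} and \ref{prop:on-low}, plus one direct integration-by-parts computation for the L\'evy density bound \eqref{e:t:up1j}.

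For \eqref{e:t:up1} I would fix $t\in(0,T]$ and distinguish the two regimes $t\phi(|x|^{-2})\le 1$ and $t\phi(|x|^{-2})\ge 1$. In the first regime Proposition~\ref{prop:off-up} directly supplies the off-diagonal bound, and combining it with the always-valid on-diagonal bound $p(t,x)\le c\,\phi^{-1}(t^{-1})^{d/2}$ of Proposition~\ref{prop:on-up} gives the minimum on the right-hand side. In the second regime one has $|x|^{2}\phi^{-1}(t^{-1})\le 1$, so the factor $\exp[-c_{2}|x|^{2}\phi^{-1}(t^{-1})]$ is bounded below by a positive constant; the bracketed expression in \eqref{e:t:up1} is therefore itself comparable to $\phi^{-1}(t^{-1})^{d/2}$, and Proposition~\ref{prop:on-up} already produces that bound. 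Consequently the minimum of the two candidates is achieved with the same constant in both regimes.

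For \eqref{e:t:up1j} I would argue from the integral representation \eqref{eq:jump}. Setting $g_{r}(s):=(4\pi s)^{-d/2}e^{-r^{2}/(4s)}$ and integrating by parts against the Stieltjes measure $-d\bar\mu(s)$ with $\bar\mu(s):=\mu(s,\infty)$, the boundary terms vanish since $g_{r}$ decays exponentially at $s\downarrow 0$ while $\bar\mu(s)=O(s^{-1})$, and both $g_{r}$ and $\bar\mu$ vanish at infinity. The elementary inequality $1-(1+u)e^{-u}\ge c_{0}$ for $u\ge 1$, inserted into the Bernstein representation $H(\lambda)=\int_{0}^{\infty}(1-(1+\lambda y)e^{-\lambda y})\,\mu(dy)$, yields the general bound $\bar\mu(s)\le C\,H(1/s)$ with no scaling required. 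Changing variables $u=1/s$ in the resulting integral I would split at $u=r^{-2}$: on $\{u\le r^{-2}\}$ the monotonicity $H(u)\le H(r^{-2})$ together with the explicit integration $\int_{0}^{r^{-2}}u^{d/2-1}du=\tfrac{2}{d}r^{-d}$ provides the factor $r^{-d}H(r^{-2})$, while on $\{u\ge r^{-2}\}$ the crude scaling $H(v/r^{2})\le v^{2}H(r^{-2})$ for $v\ge 1$ from Lemma~\ref{lem:bf}(a), combined with the exponential decay $e^{-v/4}$, renders the remaining piece a convergent constant. Summing both pieces delivers \eqref{e:t:up1j}.

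For \eqref{e:t:uplow} the upper bound is a restatement of Proposition~\ref{prop:on-up}, while the lower bound is \eqref{e:prop:on-low} applied with $M=1$: the hypothesis $t\phi(|x|^{-2})\ge 1$ is precisely what the latter needs, and its output $ce^{-1/2}\phi^{-1}(t^{-1})^{d/2}$ is the desired lower bound. The only nontrivial point is the integration-by-parts step for \eqref{e:t:up1j}: one must verify that $g_{r}(s)\bar\mu(s)\to 0$ at both endpoints and that the change of variables and split produce constants depending only on $d$; everything else is bookkeeping built from the propositions already established.
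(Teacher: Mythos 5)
Your proposal is correct, and for \eqref{e:t:up1} and \eqref{e:t:uplow} it is essentially the paper's own argument: the paper also obtains \eqref{e:t:up1} by combining Propositions \ref{prop:on-up} and \ref{prop:off-up} (the regime $t\phi(|x|^{-2})\ge 1$ being handled exactly by your observation that then $|x|^2\phi^{-1}(t^{-1})\le 1$, so the bracketed term is comparable to $\phi^{-1}(t^{-1})^{d/2}$), and \eqref{e:t:uplow} from Proposition \ref{prop:on-up} together with \eqref{e:prop:on-low}. Where you genuinely diverge is \eqref{e:t:up1j}: the paper simply asserts it as a consequence of Propositions \ref{prop:on-up} and \ref{prop:off-up}, i.e.\ implicitly via the heat-kernel route (letting $t\downarrow 0$ in the off-diagonal bound and using $p(t,x)/t\to j(|x|)$, the Gaussian-type remainder being killed by the exponential), whereas you prove it directly from \eqref{eq:jump} by integration by parts against $\bar\mu(s)=\mu(s,\infty)$ and the bound $\bar\mu(s)\le C\,H(1/s)$ coming from $1-(1+u)e^{-u}\ge 1-2e^{-1}$ for $u\ge 1$ in the representation $H(\lambda)=\int_0^\infty\big(1-(1+\lambda y)e^{-\lambda y}\big)\mu(dy)$. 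Your route is self-contained, avoids any vague-convergence step, and in fact yields \eqref{e:t:up1j} for every driftless subordinator without the scaling hypothesis $L_a(\gamma,C_L)$, which is a nice gain in generality; the paper's route is shorter because it recycles results already imported from \cite{M}. Two bookkeeping points in your computation: since $g_r'$ changes sign, you should bound $j(r)=\int_0^\infty \bar\mu(s)g_r'(s)\,ds$ by the integral over $\{g_r'>0\}$, where $g_r'(s)\le (4\pi)^{-d/2}\tfrac{r^2}{4}s^{-d/2-2}e^{-r^2/(4s)}$; after the substitution $u=1/s$ this gives $c\,r^2\int_0^\infty u^{d/2}e^{-r^2u/4}H(u)\,du$ rather than an integrand $u^{d/2-1}$, but the split at $u=r^{-2}$, the monotonicity of $H$, the bound $H(v r^{-2})\le v^2H(r^{-2})$ from Lemma \ref{lem:bf}(a) and the exponential decay then deliver $c\,r^{-d}H(r^{-2})$ exactly as you describe, so this is a constant-tracking detail and not a gap.
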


\pf \eqref{e:t:up1} and \eqref{e:t:up1j}  follow from 
Propositions \ref{prop:on-up} and \ref{prop:off-up}. 
The estimates  \eqref{e:t:uplow} follow from Remark \ref{r:UL}, \cite[Proposition 3.2]{M} and  Proposition \ref{prop:on-low}. 
\qed

\section{Parabolic Harnack inequality and preliminary lower bounds of $p_D(t,x,y)$}

Throughout this section, we assume that {\it $\phi$ has no drift and satisfies   $ L_a(\gamma, C_L)$ for some $a\ge 0$}. Recall that $p_D(t,x,y)$ defined in \eqref{e:hkos1} is the transition density for $X^D$,
the subprocess of $X$ killed upon leaving $D$.

 Let $Z_s:=(V_s, X_s)$ be
the time-space process of $X$, where
$V_s=V_0- s$.
The law of the time-space process $s\mapsto Z_s$ starting from
$(t, x)$ will be denoted as $\mathbb{P}^{(t, x)}$.

\begin{defn}
A non-negative Borel  measurable function
$h(t,x)$ on $\bR \times \bR^d$ is said to be {\it parabolic}
(or {\it caloric})
on $(a,b]\times B(x_0,r)$
if for every relatively compact open subset $U$ of $(a,b]\times B(x_0,r)$,
$h(t, x)=
\mathbb{E}^{(t,x)}
 [h (Z_{\tau^Z_{U}})]$
for every $(t, x)\in U\cap ([0,\infty)\times
  \bR^d)$,
where
$\tau^Z_{U}:=\inf\{s> 0: \, Z_s\notin U\}$.
\end{defn}

Recall that $\Phi(r)=\frac{1}{\phi(1/r^2)}$.
In this section, we will  first  prove that
 $X$ satisfies the scale-invariant
parabolic Harnack inequality  with  explicit scaling in terms of $\Phi$. That is, 
\begin{thm}\label{t:PHI}
Suppose that 
$\phi$ has no drift and satisfies   $ L_a(\gamma, C_L)$ for some $a\ge 0$.  For every  $M>0$, there exist
$c>0$ and $c_1, c_2 \in (0,1)$ depending on $d$, $\gamma$ and $C_L$ (also depending  on  $M$ and $a$ if $a>0$)  
such that for every $x_0\in
\R^d$, $t_0\ge 0$, $R \in (0, a^{-1} M)$ and every non-negative function $u$ on $[0,
\infty)\times \R^d$ that is
  parabolic
 on $(t_0,t_0+4c_1
\Phi(R)]\times B(x_0,
R)$,
$$
\sup_{(t_1,y_1)\in Q_-}u(t_1,y_1)\le c \, \inf_{(t_2,y_2)\in
Q_+}u(t_2,y_2),
$$
where $Q_-=(t_0+c_1 \Phi(R),t_0+2c_1  \Phi(R)]\times B(x_0,
c_2R)$
and $Q_+=[t_0+3c_1 \Phi(R),t_0+ 4c_1 \Phi(R)]\times B(x_0,
c_2R)$.
\end{thm}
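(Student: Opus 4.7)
The plan is to establish Theorem \ref{t:PHI} via the probabilistic route of Bass--Levin and Chen--Kumagai: I would prove three ingredients --- (i) a scale-invariant near-diagonal lower bound for the Dirichlet heat kernel $p_{B(x_0,R)}(t,x,y)$, (ii) the exit-time control already supplied by Lemma \ref{L:3.3}, and (iii) the L\'evy system formula \eqref{e:levy} paired with the tail bound \eqref{e:jupper} --- and then combine them via the standard chaining/iteration scheme. The natural space-time scale is $\Phi(r)=1/\phi(r^{-2})$; concavity of $\phi$ gives doubling of $\Phi$, while $L_a(\gamma,C_L)$ (extended via Remark \ref{r:UL} from $a$ down to any smaller positive level) gives a matching lower scaling $\Phi(xR)/\Phi(R)\gtrsim x^{2\gamma}$ uniformly on $R\in(0,a^{-1}M)$, which is what lets us treat all balls in the allowed range on equal footing.

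The near-diagonal lower bound I need is: there exist $\beta\in(0,1)$ and $c>0$ such that
\begin{equation*}
p_{B(x_0,R)}(t,x,y)\;\ge\; c\,\phi^{-1}(t^{-1})^{d/2}\qquad \text{for } t\in[c_1\Phi(R),4c_1\Phi(R)] \text{ and } x,y\in B(x_0,\beta R).
\end{equation*}
I would derive this from the Dynkin decomposition \eqref{e:hkos1}. The free-space term $p(t,x,y)$ is bounded below by \eqref{e:prop:on-low}: since $|x-y|\le 2\beta R$ and $t\asymp \Phi(R)=1/\phi(R^{-2})$, one has $t\,\phi(|x-y|^{-2})\gtrsim 1$. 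For the error $\E_x[p(t-\tau_D,X_{\tau_D},y);\tau_D<t]$ with $D=B(x_0,R)$, I would apply Lemma \ref{L:3.3} to the inner ball $B(x,R/4)\subset D$ (using $\beta<1/2$) to force $\P_x(\tau_D>t)\ge c'$, then use Theorem \ref{tm:main} together with $|X_{\tau_D}-y|\ge(1-\beta)R$ to bound $p(s,z,y)$ on the event $\{\tau_D<t\}$ by the off-diagonal expression $s|z-y|^{-d}H(|z-y|^{-2})+\phi^{-1}(s^{-1})^{d/2}e^{-c|z-y|^2\phi^{-1}(s^{-1})}$ and verify that both contributions remain at most a small multiple of $\phi^{-1}(t^{-1})^{d/2}$ uniformly in the allowed range of $s$, shrinking $c_1$ as needed to absorb them into half of the main term.

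For the passage from the near-diagonal lower bound to full PHI I would follow the classical Krylov--Safonov/Chen--Kumagai iteration. Given a nonnegative parabolic $u$ on $(t_0,t_0+4c_1\Phi(R)]\times B(x_0,R)$, the identity $u(t_1,y_1)=\E^{(t_1,y_1)}[u(Z_{\tau^Z_U})]$ applied with $U$ a suitable time-space subcylinder, combined with the near-diagonal lower bound, propagates any quantitative lower bound on $u$ across balls of comparable diameter within a macroscopic fraction of $\Phi(R)$ in time; the L\'evy system \eqref{e:levy} with the tail estimate $\int_{|z|>r}j(|z|)\,dz\lesssim \phi(r^{-2})\asymp \Phi(r)^{-1}$ (obtained from \eqref{e:jupper} and doubling of $\phi$) controls the large-jump contributions at exactly the critical scale, so single long jumps cannot spoil the propagation. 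The constants are then fixed in the order $\beta$ from Step 1, next $c_2<\beta$ so that $B(x_0,c_2R)$ sits well inside $B(x_0,\beta R)$, and finally $c_1$ small enough that Step 1 applies uniformly on $Q_-\cup Q_+$. The hardest part is the uniformity of all constants across $R\in(0,a^{-1}M)$ under the single-sided assumption $L_a(\gamma,C_L)$: every off-diagonal heat-kernel estimate must be imported from Theorem \ref{tm:main} (whose constants depend on $T$, i.e., on the $\Phi(R)$ range) and every scaling argument must be restricted to the prescribed range, which is precisely why the constants in the statement are allowed to depend on $M$ and on $a$ when $a>0$.
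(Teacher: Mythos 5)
Your proposal follows essentially the same route as the paper: the paper likewise reduces Theorem \ref{t:PHI} to a near-diagonal lower bound for $p_{B(x_0,r)}$ (its Proposition \ref{P:3.5-550}, proved exactly by the Dynkin decomposition \eqref{e:hkos1} you describe), the on/off-diagonal bounds \eqref{e:upp}--\eqref{e:low}, the exit-time comparison \eqref{e:Euplo}, and the automatic ({\bf UJS}) property of the radially non-increasing $j$, and then invokes the Chen--Kumagai--Wang stability theorem for the iteration you sketch by hand. The ingredients and their roles match; the only difference is that you would reproduce the Krylov--Safonov-type chaining argument rather than cite \cite{CKW} for it.
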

Theorem  \ref{t:PHI} clearly implies the elliptic Harnack inequality. Thus this extends the main result of \cite{G}.

To prove Theorem  \ref{t:PHI}, we first observe that for each $c_1, b>0$ and  every $r, t>0$ satisfying $r\phi^{-1}(t^{-1})^{1/2} \ge c_1$ we have 
\begin{align*}
&\phi^{-1}(t^{-1})^{d/2}e^{-br^2\phi^{-1}(t^{-1})}\big/
(t r^{-d} \phi(r^{-2}))= (\phi(r^{-2})t)^{-1}(r \phi^{-1}(t^{-1})^{1/2})^d  e^{-br^2\phi^{-1}(t^{-1})}\\
& \le \sup_{a>0}[(\phi(a^2r^{-2})/\phi(r^{-2})) a^d e^{-ba^2}]\le \sup_{a>0} a^{d} (a \vee 1)^2 e^{-ba^2}=:c_2 <\infty.
\end{align*}
Using this and the fact that $ \phi \ge H$, we see that 
  for each $b>0$ there exists $c=c(b)>0$ such that for all  $t>0, x  \in \R^d$, 
\begin{align}
\label{e:uppcom} \phi^{-1}(t^{-1})^{d/2} \wedge \big( t|x|^{-d}H(|x|^{-2})+ \phi^{-1}(t^{-1})^{d/2}e^{-b|x|^2\phi^{-1}(t^{-1})}\big) \le c (\phi^{-1}(t^{-1})^{d/2}\wedge t|x|^{-d}\phi(|x|^{-2})).
\end{align} 

Thus by \cite{M} (for $a=0$) and  Proposition \ref{prop:on-low} and \eqref{e:t:up1} (for $a>0$) we have the following bounds: for $t \in (0,  T]$ if $a>0$ (for $t >0$ if $a=0$),
\begin{align}
\label{e:upp}
 p(t,x)\leq C\left((\Phi^{-1}(t))^{-d} \wedge \frac{t}{|x|^{d}\Phi(|x|)} \right), \quad x  \in \R^d 
\end{align}
 and 
\begin{align}
\label{e:low}
 p(t,x)\geq C^{-1} (\Phi^{-1}(t))^{-d} e^{-\frac{1}{2}|x|^{2}/(\Phi^{-1}(t))^{2}},
\end{align}
where the above constant $C>1$ depends on $T$ if $a>0$.

Now, using \eqref{e:upp} and \eqref{e:low}  we get the following lower bound.

\begin{prop}\label{P:3.5-550}
Suppose that  $\phi$ has no drift and satisfies   $ L_a(\gamma, C_L)$ for some $a\ge 0$. 
For  every  $M>0$, there exist  constants
$c>0$ and $\eps \in (0,1/2)$ such that for every $x_0\in \R^d$ and $r  \in (0, a^{-1}M)$,
\begin{equation}\label{e:aa11}
 p_{B(x_0,r)}(t, x,y)\ge   c \, \frac 1{(\Phi^{-1}(t))^d}
 \qquad \hbox{for   } x, y\in B(x_0, \eps  \Phi^{-1}(t)) \hbox{ and }
 t\in (0,   \Phi(\eps r)].
\end{equation}
\end{prop}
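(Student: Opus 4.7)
The plan is to apply the Hunt/Dynkin identity \eqref{e:hkos1} with $\tau := \tau_{B(x_0,r)}$,
\[
p_{B(x_0,r)}(t,x,y) = p(t,x,y) - \E_x[p(t-\tau, X_\tau, y); \tau < t],
\]
and to show that for $\epsilon$ chosen sufficiently small, the subtracted term is at most half of the first. The hypothesis $t \le \Phi(\epsilon r)$ translates to $\Phi^{-1}(t) \le \epsilon r$, so that $x$ and $y$ both lie in $B(x_0, \epsilon^2 r)$, well inside $B(x_0, r)$.

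For the free kernel, the constraint $|x-y| \le 2\epsilon \Phi^{-1}(t)$ plugged into the lower bound \eqref{e:low} gives
\[
p(t,x,y) \ge C^{-1} \Phi^{-1}(t)^{-d} e^{-2\epsilon^2}.
\]
The constant $C$ in \eqref{e:low} is uniform in $t$ on $(0, \Phi(a^{-1}M)]$ when $a>0$ (and for all $t>0$ when $a=0$), which covers our range since $t \le \Phi(\epsilon r) \le \Phi(a^{-1}M)$.

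For the exit term, on $\{\tau < t\}$ one has $|X_\tau - x_0| \ge r$, so
\[
|X_\tau - y| \ge r - \epsilon \Phi^{-1}(t) \ge (1-\epsilon^2)r \ge r/2
\]
for $\epsilon \le 1/2$. Since $t-\tau \le t \le \Phi(\epsilon r) \le \Phi(r/2) \le \Phi(|X_\tau - y|)$, we are on the tail branch of \eqref{e:upp}, giving
\[
p(t-\tau, X_\tau, y) \le C\,\frac{t-\tau}{|X_\tau - y|^d \Phi(|X_\tau - y|)} \le C\cdot 2^d\cdot \frac{t}{r^d \Phi(r/2)}.
\]
Using $t \le \Phi(\epsilon r)$ and $\Phi^{-1}(t) \le \epsilon r$ one gets $t\,\Phi^{-1}(t)^d \le \Phi(\epsilon r)(\epsilon r)^d \le \epsilon^d r^d \Phi(r/2)$, hence the whole exit contribution is bounded by $C\cdot 2^d \epsilon^d \Phi^{-1}(t)^{-d}$.

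Combining the two bounds, it suffices to choose $\epsilon \in (0, 1/2)$ small enough that $C\cdot 2^d \epsilon^d \le \tfrac{1}{2} C^{-1} e^{-2\epsilon^2}$; this is possible because the left-hand side is $O(\epsilon^d)$ while the right-hand side approaches $\tfrac{1}{2} C^{-1}$ as $\epsilon \to 0$. With such $\epsilon$ we conclude $p_{B(x_0,r)}(t,x,y) \ge \tfrac{1}{2} C^{-1} e^{-2\epsilon^2} \Phi^{-1}(t)^{-d}$, which is \eqref{e:aa11}. No single step is genuinely subtle; the main point to watch is the uniformity of the constants in \eqref{e:low} and \eqref{e:upp} over $t \in (0, \Phi(a^{-1}M)]$, so that the final $c$ and $\epsilon$ depend only on $d$, $\gamma$, $C_L$ and $M$ as required.
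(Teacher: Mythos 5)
Your argument is correct and follows essentially the same route as the paper's proof: apply the Hunt formula \eqref{e:hkos1}, bound the free kernel from below via \eqref{e:low} using $|x-y|\le 2\eps\Phi^{-1}(t)$, bound the exit term from above via the tail branch of \eqref{e:upp} using $|X_{\tau}-y|\ge r-\eps\Phi^{-1}(t)$, and choose $\eps$ small so the second term is absorbed by the first. The only cosmetic difference is that you express the exit bound through $r$ and convert back to $\Phi^{-1}(t)^{-d}$ (getting a factor $2^d\eps^d$), whereas the paper keeps everything in terms of $\Phi^{-1}(t)$ (getting $(\eps^{-1}-\eps)^{-d}$); these are equivalent.
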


\proof 
Since the proof for  the case $a=0$ is almost identical to   the proof for  the case $a>0$, we will prove the proposition  for  the case $a>0$ only.
Fix $x_0\in \R^d$ and let   $B_r:=B(x_0,r)$. 
The constant  $\eps \in (0,1/2)$ will be chosen later.
For   $ x, y\in B_{\eps  \Phi^{-1}(t)}$, we have $|x-y| \le 2\eps  \Phi^{-1}(t)$. So, 
\begin{align}
\label{e:new12}
\frac{|x-y|^{2}}{2(\Phi^{-1}(t))^{2}} \le 2\eps^2 \le 1/2.
\end{align}
Now combining   \eqref{e:hkos1}, 
\eqref{e:upp},  \eqref{e:low} and \eqref{e:new12}
we have that for $x, y\in B_{\eps  \Phi^{-1}(t)}$ and $t\in (0,   \Phi(\eps r)]$,
\begin{eqnarray}
&&p_{B_r}(t,x,y)\,\ge\,
C^{-1}\frac{e^{-2 }    }{(
\Phi^{-1}( t))^d}\nonumber\\&&- C \, 
\E^x\left[1_{\{\tau_{B_r}\le
t\}}\left(\frac1{(\Phi^{-1}(t-\tau_{B_r}))^d}\wedge \frac
{t-\tau_{B_r}}{|X_{\tau_{B_r}}-y|^d \Phi(|X_{\tau_{B_r}}-y|)}\right)\right]\label{eq:dipco22},
\end{eqnarray}

Observe that
$$ |X_{\tau_{B_r}}-y|\ge r-  \eps  \Phi^{-1}(t) \ge   
 (\eps^{-1} -\eps) \Phi^{-1}(t) \ge \Phi^{-1}(t),\quad \text{for all } t\in (0,   \Phi(\eps r)]
$$
 and so 
\begin{align}
  \frac
{t-\tau_{B_r}}{|X_{\tau_{B_r}}-y|^d \Phi(|X_{\tau_{B_r}}-y|)} \le 
\frac{t}{( (\eps^{-1} -\eps) \Phi^{-1}(t))^d \Phi(  \Phi^{-1}(t))}
=
\frac{(\eps^{-1} -\eps)^{-d}}{(\Phi^{-1}( t))^d}.\label{eq:dipco33}
\end{align}
Consequently, we have from \eqref{eq:dipco22} and \eqref{eq:dipco33},
\begin{eqnarray*}
p_{B_r}(t,x,y)
&\ge&
\frac{e^{-2 }  C^{-1}}{(\Phi^{-1}( t))^d}-C
\frac{(\eps^{-1} -\eps)^{-d}}{(\Phi^{-1}( t))^d}\\
&\ge& \left(e^{-2 }  C^{-1} -C(\eps^{-1} -1)^{-d} \right)  \frac{1}{
(\Phi^{-1}( t))^d} .
\end{eqnarray*}
Choose $\eps:=((2 e^{2 }  C^{2})^{1/d}+1)^{-1} <1/2$ 
so that 
$e^{-2 }  C^{-1} -C(\eps^{-1} -\eps)^{-d}   \ge e^{-2 }  C^{-1}/2.
$
We now have $ p_{B_r}(t,x,y)\ge  2^{-1} e^{-2 }  C^{-1}
   (\Phi^{-1}(t))^{-d}$ for  $ x, y\in B_{\eps  \Phi^{-1}(t)}$ and $t\in (0,   \Phi(\eps r)]$. \qed

Integrating \eqref{e:upp} and  \eqref{e:aa11}, we obtain that 
there exist constants $c_1,c_2>0$ such that 
\begin{align}\label{e:Euplo}
c_1 \Phi(r) \le \E^x[\tau_{B(x,r)}] \le c_2 \Phi(r), \quad x \in \R^d, r<1.
\end{align}

We say ({\bf UJS}) holds for $J$ if there exists a positive constant $c$ such that
for every $y\in \R^d$,
$$
 J(y) \le \frac{c}{r^d}
\int_{B(0,r)} J(y-z)dz \qquad
\hbox{whenever } r\le  |y|/2 . \eqno({\bf UJS})
 $$

\noindent
{\bf Proof of Theorem \ref{t:PHI}.}
Note that 
 ({\bf UJS}) always holds for our L\'evy density  $x \to  j(|x|)$ since $j$ is non-increasing. (see  \cite[page 1070]{CKK2}). 
Thus, using Proposition \ref{P:3.5-550},  \eqref{e:upp} (for the case $a=0$) and ({\bf UJS}), we see that   
 Theorem \ref{t:PHI} for the case $a=0$  is a special case of \cite[Theorem 1.17 or Theorem 4.3 and (4.11)]{CKW}.
Moreover, using Proposition \ref{P:3.5-550},  \eqref{e:upp} (for the case $a>0$) and ({\bf UJS}), the proof of Theorem \ref{t:PHI} for the case $a>0$ is almost identical to the proof for the case $a=0$ in \cite[Theorem 4.3]{CKW}.
We skip the details.
\qed.

For the remainder of this section, 
we use the convention that $\delta_{D}(\cdot) \equiv \infty$ when $D
=\bR^d$.
For the next two results,
$D$ is an arbitrary nonempty open set.

\begin{prop}\label{step1}
Suppose that $\phi$ has no drift and  satisfies   $ L_a(\gamma, C_L)$ for some $a\ge 0$. 
For every $T>0$ and $b>0$,  
  there exists
 $c=c(a, T,b,  \phi)>0$ such that
$$
p_D(t,x,y) \,\ge\,c\, (\Phi^{-1}(t))^{-d}
$$
for every $(t, x, y)\in (0, a^{-1}T)\times D\times D$ with
$\delta_D (x) \wedge \delta_D (y)
 \ge b \Phi^{-1}(t) \geq 4 |x-y|$.

 \end{prop}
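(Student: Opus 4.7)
\medskip\noindent\textbf{Proof plan.} The geometry of the hypotheses is as follows: since $\delta_D(x)\wedge \delta_D(y)\ge b\Phi^{-1}(t)\ge 4|x-y|$, the entire segment $\overline{xy}$ stays at distance at least $(7/8)b\Phi^{-1}(t)$ from $\partial D$, and in particular $B(x,b\Phi^{-1}(t))\subset D$, so that $p_D(t,x,y)\ge p_{B(x,b\Phi^{-1}(t))}(t,x,y)$. A direct application of Proposition~\ref{P:3.5-550} to this ball would require the two conditions $t\le \Phi(\eps b\Phi^{-1}(t))$ (forcing $\eps b\ge 1$) and $|x-y|\le \eps \Phi^{-1}(t)$ (forcing $b\le 4\eps$) simultaneously, which are incompatible since $\eps<1/2$. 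Thus a single-shot estimate will not suffice.

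The plan is therefore to combine Proposition~\ref{P:3.5-550} (an interior lower bound in a ball) with the parabolic Harnack inequality, Theorem~\ref{t:PHI}. First, I would apply Proposition~\ref{P:3.5-550} at the smaller time $s_0:=\Phi(\eps b\Phi^{-1}(t))$ inside $B(x, b\Phi^{-1}(t))\subset D$, which yields
\[
  p_D(s_0, x, z)\,\ge\, p_{B(x,b\Phi^{-1}(t))}(s_0,x,z) \,\ge\, c_0\,(\Phi^{-1}(s_0))^{-d}
\]
for all $z$ in a small neighborhood of $x$. Next, observe that $u(s,z):=p_D(s,x,z)$ is caloric for the time-space process of $X$ on $(0,\infty)\times D$, so Theorem~\ref{t:PHI} transports pointwise lower bounds forward in time. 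Applied iteratively on space-time cylinders of spatial radius comparable to $b\Phi^{-1}(t)$---which sit inside $D$ thanks to the geometric bound on $\delta_D$ above---and centered at points close to the segment $\overline{xy}$, each PHI step advances time by a factor bounded below by a constant depending only on $b$ (via the scaling of $\Phi$ in Lemma~\ref{lem:bf}). A finite number of iterations $N=N(b)$ thus transports the lower bound from $(s_0,z)$ to $(t,y)$.

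The main obstacle is the bookkeeping of this chain of PHI cylinders: at each step one must ensure that the cylinder lies entirely inside $D$, that consecutive cylinders' $Q_+$ and $Q_-$ regions overlap so the lower bound can be passed along, and that the final cylinder captures $(t,y)\in Q_+$. Since $b$ is fixed, $N(b)$ is a fixed constant, and the scaling of $\Phi$ (Lemma~\ref{lem:bf}) guarantees that every intermediate lower bound of the form $(\Phi^{-1}(s_i))^{-d}$ appearing in the iteration is comparable to $(\Phi^{-1}(t))^{-d}$, producing the stated estimate $p_D(t,x,y)\ge c\,(\Phi^{-1}(t))^{-d}$ with $c=c(a,T,b,\phi)>0$. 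The constraint $t\in(0, a^{-1}T)$ enters only through the quantitative constants in Proposition~\ref{P:3.5-550} and Theorem~\ref{t:PHI}, which both depend on the localization parameter $M$ when $a>0$.
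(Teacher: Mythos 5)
Your plan is sound and would yield the stated bound, but it is not the route the paper takes, so a comparison is in order. You start from the near-diagonal lower bound of Proposition~\ref{P:3.5-550} at the early time $s_0=\Phi(\eps b\Phi^{-1}(t))$ and then run a finite Harnack chain (Theorem~\ref{t:PHI}) forward in time and along the segment from $x$ to $y$; this works because the number of cylinders depends only on $b$ and the PHI constants, and Lemma~\ref{lem:bf}(a) keeps all the intermediate factors $(\Phi^{-1}(s_i))^{-d}$ comparable to $(\Phi^{-1}(t))^{-d}$. The paper instead makes a single Harnack comparison in the ball $B(y,b\Phi^{-1}(t))\subset D$ (note $B(x,b\Phi^{-1}(t)/4)\subset B(y,b\Phi^{-1}(t)/2)$), obtaining $p_D(t/2,x,w)\le c_1\,p_D(t,x,y)$ for all $w\in B(x,b\Phi^{-1}(t)/4)$, and then \emph{integrates} the left-hand side over that ball: since $\int_{B}p_{B}(t/2,x,w)\,dw=\P_x(\tau_{B}>t/2)$, Lemma~\ref{L:3.3} gives a uniform positive lower bound for the integral, and dividing by $|B(x,b\Phi^{-1}(t)/4)|\asymp(\Phi^{-1}(t))^d$ finishes the proof. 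The paper's device buys two things: it avoids invoking Proposition~\ref{P:3.5-550} at the starting time (only the survival-probability estimate is needed), and it compresses essentially all of your chaining bookkeeping into the single constant $c_1(b,T)$ coming from Theorem~\ref{t:PHI}. Your version costs more bookkeeping (fitting each cylinder inside $D$, overlapping $Q_+$ with the next $Q_-$, and ensuring $t_0\ge 0$ forces you to take cylinder radii a fixed fraction of $\Phi^{-1}(t)$, hence possibly more steps), but it is a legitimate and more self-contained alternative; just make sure to record that the radius $r=b\Phi^{-1}(t)$ stays below the threshold $a^{-1}M$ required by Proposition~\ref{P:3.5-550}, which is where the restriction $t<a^{-1}T$ enters.
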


\pf
Using Theorem \ref{t:PHI},  the proof for the case that $\phi$ satisfies   $ L_0(\gamma, C_L)$ is identical to that of \cite[Proposition 3.4]{CK}. Even through the proof is similar, 
for reader's convenience  we provide the proof for the case that
 $\phi$ satisfies   $ L_a(\gamma, C_L)$
 for $a> 0$. 

Without loss of generality we assume $a=1$. 
We fix $b, T>0$ and 
$(t, x, y)\in (0, T)\times D\times D$ satisfying
$\delta_D(x)\wedge \delta_D(y)\geq b \Phi^{-1} (t) \geq 4 |x-y|$.
Since 
$|x-y| \le b\Phi^{-1}(t)/4$, we have
\begin{align}
\label{e:BBB}
B(x, b\Phi^{-1}(t)/4) \subset B(y, b\Phi^{-1}(t)/2)\subset
 B(y, b\Phi^{-1}(t))  \subset D.
\end{align}
Thus, 
by the symmetry of $p_D$,  Theorem \ref{t:PHI} and Lemma \ref{lem:bf}(a),
 there exists $c_1=c_1(b, T)>0$ such that
$$
p_{B(x, b\Phi^{-1}(t)/4)} (t/2, x, w) \, \le  \, p_D(t/2, x, w) \, \le  \,  c_1 p_D(t,x,y) \quad \mbox{for every
} w \in B(x, b\Phi^{-1}(t)/4)  .
$$
This together with Lemma \ref{L:3.3}
yields that there exist $c_2, c_3>0$ such that \begin{eqnarray*}
p_D(t, x, y) &\geq & \frac{c_1^{-1}}{ | B(x, b\Phi^{-1}(t)/4)|}
 \, \int_{B(x, b \Phi^{-1}(t)/4)}
p_{B(x, b\Phi^{-1}(t)/4)} (t/2, x, w)dw \\
&=& c_2 (\Phi^{-1}(t))^{-d} \, \P_x \left( \tau_{B(x, b\Phi^{-1}(t)/4)} >
t/2\right) \,\geq \, c_3 \, (\Phi^{-1}(t))^{-d}.
\end{eqnarray*}
\qed

\begin{prop}\label{step3}
Suppose that $\phi$ has no drift and  satisfies   $ L_a(\gamma, C_L)$ for some $a\ge 0$. 
For every $b, T>0$,
there exists a constant $c=c(a, b, T)>0$ such that 
 \bee\label{e:lb11}p_D(t, x, y)\,\ge \,c\, t\, 
 j(|x-y|)
 \eee
 for every $(t, x, y)\in (0, a^{-1}T) \times D\times D$ with $\delta_D(x)\wedge \delta_D (y) \ge b \Phi^{-1}(t)$ and
$b \Phi^{-1}(t) \leq 4|x-y|$.
\end{prop}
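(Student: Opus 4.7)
My plan is to execute the classical ``stay in $B_x$, jump once, end up near $y$'' construction, which combines three ingredients already available in this section: the interior diagonal lower bound of Proposition~\ref{step1}, the L\'evy system formula \eqref{e:levy}, and the exit-time lower bound of Lemma~\ref{L:3.3}. Fix $b, T>0$ and $(t,x,y)$ as in the hypothesis. Choose a small parameter $\rho=\rho(b,\phi)\in(0,b/8)$ and set $B_x:=B(x,\rho\Phi^{-1}(t))$, $B_y:=B(y,\rho\Phi^{-1}(t))$. The hypotheses $\delta_D(x)\wedge\delta_D(y)\ge b\Phi^{-1}(t)$ and $b\Phi^{-1}(t)\le 4|x-y|$, together with $\rho<b/8$, then give $B_x,B_y\subset D$ and $B_x\cap B_y=\emptyset$.

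By the strong Markov property at $\tau_{B_x}$,
\[
 p_D(t,x,y)\ \ge\ \E_x\bigl[\,p_D(t-\tau_{B_x},X_{\tau_{B_x}},y);\ \tau_{B_x}\le t/2,\ X_{\tau_{B_x}}\in B(y,\rho\Phi^{-1}(t)/2)\,\bigr].
\]
On the indicated event $t-\tau_{B_x}\in[t/2,t]$ and $X_{\tau_{B_x}}$ lies well inside $D$ within distance $\rho\Phi^{-1}(t)/2$ of $y$; using $\Phi^{-1}(t/2)\asymp\Phi^{-1}(t)$ from Lemma~\ref{lem:bf}(b) and shrinking $\rho$ if necessary, Proposition~\ref{step1} applied with a slightly smaller constant $b'$ yields $p_D(t-\tau_{B_x},X_{\tau_{B_x}},y)\ge c_1(\Phi^{-1}(t))^{-d}$ throughout the event. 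The probability of the event is handled via the L\'evy system: since $B_x$ and $B(y,\rho\Phi^{-1}(t)/2)$ are disjoint, any jump during $[0,\tau_{B_x}\wedge t/2]$ from $B_x$ into $B(y,\rho\Phi^{-1}(t)/2)$ must be the exit jump, so
\[
 \P_x\bigl(\tau_{B_x}\le t/2,\,X_{\tau_{B_x}}\in B(y,\rho\Phi^{-1}(t)/2)\bigr)\ =\ \E_x\!\left[\int_0^{\tau_{B_x}\wedge t/2}\!\int_{B(y,\rho\Phi^{-1}(t)/2)} j(|X_s-w|)\,dw\,ds\right].
\]
For $X_s\in B_x$ and $w\in B(y,\rho\Phi^{-1}(t)/2)$ we have $|X_s-w|\le |x-y|+(3\rho/2)\Phi^{-1}(t)\le(1+6\rho/b)|x-y|$. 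Lemma~\ref{L:3.3} applied at radius $\rho\Phi^{-1}(t)$ gives $\P_x(\tau_{B_x}>t/2)\ge c_3$, hence $\E_x[\tau_{B_x}\wedge t/2]\ge c_3 t/2$. Combining with $|B(y,\rho\Phi^{-1}(t)/2)|\asymp\Phi^{-1}(t)^d$ and the interior lower bound $c_1\Phi^{-1}(t)^{-d}$, the product telescopes to $p_D(t,x,y)\ge c\,t\,j(|x-y|)$ provided one can pass from $j(|X_s-w|)$ to a constant multiple of $j(|x-y|)$.

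The main obstacle is precisely this last passage: monotonicity of $j$ alone only yields $j(|X_s-w|)\ge j((1+6\rho/b)|x-y|)$, and a uniform local doubling $j((1+\eps)r)\ge c_\eps\,j(r)$ for $r\ge b\Phi^{-1}(t)/4$ with $t<a^{-1}T$ is required. This doubling is not formal from monotonicity, but it is a consequence of the mixture-of-Gaussians representation \eqref{eq:jump} combined with the lower scaling $L_a(\gamma,C_L)$ of $\phi$, which together control the relevant portion of the L\'evy measure $\mu$ enough to give uniform bounds on ratios $j((1+\eps)r)/j(r)$ on the range in question; this is the same circle of ideas that underlies the L\'evy-density bound \eqref{e:t:up1j} of Theorem~\ref{tm:main}. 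Once this local doubling is in hand, the construction above assembles routinely into the claimed bound $p_D(t,x,y)\ge c\,t\,j(|x-y|)$.
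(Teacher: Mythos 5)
Your overall architecture (confine the process near $x$, make a single jump into a small ball around $y$, and use the interior diagonal bound of Proposition~\ref{step1} to finish) is the same as the paper's, but the step you yourself flag as ``the main obstacle'' is a genuine gap, and the way you propose to close it does not work. The local doubling $j((1+\eps)r)\ge c_\eps\,j(r)$, uniformly over the range $r\ge b\Phi^{-1}(t)/4$, is \emph{not} a consequence of $L_a(\gamma,C_L)$ for $\phi$ together with the subordination formula \eqref{eq:jump}. The lower scaling of $\phi$ near infinity constrains only the small-$r$ behaviour of $j$; the large-$r$ behaviour is governed by the tail of $\mu$, which is unconstrained here. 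For instance, if $\mu(dt)=t^{-1-\gamma}{\bf 1}_{(0,1]}(t)\,dt$, then $\phi$ satisfies the required lower scaling, yet \eqref{eq:jump} gives $j(r)\asymp e^{-r^2/4}\cdot(\text{polynomial corrections})$ for large $r$, so $j((1+\eps)r)/j(r)\to 0$ as $r\to\infty$ for every fixed $\eps>0$. Since $|x-y|$ can be arbitrarily large relative to $\Phi^{-1}(t)$ in the hypotheses (e.g.\ $D=\R^d$), your argument cannot be completed as written; note also that the paper's standing hypothesis \eqref{e:jdouble} (which is a weaker, additive relative of what you need and is still an extra \emph{assumption}) is deliberately not assumed in this proposition.

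The correct replacement, and what the paper does, is to never compare $j$ at two multiplicatively comparable distances pointwise. Because $j$ is radially non-increasing, the ({\bf UJS}) inequality holds for free: for any $v$ and $r\le|v|/2$, at least half of the ball $B(v,r)$ lies at distance $\le |v|$ from the origin, so $\int_{B(v,r)}j(|w|)\,dw\ge \tfrac12|B(0,r)|\,j(|v|)$. One applies this twice: first, integrating the landing point $u$ over $B(y,r_1)$ turns $\int_{B(y,r_1)}j(|X_s-u|)\,du$ into $c\,r_1^d\,j(|X_s-y|)$ with no loss; second, restricting $s$ to $[t/2,t]$ and using Proposition~\ref{step1} to bound $p_{B(x,\cdot)}(s,x,w)\ge c\,\Phi^{-1}(t)^{-d}$ on a small ball around $x$, one integrates the position $w$ of $X_s$ and applies ({\bf UJS}) again to recover exactly $j(|x-y|)$. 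This averaging mechanism uses only monotonicity of $j$, Lemma~\ref{L:3.3}, the L\'evy system \eqref{e:levy}, and Proposition~\ref{step1}, and is what makes the bound $p_D(t,x,y)\ge c\,t\,j(|x-y|)$ valid even when $j$ has Gaussian-type tails. With that substitution your construction goes through; without it, the proof is incomplete.
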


\pf 
Again, using Proposition \ref{step1},  the proof for the case that $\phi$ satisfies   $ L_0(\gamma, C_L)$ is the same as that of \cite[Proposition 3.5]{CK}, and  for reader's convenience  we provide the proof for the case that
 $\phi$ satisfies   $ L_a(\gamma, C_L)$
 for $a> 0$.

Without loss of generality we assume $a=1$.
Throughout the proof we assume that $t \in  (0, T)$. 
By Lemma~\ref{L:3.3}, starting at $z\in B(y, \,  (12)^{-1} b \Phi^{-1}(t))$, with probability at least $c_1=c_1(b, T)>0$ the process $X$ does not move more than $ (18)^{-1} b \Phi^{-1}(t) $ by time $t$.
Thus, using the strong Markov property and the L\'evy system in \eqref{e:levy}, we obtain
\begin{align}
&\P_x \left( X^D_t \in B \big( y, \,  6^{-1} b \Phi^{-1}(t) \big) \right)\nn\\
&\ge  c_1\P_x(X_{t\wedge \tau_{B(x,  (18)^{-1} b \Phi^{-1}(t))}}^D\in B(y,\,  (12)^{-1}b \Phi^{-1}(t))\hbox{ and }t \wedge \tau_{B(x,  (18)^{-1}b  \Phi^{-1}(t))} \hbox{ is a jumping time })\nonumber \\
&= c_1 \E_x \left[\int_0^{t\wedge \tau_{B(x,  (18)^{-1}b \Phi^{-1}(t))}} \int_{B(y, \,  (12)^{-1}b \Phi^{-1}(t))}
j(|X_s- u|) duds \right]. \label{e:nv1}
\end{align}
Using the ({\bf UJS}) property of $j$ (see  \cite[page 1070]{CKK2}), we obtain
\begin{eqnarray}
 && \E_x \left[\int_0^{t\wedge \tau_{B(x,  (18)^{-1}b \Phi^{-1}(t))}} \int_{B(y, \,  (12)^{-1}b \Phi^{-1}(t))}
j(|X_s-u|) duds \right] \nn\\
 &= & \E_x \left[\int_0^{t} \int_{B(y, \,  (12)^{-1}b \Phi^{-1}(t))}
j(|X^{B(x,  (18)^{-1}b \Phi^{-1}(t))}_s-u|) duds \right] \nn\\
 &\ge & c_2\Phi^{-1}(t)^d \int_0^{t} \E_x \left[
j(|X^{B(x,  (18)^{-1}b \Phi^{-1}(t))}_s-y|) \right]ds \nn\\
& \ge & c_2\Phi^{-1}(t)^d \int_{t/2}^{t} \int_{B(x,  (72)^{-1}b \Phi^{-1}(t/2))}
j(|w-y|) p_{B(x,  (18)^{-1}b \Phi^{-1}(t))}(s, x,w)     dw ds.
   \label{e:nv2}
\end{eqnarray}
Since, for $t/2 <s <t$ and $w\in B(x,  (72)^{-1}b \Phi^{-1}(t/2))$,
 $$ \delta_{B(x,  (18)^{-1}b \Phi^{-1}(t))} (w)
 \ge (18)^{-1}b \Phi^{-1}(t)-  (72)^{-1}b \Phi^{-1}(t/2)\ge 2^{-1}(18)^{-1}b \Phi^{-1}(s)
 $$
and
 $$|x-w| <  (72)^{-1}b \Phi^{-1}(t/2) \le 4^{-1}  (18)^{-1}b \Phi^{-1}(s),   $$
we have by Proposition \ref{step1} that for $t/2 <s <t$ and $w\in B(x,  (72)^{-1}b \Phi^{-1} (t/2))$,
\begin{eqnarray}
p_{B(x,  (18)^{-1}b \Phi^{-1}(t))}(s, x,w)  \ge c_3\, (\Phi^{-1}(s))^{-d}  \ge c_3\, (\Phi^{-1}(t))^{-d}.
   \label{e:nv4}
\end{eqnarray}

Combining \eqref{e:nv1}, \eqref{e:nv2} with \eqref{e:nv4} and applying ({\bf UJS}) again, we get
\begin{align}
\P_x \left( X^D_t \in B \big( y, \,  6^{-1} b \Phi^{-1}(t) \big) \right)
 \ge & c_4 t  \int_{B(x,  (72)^{-1}b \Phi^{-1}(t/2))}
j(|w-y|)    dw
\nn\\
 \ge & c_5 t  (\Phi^{-1}(t/2))^{d}
j(|x-y|)   \ge  c_6 t  (\Phi^{-1}(t))^{d}
j(|x-y|).
   \label{e:nv6}
\end{align}
In the last inequality we have used Lemma \ref{lem:bf}(a).
Since by the semigroup property of $p_D$ and Proposition~\ref{step1},
\begin{eqnarray*}
p_D(t, x, y) &=& \int_{D} p_{D}(t/2, x, z)
p_{D}(t/2, z, y)dz\\
&\ge& \int_{B(y, \, b \Phi^{-1}(t/2)/ 6)}
p_{D}(t/2, x, z) p_{D}(t/2, z, y) dz\\
&\ge& c_7 (\Phi^{-1}(t/2))^{-d} \,
\bP_x \left( X^{D}_{t/2} \in B(y, 6^{-1}b
\Phi^{-1}(t/2)) \right), 
\end{eqnarray*}\
the proposition now follows 
 from this and  \eqref{e:nv6}. \qed

Recall that $B=(B_t:\, t\ge 0)$ is a Brownian motion in
${\mathbb R}^d$ and $S=(S_t:\, t\ge 0)$  a subordinator  independent of $B$.
Suppose that $U$ is an open subset of $\bR^d$.
We denote by $B^U$ the part process of $B$ killed upon leaving $U$.
The process $\{Z^U_t: t\ge 0\}$ defined by $Z^U_t=B^U_{{S}_t}$
is called a subordinate killed Brownian motion in $U$.
Let $q_U(t, x, y)$ be the transition density of $Z^U$.
Clearly,  $Z^U_t =B_{{S}_t}$ for every $t\in [0, \zeta)$ where $\zeta$ is the lifetime of $Z^U$. 
Therefore we have
\begin{equation}\label{e:pq}
p_{U}(t, z, w)\ge q_{U}(t, z, w) \quad \hbox{for } (t, z, w)\in (0,
\infty)\times U\times U.
\end{equation}

For a $C^{1,1}$ open set $D$ in $\bR^d$
with characteristics $(R_0, \Lambda)$, consider a $z \in \partial D$ and a $C^{1,1}$-function $\varphi=\varphi_z: \R^{d-1}\to \R$ satisfying $\varphi(0)=0$, $\nabla\varphi (0)=(0, \dots, 0)$, $\| \nabla\varphi \|_\infty \leq \Lambda$, $| \nabla \varphi(x)-\nabla \varphi(w)| \leq \Lambda |x-w|$ and an orthonormal coordinate system $CS_z$ of  $z=(z_1, \cdots, z_{d-1}, z_d):=(\wt z, \, z_d)$ with origin at $z$ such that $ D\cap B(z, R_0 )= \{y=({\tilde y}, y_d) \in B(0, R_0) \mbox{ in } CS_z: y_d > \varphi (\wt y) \}$. 
 Define 
 \begin{equation}\label{e:d_com}
\rho_z (x) := x_d -  \varphi (\wt x) \text{ and } D_z( r_1, r_2) :=\left\{ y\in D: r_1 >\rho_z(y) >0,\, |\wt y | < r_2
\right\}, \quad r_1, r_2>0, 
\end{equation}
where $(\wt x, x_d)$ are the coordinates of $x$ in $CS_z$. 
We also define
\begin{align}\label{e:kappa1}
\kappa=\kappa(\Lambda):=(1+(1+\Lambda)^2)^{-1/2}.
\end{align}
It is easy to see that for every $z \in\partial D$ and $r \le \kappa R_0$,
\begin{align}
\label{e:Uzr}
 D_z( r, r) \subset D\cap B(z,  r /\kappa).
\end{align}

It is well known (see, for instance \cite[Lemma
2.2]{So}) that there exists $L_0=L_0(R_0, \Lambda, d)>0$ such that for
every $z \in\partial D$ and $r \le \kappa R_0$, one can find a $C^{1,1}$
domain $V_z(r)$ with characteristics $(rR_0/L_0, \Lambda L_0/r)$
such that $D_z( 3r/2, r/2) \subset V_z(r)  \subset  D_z( 2r, r) $. 
In this paper, given a $C^{1, 1}$ open set $D$, 
$V_z(r)$ always refers to the $C^{1, 1}$ domain above.

\begin{prop}\label{prop:on-D-low}
Suppose that $\phi$ has no drift and  satisfies   $ L_a(\gamma, C_L)$ for some $a\ge 0$. 

  \noindent
  (a)
We assume that $D$ is a connected $C^{1,1}$ open set in $\bR^d$
with characteristics $(R_0, \Lambda)$ such that
the path distance of $D$
is comparable to the Euclidean distance with characteristic $\lambda_1$.
For any $T>0$, there exist positive constants
$c_1$ and $c_2$ depending on  $R_0, \Lambda, \lambda_1, T, \phi, \gamma, C_L, a, b$ such that 
 for every $(t, x, y)\in (0, T]\times D\times D$,
\begin{align}
\label{e:Glower}
p_D(t, x,y)\geq c_1  \left(1\wedge \frac{\delta_D(x)}{    \Phi^{-1} (t)}\right)
\left(1\wedge \frac{\delta_D(
y)}{  \Phi^{-1} (t)}\right)  \Phi^{-1}(t)^{-d}\exp \left( -\frac{c_{2}|x-y|^2} {\Phi^{-1}(t)^2} \right).
\end{align}

Moreover, 
 there exist $c_3, c_4>0$ such that for all $z \in \partial D$, $r \le \kappa R$ and $(t, x, y)\in (0,  \Phi(r)]\times {V_z(r)}\times {V_z(r)},$
\begin{align}
\label{e:Glowernew}
p_{V_z(r)}(t, x,y)\geq c_3  \left(1\wedge \frac{\delta_{V_z(r)}(x)}{    \Phi^{-1} (t)}\right)
\left(1\wedge \frac{\delta_{V_z(r)}(
y)}{  \Phi^{-1} (t)}\right)  \Phi^{-1}(t)^{-d}\exp \left( -\frac{c_{4}|x-y|^2} {\Phi^{-1}(t)^2} \right).
\end{align}

  \noindent
(b)  Furthermore, if $\phi$ satisfies   $ L_0(\gamma, C_L)$ and $D$ is a domain consisting of all the points above the graph of a 
bounded globally $C^{1,1}$ function, then \eqref{e:Glower} holds 
   for every $(t, x, y)\in (0, \infty)\times D\times D$.
\end{prop}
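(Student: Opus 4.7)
The backbone of the argument is the subordinate killed Brownian motion bound
\[
p_D(t,x,y)\ge q_D(t,x,y)=\int_0^\infty p^{BM}_D(s,x,y)\,\P(S_t\in ds),
\]
combined with Proposition \ref{prop:sub-low}, which with probability at least $\tau$ puts $S_t$ inside the interval $[(2\phi^{-1}(t^{-1}))^{-1},\,\phi^{-1}(\rho t^{-1})^{-1}]$. By Lemma \ref{lem:inverse} both endpoints of this interval are comparable to $\Phi^{-1}(t)^2$. On this window of $s$ the classical Brownian Dirichlet heat kernel in a $C^{1,1}$ region is uniformly comparable, up to universal constants, to the right-hand side we are trying to prove, so $p_D(t,x,y)$ inherits both the product boundary factors $(1\wedge\delta_D(x)/\sqrt{s})(1\wedge\delta_D(y)/\sqrt{s})$ and the Gaussian factor $e^{-c|x-y|^2/s}$ evaluated at $s$ of order $\Phi^{-1}(t)^2$.

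First I would prove \eqref{e:Glowernew} for $V_z(r)$. Since $V_z(r)$ is a bounded $C^{1,1}$ domain with characteristics $(rR_0/L_0,\Lambda L_0/r)$, rescaling by $1/r$ reduces it to a uniformly $C^{1,1}$ domain, so the standard two-sided Brownian Dirichlet heat kernel estimate applies with constants independent of $r$ as long as $s\le c r^2$. The hypothesis $t\le\Phi(r)$ is exactly what ensures that the interval produced by Proposition \ref{prop:sub-low} lies in this admissible range. Writing $q_{V_z(r)}(t,x,y)$ as an integral against $\P(S_t\in ds)$, restricting to that interval, and bounding the integrand below on a set of probability $\ge\tau$ yields \eqref{e:Glowernew}.

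Next I would prove \eqref{e:Glower} for $D$. The fastest route is identical in spirit: the hypothesis that the path distance in $D$ is comparable to the Euclidean distance is precisely the condition known to guarantee the two-sided Gaussian Dirichlet heat kernel estimate for Brownian motion in $D$ on any bounded time-range, and subordinating this estimate with the same window for $S_t$ gives \eqref{e:Glower} for $t\le T$. If one prefers to avoid a global Brownian Dirichlet input, the alternative is a chaining argument: apply \eqref{e:Glowernew} once near $x$ and once near $y$ in local windows $V_{z_x}(\cdot),V_{z_y}(\cdot)$ at scale $\Phi^{-1}(t)\wedge\kappa R_0$ to extract the two boundary factors and to transport $x,y$ to interior points $x_0,y_0$ with $\delta_D(x_0)\wedge\delta_D(y_0)\gtrsim\Phi^{-1}(t)$, then connect $x_0$ to $y_0$ along a curve in $D$ of length at most $\lambda_1|x-y|$, partitioned into $N\asymp 1\vee(|x-y|/\Phi^{-1}(t))^2$ segments, using the semigroup property and Proposition \ref{step1} on each segment; the product of the $N$ resulting factors, combined with the volumes of intermediate balls, produces the claimed Gaussian decay.

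For part (b) the global scaling $L_0(\gamma,C_L)$ removes the restriction $t\le T$ throughout: Propositions \ref{step1} and \ref{step3} become valid for all $t>0$, and a domain lying above a bounded globally $C^{1,1}$ graph automatically has path distance comparable to Euclidean distance, so the argument of part (a) extends verbatim to $t\in(0,\infty)$. The main technical obstacle I expect is precisely the Gaussian exponent $\exp(-c_2|x-y|^2/\Phi^{-1}(t)^2)$: unlike the pure jump tail $tH(|x-y|^{-2})/|x-y|^d$ it reflects the subordination of Brownian motion rather than the Lévy measure, so either the $q_D$ detour through the Brownian Dirichlet heat kernel or a carefully balanced chaining of the interior lower bound is indispensable; pure jump arguments will not recover it.
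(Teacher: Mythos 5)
Your proposal is correct and follows essentially the same route as the paper: bounding $p_D$ below by the subordinate killed Brownian motion $q_D$, restricting the subordinator to the window $[\,2^{-1}\phi^{-1}(t^{-1})^{-1},\phi^{-1}(\rho t^{-1})^{-1}]$ from Proposition \ref{prop:sub-low} (comparable to $\Phi^{-1}(t)^2$ by Lemma \ref{lem:inverse}), and invoking the two-sided Gaussian Dirichlet heat kernel estimate for Brownian motion in $D$ (valid under the path-distance comparability hypothesis) and, for $V_z(r)$, the corresponding estimate with constants uniform in $r$ for $s\le r^2$. The paper cites these Brownian inputs directly rather than rescaling or chaining, but the substance is identical, including the treatment of part (b) via the global-in-time Brownian estimate for graph domains.
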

\proof 
(a) 
Le $\rho\in (0,1)$ be the constant in  Proposition \ref{prop:sub-low}. Without loss of generality we assume $T \ge \rho\phi(a)^{-1}$. 
Suppose that $x$ and $y$ are in $D$. Let $\widetilde{p}_D(t, z, w)$ be the transition density of
$B^D$.
By
\cite[Theorem 3.3]{Ch} (see also \cite[Theorem 1.2]{Zq3} where the comparability condition on the path distance in $D$ with the Euclidean distance is used),
there exist positive constants
$c_1=c_1(R_0, \Lambda, \lambda_0, T, \phi, \rho)$ and $c_2=c_2(R_0, \Lambda, \lambda_0)$
 such that for any $(s, z, w)\in (0,  \phi^{-1}(\rho T^{-1})^{-1}]
\times D\times
D$,
\bee \label{e:2.41}
\widetilde{p}_D(s, z, w)\ge c_1\left(1\wedge \frac{\delta_D(z)}{\sqrt{s}}
 \right)\left(1\wedge \frac{\delta_D(w)}{\sqrt{s}}
\right)s^{-d/2}e^{-c_2|z-w|^2/s}.
\eee

Recall that $q_D (t,x,y)$ is of the form
$$q_D (t,x,y) =\int_{(0,\infty)}\widetilde{p}_D(s, x, y)\P(S_t\in ds).$$
 Using this and \eqref{e:2.41} we get
\begin{align}
&p_D(t,x, y) \ge q_D (t,x,y) \nn\\
& \ge   \int_{\left[2^{-1}\phi^{-1}(t^{-1})^{-1},\phi^{-1}( \rho t^{-1})^{-1}\right]} 
\widetilde{p}_D(s, x, y)\P(S_t\in ds)\nonumber \\
    &\geq c_1
     \int_{\left[2^{-1}\phi^{-1}(t^{-1})^{-1},\phi^{-1}( \rho t^{-1})^{-1}\right]}\left(1\wedge \frac{\delta_D(x)}{\sqrt{s}}\right)
\left(1\wedge \frac{\delta_D(
y)}{\sqrt{s}}\right)s^{-d/2}e^{-c_2\frac{|x-y|^2}{s}}\P(S_t\in ds)\nonumber \\
  &\geq c_1 \left(1\wedge \frac{\delta_D(x)}{\sqrt{\phi^{-1}( \rho t^{-1})^{-1}}}\right)
\left(1\wedge \frac{\delta_D(
y)}{\sqrt{\phi^{-1}( \rho t^{-1})^{-1}}}\right)     \phi^{-1}(\rho t^{-1})^{d/2}e^{-2c_2 |x-y|^2\phi^{-1}(t^{-1})} \nn\\
& \qquad \times \P\left(2^{-1}\phi^{-1}(t^{-1})^{-1}\leq S_t\leq\phi^{-1}(\rho t^{-1})^{-1}\right).
\label{eq:tmp_20341}
\end{align}
Now, using 
\eqref{e:prg1} and Proposition \ref{prop:sub-low}, we conclude from \eqref{eq:tmp_20341} that 
\begin{align*}
  & p_D(t,x,y) \nn\\
 \geq   &
    c_3 \left(1\wedge \frac{\delta_D(x)}{\sqrt{\phi^{-1}(  t^{-1})^{-1}}}\right)
\left(1\wedge \frac{\delta_D(
y)}{\sqrt{\phi^{-1}(  t^{-1})^{-1}}}\right)  \phi^{-1}(t^{-1})^{d/2}  e^{-2c_2 |x-y|^{2}\phi^{-1}(t^{-1})}\, \nn\\
=  &c_3
    \left(1\wedge \frac{\delta_D(x)}{    \Phi^{-1} (t)}\right)
\left(1\wedge \frac{\delta_D(
y)}{  \Phi^{-1} (t)}\right)  \Phi^{-1}(t)^{-d}  \exp \left( -2c_{2}\frac{|x-y|^2} {\Phi^{-1}(t)^2} \right).
\end{align*}
We have proved \eqref{e:Glower}.

Using \cite[(4.4)]{KSV17}, we have that there exist $c_4, c_5>0$ such that   for any $s\in (0,  r^2]$ and any $z,w\in V_z(r)$,
\bee \label{e:2.41n}
\widetilde{p}_{V_z(r)}(s, z, w)\ge c_4\left(1\wedge \frac{\delta_{V_z(r)}(z)}{\sqrt{s}}
 \right)\left(1\wedge \frac{\delta_{V_z(r)}(w)}{\sqrt{s}}
\right)s^{-d/2}e^{-c_5|z-w|^2/s}.
\eee
Since  $t \le \Phi(r)$ if and only if $\phi^{-1}(t^{-1})^{-1}\le r^2$, we can repeat the proof of \eqref{e:Glower} and see that \eqref{e:Glowernew} holds true.

\noindent
(b) 
Suppose that $D$ is a domain consisting of all the points above the graph of a 
bounded globally $C^{1,1}$ function. Then  by \cite{So}, \eqref{e:2.41} holds for 
all $(s, z, w)\in (0, \infty)
\times D\times
D$. Using this fact and the assumption $ L_0(\gamma, C_L)$, one can follow the arguments in (a) line by line and prove (b). 
We skip the details.
\qed

\medskip

\section{Key estimates
}\label{s:KBHP}

In this section we prove key estimates on exit distribution for $X$ 
in $C^{1,1}$ open set with explicit decay rate.

Recall that $H(\lambda)=\phi(\lambda)-\lambda\phi'(\lambda)$, which is non-negative and non-decreasing on $(0, \infty)$.
We remark  here that $H$ loses the information on the drift of $\phi$. 

Throughout this section we assume that $H$ satisfies $ L_a(\gamma, C_L)$ and $ U_a(\delta, C_U)$ for some $a>0$ with $\delta<2$  and   the drift of the subordinator is zero. 
\begin{prop}\label{prop:off-lower2} 
For every $M>0$ there exists $c=c( a, M)>0$ such that for all $t >0$ and $x \in B(0, M)$  satisfying  $t\phi(|x|^{-2})\leq 1$ we have
 \[
  p(t,x)\geq c t|x|^{-d}H(|x|^{-2})\,.
  \]
  Thus, for all $x \in B(0, M)$,
  \begin{align}
  \label{e:jlow}
    j(|x|) \geq c |x|^{-d}H(|x|^{-2})\,.
  \end{align}

\end{prop}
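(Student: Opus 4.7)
The proof splits naturally into two pieces: a lower bound on the L\'evy density $j$, and an upgrade of that bound to a lower bound on $p$ via a L\'evy-system argument.

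\textbf{Step 1 (Lower bound on $j$).} I would start from the integral representation
\[
j(r) = \int_0^\infty (4\pi s)^{-d/2} e^{-r^2/(4s)}\,\mu(ds)
\]
and restrict the integration to an annulus $s \in [r^2, A r^2]$ for a constant $A > 1$ to be chosen. On this range $(4\pi s)^{-d/2} e^{-r^2/(4s)} \ge c_0(A)\, r^{-d}$, giving $j(r) \ge c_0(A)\, r^{-d}\, \mu([r^2, Ar^2])$. The task then reduces to showing $\mu([r^2, Ar^2]) \gtrsim H(r^{-2})$ uniformly for $r \le M$. The tail estimate $\mu((s,\infty)) \asymp H(s^{-1})$ for $s$ small enough (cited from \cite[Lemma 2.6]{M} in Remark~\ref{r:UL0}) already provides the right order of magnitude for $\mu([r^2,\infty))$, so what remains is to show that $\mu([Ar^2,\infty))$ is genuinely smaller. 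Using the lower scaling $L_a(\gamma, C_L)$, I would choose $A$ large enough that $H(A^{-1} r^{-2}) \le \tfrac{1}{2} H(r^{-2})$ (possible because $\gamma > 0$); by the tail estimate this forces $\mu([Ar^2,\infty)) \le \tfrac{1}{2}\mu([r^2,\infty))$ (for $r$ in the appropriate range depending on $a$ and $M$), hence $\mu([r^2, Ar^2]) \ge \tfrac{1}{2}\mu([r^2,\infty)) \gtrsim H(r^{-2})$.

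\textbf{Step 2 (Lower bound on $p$).} By Lemma~\ref{lem:bf}(b), $\phi$ inherits $L_a(\gamma, C_L)$ from $H$, so Proposition~\ref{step3} is applicable. Taking $D = \bR^d$ (so $\delta_D \equiv \infty$ and the distance-to-boundary hypothesis is vacuous) and $y = 0$, the remaining hypothesis $b\Phi^{-1}(t) \le 4|x-y|$ with $b = 4$ reduces to $\Phi^{-1}(t) \le |x|$, which is exactly $t\phi(|x|^{-2}) \le 1$. The temporal restriction $t < a^{-1} T$ of Proposition~\ref{step3} is not an obstruction: for $x \in B(0,M)$ and $t\phi(|x|^{-2}) \le 1$ one has $t \le 1/\phi(M^{-2})$, so $T$ may be fixed in terms of $a$ and $M$. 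Proposition~\ref{step3} then yields $p(t,x) \ge c_1\, t\, j(|x|)$, and combining with Step~1 gives $p(t,x) \ge c_2\, t\, |x|^{-d} H(|x|^{-2})$. The auxiliary estimate \eqref{e:jlow} has already been established in Step~1.

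\textbf{Main obstacle.} The delicate piece is the passage from the tail bound to the annular bound on $\mu$. Without the lower scaling of $H$, the ratio $H(A^{-1} r^{-2})/H(r^{-2})$ need not be quantitatively below $1$, and the annular mass could be negligible compared to $H(r^{-2})$; the near-infinity lower scaling $\gamma > 0$ is exactly what provides this quantitative gap. The local nature of the proposition (restriction to $B(0,M)$ with constant depending on $a$ and $M$) stems from the fact that $H$ is only assumed to scale near infinity, so comparisons with $\mu$ are only available in a regime where $r^{-2}$ is sufficiently large, which is determined jointly by $M$ and $a$.
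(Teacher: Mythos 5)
Your argument is correct in substance but reverses the logical order of the paper's proof and rests on a different key input. The paper first bounds $p(t,x)$ from below for $|x|\le (aL_1)^{-1/2}$ by inserting the subordinator estimate $\P\left(|x|^2\le S_t\le L_2|x|^2\right)\ge c\,tH(|x|^{-2})$ of \cite[Proposition 2.8]{M} directly into the subordination formula \eqref{eq:heat_kernel}, and only then reads off \eqref{e:jlow} as a consequence (implicitly via $j(|x|)=\lim_{t\to 0}t^{-1}p(t,x)$); you instead prove \eqref{e:jlow} first, from the representation \eqref{eq:jump} together with the two-sided tail comparison $\mu(r,\infty)\asymp H(r^{-1})$ recorded in Remark \ref{r:UL0} and the lower scaling of $H$, and then upgrade to the heat kernel via Proposition \ref{step3} with $D=\R^d$, $y=0$. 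Both routes ultimately lean on \cite{M} (your tail comparison is \cite[Lemma 2.6]{M}, the paper's is \cite[Proposition 2.8]{M}), and your verification of the hypotheses of Proposition \ref{step3} ($b=4$ turns the condition $b\Phi^{-1}(t)\le 4|x|$ into exactly $t\phi(|x|^{-2})\le 1$, and $T$ can be fixed in terms of $a$ and $M$) is correct. Your order has the advantage that the $j$-bound is a genuinely prior statement rather than a limit of the $p$-bound.

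One small hole to patch: your annulus estimate $\mu([r^2,Ar^2])\gtrsim H(r^{-2})$ requires both $A^{-1}r^{-2}>a$ (to invoke the lower scaling of $H$) and $Ar^2<2$ (for the tail comparison of Remark \ref{r:UL0}), so it only covers $|x|\le r_0$ for some $r_0=r_0(a,A)>0$, not the full range $|x|\le M$ that you claim to treat ``uniformly.'' On the remaining compact annulus $r_0\le |x|\le M$ the bound is trivial: $j(|x|)\ge j(M)>0$ by monotonicity of $j$, while $|x|^{-d}H(|x|^{-2})\le r_0^{-d}H(r_0^{-2})$ since $H$ is non-decreasing. This is exactly the paper's case (2), which it disposes of with Proposition \ref{step3} and the same monotonicity observation; with that one line added, your proof is complete.
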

\proof 
The proof is just a combination of  Proposition \ref{step3} and the proof of \cite[Proposition 3.6]{M}. We spell out the details for completeness. 
By \cite[Proposition 2.8]{M}  there exist  $L_1, L_2>1$ and $c_1>0$ such that for $|x|\le(aL_1)^{-1/2}$ and $t\phi(|x|^{-2})\leq 1$ it holds that 
\begin{align}
\label{e:p28}
    \P \left(|x|^2\leq S_t\leq L_2 |x|^2 \right)\geq c_1 tH(|x|^{-2})\,.
\end{align}
 Without loss of generality, we assume that $M > (aL_1)^{-1/2}$ and consider the following two cases separately.

\noindent
(1) $|x| \le (aL_1)^{-1/2}$ and $t\phi(|x|^{-2})\leq 1$: 
In this case, 
by \eqref{eq:heat_kernel} and  \eqref{e:p28} we obtain
\begin{align*}
 p(t,x)&\geq (4\pi)^{-d/2} \int_{[|x|^2,L_2|x|^2]}s^{-d/2}e^{-\frac{|x|^2}{4s}}\P(S_t\in ds)\\
 &\geq (4\pi)^{-d/2}  L_2^{-d/2}|x|^{-d}e^{-1/4}\P(|x|^2\leq S_t\leq L_2 |x|^2)\\
 &\geq c_1(4\pi)^{-d/2}  L_2^{-d/2} e^{-1/4}  t|x|^{-d}H(|x|^{-2})\,.
 \end{align*}
 
 \noindent
(2) $(aL_1)^{-1/2}  <  |x|   \le M$ and $t\phi(|x|^{-2})\leq 1$: 
In this case, $t \le \phi(|x|^{-2})^{-1} \le \phi(M^{-2})^{-1}$.
Thus by Proposition \ref{step3} we obtain
\begin{align*}
 p(t,x)&\geq c_2 t 
 j(|x|)\geq c_2 t 
 j(M)
  \geq c_3 t|x|^{-d}H(|x|^{-2}) \,.
 \end{align*}
\qed

We now revisit \cite{M} and improve the main result of \cite{M}
for the cases that $H$ satisfies the lower and upper scaling conditions near  infinity.

\begin{thm}\label{tm:mainb}
  For every $T, M>0$ there exists $c=c(a,   T, M)>0$ such that for all $t \le T$ and $x \in B(0, M)$,  
  $$
  p(t,x)\geq c\left( \phi^{-1}(t^{-1})^{d/2} \wedge \big( t|x|^{-d}H(|x|^{-2})+ \phi^{-1}(t^{-1})^{d/2}e^{-2^{-1}|x|^2\phi^{-1}(t^{-1})}\big)\right).
$$
\end{thm}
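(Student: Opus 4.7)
The plan is to exploit the two lower bounds already available in the paper, namely Proposition \ref{prop:off-lower2} (the off-diagonal bound $p(t,x)\gtrsim t|x|^{-d}H(|x|^{-2})$ valid in the regime $t\phi(|x|^{-2})\le 1$ and $x\in B(0,M)$) and Proposition \ref{prop:on-low} (the Gaussian-type bound $p(t,x)\gtrsim \phi^{-1}(t^{-1})^{d/2}\exp[-|x|^2\phi^{-1}(t^{-1})/2]$ valid for all $x\in\R^d$ and $t\le T$, together with its on-diagonal strengthening \eqref{e:prop:on-low}). The argument will proceed by splitting into the on-diagonal and off-diagonal regimes according to the sign of $t\phi(|x|^{-2})-1$, and in each regime showing that $p(t,x)$ dominates the whole minimum up to a constant.

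First I would treat the on-diagonal case $t\phi(|x|^{-2})\ge 1$. Apply \eqref{e:prop:on-low} with $M=1$ to obtain $p(t,x)\ge c\,e^{-1/2}\phi^{-1}(t^{-1})^{d/2}$. Since the minimum appearing in the theorem is bounded from above by its first entry $\phi^{-1}(t^{-1})^{d/2}$, this already gives $p(t,x)$ at least a constant times the minimum. (As a sanity check, in this regime $|x|^2\phi^{-1}(t^{-1})\le 1$, so the second entry of the minimum is itself comparable to $\phi^{-1}(t^{-1})^{d/2}$, confirming that the two entries are of the same order.)

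Next I would treat the off-diagonal case $t\phi(|x|^{-2})<1$. Here Proposition \ref{prop:off-lower2} applies and yields $p(t,x)\ge c_1 \,t|x|^{-d}H(|x|^{-2})$, while Proposition \ref{prop:on-low} yields $p(t,x)\ge c_2\,\phi^{-1}(t^{-1})^{d/2}\exp[-|x|^2\phi^{-1}(t^{-1})/2]$. Averaging these two estimates gives
\begin{align*}
p(t,x)\ \ge\ \tfrac{1}{2}\min(c_1,c_2)\Bigl(t|x|^{-d}H(|x|^{-2})+\phi^{-1}(t^{-1})^{d/2}e^{-|x|^2\phi^{-1}(t^{-1})/2}\Bigr),
\end{align*}
which is a constant times the second entry of the minimum, hence at least a constant times the minimum itself.

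Combining the two cases finishes the proof, so the total argument is only a few lines once one has Propositions \ref{prop:off-lower2} and \ref{prop:on-low} in hand. The only subtle point (and hence the nearest thing to an obstacle) is the verification that one does not need a more refined estimate than \eqref{e:prop:on-low} in the on-diagonal regime; this reduces to observing that $t\phi(|x|^{-2})\ge 1$ is equivalent to $|x|^{-2}\ge \phi^{-1}(t^{-1})$, which makes the Gaussian factor $\exp[-|x|^2\phi^{-1}(t^{-1})/2]$ harmlessly bounded below, so that the two candidates in the minimum are indeed of the same order and the bound for one transfers to the bound for the minimum.
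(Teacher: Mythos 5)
Your proposal is correct and is essentially the paper's own argument, which simply combines Propositions \ref{prop:on-low} and \ref{prop:off-lower2} (splitting on whether $t\phi(|x|^{-2})\ge 1$, and summing the two lower bounds in the off-diagonal regime). The only detail worth making explicit is that Proposition \ref{prop:on-low} assumes the lower scaling condition on $\phi$, which in this section follows from the standing assumption on $H$ via Lemma \ref{lem:bf}(b) — the paper cites that lemma for exactly this reason.
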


\pf This theorem follows from  Lemma 
\ref{lem:bf}(b),  Propositions \ref{prop:on-low} and \ref{prop:off-lower2}. 

\qed

Let $T_A:=\inf\{t>0: X_t \in A\}$, the first hitting time of $X$ to $A$.
Observe that for every Borel subset $A \subset U$ and $r>0$, we have
\begin{align}
\label{e:elow1}
&\P_x\(T_A< \tau_U \wedge \Phi(r) \) \ge \P_x \Big(\int_0^{ \Phi(r) }{\bf 1}_A(X^U_s)ds>0\Big)\ge \frac{1}{\Phi(r)} \int_0^{ \Phi(r)}\P_x \Big(\int_0^{ \Phi(r) }{\bf 1}_A(X^U_s)ds>u\Big)du
\nn\\
&
= \frac{1}{\Phi(r)}\E_x \int_0^{\Phi(r) }{\bf 1}_A(X^U_s)ds 
 \ge \frac{1}{\Phi(r)}\int_0^{\Phi(r) }\int_A p_U(s, x,y)dy ds.
\end{align}

Using  Levy system, \eqref{e:t:up1j} and \eqref{e:Euplo},  we have that for $w\in \R^d$ and  $0 <4r \le R <1$,
\begin{align}\label{e:ggggg1}
&\P_w \left(X_{\tau_{ B(w, r)}} \in  B( w, R)^c\right)
\le 
\E_w[\tau_{ B(w, r)}]  \sup_{y \in B(w, r)} \int_{B( w, R)^c} j(|y-z|) dz\nn\\
&\le
\frac{c_1}{\phi(r^{-2}) }\left( \int_{R}^1 H(s^{-2}) s^{-1} ds +c \right)
 \le c_{2}\frac{H(R^{-2})}{ \phi(r^{-2})   } \le  c_{2}\frac{\phi(R^{-2})}{ \phi(r^{-2})   } . 
\end{align}

Now we prove the following estimate, which is inspired by the proof of \cite[Lemma 5.3]{Gu}.
(See also \cite{KSV17, KSV18}.)
We recall that $\rho_z$, $D_z( r_1, r_2)$ and $\kappa$ are defined in \eqref{e:d_com} and \eqref{e:kappa1} respectively. 
\begin{prop}\label{p:H12}
Let $D\subset \R^d$ be a $C^{1,1}$ open set with characteristics
$(R_0, \Lambda)$. 
Assume that $H$ satisfies $ L_a(\gamma, C_L)$ and $ U_a(\delta, C_U)$ with $\delta<2$ and $\gamma >2^{-1}{\bf 1}_{\delta \ge 1}$
   for some $a \ge 0$.
Then there exists a constant  $c=c(\phi, R_0,  \Lambda)>0$ such that for every $r \le  \kappa^{-1} (R_0\wedge 1)/2$, $z \in D$ 
 and $x \in D_z(2^{-3}r, 2^{-4}r)$,
$$\P_x\big( X_{\tau_{ D_z (  r , r)}} \in
D\big)\leq c\, \P_x\big( X_{ \tau_{ D_z (  r , r)}}\in
D_z (  2 r , r)\big).$$ 
\end{prop}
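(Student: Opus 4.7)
The plan is to set $G := D_z(r,r)$, decompose
\[
\P_x(X_{\tau_G} \in D) = \P_x(X_{\tau_G} \in D_z(2r, r)) + \P_x(X_{\tau_G} \in D \setminus D_z(2r, r)),
\]
and to show that the second summand is dominated by a constant multiple of the first. Both probabilities will be estimated via the L\'evy system formula \eqref{e:levy}, which rewrites
\[
\P_x(X_{\tau_G} \in A) = \E_x \int_0^{\tau_G} \int_A j(|X_s - y|)\, dy\, ds
\]
for every Borel set $A \subset G^c$.

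For the lower bound on $\P_x(X_{\tau_G} \in D_z(2r, r))$, I would pick the point $y^* \in D_z(2r, r) \setminus \overline{G}$ with coordinates $(\wt 0,\, 3r/2)$ in $CS_z$, and set $B^* := B(y^*, \alpha_0 r)$ with $\alpha_0 = \alpha_0(\Lambda) > 0$ small enough that $B^* \subset D_z(2r, r) \setminus \overline{G}$ (a computation using $\nabla\varphi(0)=0$ and $\|\nabla\varphi\|_\infty\le\Lambda$ shows $\alpha_0 = (4(1+\Lambda))^{-1}$ works). By \eqref{e:Uzr} and the triangle inequality, $|w-y| \le c_1 r$ for all $w\in G$ and $y\in B^*$ with $c_1=c_1(\Lambda)$. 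Hence, by the monotonicity of $j$ and \eqref{e:jlow} combined with the scaling of $H$,
\[
\P_x(X_{\tau_G} \in D_z(2r,r)) \ge \P_x(X_{\tau_G} \in B^*) \ge \E_x[\tau_G]\, |B^*|\, j(c_1 r) \ge c_2\, \E_x[\tau_G]\, H(r^{-2}).
\]

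For the upper bound on $\P_x(X_{\tau_G}\in D\setminus D_z(2r,r))$, I would bound
\[
\P_x(X_{\tau_G}\in D\setminus D_z(2r,r)) \le \E_x[\tau_G]\,\sup_{w\in G}\int_{D\setminus D_z(2r,r)} j(|w-y|)\, dy
\]
and reduce the proof to the claim that the supremum is at most $c_3 H(r^{-2})$. Using the upper bound $j(s)\le c\, s^{-d}H(s^{-2})$ from \eqref{e:t:up1j}, one splits the integration domain according to $|y-w|$: for the \textbf{far} part ($|y-w|$ at least a multiple of $r$), the tail estimate $\int_{|y-w|\ge R} j(|y-w|)\,dy \le c\, H(R^{-2})$ --- derivable from the identity $\phi(\lambda)=\lambda\int_\lambda^\infty H(t)\,t^{-2}\,dt$ (which uses that the drift is zero) together with the upper scaling of $H$ --- gives what we want; for the \textbf{near} part, one exploits the geometric structure of $D\setminus D_z(2r,r)$ near $\partial G$, namely thin slabs along the sides $\{|\wt y|=r\}$ and the top $\{\rho_z(y)=2r\}$ of $G$ whose measures are controlled by the Lipschitz bound on $\varphi$, combined with \eqref{e:jdouble} and the scaling of $H$; here the hypothesis $\gamma > 2^{-1}\mathbf{1}_{\delta\ge 1}$ is precisely what guarantees the convergence of the resulting integrals.

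Putting the two bounds together yields $\P_x(X_{\tau_G}\in D\setminus D_z(2r,r)) \le (c_3/c_2)\,\P_x(X_{\tau_G}\in D_z(2r,r))$, and hence $\P_x(X_{\tau_G}\in D)\le(1+c_3/c_2)\,\P_x(X_{\tau_G}\in D_z(2r,r))$ as required. The main technical hurdle will be the near-field part of the supremum above: for $w\in G$ close to the lateral boundary $\{|\wt y|=r\}$, arbitrarily small jumps of $X$ can carry $X_{\tau_G}$ into $D\setminus D_z(2r,r)$ through the sideways slab, so the singular portion of the integral has to be handled by a careful combination of the $C^{1,1}$ regularity of $\partial D$, the doubling-type inequality \eqref{e:jdouble}, and both scaling properties of $H$.
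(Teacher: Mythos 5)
Your decomposition runs into a genuine dead end at exactly the point you flag as the ``main technical hurdle.'' The crude bound
\[
\P_x\big(X_{\tau_G}\in D\setminus D_z(2r,r)\big)\le \E_x[\tau_G]\,\sup_{w\in G}\int_{D\setminus D_z(2r,r)} j(|w-y|)\,dy
\]
cannot be salvaged, because that supremum is $+\infty$, not merely hard to estimate. The set $D\setminus D_z(2r,r)$ contains all of $\{y\in D: |\wt y|\ge r\}$, which is a \emph{thick} region adjacent to $G$ along the entire lateral boundary $\{|\wt y|=r\}$ at heights $\rho_z\in(0,r)$ (it is not a thin slab whose measure the Lipschitz bound on $\varphi$ controls -- that description only fits the top face). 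Since $\mu(0,\infty)=\infty$, the L\'evy measure of $X$ is infinite, so for $w\in G$ at distance $\eps$ from the lateral boundary one has $\int_{D\setminus D_z(2r,r)}j(|w-y|)\,dy\gtrsim\int_\eps^{r} j(s)s^{d-1}\,ds\to\infty$ as $\eps\to0$. No combination of \eqref{e:jdouble}, the $C^{1,1}$ regularity, and the scaling of $H$ changes this. The correct replacement is the exact L\'evy-system identity $\P_x(X_{\tau_G}\in A)=\int_G G_G(x,w)\int_A j(|w-y|)\,dy\,dw$, where the divergence of the inner integral near the lateral boundary must be compensated by decay of the Green function $G_G(x,\cdot)$ there -- but establishing that decay is a boundary-decay (Carleson-type) estimate essentially equivalent to the proposition itself, so your route is circular. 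Your lower bound on $\P_x(X_{\tau_G}\in D_z(2r,r))$ via a jump into a fixed ball $B^*$, and your far-field tail estimate, are both fine; the problem is confined to, and fatal for, the near-field lateral contribution.

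The paper avoids this by an entirely different mechanism (Guan's iteration, as in \cite[Lemma 5.3]{Gu} and \cite{KSV17}): for $\delta<1$ it simply invokes \cite[Theorem 1.8]{GKK}, and for $\delta\ge1$ it slices the region $D_z(2^{-3}r,2^{-4}r)$ into horizontal layers $J_i$ at heights $\sim 2^{-i}r$ with \emph{slightly shrinking} lateral widths $s_i$, and controls the ratios $d_i=\sup_{J_i}\P(E_2)/\P(E_1)$ recursively. The lateral buffer $s_{i-1}-s_i=r/(200i^2)$ ensures that reaching $D$ outside the previous layers from $J_i$ requires, among $\sim li$ small excursions of scale $2^{-i}r$, one jump of length $\ge r/(200li^3)$; the probability of such a jump is bounded by \eqref{e:ggggg1} as $\phi((li^3/r)^2)/\phi(4^{i}r^{-2})\lesssim i^{6}4^{-i}\cdot(\text{scaling factor})$, and the hypothesis $\gamma>1/2$ is what makes the resulting series $\sum_i i^{13}2^{-(2\gamma-1)i}$ converge. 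The denominator $\P_z(E_1)\gtrsim\delta_D(z)/r\gtrsim 2^{-i}$ on $J_i$ comes from the subordinate killed Brownian motion lower bound \eqref{e:Glowernew}, not from a single-jump L\'evy-system estimate. If you want to pursue your own line, you would first need an a priori decay estimate for $G_G(x,\cdot)$ near the lateral boundary of $D_z(r,r)$, which is not available at this stage of the paper.
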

\pf
Without loss of generality we assume $z=0$. 
Let 
$E_2:=\{X_{ \tau_{ D_0 (  r , r)}} \in
D  \}$ and 
$
E_1:=\{X_{ \tau_{ D_0 (  r , r)}} \in
D_0 (  2 r , r)\}. 
$
We claim that $\P_x( E_2)\leq c_{0} \P_x( E_1)$ 
for all $r \le  \kappa^{-1} (R_0\wedge 1)/2$ and $x \in D_0 ( 2^{-3} r , 2^{-4} r )$. 

When $\delta < 1$, we use \cite[Theorem 1.8]{GKK} and get the claim immediately. Thus, throughout the proof we assume that $\delta \ge 1$.

 Recall from  the paragraph before
Proposition  \ref{prop:on-D-low} that,  for $z \in\partial D$ and $r \le \kappa R_0$,  $V_0(r)$ is a $C^{1,1}$
domain with characteristics $(rR_0/L_0, \Lambda L_0/r)$ such that $D_0( 3r/2, r/2) \subset V_0(r)  \subset  D_0( 2r, r) $. 
Note that for $w \in D_0 ( 2^{-3} r , 2^{-4} r )$, we have $\delta_{V_0(r)}(w)=\delta_{D}(w)$.
Using this,  \eqref{e:Glowernew} and  \eqref{e:elow1}, we have that for  $w \in D_0 ( 2^{-3} r , 2^{-4} r )$,
\begin{align}\label{e:D2_43n2}
&\P_w ( E_1 )  \ge \P_w \big(  \tau_{ V_0(r)} >T_{D_0 (  5r/4, r/4)\setminus D_0 (  r , r/4))}  \big)\nn\\
  & \ge \frac{1}{\Phi(r)}\int_{\Phi(r)/2 }^{\Phi(r) }\int_{ D_0 (  5r/4, r/4)\setminus D_0 (  r , r/4))} p_{V_0(r)}(s, w,y)dy ds \nn\\
    & \ge \frac{c_1}{\Phi(r)}\int_{\Phi(r)/2 }^{\Phi(r) }\int_{  D_0 (  5r/4, r/4)\setminus D_0 (  r , r/4))}
     \left(1\wedge \frac{\delta_{V_0(r)}(w)}{    \Phi^{-1} (s)}\right)
 \Phi^{-1}(s)^{-d}
    dy ds \nn\\
      & \ge c_2 \frac{\delta_D(w)r^d}{\Phi(r)}\int_{\Phi(r)/2 }^{\Phi(r) }
 \frac{ds}{    \Phi^{-1} (s)^{d+1}} \ge c_3 \frac{\delta_D(w)}{r}.
\end{align}

We define, for $i\ge 1$,
$$
J_i=D_0(2^{-i-2}r,s_i)\setminus  D_0(2^{-i-3}r,s_i),\ \ \ \ s_i=\frac{1}{4}\left(\frac{1}{2}-\frac{1}{50}\sum_{j=1}^i\frac{1}{j^2}\right)r,
$$
and $s_0=s_1$.  
Note that $r/(10)<s_i <r/8$.
For $i\geq 1$,  set
\begin{align}\label{q1q}
d_i=d_i(r)=\sup_{z\in J_i}\P_z( E_2)/\P_z( E_1),\ \ \
\widetilde{J}_i=D_0(2^{-i-2}r,s_{i-1}),\ \ \ \
\tau_i=\tau_{\widetilde{J}_i}.
\end{align}
Repeating the argument leading to \cite[(6.29)]{KSV17}, we get that 
 for $z\in J_i$ and $i\geq 2$,
\begin{align}\label{q12q}  \P_z( E_2) 
\leq    \left ( \sup_{1\leq k\leq i-1}  d_k \right) \P_{z}(E_1) +\P_z\left( X_{\tau_i}\in
D \setminus \cup_{k=1}^{i-1}J_k \right).
\end{align}

For  $i\ge 2$, define $\sigma_{i, 0}=0, \sigma_{i, 1}=\inf\{t>0: |X_t-X_0|\geq 2^{-i-2}r\} $ and
$\sigma_{i, m+1}=\sigma_{i,1}\circ\theta_{\sigma_{i, m}}$
for $m\geq 1$. 

We first claim that  for all $w \in\widetilde{J}_i$, 
$ \P_{w}(X_{\sigma_{i, 1}}\notin \widetilde{J}_i)$ is bounded below by a strictly positive constant. 
We prove the claim for $w \in
\widetilde{J}_i \setminus D_0(2^{-i-3}r,s_{i-1})=
\{ y\in D: 2^{-i-2}r>\rho_0(y)  \ge 2^{-i-3}r,\, |\wt y | < s_{i-1}
\}.
$
Since  $
\widetilde{J}_i= \{ y=(\wt y, t): 0<t- \wh \varphi(\wt y) <2^{-i-2}r, \quad\, |\wt y | < s_{i-1}
\}$ with $t:=2^{-i-2}r-y_d$ and  $\wh \varphi(\wt y):=-\varphi(\wt y) $, the proof for the case $w \in D_0(2^{-i-3}r,s_{i-1})$
is same.

We choose $\eps \in (0, 2^{-4}/\Lambda)$ small so that
\begin{align}
\label{e:Fn1}
(2\eps^2+1) \( \frac{3+\eps \Lambda}{1-\Lambda\eps} \)^2+ 2 \eps^2  <16.
\end{align}
Fix $w \in
\widetilde{J}_i \setminus D_0(2^{-i-3}r,s_{i-1})$ and 
 define $A:=B((\wt w, w_d+2^{-i-1}r),  \eps 2^{-i-4}r)$ and 
 $$
 V:=B((\wt w, w_d-2^{-i-4}r),  3 \cdot 2^{-i-2} r) \cap \{ y_d > w_d-2^{-i-4}r, |\wt y-\wt w| < \eps (y_d - w_d+2^{-i-4}r)\}.
$$
For $y \in A$, we have 
$y_d-w_d  \ge 2^{-i-1}r -| y_d-w_d-2^{-i-1}r| > 2^{-i-1}r -\eps 2^{-i-4} r >2^{-i-2}r.
$ Thus, for $y \in A$ we have $y \notin B(w, 2^{-i-2}r)$, 
$|\wt w-\wt y| \le \eps 2^{-i-4}r <\eps (y_d - w_d+2^{-i-4}r)$ and 
$$
\rho_0(y) \ge y_d -w_d +\rho_0(w)-|  \varphi (\wt w)- \varphi (\wt y)| > (2^{-i-2}+(1-\eps \Lambda)2^{-i-4})r >2^{-i-2}r.
$$
Therefore \begin{align}
\label{e:Fn2}
A \subset V \setminus (\widetilde{J}_i  \cup B(w, 2^{-i-2}r)).
\end{align}
If  $y \in V \cap \widetilde{J}_i$ and $y_d <w_d$, then clearly $|y_d-w_d|=w_d-y_d \le 2^{-i-4}r$.
If  $y \in V \cap \widetilde{J}_i$ and $y_d  \ge w_d$, then
$y_d-w_d=\rho_0(y)-
\rho_0(w)+|  \varphi (\wt w)- \varphi (\wt y)|<3 \cdot 2^{-i-4}r +\Lambda \eps |y_d-w_d| +\Lambda \eps 2^{-i-4}r$
so that $ |y_d-w_d| <2^{-i-4}r (3+\Lambda \eps)/(1-\Lambda \eps)$.
Thus using \eqref{e:Fn1}, we have that  for $y \in V \cap \widetilde{J}_i$, 
\begin{align*}
&|y-w|^2 \le \eps^2(|y_d-w_d|+ 2^{-i-4}r)^2 + |y_d-w_d|^2
\le (2\eps^2+1)|y_d-w_d|^2+ 2\eps^2(2^{-i-4}r)^2
\\
&\le \(2\eps^2+1) \Big( \frac{3+\eps \Lambda}{1-\Lambda\eps} \Big)^2+ 2 \eps^2\)(2^{-i-4}r)^2 < (2^{-i-2}r)^2,
\end{align*}
which implies that 
\begin{align}
\label{e:Fn3}
V \cap \widetilde{J}_i \subset B(w, 2^{-i-2}r)
\end{align}
On the other hand, for 
$y \in \frac12A:=B((\wt w, w_d+2^{-i-1}r),  \eps 2^{-i-5}r)$, we have $\delta_V (w) \wedge \delta_V(y) \ge c_0 2^{-i-1}r 
$ and $|w-y| \le 2^{-i}r$. Since we assume that $\gamma >1/2$, we can find a large $M$ so that
$$
\frac{\Phi^{-1}(2s)}{\Phi^{-1}(s/M) } \le c_4(2M)^{1/(2\gamma)} <\frac{Mc_0}{48} \,\text{ and } \,\frac{ \Phi(s) }{\Phi(c_0 s)}  \le M\quad \text{ for all }s \in (0,1).$$
Thus, when $\Phi(2^{-i-2}r)/2 \le s \le \Phi(2^{-i-2}r)$ and $|z_1-z_2| \le 3 \cdot 2^{-i}r/M$ with $\delta_V (z_i) \ge c_0 2^{-i-2}r$, we see that 
 $|z_1-z_2| \le 12  \cdot 2^{-i-2}r/M \le 12  \Phi^{-1}(2s)/M \le c_0 \Phi^{-1}(s/M)/4$ and $\delta_V (z_i) \ge c_0 2^{-i-2}r \ge \Phi^{-1}(s/M)$
  (because  $ M\ge \Phi(2^{-i-2}r) /\Phi(c_0 2^{-i-2}r)\ge s/\Phi(c_0 2^{-i-2}r)$). Thus, by 
 Proposition \ref{step1}, for such $y,z$ and $s$, 
using this and a chaining argument through the semigroup property, we have
\begin{align}
\label{e:Fn4}
p_V(s, w,y) \ge    c_6 (2^{-i}r)^{-d}, \quad \text{for }
\Phi(2^{-i-2}r)/2 \le s \le \Phi(2^{-i-2}r) \text{ and }
y \in \frac12 A.
\end{align}
By \eqref{e:elow1} and \eqref{e:Fn1}--\eqref{e:Fn4}, we have that for all $w\in \widetilde{J}_i \setminus D_0(2^{-i-3}r,s_{i-1})$,
\begin{align*}
&\P_{w}\big(X_{\sigma_{i, 1}}\notin \widetilde{J}_i\big) \ge 
\P_w(T_{ \frac12 A}< \tau_{V} \wedge \Phi(2^{-i-2}r) )\nn\\
&\ge \frac{1}{\Phi(2^{-i-2}r)}\int_{\Phi(2^{-i-2}r)/2 }^{\Phi(2^{-i-2}r) }\int_{ \frac12 A} p_{V}(s, w,y)dy ds\ge  c_6 \frac{|\frac12 A|}{\Phi(2^{-i-2}r)}\int_{\Phi(2^{-i-2}r)/2 }^{\Phi(2^{-i-2}r) } (2^{-i}r)^{-d} ds, \end{align*}
which is  a positive constant independent of $i$. We have proved the claim. 
 
Thus, we have that there exists $k_1 \in (0,1)$ such that 
\begin{align}\label{e:rs}
\P_{w}\big(X_{\sigma_{i, 1}}\in \widetilde{J}_i\big)
=1-\P_{w}\big(X_{\sigma_{i, 1}}\notin \widetilde{J}_i\big)
  <k_1,\ \ \ w\in  \widetilde{J}_i.
\end{align}
For the purpose 
of  further estimates, we now choose a 
positive integer $l\ge 1$ 
such that $k_1^l\le 4^{-1}$.
Next we choose $i_0 \ge 2$ large enough so that 
$2^{-i}<1/(200 l i^3)$ for all $i\ge i_0$.
Now we assume $i\ge i_0$. Using \eqref{e:rs} and the strong Markov property we have  that  for   $z\in J_i$,
\begin{align}\label{q132q}    
&\P_z( \tau_{i}>\sigma_{i, li})\leq 
\P_z\big(X_{\sigma_{i, k}}\in \widetilde{J}_i, 1\leq k\leq
li\big )\nonumber\\
&=
 \E_z \left[ \P_{X_{\sigma_{i, li-1}}} (X_{\sigma_{i, 1}}\in  \widetilde{J}_i) : X_{\sigma_{i, li-1}} \in  \widetilde{J}_i,  X_{\sigma_{i, k}}\in \widetilde{J}_i, 1\leq k\leq
li-2
  \right]
 \nonumber\\      
& \leq \P_z\big(X_{\sigma_{i, k}}\in \widetilde{J}_i, 1\leq k\leq
li-1 \big )k_1\leq k_1^{li}.
\end{align}
Note that if $z\in J_i$ and $y\in D \setminus[ \widetilde{J}_i \cup(\cup_{k=1}^{i-1}J_k)]$, 
then $|y-z|\ge (s_{i-1}-s_i) \wedge (2^{-3}-2^{-i-2}) r = r/(200 i^2)$. 
Furthermore, since  $2^{-i-2} r< r/(200 i^2)$, $\tau_i$ must be one of  the
$\sigma_{i, k}$'s, $k\le li$. 
Hence, on
$\{X_{\tau_{i}}\in D \setminus \cup_{k=1}^{i-1}J_k,\ \ \tau_{i}\leq
\sigma_{i, li}\}$ with $X_0=z\in J_i$,  there exists $k$, $1\le k\le li$, such that 
$|X_{\sigma_{i, k}}-X_0|=|X_{\tau_i}-X_0|> r/(200 i^2)$.
Thus  for some $1\leq k\leq li$,
$$
\sum_{j=1}^k\big|X_{\sigma_{i, j}}-X_{\sigma_{i, j-1}}\big|>
\frac{r}{200i^2}\, .
$$
which implies for some $1\leq k'\leq k\le  li$, 
$$ 
\big|X_{\sigma_{i, k'}}-X_{\sigma_{i, k'-1}}\big|\geq
\frac1k \frac{r}{200i^2}\ge \frac{1}{ li} \frac{r}{200i^2}.
$$
Thus, 
  using  the strong Markov property  and then using \eqref{e:ggggg1} (noting that  
$4 \cdot 2^{-i-2}<1/(200 l i^3)$ for all $i\ge i_0$)
 we have 
 \begin{align}\label{dsa}& \P_z \left(X_{\tau_{i}}\in D \setminus
\cup_{k=1}^{i-1}J_k,\ \ \tau_{i}\leq \sigma_{i, li} \right)\nonumber\\
 \leq  &\sum_{k=1}^{li} \P_z\left(|
X_{\sigma_{i, k}}- X_{\sigma_{i, k-1}}|\geq r/(200li^3),
X_{\sigma_{i, k-1}}\in \widetilde{J}_{i}
\right ) \nonumber\\
 \leq  &li \sup_{z\in \widetilde{J}_{i}} \P_z\left(|
X_{\sigma_{i, 1}} - z |\geq r/(200li^3) \right)
  \leq  c_{7}li  \frac{\phi((200 li^3)^2/r^2)}{ \phi(2^{2(i+2)} r^{-2})   } .
\end{align}
Since
\begin{align*}
 \frac{\phi((200 li^3)^2/r^2)}{ \phi(2^{2(i+2)} r^{-2})   } \ge c_{8} 
  \frac{(200 li^3)^{2}}{ (2^{2(i+2)} ) ^{2}} 
\ge  c_{9} {i^{6}}(4^{})^{-i},
\end{align*} 
by \eqref{e:D2_43n2}, (\ref{q132q}), (\ref{dsa}) and Lemma \ref{lem:bf}(b),  for $z\in J_i$, $i\ge i_0$, we have
\begin{align*}
&\frac{\P_z( X_{\tau_i}\in
D \setminus \cup_{k=1}^{i-1}J_k)}{\P_z(E_1)}\leq
 \frac{1}{\P_z(E_1)} \left(k_1^{li}+c_{24}li  \frac{\phi((200 li^3)^2/r^2)}{ \phi(2^{2(i+2)} r^{-2})   }  \right)\nn\\
& \le \frac{c_{10}i}{\P_z(E_1)} \frac{\phi((200 li^3)^2/r^2)}{ \phi(2^{2(i+2)} r^{-2})   } 
\le c_{11}i2^{i} \frac{\phi((200 li^3)^2/r^2)}{ \phi(2^{2(i+2)} r^{-2})   } 
\le c_{12}i2^{i} {i^{6\gamma}}(2^{\gamma})^{-2i}
\le c_{13}{i^{13}}2^{-(2\gamma-1)i}.\end{align*} 
By this and  (\ref{q12q}), for $z\in J_i$ 
and $i\geq i_0$,
\begin{align*}  
\frac{\P_z( E_2)}{\P_z(E_1)} \leq  \sup_{1\leq k\leq i-1}  d_k   +\frac{\P_z( X_{\tau_i}\in
D \setminus \cup_{k=1}^{i-1}J_k)}{\P_z(E_1)} 
\leq \sup_{1\leq k\leq i-1}d_k+ c_{13}{i^{13}}2^{-(2\gamma-1)i}.
\end{align*}
This implies that
\begin{align}  
 \sup_{ r \le \kappa^{-1} (R_0\wedge 1)/2} d_i(r)& \leq \sup_{1\leq k\leq i_0-1\atop r \le \kappa^{-1} (R_0\wedge 1)/2} d_k(r)+   c_{14} \sum_{k=i_0}^{\infty} {k^{13}}2^{-(2\gamma-1)k}=:c_{15} <\infty. \nonumber
\end{align}
Thus the claim above is valid, since $D_0 ( 2^{-3} r , 2^{-4} r )\subset \cup_{k=1}^\infty J_k$. The proof is now complete.
\qed

The next two results should be well-known but we could not find any reference. We provide the full details.
\begin{lemma}
\label{l:r-time1}
For any non-negative locally integrable  function $t \to k(t)$ on $(0, \infty)$ and every $R>0$, $s \in (0, R/2)$ and $\eps \in (0, s/2)$, 
\begin{align}\label{e:r-di22}
\left(\int_{s+\eps}^{R+s} + \int_{-R+s}^{s-\eps} \right)((t_+)^2-s^2)k(|t-s|)dt
=\int_{\eps}^R ({\bf 1}_{u<s}2u^2 +{\bf 1}_{u \ge s}(u^2+s(2u-s))) k(|u|)du.
\end{align}
Thus,
$$
P.V. \int_{-R+s}^{R+s} ((t_+)^2-s^2)k(|t-s|)dt= \int_{0}^R ({\bf 1}_{u<s}2u^2 +{\bf 1}_{u \ge s}(u^2+s(2u-s))) k(|u|)du.
$$
\end{lemma}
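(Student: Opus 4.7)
\medskip

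The plan is to split the left-hand side into its two integrals and perform a change of variables on each, matching the resulting contributions with the two indicator regions on the right. The identity is purely computational, with no analytic obstacles other than checking the case split carefully.

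For the first integral $\int_{s+\eps}^{R+s}((t_+)^2-s^2)k(|t-s|)\,dt$, the substitution $u=t-s$ maps the range to $u\in[\eps,R]$; since $t=u+s>0$ throughout we have $t_+=t$, so
\begin{align*}
(t_+)^2-s^2 = (u+s)^2-s^2 = u^2+2us,
\end{align*}
and $|t-s|=u$. This yields $\int_\eps^R (u^2+2us)\,k(u)\,du$.

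For the second integral $\int_{-R+s}^{s-\eps}((t_+)^2-s^2)k(|t-s|)\,dt$, I would set $u=s-t$, so that $u\in[\eps,R]$ and $|t-s|=u$. The key split is according to the sign of $t=s-u$: on the subregion $u<s$ we have $t>0$ and $(t_+)^2-s^2=(s-u)^2-s^2=u^2-2us$, while on $u\ge s$ we have $t\le0$ and $(t_+)^2-s^2=-s^2$. Thus the second integral equals
\begin{align*}
\int_\eps^R\bigl[\mathbf{1}_{u<s}(u^2-2us)+\mathbf{1}_{u\ge s}(-s^2)\bigr] k(u)\,du.
\end{align*}
Adding the two contributions, the $\pm 2us$ terms cancel on $\{u<s\}$ to leave $2u^2$, and on $\{u\ge s\}$ we get $u^2+2us-s^2 = u^2+s(2u-s)$. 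This is exactly the right-hand side of \eqref{e:r-di22}.

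For the principal-value assertion, I would simply pass to the limit $\eps\downarrow0$ in \eqref{e:r-di22}. Since $k\ge0$ and the weights $2u^2$ and $u^2+s(2u-s)$ (the latter bounded by $3u^2$ on $\{u\ge s\}$) are non-negative, the integrands on $[\eps,R]$ increase as $\eps$ decreases, so monotone convergence applies and gives the stated equality, allowing either side to equal $+\infty$. The whole proof is essentially bookkeeping for the two changes of variables and the case split $u<s$ versus $u\ge s$; the only place to be mildly careful is the sign of $t$ in the second integral when $u$ crosses $s$, which is where the indicator functions come from.
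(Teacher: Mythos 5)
Your proof is correct and follows essentially the same route as the paper: the substitutions $u=t-s$ and $u=s-t$ in the two integrals, the case split at $u=s$ coming from the positive part $(s-u)_+$, and the cancellation of the $\pm 2us$ terms on $\{u<s\}$. The passage to the limit $\eps\downarrow 0$ via monotone convergence (using non-negativity of $k$ and of the weights) is a slightly more explicit justification than the paper's one-line "letting $\eps\to 0$", but the argument is the same.
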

\pf
Using the change of variables $u=t-s$ in the first integral and $u=s-t$ in the second integral, we get that for $\eps \in (0, s/2)$,
\begin{align*}
&
\left(\int_{s+\eps}^{R+s} + \int_{-R+s}^{s-\eps} \right)((t_+)^2-s^2)k(|t-s|)dt\\
=&\int_{\eps}^R((s+u)^2-s^2) k(|u|)du + \int_{\eps}^R([(s-u)_+]^2-s^2) k(|u|)du \nn \\
=&\int_{\eps}^R((s+u)^2+[(s-u)_+]^2-2s^2) k(|u|)du\nn\\
=&\int_{\eps}^s((s+u)^2+(s-u)^2-2s^2) k(|u|)du+\int_s^R((s+u)^2-2s^2) k(|u|)du\nn\\
=&\int_{\eps}^s2u^2 k(|u|)du+\int_s^R(u^2+s(2u-s)) k(|u|)du.
\end{align*}
Letting $\eps \to 0$, we also have proved the second claim of the lemma.
\qed

\begin{lemma}
\label{l:r-time2}
For every $R>0$ and $x=(\wt 0, x_d) \in \R^d$ with $x_d>0$, 
\begin{align}\label{e:r-di3}
&\frac{1}{2d} \int_{B(0,R)}|z|^2j(|z|)dz\nn\\
 \le& P.V. \int_{\{(\wt w, w_d) \in \R^d: |\wt w|<R, |w_d-x_d|<R\}}
([(w_d)_+]^2-x_d^2) j(|w-x|)dw\\
 \le& \frac{1}{d} \int_{B(0,\sqrt 2 R)}|z|^2j(|z|)dz < \infty. \nn
\end{align}
\end{lemma}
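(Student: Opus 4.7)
The plan is to reduce the $d$-dimensional principal value integral to an iterated integral whose inner one-dimensional part is handled by Lemma \ref{l:r-time1}, and then sandwich the resulting expression between integrals over the ball $B(0,R)$ and $B(0,\sqrt 2 R)$.

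\emph{Step 1 (change of variables).} Translate by setting $z=w-x$, so $\tilde w=\tilde z$ and $w_d=x_d+z_d$. The integration region becomes the cylinder $C:=\{z\in\R^d:|\tilde z|<R,\ |z_d|<R\}$ and the singularity of $j(|w-x|)$ moves to $z=0$. Writing $s=x_d$, the integral in question becomes
\[
P.V.\int_{C}\bigl([(s+z_d)_+]^2-s^2\bigr)\,j(|z|)\,dz.
\]

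\emph{Step 2 (apply Lemma \ref{l:r-time1} in the inner variable).} For each fixed $\tilde z\in\R^{d-1}$ with $|\tilde z|<R$ and $\tilde z\ne 0$, the function $u\mapsto k_{\tilde z}(u):=j(\sqrt{|\tilde z|^2+u^2})$ is locally integrable and has no singularity at $u=0$, so Lemma \ref{l:r-time1} gives
\[
\int_{-R}^{R}\bigl([(s+u)_+]^2-s^2\bigr)\,k_{\tilde z}(u)\,du
=\int_{0}^{R}\bigl(\ind_{u<s}\,2u^2+\ind_{u\ge s}(u^2+s(2u-s))\bigr)\,k_{\tilde z}(u)\,du.
\]
Near the origin $z=0$ in $d$ dimensions, the odd-in-$z_d$ leading term $2sz_d\,j(|z|)$ cancels by symmetry and the $z_d^2\,j(|z|)$ term is locally integrable thanks to $\int(1\wedge |z|^2)j(|z|)\,dz<\infty$, so Fubini together with the $\eps\to 0$ limit in the $d$-dimensional P.V. is justified and yields
\[
P.V.\int_{C}\bigl([(s+z_d)_+]^2-s^2\bigr)\,j(|z|)\,dz
=\int_{|\tilde z|<R}\!\!\int_{0}^{R}\bigl(\ind_{u<s}\,2u^2+\ind_{u\ge s}(u^2+s(2u-s))\bigr)\,j(\sqrt{|\tilde z|^2+u^2})\,du\,d\tilde z.
\]

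\emph{Step 3 (pointwise bounds on the integrand).} For $u<s$ we trivially have $u^2\le 2u^2\le 2u^2$, and for $u\ge s$ the identity $u^2+s(2u-s)=2u^2-(u-s)^2$ together with $s(2u-s)=s(u+(u-s))\ge 0$ gives
\[
u^2\ \le\ \ind_{u<s}\,2u^2+\ind_{u\ge s}(u^2+s(2u-s))\ \le\ 2u^2.
\]

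\emph{Step 4 (geometric comparison via symmetry).} Using $u^2$ is even in $u$, the lower bound from Step 3 and $B(0,R)\subset C$ give
\[
\text{LHS}\ \ge\ \tfrac12\int_{C}z_d^2\,j(|z|)\,dz\ \ge\ \tfrac12\int_{B(0,R)}z_d^2\,j(|z|)\,dz
=\tfrac{1}{2d}\int_{B(0,R)}|z|^2\,j(|z|)\,dz,
\]
where the last equality is the rotational symmetry $\int_{B(0,R)}z_i^2 j(|z|)dz=\tfrac1d\int_{B(0,R)}|z|^2 j(|z|)dz$. Similarly the upper bound and the inclusion $C\subset B(0,\sqrt 2R)$ give
\[
\text{LHS}\ \le\ \int_{C}z_d^2\,j(|z|)\,dz\ \le\ \int_{B(0,\sqrt 2R)}z_d^2\,j(|z|)\,dz
=\tfrac{1}{d}\int_{B(0,\sqrt 2R)}|z|^2\,j(|z|)\,dz.
\]
Finiteness of the last integral follows from the Lévy condition $\int(1\wedge|z|^2)j(|z|)\,dz<\infty$ by splitting at $|z|=1$. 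The only slightly delicate point of the proof is the passage from $1$-dimensional P.V.\ to $d$-dimensional P.V.\ in Step 2, but this is exactly the cancellation that made the P.V.\ in Lemma \ref{l:r-time1} well defined.
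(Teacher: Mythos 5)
Your proposal is correct and follows essentially the same route as the paper: apply Lemma \ref{l:r-time1} fiberwise in the $w_d$ variable, bound the resulting weight between $u^2$ and $2u^2$, and compare the cylinder with $B(0,R)$ and $B(0,\sqrt2 R)$ using rotational symmetry. The only cosmetic difference is in justifying the principal value (you invoke the odd/even decomposition of $[(s+z_d)_+]^2-s^2$ near the singularity, while the paper computes the $\eps$-truncated integral explicitly and uses monotone convergence), which amounts to the same cancellation.
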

\pf
By Lemma \ref{l:r-time1}, for all small $\eps \in (0, x_d/2)$, 
\begin{align*}
& \int_{\{(\wt w, w_d) \in \R^d: |\wt w|<R, |w_d-x_d|<R, 
 |\wt w|^2+ |w_d-x_d|^2>\eps^2
 \}}
([(w_d)_+]^2-x_d^2) j(|w-x|)dw \\
&
=\int_{\{|\wt w|<R\}} \int_{\{ \sqrt{(\eps^2- |\wt w|^2)_+}<|w_d-x_d|<R\}} 
([(w_d)_+]^2-x_d^2) j((|w_d-x_d|^2+|\wt w|^2)^{1/2})  dw_d d\wt w \\
&= \int_{\{|\wt w|<R\}}  \int_{\sqrt{(\eps^2- |\wt w|^2)_+}}^{R} ({\bf 1}_{u<x_d}2u^2 +{\bf 1}_{u \ge x_d}(|u|^2+x_d(2u-x_d))) j((|u|^2+|\wt w|^2)^{1/2})  du d\wt w. 
\end{align*}
Thus by the monotone convergence theorem, \eqref{e:r-di3} is equal to 
\begin{align*}
&\frac{1}{2} \int_{\{|\wt w|<R\}}  \int_{-R}^{R} ({\bf 1}_{|u|<x_d}2u^2 +{\bf 1}_{|u| \ge x_d}(u^2+x_d(2u-x_d))) j((|u|^2+|\wt w|^2)^{1/2})  du d\wt w \\
&\ge
\frac{1}{2} \int_{B(0, R)} |u|^2 j((|u|^2+|\wt w|^2)^{1/2})  du d\wt w=\frac{1}{2d} \int_{B(0,R)}|z|^2j(|z|)dz.
\end{align*}
Since $x_d(2u-x_d) \le u^2$, wee also have the upper bound as
\begin{align*}
&\frac{1}{2} \int_{\{|\wt w|<R\}}  \int_{-R}^{R} ({\bf 1}_{|u|<x_d}2u^2 +{\bf 1}_{|u| \ge x_d}(u^2+x_d(2u-x_d))) j((|u|^2+|\wt w|^2)^{1/2})  du d\wt w \\
&\le
 \int_{B(0, \sqrt 2R)} |u|^2 j((|u|^2+|\wt w|^2)^{1/2})  du d\wt w=\frac{1}{d} \int_{B(0,\sqrt 2 R)}|z|^2j(|z|)dz.
\end{align*}
\qed

Let $\psi(r)=1/H(r^{-2})$. We first note that $\Phi (r) \le \psi(r)$ and 
\begin{equation}\label{eqn:polypsi}
c_1 \Big(\frac Rr\Big)^{2\gamma} \,\leq\,
 \frac{\psi (R)}{\psi (r)}  \ \leq \ c_2
\Big(\frac Rr\Big)^{2\delta}
\qquad \hbox{for every } 0<r<R<1.
\end{equation}

Since
$$
\int_0^r\frac{s}{\psi(s)}ds=\int_0^rs H(s^{-2})ds=\frac12\int_{r^{-2}}^\infty  \frac{H(t)}{t^2}dt=
-\frac12\int_{r^{-2}}^\infty  (\frac{\phi(t)}{t})'dt=\frac{r^{2}}2 \phi(r^{-2})=\frac{r^{2}}{2\Phi(r)},
$$
$\Phi$ and $\psi$ are also related as 
\begin{align}
\label{e:r-di1}
\Phi(r)=\frac{r^2}{2\int_0^r\frac{s}{\psi(s)}ds}.
\end{align}

Using \eqref{e:jlow}, \eqref{eqn:polypsi} and \eqref{e:r-di1}, we get that for $R<1$,
\begin{align}
\label{e:r-di5}
& \int_{B(0,R)}|z|^2j(|z|)dz \ge c_1^{-1} c_2(d) \int_0^R \frac{r}{\psi(r)}dr= \frac{ c_2(d)}{2c_1} \frac{R^2}{\Phi(R)},
\end{align}
and
\begin{align}
\label{e:r-di6}
& \int_{B(0,R)^c}j(|z|)dz \le c_2(d) (c_1\int_R^1 \frac{dr}{r\psi(r)}
+\int_1^\infty j(r)dr)=c_2(d) (\frac{c_1}{ \psi (R)}\int_R^1  \frac{\psi(R)}{r\psi(r)}dr
+c_3)\nn\\
&\le \frac{c_2(d)c_1c_4(1-R^{2\delta})+c_3}{ \psi (R)} \le  \frac{c_2(d)c_1c_4+c_3}{ \psi (R)} \le  \frac{c_2(d)c_1c_4+c_3}{ \Phi (R)}
\end{align}
 Choose 
\begin{align}\label{e:r-diM}
M_0:=4[c_1d(c_2(d)c_1c_4+c_3)/c_2(d)]^{1/2}>4.
\end{align}
By \eqref{e:r-di5} and \eqref{e:r-di6},  if $r \le R/M_0$ then
\begin{align}
\label{e:r-di7}
r^2\int_{B(0,R)^c}j(|z|)dz   \le R^2  \frac{c_2(d)c_1c_4+c_3}{ M_0^2\Phi (R)}
 \le \frac{ c_2(d)}{8dc_1} \frac{R^2}{\Phi(R)}
\le \frac{1}{4d}  \int_{B(0,R)}|z|^2j(|z|)dz.
\end{align}
We use this constant $M_0$ in Lemma \ref{l:r-time3}, Proposition \ref{p:boundn1} and
Theorem \ref{t:n_estimates} below.

For any function $f:\R^d\to \R$ and $x\in \R^d$, we define an operator as follows:
\begin{align*}
\gener f(x)&:=P.V.\int_{\R^d} (f(y)-f(x))j(|x-y|)dy,\\
{\cal D} (\gener)&:=\left\{f\in C^{2}(\R^d):P.V.\int_{\R^d} (f(y)-f(x))j(|x-y|)dy\,\mbox{ exists and is finite.}\right\}.
\end{align*}
Recall that $C_0^2(\R^d)$ is the collection of $C^2$ functions in $\R^d$ vanishing at infinity. It is well known that $C_0^2(\R^d)\subset {\cal D} (\gener)$ and that, by the rotational symmetry of $X$,
\begin{align}
\label{e:AL}
A |_{C_0^2(\R^d)}=\gener |_{C_0^2(\R^d)}
\end{align}
 where  $A$ is the infinitesimal generator of $X$. We also recall that $\delta_D (x)$
is the distance of the point $x$ to $D^c$.
\begin{lemma}
\label{l:r-time3}
Suppose that   $D$ is a $C^{1,1}$ open set in $\R^d$ with characteristics $(R_0, \Lambda)$.
For any $z\in \partial D$ and $r\le (1\wedge R_0)/4$, we define 
$$f(y)=f_{r,z}(y):=(\delta_D(y))^2{\bf 1}_{D\cap B(z, 2r)} (y).$$
Then there exist $c=c(\phi,  \Lambda, d)>1$ and 
$\wt R=\wt R(\phi,  \Lambda, d)\in (0, (1\wedge R_0)/4)$ independent of $z$ such that 
for all $r \le \wt R$,  
$\gener f$ is well-defined in $D\cap B(z, r/M_0)$ and 
\begin{align}\label{generd}
 c \frac{r^2}{\Phi(r)}  \ge \gener f(x)\ge c^{-1} \frac{r^2}{\Phi(r)} \,\quad\mbox{ for all } x\in D\cap B(z, r/M_0). 
\end{align}
\end{lemma}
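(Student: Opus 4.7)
My plan is to compare $\gener f(x)$ to its analogue on the tangent half-space at the closest boundary point $\pi(x)$, for which Lemma \ref{l:r-time2} provides two-sided bounds of the required order $r^2/\Phi(r)$, and to absorb the $C^{1,1}$ boundary curvature into a lower-order perturbation using the quadratic bound $|\varphi(\wt y)|\le\tfrac{\Lambda}{2}|\wt y|^2$. Place $z$ at the origin, work in $CS_z$ so that $\partial D\cap B(0,R_0)$ is the graph $\{y_d=\varphi(\wt y)\}$, and note that for $x\in D\cap B(0,r/M_0)$ with $r\le R_0/4$ the projection $\pi(x)\in\partial D$ is unique. I would decompose
\[
\gener f(x) \;=\; \underbrace{P.V.\!\!\int_{B(0,2r)}\!\!(f(y)-f(x))\, j(|x-y|)\,dy}_{=:\, I_{\mathrm{near}}} \;-\; \underbrace{f(x)\!\!\int_{B(0,2r)^c}\! j(|x-y|)\,dy}_{=:\, I_{\mathrm{far}}\;\ge\;0},
\]
where $I_{\mathrm{near}}$ converges as a principal value because $\delta_D^2$ is $C^{1,1}$ in the tubular neighborhood $\{0<\delta_D<R_0/2\}$ (via the interior/exterior ball conditions). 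For $y\in B(0,2r)^c$ we have $|x-y|\ge 2r-r/M_0\ge r$ (since $M_0\ge 4$), so \eqref{e:r-di6} gives $I_{\mathrm{far}}\le C\delta_D(x)^2/\Phi(r)\le Cr^2/(M_0^2\Phi(r))$, which is a small fraction of $r^2/\Phi(r)$ for the $M_0$ fixed in \eqref{e:r-diM}.

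For $I_{\mathrm{near}}$, the $C^{1,1}$ hypothesis together with the interior/exterior ball conditions yields, for $y\in D\cap B(0,2r)$, the pointwise expansions $\delta_D(y) = \rho_z(y) + O(r\,\delta_D(y))$ with $\rho_z(y)=y_d-\varphi(\wt y)$, and $\rho_z(y)^2 = y_d^2 + O(\Lambda r^2 |y_d|) + O(\Lambda^2 r^4)$. Introduce the box $Q_R:=\{w:|\wt w-\wt x|<R,\ |w_d-x_d|<R\}$ with $R:=r/\lambda$ and $\lambda=\lambda(\Lambda,M_0)>1$ chosen large enough so that (i) $Q_R\subset B(0,2r)$ and (ii) $Q_R\cap\{w_d>0\} = Q_R\cap D$ (possible once $\wt R$ is small, since $|\varphi|\le\Lambda r^2/2\ll R$ on $\{|\wt w|\le R\}$). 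Substituting the expansions and applying Lemma \ref{l:r-time2} to $(w_d)_+^2 - x_d^2$ on $Q_R$ gives the leading term
\[
P.V.\!\!\int_{Q_R}\!\!\big((w_d)_+^2-x_d^2\big) j(|x-w|)\,dw \;\asymp\; \int_{B(0,R)}|u|^2 j(|u|)\,du \;\asymp\; \frac{R^2}{\Phi(R)} \;\asymp\; \frac{r^2}{\Phi(r)},
\]
using \eqref{e:r-di5} and the weak scaling \eqref{eqn:poly} of $\Phi$. The three remainders---the integral over $B(0,2r)\setminus Q_R$, the curvature correction on $Q_R$, and $I_{\mathrm{far}}$---are each of the form $r^2/\Phi(r)$ times a small factor depending on $\lambda$, on $\Lambda\wt R$, and on $M_0^{-2}$ respectively, all controlled via \eqref{e:r-di5}--\eqref{e:r-di7}.

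\textbf{Main obstacle.} The upper bound in \eqref{generd} follows from a direct Lipschitz majorization ($\delta_D(y)^2\le 2|y-x|^2+2\delta_D(x)^2$) integrated against $|u|^2 j(|u|)\,du$ via \eqref{e:r-di5}--\eqref{e:r-di6}, and is essentially a one-line estimate. The substantive difficulty is the lower bound: each of the three remainder pieces is a priori of the same order $r^2/\Phi(r)$ as the main term and may carry an indefinite sign, so one must beat the remainders by absolute constants rather than by smallness of $r$. The fix is the correct order of quantifiers---first fix the implicit constant of Lemma \ref{l:r-time2} (and of \eqref{e:r-di5}), then choose $\lambda$ large so that the $B(0,2r)\setminus Q_R$ remainder is, say, a tenth of the main term (using the weak scaling of $\Phi$ at the smaller scale $r/\lambda$), and only afterwards shrink $\wt R$ so that the $C^{1,1}$ curvature error $O(\Lambda\wt R)$ is also a small fraction. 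Uniformity in $z\in\partial D$ is automatic since only $\Lambda$ and $R_0$ enter the bounds.
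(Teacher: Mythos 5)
Your overall architecture is the paper's: flatten the boundary at the nearest boundary point, compare $f$ with the half-space profile $[(w_d)_+]^2$ so that Lemma \ref{l:r-time2} supplies the two-sided main term, control the far field using $M_0$, and absorb the $C^{1,1}$ curvature error (which gains a factor $r$) by shrinking $\wt R$. However, there is a genuine error in how you propose to beat the dominant remainder, and it occurs exactly at the step you single out as the crux.

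The problem is the direction in which you send $\lambda$. With the comparison box $Q_R$ of half-width $R=r/\lambda$ centred at $x$, the main term from Lemma \ref{l:r-time2} is comparable to $\int_{B(0,R)}|u|^2 j(|u|)\,du\asymp R^2/\Phi(R)$, while the only negative contribution from outside $Q_R$ (for the lower bound the $f(y)$-part of the integrand over $B(z,2r)\setminus Q_R$ is nonnegative and can be dropped) is $-f(x)\int_{B(x,R)^c}j\ge -\,\delta_D(x)^2\cdot C/\Phi(R)$ by \eqref{e:r-di6}. The ratio of this remainder to the main term is therefore of order $\delta_D(x)^2/R^2\le (r/M_0)^2/(r/\lambda)^2=\lambda^2/M_0^2$, with \emph{no} help from the weak scaling of $\Phi$: the factor $1/\Phi(R)$ appears in both numerator and denominator and cancels. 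Hence enlarging $\lambda$ makes the remainder \emph{larger} relative to the main term, and your quantifier order (``first fix the constants, then choose $\lambda$ large, then shrink $\wt R$'') cannot close. The correct choice is the opposite one: take the box at the full scale $R=r$ (the paper's $\bB_x(r)$, which contains $B(x,r)$), so that the ratio above is $\le C/M_0^2$, and the smallness is delivered by the pre-chosen constant $M_0$ of \eqref{e:r-diM} via \eqref{e:r-di7}, i.e., by the hypothesis $\delta_D(x)\le r/M_0$ rather than by shrinking the comparison box. Two smaller points: (i) you should set up the coordinate system at $z_x$ (the nearest boundary point to $x$), not at $z$, so that $x=(\wt 0,x_d)$ with $x_d=\delta_D(x)$ and Lemma \ref{l:r-time2} applies verbatim; (ii) the identity $Q_R\cap\{w_d>0\}=Q_R\cap D$ cannot hold for a curved boundary — the symmetric difference is precisely the thin set $\{|y_d|\le 2\Lambda|\wt y|^2\}$ whose contribution must be estimated as part of the curvature correction, and your claimed ``one-line'' upper bound also needs a separate second-order Taylor argument near $y=x$, since $\delta_D(x)^2\,j(|y-x|)$ is not integrable at $y=x$ and the principal value must be handled there.
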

\pf
Since the case of $d=1$ is easier, we give the proof only for $d\ge 2$. 
Without loss of generality we assume that $\Lambda>1$.
For $x\in D\cap B(z, r/M_0)$, choose  $z_x\in \partial D$ be a point satisfying $\delta_D(x)=|x-z_x|$.
Let $\varphi$ be a  $C^{1,1}$ function and $CS=CS_{z_x}$ be an orthonormal coordinate system with $z_x$ chosen as the origin so that $\varphi(\wt 0)=0$, $ \nabla\varphi(\wt 0)=(0, \dots, 0)$, $\| \nabla \varphi \|_\infty \leq \Lambda$, $|\nabla \varphi (\wt y)-\nabla \varphi (\wt z)| \leq \Lambda |\wt y-\wt z|$, and $x=(\wt{0}, x_d)$, $ D\cap B(z_x, R_0)=\{y=(\wt{y}, y_d)  \in B(0, R_0) \mbox{ in } CS: y_d>\varphi(\wt{y})\}$.
We fix the function $\varphi$ and the coordinate system $CS$, and  consider the truncated square function $[(y_d)_+]^2$ in $CS$. 
Let
$$\bB_x=\bB_x(r):={\{(\wt w, w_d) \mbox{ in } CS:  |\wt w|<r, |w_d-x_d|<r\}} \subset B(z,2r),$$
and we define $\wh \varphi :B(\wt{0}, r)\to \R$ by $\wh \varphi(\wt{y}):=2\Lambda|\wt{y}|^{2}$.
Since $\nabla\varphi (\wt{0})=0$, by the mean value theorem we have 
$-\wh \varphi(\wt{y})\le\varphi(\wt{y})\le \wh \varphi(\wt{y})$  for any $y\in D\cap B(x, r/2)$
and so that
\begin{align*}
\{z=(\widetilde{z}, z_d)\in \bB_x : z_d\ge& \wh{\varphi}(\widetilde z)\}\subset D \cap  \bB_x \subset \{z=(\widetilde{z}, z_d)\in \bB_x : z_d\ge -\wh{\varphi}(\widetilde z)\}.
\end{align*}
Let $A:=\{y\in \bB_x:  -\wh{ \varphi}(\widetilde{y})\le y_d\le \wh{\varphi}(\widetilde{y})\}$
and 
$E:=\{y\in \bB_x: y_d>  \wh{\varphi}(\widetilde{y})\}\subset D$ so that 
\begin{align}
\label{e:r-di10}
& \int_{\bB_x(r)} \left|[(y_d)_+]^2-(\delta_D(y))_+^2 \right| j(|y-x|)dy\nn\\
\le& \int_A (y_d^2 + \delta_D(y)^2)j(|y-x|)dy +\int_{E} |y_d^2-\delta_D(y)^2|j(|y-x|)dy\nn\\
\le& 2^5\Lambda^2\int_A |\wt{y}|^{4} j(|\wt{y}|)dy +c_0r\int_{E}  |y_d -\delta_D(y)|j(|y-x|)dy
\end{align}
where we have used 
$y_d^2 + \delta_D(y)^2\le 2(2\wh{\varphi} (\widetilde{y}))^2=
2(4\Lambda|\wt{y}|)^2$ for $y \in A$.
We will show that the above is less than $c_1 r^3/\Phi(r)$.

First,  let $m_{d-1}(dy)$ be the Lebesgue  measure on $\R^{d-1}$.
Since $m_{d-1}(\{y:|\widetilde{y}|=s, - \wh{\varphi}(\widetilde{y})\le y_d\le \wh{\varphi}(\widetilde{y})\}) \le c_2 s^d$ for $0<s<r$,
using  polar coordinates for $|\wt{y}|=s$, by \eqref{e:r-di1} and \eqref{e:t:up1j}
\begin{align}
\label{e:r-di11}
\int_A |\wt{y}|^{4} j(|\wt{y}|)dy \le  r^3\int_A |\wt{y}| j(|\wt{y}|)dy \le c_3 r^3\int_0^r  \frac{{s}}{\psi(s)}ds  =\frac{c_3 r^5}{2\Phi(r)}.
\end{align}
Second, when $y\in E$, we have that $|y_d-\delta_D(y)|\le (1\wedge R_0)^{-1}\wh{\varphi}(\widetilde{y})$.
Indeed,  if $0<y_d \le \delta_D(y)$ and $y\in E$,  $ \delta_{D}(y)\le y_d+|\varphi(\widetilde{y})|\le y_d+\wh{\varphi}(\widetilde{y})$. 
Since we assume that $\Lambda>1$, we have $|\wt y|^2+(R_0-y_d)^2 <|\wt y|^2+(R_0-2\Lambda |\wt y|^2)^2<R^2$. Thus,  if $y_d \ge \delta_D(y)$ and $y\in E$, using the interior ball condition, 
we have 
 \begin{align*}
&y_d -\delta_D(y) \le y_d-R_0+\sqrt{ |\wt y|^2+(R_0-y_d)^2} \\
&= \frac{|\wt y|^2}
{\sqrt{ |\wt y|^2+(R_0-y_d)^2} + (R_0-y_d)}\,\le\, \frac{ |\wt y|^2} {2 (R_0-y_d)}
\le
\frac{ |\wt y|^2}{1\wedge R_0} \le\frac{ \wh{\varphi}(\widetilde{y}) }{1\wedge R_0} .
\end{align*}
Thus,
\begin{align}
\label{e:r-di111}
\int_{E} |y_d -\delta_D(y)|j(|y-x|)dy \le 
2 \Lambda (1\wedge R_0)^{-1} \int_{E}  |\wt{y}|^2  
   j((|y_d-x_d|+|\wt y|)/2)dy_dd\wt y.
\end{align}
Since $E \subset \{ (\wt{y}, y_d): |\widetilde{y}|<r, \, \wh{\varphi}(\widetilde{y})  <y_d< \wh{\varphi}(\widetilde{y})+2r\},$
using the polar coordinates for $|\wt{y}|=v$ and  the change of the variable $s:= y_d-\wh{\varphi}(v)$, we have  by  \eqref{e:t:up1j} and Lemma \ref{lem:bf},
\begin{align}
\label{e:r-di12}
\int_{E}  |\wt{y}|^2  
   j((|y_d-x_d|+|\wt y|)/2)dy_dd\wt y
\le c_4\int_{0}^{r} \int_{0}^{2r} 
\frac{dsdv}{\psi(v+ |s+\wh{\varphi}(v)-x_d|)}.\end{align}
Using 
\cite[Lemma 4.4]{KSV14}
with 
non-increasing functions $f(s)\equiv 1$ and $g(s):= \psi(s)^{-1}$ and 
$x(r)=x_d-\wh{\varphi}(r)$
 and get 
\begin{align}
\label{e:r-di13}
\int_{0}^{r} \int_{0}^{2r}
\frac{dsdv}{\psi(v+ |s+\wh{\varphi}(v)-x_d|)} \le  2
\int_0^{3r} (\int_0^u ds) \frac{du}{\psi(u)} \le \int_0^{3r} u\frac{du}{\psi(u)}.
\end{align}
Applying \eqref{e:r-di11}--\eqref{e:r-di13} to \eqref{e:r-di10} and using \eqref{e:r-di1}, 
we have that
\begin{align}
\label{e:r-di14}
 \int_{\bB_x(r) }|[(y_d)_+]^2-(\delta_D(y))_+^2|j(|y-x|)dy
\le c_5 r \int_0^{3r} u\frac{du}{\psi(u)} \le  c_6 r \frac{r^2}{\Phi(r)}.
\end{align}

On the other hand, since $x_d =\delta_D(x) \le r/M_0$, we see that $ \gener f(x)$ is well-defined and 
\begin{align}
\label{e:r-di8}
\int_{\bB_x(r)^c}(f(y)-x_d^2)j(|y-x|)dy \ge -x_d^2 \int_{\bB_x(r)^c}j(|y-x|)dy\ge -(r/M_0)^2 \int_{B(0, r)^c}j(|z|)dz.
\end{align}
Thus, using our choice of the positive constant  $M_0$, \eqref{e:r-di5}, \eqref{e:r-di7} and Lemma 
\ref{l:r-time2}, we have
\begin{align}
&\gener f(x) =P.V.\int_{\bB_x(r)}(f(y)-x_d^2)j(|y-x|)dy+ \int_{\bB_x(r)^c}(f(y)-x_d^2)j(|y-x|)dy\nn\\
&=P.V.\int_{\bB_x(r)}([(y_d)_+]^2-x_d^2)j(|y-x|)dy+ \int_{\bB_x(r)^c}(f(y)-x_d^2)j(|y-x|)dy
\nn\\
&\qquad+\int_{\bB_x(r)}(f(y)-[(y_d)_+]^2)j(|y-x|)dy
\label{e:r-di99}\\
 & \ge c_7 \frac{r^2}{\Phi(r)}- \int_{\bB_x(r) }|[(y_d)_+]^2-(\delta_D(y))_+^2|j(|y-x|)dy \ge (c_7-rc_6) \frac{r^2}{\Phi(r)}.\label{e:r-di9}
\end{align}
Let $c_8 := 
(1\wedge R_0 \wedge (c_7/c_6))/4
$. 
Then, from
\eqref{e:r-di9} and \eqref{e:r-di14} we conclude that 
 for all $r \le c_8$, $z \in \partial D$ and $x\in D\cap B(z, r/M_0)$, 
 $
 \gener f(x) > 2^{-1} c_7 \frac{r^2}{\Phi(r)}, 
$ and, by Lemma 
\ref{l:r-time2}, \eqref{e:r-di6}, \eqref{e:r-di14} and \eqref{e:r-di99} we also have 
\begin{align*}
&\gener f(x) \le c_9\frac{r^2}{\Phi(r)} +r^2 \int_{\bB_x(r)^c}j(|y-x|)dy+\int_{\bB_x(r)}|f(y)-[(y_d)_+]^2|j(|y-x|)dy \le c_{10} \frac{r^2}{\Phi(r)}.
\end{align*}
We have proved the lemma.
\qed

Since \eqref{e:AL} holds, we have  Dynkin's formula for $\gener$: for each $g \in C_c^{2}(\R^d)$ and any bounded open subset $U$ of $\R^d$ we have
\begin{equation} \label{n_Dynkin_formula}
\E_x\int_0^{\tau_U}  \gener g(Z_t) dt =\E_x[g(Z_{\tau_U})]- g(x).
\end{equation}

Note that, since $H$ may not be comparable to $\phi$, the next result can not be obtained using L\'evy system and \eqref{e:jlow}.

\begin{prop}\label{p:boundn1}
Suppose that   $D$ is a $C^{1,1}$ open set in $\R^d$ with characteristics $(R_0, \Lambda)$.
Let $\wt R$ be the constant in Lemma \ref{l:r-time3}.
There exists a constant $c >0$ such that for any $z\in \partial D$, $r \le \wt R$, open set $U \subset  D\cap B(z, r/M_0)$, 
and $x \in U$, 
$$\P_{x}(X_{\tau_U} \in B(z, 2r ))  \ge c_1
 \frac{\E_{x}[\tau_U]}{\Phi(r)}.  $$
\end{prop}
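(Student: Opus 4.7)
The plan is to apply Dynkin's formula to the auxiliary function $f = f_{r,z}$ constructed in Lemma \ref{l:r-time3} and exploit the pointwise lower bound on $\gener f$ established there.

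First, I would try to justify the Dynkin-type identity
\begin{align*}
\E_x[f(X_{\tau_U})] - f(x) \,=\, \E_x \int_0^{\tau_U} \gener f(X_t)\, dt
\end{align*}
for this specific $f$, even though $f$ is neither $C^2$ nor in $C_c^2(\R^d)$. The key observations are that $f$ is bounded and has compact support, $\gener f$ is well-defined and bounded on $U \subset D \cap B(z, r/M_0)$ by Lemma \ref{l:r-time3}, and the process $X_t$ lives in $U$ prior to $\tau_U$ where only the values of the integrand on $U$ matter. A standard route is to approximate $f$ by mollification with functions $f_n \in C_c^2(\R^d)$ (which agree with $f$ on a large compact subset of $\{y : \delta_D(y) > 0\} \cap B(z,2r)$), apply \eqref{n_Dynkin_formula} to $f_n$, and pass to the limit using dominated convergence—here the uniform boundedness of $\gener f_n$ on $U$ (again from Lemma \ref{l:r-time3}, since the estimate only relied on $\phi$, $\Lambda$, $d$) and $\E_x[\tau_U] \leq \E_x[\tau_{D \cap B(z, r/M_0)}] < \infty$ are essential.

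Next, since $U \subset D \cap B(z, r/M_0)$, the lower bound in \eqref{generd} gives $\gener f(X_t) \ge c_1^{-1} r^2/\Phi(r)$ for all $t < \tau_U$, so
\begin{align*}
\E_x[f(X_{\tau_U})] \,\ge\, f(x) + c_1^{-1}\, \frac{r^2}{\Phi(r)}\, \E_x[\tau_U] \,\ge\, c_1^{-1}\, \frac{r^2}{\Phi(r)}\, \E_x[\tau_U],
\end{align*}
using $f(x) = \delta_D(x)^2 \ge 0$.

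For the complementary upper bound on $\E_x[f(X_{\tau_U})]$, note that $f \equiv 0$ outside $D \cap B(z,2r)$, and for $y \in D \cap B(z,2r)$ the fact that $z \in \partial D$ yields $\delta_D(y) \le |y-z| < 2r$, hence $f(y) \le 4r^2 \, \mathbf{1}_{B(z,2r)}(y)$. Therefore
\begin{align*}
\E_x[f(X_{\tau_U})] \,\le\, 4r^2\, \P_x\bigl(X_{\tau_U} \in B(z,2r)\bigr).
\end{align*}
Combining the two displays and dividing by $4r^2$ yields the desired inequality with $c = (4 c_1)^{-1}$.

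The main obstacle is the rigorous passage to Dynkin's formula for the non-smooth, discontinuous function $f$; all other steps are essentially algebraic manipulations of the two-sided bounds in Lemma \ref{l:r-time3}. Once the approximation step is handled (for instance by cutting off the indicator with a smooth bump and mollifying $\delta_D^2$, then invoking dominated convergence on both sides), the rest of the argument is immediate.
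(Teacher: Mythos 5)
Your proposal is correct and follows essentially the same route as the paper: mollify $f_{r,z}$ to get $C_c^2$ approximants, transfer the lower bound $\gener f \ge c^{-1}r^2/\Phi(r)$ from Lemma \ref{l:r-time3} to the mollifications, apply Dynkin's formula \eqref{n_Dynkin_formula}, pass to the limit, and then bound $\E_x[f(X_{\tau_U})]$ above by $4r^2\,\P_x(X_{\tau_U}\in B(z,2r))$ using $\delta_D(y)\le|y-z|<2r$ on the support of $f$. The only cosmetic difference is that the paper carries out the limit via the exhausting domains $B_k=\{y\in U:\delta_U(y)\ge 2^{-k}\}$ rather than directly on $U$.
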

\pf 
Fix $z\in \partial D$, $r\le \wt R$ and an open set $U \subset  D\cap B(z, r/M_0)$. Define $f(y)=(\delta_D(y))^2{\bf 1}_{D\cap B(z, 2r)} (y).$
Then by
Lemma \ref{l:r-time3},
there exists $c_1=c_1(\phi,  \Lambda, d) \in (0,1)$ such that for all $r \le \wt R$ and $y\in D\cap B(z, r/M_0)$, 
$
 c_1^{-1} \frac{r^2}{\Phi(r)} \ge \gener f(y)\ge c_1 \frac{r^2}{\Phi(r)}.
$
Let $v\ge 0$ be a smooth radial function such that $v(y)=0$ for $|y|>1$ and $\int_{\R^d} v(y) dy=1$.
For $k\geq 1$,  define $v_k(y):=2^{kd} v (2^k y)$ and $f^{(k)}_{ r}:= v_k*f \in C_c^2(\R^d)$,
and let 
$B_k:=\{y \in U: \delta_{U }(y) \ge 2^{-k}\}$. 
We note that
\begin{align*}
&\int_{|w-y|>\eps} (f^{(k)}_r(y)-f^{(k)}_r(w))j(|w-y|)dy\\
=&\int_{|u|<2^{-k}}v_k(u)\int_{|w-y|>\eps}\(f(y-u)-f(w-u)\)j(|w-y|)dydu.
\end{align*}
By letting $\varepsilon \downarrow 0$ and using the dominated convergence theorem,
it follows that for $w \in B_k$ and all large $k$,
$$
\gener f^{(k)}_r(w)=  \int_{|u|<2^{-k}} v_k(u) \gener^u f(w)\, du  \ge  c_1 \frac{r^2}{\Phi(r)} \int_{|u|<2^{-k}} v_k(u) \, du =  c_1 \frac{r^2}{\Phi(r)}.
$$
Therefore, by the Dynkin's formula in \eqref{n_Dynkin_formula}  we have that for $x \in B_k$ and all large $k$,
$$
c_1 r^2\frac{\E_x[\tau_{B_k}]}{\Phi(r)}  \le \E_{x}\int_0^{\tau_ {B_k}} \sL f^{(k)}_r(X_s)ds = \E_{x} f^{(k)}_r(X_{\tau_ {B_k} }). $$
By letting $k\to \infty$, for any $x \in U$, we conclude that 
$$
\P_{x}(X_{\tau_U} \in B(z, 2r )\setminus U)  \ge
\frac{\E_{x} f(X_{\tau_U })}{\sup_{z \in \text{supp}(f)\setminus U} f(z)}
 \ge  c_1\frac{\E_{x}[\tau_U]}{4\Phi(r)}.
$$
\qed

Let $X^d$ be the last coordinate of $X$ and let $L_t$ be the local time {at $0$} for
{($\sup_{s\le t}X^d_s)-X^d_t$}.
Using its right-continuous inverse $L^{-1}_s$, 
define the ascending ladder-height process as
$H_s = X_{L^{-1}_s}^{d} $.
We define  $V$, 
the renewal function of the ascending ladder-height process $H$, as
\begin{equation*}\label{e:defV}
V(x) = \int_0^{\infty}\P(H_s \le x)ds, \quad
x {\in \R}.
\end{equation*}
 It is well-known that $V$ is subadditive (see \cite[p.74]{Be}). 
 Note that, since  the resolvent measure of $X^d_t$ is absolutely continuous, 
by \cite[Theorem 2]{Si}, 
 $V$ is absolutely continuous and $V$ and $V^\prime$ are 
 harmonic  for the process $X_t^d$ on $(0,\infty)$. 
Thus,  by the strong Markov property, 
$V((x_d)_+)$ and $V'((x_d)_+)$ are  harmonic in  the upper half space $\R^d_+:=\{x=(\widetilde{x},x_d) \in \R^d:x_d>0 \}$ with respect to $X$.
Furthermore, the function $V(r)$ is comparable to $\Phi(r)^{1/2}$ (see \cite[Corollary 3]{BGR1}): there exists $c>1$ such that 
\begin{equation}\label{compVhp}
c^{-1}\Phi(r)^{1/2} \le V(r) \le c \Phi(r)^{1/2} \qquad\mbox{ for any }\,\, r>0.
\end{equation} 

Using \cite[(2.23) and Lemma 3.5]{BGR2}, we see that \cite[Proposition 3.2]{GKK} also holds in our setting. 
Moreover, if we assume \eqref{e:jdouble}, then we can use  \cite[Theorem 1]{KR} so that \cite[Proposition 3.1]{GKK} holds in our setting too.
Therefore, by following the proof of 
\cite[Proposition 3.3]{GKK} line by line, we have the following.
\begin{thm}\label{st3} 
Let $w(x):=V((x_d)_+)$. Suppose that  \eqref{e:jdouble} holds. Then, for any $x\in \R^d_+$, $\gener w(x)$ is well-defined and $\gener w(x)=0$.
\end{thm}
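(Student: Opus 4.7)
The plan is to reduce the problem to a one-dimensional statement about the last coordinate process $X^d$ and then invoke the probabilistic harmonicity of $V$ on $(0,\infty)$. Since $w(y)=V((y_d)_+)$ depends only on $y_d$ and the L\'evy density $j(|\cdot|)$ of $X$ is rotationally symmetric, performing the transverse $(d-1)$-dimensional integration first (by Fubini) replaces $j(|\cdot|)$ by the marginal kernel
\[
 j_1(|u|) := \int_{\R^{d-1}} j\bigl(\sqrt{|\widetilde z|^2 + u^2}\bigr) d\widetilde z, \qquad u \in \R\setminus\{0\},
\]
which is exactly the L\'evy density of the one-dimensional subordinate Brownian motion $X^d=(B^d_{S_t})_{t\ge 0}$. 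Consequently, once the integrability issues below are settled, one has $\gener w(x)=\gener_1 V(x_d)$, where $\gener_1$ denotes the pointwise generator of $X^d$, and the claim reduces to $\gener_1 V(x_d)=0$ for $x_d>0$.

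Before the Fubini step, the well-definedness of the principal value must be justified. Near $y=x$, the integrand is controlled by local Lipschitz behaviour of $V$ on any compact subinterval of $(0,\infty)$ combined with the evenness of $j(|\cdot|)$ in $y-x$, which cancels the linear part exactly. For the tail, \eqref{compVhp} gives $V(r)\le c\,\Phi(r)^{1/2}$; combined with \eqref{e:t:up1j}, the doubling condition \eqref{e:jdouble} and Lemma \ref{lem:bf}, this yields
\[
 \int_{|u|\ge 1} V(|u|)\, j_1(|u|)\, du < \infty,
\]
securing the tail and legitimising Fubini.

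The remaining one-dimensional identity $\gener_1 V(x_d)=0$ is the content of \cite[Proposition 3.3]{GKK}. Its proof mollifies $V$, applies Dynkin's formula \eqref{n_Dynkin_formula} on an interval $(x_d-r,x_d+r)\Subset(0,\infty)$ using the probabilistic harmonicity of $V$ for $X^d$ there (which comes from \cite[Theorem 2]{Si} and the strong Markov property), and passes to the limit with the help of \cite[Propositions 3.1 and 3.2]{GKK}. As explained in the remarks immediately preceding the theorem, those two auxiliary results are available in our generality: \cite[Proposition 3.2]{GKK} via \cite[(2.23) and Lemma 3.5]{BGR2}, and \cite[Proposition 3.1]{GKK} via \cite[Theorem 1]{KR}, whose hypothesis is precisely our \eqref{e:jdouble}. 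Hence the argument of \cite[Proposition 3.3]{GKK} transcribes verbatim.

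The main obstacle is converting the probabilistic harmonicity of $V$ on $(0,\infty)$ into the pointwise identity $\gener_1 V=0$: $V$ is a priori only absolutely continuous, so the generator cannot be computed naively as a second derivative plus a jump part. This difficulty is precisely what the auxiliary propositions from \cite{GKK} address through the mollification-plus-Dynkin scheme, and checking that their hypotheses survive our weaker (only near-infinity) scaling on $H$ is handled by the appeals to \cite{KR} and \cite{BGR2} mentioned just before the theorem statement.
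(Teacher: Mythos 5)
Your proposal is correct and follows essentially the same route as the paper: the paper's proof consists precisely of verifying that \cite[Propositions 3.1 and 3.2]{GKK} carry over to this setting (via \cite[Theorem 1]{KR} under \eqref{e:jdouble} and \cite[(2.23) and Lemma 3.5]{BGR2}) and then transcribing the proof of \cite[Proposition 3.3]{GKK} line by line. Your additional remarks on the Fubini reduction to the one-dimensional marginal kernel and the tail integrability of $V\cdot j_1$ are exactly the steps carried out inside that cited proof, so nothing is missing.
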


We observe that, by a direct calculation using \eqref{e:r-di1},  
$$
\left(\frac{s}{\Phi(s)^{1/2}}\right)'=\left((\frac{s^2}{\Phi(s)})^{1/2}\right)'=2^{-1} (\frac{s^2}{\Phi(s)})^{-1/2} 2\frac{s}{\psi(s)}=
\frac{\Phi(s)^{1/2}}{\psi(s)}.
$$
Thus, using this and the fact $\lim_{s \to 0}{s}{\Phi(s)^{-1/2}}=0$ which also can be seen from \eqref{e:r-di1}, we have
\begin{align}
\label{e:pPre1}
\int _0 ^{r}
\frac{\Phi(s)^{1/2}}{\psi(s)}
 ds=
 \int _0 ^{r}
\left(\frac{s}{\Phi(s)^{1/2}}\right)'
 ds =\frac{r}{\Phi(r)^{1/2}}. 
\end{align}

\begin{lemma}\label{l:newcomp}
Assume that $H$ satisfies $ L_a(\gamma, C_L)$ and $ U_a(\delta, C_U)$ with $\delta<2$ and $\gamma >2^{-1}{\bf 1}_{\delta \ge 1}$
   for some $a \ge 0$.
Let $\gamma_1:=\gamma {\bf 1}_{\delta <1}+ (2\gamma-1){\bf 1}_{\delta \ge 1}>0$.
There exist $c_1, c_2, c_3>0$ such that for all positive constants $R \le 1$ and $\lambda>1$,
\begin{align}
\label{e:newco1}
\int_{R/\lambda}^{R}\frac{\Phi(t)^{1/2}}{\psi(t)t}dt \ge {c_1}
(\lambda^{\gamma_1}-1) \frac{\Phi(R)^{1/2}}
{\psi(R)}  \ge c_2 (\lambda^{\gamma_1}-1) R^{-\gamma_1}, \end{align}
and 
\begin{align}
\label{e:newco2}
\int_{R}^1\frac{\Phi(t)^{1/2}}{\psi(t)t}dt \le c_3\frac{\Phi(R)^{1/2}}{\psi(R)}.
\end{align}
\end{lemma}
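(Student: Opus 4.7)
Set $G(r) := \Phi(r)^{1/2}/\psi(r)$, so the integrand in both \eqref{e:newco1} and \eqref{e:newco2} is $G(t)/t$. Combining Lemma \ref{lem:bf}(b) (the scaling of $\Phi$ with exponents $[2\gamma, 2(\delta \wedge 1)]$) with \eqref{eqn:polypsi} (the scaling of $\psi$ with exponents $[2\gamma, 2\delta]$), one gets for $0 < s \le r \le 1$ the two-sided control
\[
c_{*}\, (r/s)^{\gamma - 2\delta} \;\le\; G(r)/G(s) \;\le\; c^{*}\, (r/s)^{(\delta \wedge 1) - 2\gamma}.
\]

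For \eqref{e:newco1} I substitute $t = R/u$ with $u \in [1,\lambda]$ to rewrite the integral as $\int_1^\lambda G(R/u)u^{-1}\,du$. Inverting the display above gives $G(R/u) \ge c\, u^{2\gamma - (\delta \wedge 1)} G(R)$, hence
\[
\int_{R/\lambda}^R \frac{\Phi(t)^{1/2}}{\psi(t)\, t}\, dt \;\ge\; c\, G(R) \int_1^\lambda u^{2\gamma - (\delta \wedge 1) - 1}\, du.
\]
When $\delta \ge 1$, the exponent equals $2\gamma - 2$ and $\gamma_1 = 2\gamma - 1 > 0$ by hypothesis, so the integral evaluates to $(\lambda^{\gamma_1}-1)/\gamma_1$ and the first half of \eqref{e:newco1} is immediate. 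When $\delta < 1$ with $\gamma < \delta$, the crude scaling of $G$ is too weak (the obtained exponent $2\gamma - \delta$ is smaller than $\gamma_1 = \gamma$), and I resolve this by first establishing the auxiliary comparability $H \asymp \phi$ (equivalently $\psi \asymp \Phi$ and $G \asymp \Phi^{-1/2}$) under the strict upper scaling $\delta < 1$: concavity of $\phi$ gives $\lambda \phi'(\lambda) \le (\phi(\lambda)-\phi(\lambda/t))/(1-1/t)$, so $H(\lambda) \ge \tfrac{t}{t-1}\bigl(\phi(\lambda/t) - \phi(\lambda)/t\bigr)$, and choosing $t$ large enough that the upper scaling forces $\phi(\lambda)/t \le \tfrac12 \phi(\lambda/t)$ yields $H(\lambda) \ge c\,\phi(\lambda/t) \asymp \phi(\lambda)$. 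With $G \asymp \Phi^{-1/2}$, the lower scaling $\Phi(R/u)^{-1/2} \ge c\, u^\gamma \Phi(R)^{-1/2}$ produces $\int_1^\lambda u^{\gamma-1}\,du = (\lambda^\gamma - 1)/\gamma$, matching $\gamma_1 = \gamma$. The second inequality $G(R) \ge c R^{-\gamma_1}$ of \eqref{e:newco1} is then obtained analogously: for $\delta \ge 1$ by comparing $G(R)$ with $G(1)$ via the scaling of $G$, and for $\delta < 1$ by combining $G \asymp \Phi^{-1/2}$ with the lower scaling $\Phi(R) \le c\, R^{2\gamma}$.

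For \eqref{e:newco2} I substitute $t = Ru$ so the integral becomes $\int_1^{1/R} G(Ru) u^{-1}\,du$ and apply the upper estimate $G(Ru) \le c\, u^{(\delta \wedge 1) - 2\gamma} G(R)$. When $\delta \ge 1$, the resulting exponent $(\delta\wedge 1) - 2\gamma - 1 = -2\gamma < -1$ makes the integral convergent on $[1,\infty)$, producing the bound $c\, G(R)/(2\gamma - 1)$. When $\delta < 1$, I again invoke $G \asymp \Phi^{-1/2}$ and use $\Phi(Ru)^{-1/2} \le c\, u^{-\gamma}\Phi(R)^{-1/2}$ to bound the integral by $c\, \Phi(R)^{-1/2}/\gamma \asymp c\, G(R)$.

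The main obstacle is the case $\delta < 1$ with $\gamma$ possibly below $\delta$: there the direct scaling of $G$ gives an exponent weaker than the target $\gamma_1 = \gamma$. This is resolved by the auxiliary comparability $H \asymp \phi$ valid under strict upper scaling $< 1$, which replaces the problematic mixed exponent $2\gamma - \delta$ by the clean $\gamma$ through the identification $G \asymp \Phi^{-1/2}$.
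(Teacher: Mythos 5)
Your proposal is correct and follows essentially the same route as the paper: both reduce the lemma to the pointwise comparison $G(t)/G(R)\ge c\,(t/R)^{-\gamma_1}$ for $t\le R\le 1$ (with $G=\Phi^{1/2}/\psi$), obtained for $\delta\ge 1$ by combining the trivial upper scaling $\Phi(t)^{1/2}/\Phi(R)^{1/2}\ge t/R$ with the lower scaling of $\psi$, and for $\delta<1$ via the comparability $\psi\asymp\Phi$ together with the lower scaling of $\Phi$, and then integrate the resulting power. The only genuine addition in your write-up is the self-contained concavity argument for $H\asymp\phi$ when $\delta<1$, which the paper simply asserts (it is essentially \cite[Lemma 2.6]{M}, as recalled in Remark \ref{r:UL0}).
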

\pf
If $\delta <1$ then $\psi$ and $\Phi$ are comparable near $0$, thus, by \eqref{eqn:poly} for $t \le R \le 1$,
\begin{align*}
\frac{\Phi(t)^{1/2}}
{\Phi(R)^{1/2}} \frac{\psi(R)}{\psi(t)} \ge c_1 \frac{\Phi(R)^{1/2}}{\Phi(t)^{1/2}}
\ge c_2(t/R)^{ -\gamma} .
\end{align*}
 By \eqref{eqn:polypsi} and Lemma \ref{lem:bf}(a), if $\delta  \ge 1$ then  for $t \le R \le 1$, 
\begin{align*}
  \frac{\Phi(t)^{1/2}}
{\Phi(R)^{1/2}} \frac{\psi(R)}{\psi(t)} \ge c_3 (t/R) (R/t)^{2 \gamma}=c_3(t/R)^{1-2 \gamma} .
\end{align*}
Thus, for $t \le R \le 1$, 
\begin{align}\label{e:newcomp1}
 \frac{\Phi(t)^{1/2}}
{\Phi(R)^{1/2}} \frac{\psi(R)}{\psi(t)} \ge  c_4(t/R)^{-\gamma_1} .
\end{align}
Using \eqref{e:newcomp1} we have that for all $R \le 1$ and $\lambda>1$, \begin{align*}
&\int_{R/\lambda}^{R}\frac{\Phi(t)^{1/2}}{\psi(t)t}dt = \frac{\Phi(R)^{1/2}}
{\psi(R)} 
\int_{R/\lambda}^{R} \frac{\Phi(t)^{1/2}}
{\Phi(R)^{1/2}} \frac{\psi(R)}{\psi(t)t}     dt
\ge c_4  \frac{\Phi(R)^{1/2}}
{\psi(R)} R^{\gamma_1}\int_{R/\lambda}^{R}t^{-\gamma_1-1} dt\\
&=\frac{c_4}{\gamma_1} \frac{\Phi(R)^{1/2}}
{\psi(R)}  R^{\gamma_1} 
((R/\lambda)^{-\gamma_1}-R^{-\gamma_1})
=\frac{c_4}{\gamma_1} 
(\lambda^{\gamma_1}-1) \frac{\Phi(R)^{1/2}}
{\psi(R)}, 
\end{align*}
and
\begin{align*}
&\int_{R}^1\frac{\Phi(t)^{1/2}}{\psi(t)t}dt = \frac{\Phi(R)^{1/2}}
{\psi(R)} 
\int_{R}^1\frac{\Phi(t)^{1/2}}
{\Phi(R)^{1/2}} \frac{\psi(R)}{\psi(t)t}     dt
\le c_4^{-1} R^{\gamma_1} \frac{\Phi(R)^{1/2}}
{\psi(R)} 
\int_{R}^1t^{-\gamma_1-1} dt\\
&=\frac{c_4^{-1}}{\gamma_1} R^{\gamma_1} 
(R^{-\gamma_1}-1) \frac{\Phi(R)^{1/2}}
{\psi(R)} 
\le \frac{c_4^{-1}}{\gamma_1}\frac{\Phi(R)^{1/2}}{\psi(R)}.
\end{align*}
The second inequality in \eqref{e:newco1} also follows from \eqref{e:newcomp1} (with $R=1$ and $t=R$).
\qed

\begin{prop}\label{g:st1}
Let $D\subset \R^d$ be a $C^{1,1}$ open set with characteristics
$(R_0, \Lambda)$. 
Assume that  \eqref{e:jdouble} holds and that $H$ satisfies $ L_a(\gamma, C_L)$ and $ U_a(\delta, C_U)$ with $a > 0$, $\delta<2$ and $\gamma >2^{-1}{\bf 1}_{\delta \ge 1}$.
For any $z\in \partial D$ and $r\le 1\wedge R_0$, we define 
$$h_r(y)=h_{r,z}(y):=V(\delta_D(y)){\bf 1}_{D\cap B(z, r)} (y).$$
Then, there exists $C_*=C_*(\phi,  \Lambda, d)>0$ independent of $z$ such that $\gener h_r$ is well-defined in $D\cap B(z, r/4)$ and 
\begin{align}\label{gener}
|\gener h_r(x)|\le C_* \frac{\Phi(r)^{1/2}}
{\psi(r)}\quad\mbox{ for all } x\in D\cap B(z, r/4). 
\end{align}
\end{prop}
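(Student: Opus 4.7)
The strategy is to compare $h_r$ pointwise with the half-space model $w(y) := V((y_d)_+)$ attached to the nearest boundary point of $x$, which is $\gener$-harmonic by Theorem \ref{st3}, and to estimate the correction arising from the curvature of $\partial D$. Fix $x \in D \cap B(z, r/4)$, let $z_x \in \partial D$ satisfy $|x - z_x| = \delta_D(x) =: x_d$, and use the $C^{1,1}$ coordinate system $CS_{z_x}$ with boundary graph $\varphi = \varphi_{z_x}$, so that $x = (\tilde 0, x_d)$ with $x_d \le r/4$ and $|\varphi(\tilde y)| \le \tfrac{\Lambda}{2}|\tilde y|^2$. Define $w$ in $CS_{z_x}$ as above. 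By Theorem \ref{st3} (which uses \eqref{e:jdouble}) combined with rotation-invariance of $j$, $\gener w$ is well defined at $x$ and vanishes; since $h_r(x) = V(x_d) = w(x)$, the identity
\[
\gener h_r(x) \;=\; \gener(h_r - w)(x) \;=\; \mathrm{P.V.}\int_{\R^d}(h_r(y) - w(y))\,j(|x-y|)\,dy
\]
holds, where the principal-value interpretation is justified because $h_r - w$ is $C^{1,1}$ near $x$ ($\delta_D$ is $C^{1,1}$ near $\partial D$ by the interior/exterior ball condition) and vanishes at $x$. It remains to bound the right-hand side by $c\, \Phi(r)^{1/2}/\psi(r)$.

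I would split the integration into three regions: $A_1 := B(x, x_d/2)$, $A_2 := B(z_x, r/2) \setminus A_1$, and $A_3 := \R^d \setminus B(z_x, r/2)$. In $A_1$, both $y \in D$ and $y_d > 0$, so $h_r(y) - w(y) = V(\delta_D(y)) - V(y_d)$; the $C^{1,1}$ condition plus the interior/exterior ball condition give $|\delta_D(y) - y_d| \le C_\Lambda|\tilde y|^2 \le C_\Lambda|y - x|^2$ (cf.\ the computation in the proof of Lemma \ref{l:r-time3}). Exploiting the cancellation of the linear Taylor part of $h_r - w$ under the radially symmetric PV integral, together with $\int_{|z|<\rho}|z|^2 j(|z|)\,dz \asymp \rho^2/\Phi(\rho)$ (cf.\ \eqref{e:r-di1} and Lemma \ref{l:r-time2}), controls the contribution from $A_1$. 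In $A_2$, both the case $y \in D$ and the case $y \in B(z_x, r/2) \cap D^c$ (where $h_r(y) = 0$ while $(y_d)_+ \le |\varphi(\tilde y)| \le \tfrac{\Lambda}{2}|\tilde y|^2$) reduce to the pointwise bound $|h_r(y) - w(y)| \le c\, V(|\tilde y|^2) \le c\, \Phi(|y-x|)^{1/2}$ via subadditivity of $V$, \eqref{compVhp}, and \eqref{eqn:poly}. Combined with \eqref{e:t:up1j} and polar integration, this yields $\int_{x_d/2}^{r/2} \Phi(t)^{1/2}/(\psi(t)\, t)\,dt$, bounded by $c\, \Phi(r)^{1/2}/\psi(r)$ via \eqref{e:newco2}.

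The most delicate estimate is for $A_3$, where $w(y)$ is unbounded while $h_r(y) \le V(r)$ on $D \cap B(z, r)$ and vanishes elsewhere. Using $(y_d)_+ \le |y - x| + x_d$, subadditivity of $V$, and $V \asymp \Phi^{1/2}$ from \eqref{compVhp}, one gets $w(y) \le c\, \Phi(|y-x|)^{1/2}$ on $A_3$; combined with \eqref{e:t:up1j} the contribution reduces to an integral of the form $\int_{r/2}^\infty \Phi(s)^{1/2}/(s\,\psi(s))\,ds$. Its portion over $[r/2, 1]$ is bounded by $c\, \Phi(r)^{1/2}/\psi(r)$ via \eqref{e:newco2}, while the tail over $[1, \infty)$ contributes a constant (depending only on $\phi$ and $j$) absorbed into $\Phi(r)^{1/2}/\psi(r)$ using that the latter is bounded below for $r \le 1$ by \eqref{eqn:polypsi}. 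The $h_r$-contribution on $A_3$ is analogously controlled by $V(r) \int_{|y-x|>r/2} j(|x-y|)\,dy$ via \eqref{e:r-di6}. Summing the bounds on $A_1, A_2, A_3$ yields the proposition. The main obstacle is ensuring finiteness of the tail integral on $A_3$, which relies crucially on \eqref{e:jdouble} (via Theorem \ref{st3}) and on subadditivity of $V$.
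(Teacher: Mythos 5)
Your overall architecture is the same as the paper's: compare $h_r$ with $g_x(y)=V((y_d)_+)$ in the coordinate system at the nearest boundary point, use Theorem \ref{st3} to reduce to bounding $\int |h_r-g_x|\,j(|x-\cdot|)$, and split into a near, an intermediate, and a far region (the far-region and the graph-neighborhood estimates you sketch match the paper's terms ${\rm I}$ and ${\rm II}$). However, there is a genuine quantitative gap in your intermediate region $A_2$, which is exactly where the technical heart of the proof lies.

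On $A_2$ you replace $|h_r(y)-w(y)|$ by $c\,\Phi(|y-x|)^{1/2}$ and reduce to $\int_{x_d/2}^{r/2}\Phi(t)^{1/2}/(\psi(t)\,t)\,dt$, which you claim is $\le c\,\Phi(r)^{1/2}/\psi(r)$ ``via \eqref{e:newco2}.'' This is backwards: \eqref{e:newco2} controls $\int_R^1$ in terms of its \emph{lower} limit $R$, and the integrand behaves like $t^{-1-\gamma_1}$, so by \eqref{e:newco1} your integral is in fact bounded \emph{below} by $c\bigl((r/\delta_D(x))^{\gamma_1}-1\bigr)\Phi(r)^{1/2}/\psi(r)$, which blows up as $x$ approaches $\partial D$ (already in the stable case the cruder bound $V(|\wt y|^2)$ only yields $\log(r/\delta_D(x))$, still unbounded). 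The pointwise bound is simply too lossy: the correct estimate on the region above the graph is $|h_r(y)-g_x(y)|\le c\,V'\bigl(y_d-\wh\varphi(\wt y)\bigr)\,|\wt y|^{2}$ (mean value theorem plus the Harnack inequality for $V'$ and $V'(s)\le cs^{-1}\Phi(s)^{1/2}$), whose crucial features are the quadratic factor $|\wt y|^2$ \emph{and} the fact that $V'$ is evaluated at the height above the graph. After \cite[Lemma 4.4]{KSV14} this leads to the double integral ${\rm IV}=\int_0^{r}\bigl(\int_0^u \Phi(s)^{1/2}s^{-1}\,ds\bigr)\psi(u)^{-1}\,du$, and proving ${\rm IV}<\infty$ is the real crux — it needs a separate argument for $\delta<1$ (where $\psi\asymp\Phi$) and for $\delta\ge1$ (where the hypothesis $\gamma>1/2$ is used to integrate $\psi(s)/\psi(u)$). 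Your proposal never isolates this step, and the assumption $\gamma>2^{-1}{\bf 1}_{\delta\ge1}$ plays no role in your argument, which is a signal that the estimate as written cannot close.
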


\begin{proof}
Since the case of $d=1$ is easier, we give the proof only for $d\ge 2$. 
Without loss of generality we assume that $\Lambda>1$.

For $x\in D\cap B(z, r/4)$, let $z_x\in \partial D$ be a point satisfying $\delta_D(x)=|x-z_x|$.
Let $\varphi$ be a  $C^{1,1}$ function and $CS=CS_{z_x}$ be an orthonormal coordinate system with $z_x$ as the origin so that $\varphi(\wt 0)=0$, $ \nabla\varphi(\wt 0)=(0, \dots, 0)$, $\| \nabla \varphi \|_\infty \leq \Lambda$, $|\nabla \varphi (\wt y)-\nabla \varphi (\wt z)| \leq \Lambda |\wt y-\wt z|$, and $x=(\wt{0}, x_d)$, $ D\cap B(z_x, R_0)=\{y=(\wt{y}, y_d)  \in B(0, R_0) \mbox{ in } CS: y_d>\varphi(\wt{y})\}$.
We fix the function $\varphi$ and the coordinate system $CS$, and define a function $g_x(y)=V(\delta_{\R^d_+}(y))=V(y_d)$, where  $\R^d_+=\{y=(\widetilde{y}, y_d) \mbox{  in } CS :y_d>0\}$ is the half space in $CS$. 

Note that  $h_r(x)=g_x(x)$, and  that $\gener (h_r- g_x)=\gener h_r$ by Theorem \ref{st3}.
So, it suffices to show that $\gener (h_r- g_x)$ is well defined and
that there exists a constant $c_0>0$ independent of  $x\in D\cap B(z, r/4)$ and $z \in \partial D$  such that
\begin{align}\label{e:claims}
\int_{D\cup \R^d_+}|h_r(y)-g_x(y)|j(|x-y|) dy\le c_0\frac{\Phi(r)^{1/2}}
{\psi(r)}. \end{align}

We define $\wh \varphi :B(\wt{0}, r)\to \R$ by $\wh \varphi(\wt{y}):=2\Lambda|\wt{y}|^{2}$.
Since $\nabla\varphi (\wt{0})=0$, by the mean value theorem we have 
$-\wh \varphi(\wt{y})\le\varphi(\wt{y})\le \wh \varphi(\wt{y})$  for any $y\in D\cap B(x, r/2)$
and so that
\begin{align*}
\{z=(\widetilde{z}, z_d)\in B(x, r/2) : z_d\ge& \wh{\varphi}(\widetilde z)\}\subset D \cap  B(x, r/2)\nn\\
&\subset \{z=(\widetilde{z}, z_d)\in B(x, r/2) : z_d\ge -\wh{\varphi}(\widetilde z)\}.
\end{align*}
Let $A:=\{y\in (D\cup \R^d_+)\cap B(x, r/4):-\wh{ \varphi}(\widetilde{y})\le y_d\le \wh{\varphi}(\widetilde{y})\}$,  
$E:=\{y\in B(x, r/4): y_d> \wh{\varphi}(\widetilde{y})\}\subset D$, 
\begin{align*}
&{\rm I}:=\int_{B(x, r/4)^c}( h_r(y)+g_x(y))j(|x-y|)dy=\int_{B(0, r/4)^c}( h_r(x+z)+g_x(x+z))j(|z|)dz,\nn\\
&{\rm II}:=\int_A (h_r(y)+g_x(y))j(|x-y|)dy,\qquad \mbox{ and }\qquad {\rm III}:=\int_E |h_r(y)-g_x(y)|j(|x-y|)dy.
\end{align*}

First, since $h_r\le V(r)$ and $V(x_d+z_d)\le V(x_d)+V(|z|)$, we have
\begin{align}\label{e:I}
{\rm I}&\le V(r)
\int_{B(0, {r}/{4})^c} j(|z|)dz +\int_{B(0, {r}/{4})^c} \left(V(x_d)+V(|z|)\right)j(|z|)dz\nn\\
& \le c_1 V(r) \left( \int_{r/4}^1 j(s)s^{d-1}ds +1\right)
+ \left( \int_{r/4}^1 j(s)V(s) s^{d-1}ds + \int_{1}^{\infty} j(s)V(s) s^{d-1}ds\right)\nn\\&\le c_2\left(\frac{\Phi(r)^{1/2}}
{\psi(r)} +\int_{r/4}^1 \frac{\Phi(s)^{1/2}ds}{s\psi(s)}+1\right)\le c_3\frac{\Phi(r)^{1/2}}
{\psi(r)} \end{align}
In the second to last inequality above, we have used \eqref{e:t:up1j}, \eqref{eqn:polypsi}, \eqref{compVhp} and \cite[Lemma 3.5]{BGR2}.
In the last inequality above, we have used Lemma \ref{l:newcomp}.

Second, let $m_{d-1}(dy)$ be the Lebesgue  measure on $\R^{d-1}$.
Since $m_{d-1}(\{y:|\widetilde{y}|=s, - \wh{\varphi}(\widetilde{y})\le y_d\le \wh{\varphi}(\widetilde{y})\}) \le c_4 s^d$ for $0<s<r/4$,
and $h_r(y)+g_x(y)\le 2V(2\wh{\varphi} (\widetilde{y}))\le 8(\Lambda+1) V(|\widetilde {y}|)$, 
we get 
$${\rm II}\le 8(\Lambda+1) \int_{0}^{r/4}
\int_{|\widetilde{y}|=s} {\bf1}_A(y) V(|\widetilde {y}|)\nu( |\widetilde{y}|)m_{d-1}(dy)ds
 \le 8c_4(\Lambda+1)\int _0 ^{r}V(s)j(s)s^{d} ds.$$

 Thus, by \eqref{e:t:up1j}, \eqref{compVhp} and \eqref{e:pPre1},
 \begin{align}\label{e:II}
{\rm II}\le \,c_5\,\int _0 ^{r}
\frac{\Phi(s)^{1/2}}{\psi(s)}
 ds=c_5\frac{r}{\Phi(r)^{1/2}}  \le c_5\frac{1}{\Phi(1)^{1/2}} =c_5.
\end{align}
Lastly, when $y\in E$, using
$|y_d-\delta_D(y)|\le  (1\wedge R_0)^{-1}\wh{\varphi}(\widetilde{y})=
2\Lambda (1\wedge R_0)^{-1}|\wt{y}|^{2}
$ (See the proof of Lemma \ref{l:r-time3}.)
and 
the scale invariant Harnack inequality for $X^d$ to $V'$ (Theorem \ref{t:PHI}), we have that
$$
|h_r(y)-g_x(y)| \le \Big(\sup_{u\in (y_d\wedge \delta_D(y), \,\, y_d\vee \delta_D(y))} 
V^{\prime}\(u\)  \Big)|y_d-\delta_D(y)| \le 
c_6V^{\prime}\(y_d-\wh{\varphi}(\wt{y})\) |\widetilde{y}|^{2}$$
Since  $V^{\prime}(s)\le c_8 s^{-1}{V(s)}\le c_9 s^{-1} \Phi(s)^{1/2}$ by \cite[Theorem 1]{KR}  and \eqref{compVhp}, using \eqref{e:t:up1j} and the polar coordinates for $|\wt{y}|=v$ and the change of variable $s:= y_d-\wh{\varphi}(v)$, we obtain 
\begin{align*}
{\rm III} \le&\, c_6 \int_{\{ (\wt{y}, y_d): |\widetilde{y}|<r/4, \, \wh{\varphi}(\widetilde{y})  <y_d< \wh{\varphi}(\widetilde{y})+r/2\}} V^{\prime}(y_d-\wh{\varphi}(\wt{y}))|\widetilde{y}|^{2} j(|x-y|)dy\\
\le&\, c_7\int_0^{r/4}\int_{0}^{r/2}\frac{V^{\prime}(s)}{ \psi((v^2+|s+\wh{\varphi}(r)-x_d|^2)^{1/2})}
\frac{v^d}{(v^2+|s+\wh{\varphi}(r)-x_d|^2)^{d/2}}
dy_d dv\\
 \le&c_8\int_0^{r/4}\int_{0}^{r/2}\frac{s^{-1}\Phi(s)^{1/2}}{ \psi((v^2+|s+\wh{\varphi}(r)-x_d|^2)^{1/2})}
dy_d dv.
\end{align*}
Applying \cite[Lemma 4.4]{KSV14} with 
non-increasing functions $s^{-1}\Phi(s)^{1/2}$ and $f(s):= \psi(s)^{-1}$ and 
$x(r)=x_d-\wh{\varphi}(r)$, we have that  
\begin{align}\label{e:III0}
{\rm III}\le c_{9} \int_0^{r}\int_0^u \frac{\Phi(s)^{1/2}}{s}ds\frac{du}{\psi(u)} =: c_{9}\, {\rm IV}.
\end{align}
We claim that  ${\rm IV} \le  c_{10} <\infty.$

If $\delta <1$ then $\psi(t)^{1/2}$, $\Phi(t)^{1/2}$ 
and $\int_0^u \frac{\Phi(s)^{1/2}}{s}ds $ are comparable near zero. Thus, by \eqref{eqn:poly},
$${\rm IV}\le c_{11} \int_0^r \Phi^{-1/2}(u)du \le c_{12} \int_0^r u^{-\delta}du\le c_{12} \int_0^1 u^{-\delta}du \le c_{13}.
$$
If $\delta \ge 1$, using the assumption $\gamma >2^{-1}$, we see from \eqref{eqn:polypsi} that for $s<u<r$,
$$
\int_s^r  \frac{\psi(s)}{\psi(u)}   du \le c_{14} s^{2\gamma}   \int_s^r u^{-2\gamma}   du =
\frac{c_{14}}{2\gamma-1} s^{2\gamma} (s^{1-2\gamma}-r^{1-2\gamma}) \le \frac{c_{14}}{2\gamma-1}  s.
$$
Thus, using \eqref{e:pPre1} and the fact that $\frac{r}{\Phi(r)^{1/2}}$ is non-decreasing,
\begin{align*}
 {\rm IV} =\int_0^{r} \left(\int_s^{r}  \frac{\psi(s)}{\psi(u)}   du \right) \frac{\Phi(s)^{1/2}}{s\psi(s)}ds ds \le \frac{c_{14}}{2\gamma-1}\int_0^{r} \frac{\Phi(u)^{1/2}}{\psi(u)}du
= \frac{c_{14}}{2\gamma-1} \frac{r}{\Phi(r)^{1/2}} \le \frac{c_{14}}{2\gamma-1}.
\end{align*}
We have proved the claim  ${\rm IV} \le  c_{10} <\infty.$
Combining \eqref{e:I}--\eqref{e:III0} with this and using Lemma \ref{l:newcomp}, we conclude that  \eqref{e:claims} holds.
\end{proof}

We are now ready to prove key estimates on exit probabilities.
\begin{thm}\label{t:n_estimates}
Let $D\subset \R^d$ be a $C^{1,1}$ open set with characteristics
$(R_0, \Lambda)$. 
Assume that  \eqref{e:jdouble} holds and that $H$ satisfies $ L_a(\gamma, C_L)$ and $ U_a(\delta, C_U)$ with $\delta<2$ and $\gamma >2^{-1}{\bf 1}_{\delta \ge 1}$
   for some $a > 0$. Then
  there exist  positive constants  $R_* < (R_0 \wedge1)/4$ and $c_1, c_2>1$    such that 
  the following two estimates hold true.

   \noindent
   (a) For every 
 $R \le R_*$, $z\in \partial D$,
   open set $U \subset D \cap B(z, R)$ and $x \in U$,
 \begin{align}\label{e:n_estimates1}
 \E_{x}[\tau_{U}] \ge c_1^{-1} \Phi(\delta_D(x))^{1/2} \Phi(R)^{1/2}.
\end{align}
 
    \noindent
   (b) For every 
   $R \le R_*$, $z\in \partial D$ and 
   $x \in D_z( 2^{-3}R,2^{-4}R) $, 
    \begin{align}\label{e:n_estimates2}\E_{x}[\tau_{D_z(R,R)}] \le c_2
\Phi(R)   \P_{x}(X_{ \tau_{D_z(R,R)} }\in D_z(2R,R) )
   \le c_1 c_2\Phi(\delta_D(x))^{1/2} \Phi(R)^{1/2}.\end{align}
  \end{thm}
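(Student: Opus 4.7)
The argument applies Dynkin's formula to (a mollification of) the near-harmonic function $h_r$ of Proposition \ref{g:st1}, the mollification step being the convolution $v_k\ast h_r$ from the proof of Proposition \ref{p:boundn1}. The main ingredients are: the pointwise bound $|\gener h_r|\le C_*\,\Phi(r)^{1/2}/\psi(r)$ on $D\cap B(z,r/4)$; the L\'evy system \eqref{e:levy}; the jump estimate \eqref{e:t:up1j}; the hitting comparison of Proposition \ref{p:H12}; the exit-probability bound of Proposition \ref{p:boundn1}; and the comparability $V\asymp\Phi^{1/2}$ from \eqref{compVhp}. Throughout one enlarges the radius in $h_r$ by a fixed factor (using for instance $h_{4R}$ or $h_{(4/\kappa)R}$ in place of $h_R$) so that the generator bound applies on all of $U$, the constant loss being absorbed via the doubling of $\Phi$ and $\psi$.

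For part (a), apply Dynkin's formula to $h_{4R}$ on $U$ to obtain
\[
V(\delta_D(x)) \;=\; \E_x\!\big[h_{4R}(X_{\tau_U})\big] \;-\; \E_x\!\int_0^{\tau_U}\!\gener h_{4R}(X_s)\,ds,
\]
where the integral is bounded in absolute value by $c\,\Phi(R)^{1/2}/\psi(R)\cdot\E_x[\tau_U]$. The boundary term is controlled by a L\'evy-system estimate: in the extremal case $U=D\cap B(z,R)$ the set $\{h_{4R}>0\}\setminus U$ lies in the annulus $D\cap(B(z,4R)\setminus B(z,R))$, and \eqref{e:levy} combined with \eqref{e:t:up1j} bounds the probability of landing there by $c\,\E_x[\tau_U]/\psi(R)$, hence $\E_x[h_{4R}(X_{\tau_U})]\le c\,V(R)\,\E_x[\tau_U]/\psi(R)$. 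Combining, $V(\delta_D(x))\le c\,V(R)\,\E_x[\tau_U]/\psi(R)$, and the relation $H\le\phi$ (so $\psi\ge\Phi$) together with $V\asymp\Phi^{1/2}$ gives $\E_x[\tau_U]\ge c'\,\Phi(\delta_D(x))^{1/2}\Phi(R)^{1/2}$.

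For part (b), the first inequality follows from Proposition \ref{p:boundn1} applied to $U=D_z(R,R)$ with radius parameter $r:=RM_0/\kappa$, using \eqref{e:Uzr} to ensure $U\subset D\cap B(z,r/M_0)$: this yields $\E_x[\tau_U]\le c\,\Phi(R)\,\P_x(X_{\tau_U}\in B(z,2r))$ (since $\Phi(r)\asymp\Phi(R)$). Split $\{X_{\tau_U}\in B(z,2r)\}$ into $\{X_{\tau_U}\in D\}$, bounded by $c\,\P_x(X_{\tau_U}\in D_z(2R,R))$ via Proposition \ref{p:H12} (applicable since $x\in D_z(2^{-3}R,2^{-4}R)$), and the $D^c$-part, which is a jump into $D^c$ estimated by the L\'evy system and absorbed. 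For the second inequality, apply Dynkin to $h_{(4/\kappa)R}$ on the same $U=D_z(R,R)$:
\[
\E_x\!\big[h_{(4/\kappa)R}(X_{\tau_U})\big] \;\le\; V(\delta_D(x))+c\,\frac{\Phi(R)^{1/2}}{\psi(R)}\,\E_x[\tau_U].
\]
The $C^{1,1}$ geometry forces $\delta_D(y)\gtrsim R$ on $D_z(2R,R)\setminus D_z(R,R)$, hence $h_{(4/\kappa)R}(y)\asymp V(R)$ there; since $X_{\tau_U}\notin U$, we obtain $\E_x[h_{(4/\kappa)R}(X_{\tau_U})]\ge c\,V(R)\,\P_x(X_{\tau_U}\in D_z(2R,R))$. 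Substituting the first inequality of (b) into the Dynkin estimate and using $\Phi(R)^{3/2}/\psi(R)\le c\,\Phi(R)^{1/2}\asymp V(R)$, one arrives at $V(R)\,\P_x(X_{\tau_U}\in D_z(2R,R))\le c\,V(\delta_D(x))$, which together with the first inequality gives the second.

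The main technical hurdle is the absorption step in the second inequality of (b): the coefficient of $\P_x(X_{\tau_U}\in D_z(2R,R))$ arising from the Dynkin error must be strictly smaller than the leading $V(R)$ coefficient produced by the annular lower bound on $h_{(4/\kappa)R}$. This is arranged by shrinking $R_*$ sufficiently and exploiting the scaling estimates on $\Phi$ and $\psi$ (specifically $\Phi(R)/\psi(R)\le 1$ together with the polynomial comparability at comparable radii). A parallel delicate balance arises when absorbing the $D^c$-jump contribution in the first inequality of (b), and in both cases the assumption $\gamma>2^{-1}\mathbf{1}_{\delta\ge 1}$ is what keeps the renewal function $V$ in the right range to close the estimates.
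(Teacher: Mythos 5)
Your proposal reproduces the right scaffolding (Dynkin's formula for the barrier $h_r=V(\delta_D){\bf 1}_{D\cap B(z,r)}$ via the mollification of Proposition \ref{p:boundn1}, combined with Propositions \ref{p:H12} and \ref{p:boundn1} and $V\asymp\Phi^{1/2}$), but it omits the single idea that makes the argument close. The paper does not enlarge the support of $h$ to $4R$; it shrinks $U$ relative to it, taking $U\subset D\cap B(z,\lambda^{-1}R)$, and then derives a \emph{lower} bound on the boundary term that is itself proportional to $\E_x[\tau_U]$: by the L\'evy system, $j(s)\ge c s^{-d}\psi(s)^{-1}$ from \eqref{e:jlow}, and integration over the cone $\{2\Lambda|\wt y|<y_d\}\subset D$ inside the annulus $B(z,R)\setminus B(z,\lambda^{-1}R)$, one gets $\E_x[h_R(X_{\tau_U})]\ge c\,\E_x[\tau_U]\int_{\lambda^{-1}R}^{R}\Phi(t)^{1/2}\psi(t)^{-1}t^{-1}dt\ge c'(\lambda^{\gamma_1}-1)\Phi(R)^{1/2}\psi(R)^{-1}\E_x[\tau_U]$ by Lemma \ref{l:newcomp}. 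Choosing $\lambda$ large enough that $c'(\lambda^{\gamma_1}-1)\ge 2C_*$ absorbs the Dynkin error and yields the two-sided comparison $V(\delta_D(x))\asymp\E_x[h_R(X_{\tau_U})]$ (\eqref{e:ss22}--\eqref{e:ss222}), from which both (a) and (b) follow; this is precisely where $\gamma>2^{-1}{\bf 1}_{\delta\ge1}$ (i.e. $\gamma_1>0$) enters. Your substitute absorption device for (b) --- beating the error coefficient $c\,\Phi(R)^{3/2}/\psi(R)$ against the leading coefficient $cV(R)\asymp\Phi(R)^{1/2}$ "by shrinking $R_*$" --- cannot work: it requires $\Phi(R)/\psi(R)=H(R^{-2})/\phi(R^{-2})$ to be small compared with an uncontrolled ratio of constants, and this quantity does not tend to $0$ as $R\to0$ (it is a positive constant in the stable case), so no choice of $R_*$ produces the needed smallness.

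There is a second concrete error in your part (a). The bound $\P_x\big(X_{\tau_U}\in D\cap(B(z,4R)\setminus B(z,R))\big)\le c\,\E_x[\tau_U]/\psi(R)$ cannot follow from \eqref{e:levy} and \eqref{e:t:up1j}: the annulus is adjacent to $U$, and $\sup_{w\in U}\int_A j(|w-y|)\,dy=\infty$ whenever $A$ touches $\overline U$, since the L\'evy measure of $X$ is infinite. The inequality itself is false, not merely its proof: for $x$ close to the spherical part of $\partial U$ but with $\delta_D(x)\asymp R$, the left side is of order $1$ while $\E_x[\tau_U]$ is arbitrarily small. Relatedly, in the first inequality of (b) your splitting leaves a term $\P_x(X_{\tau_U}\in B(z,2r)\cap D^c)$ that you claim is "absorbed"; it cannot be, since it may carry essentially all the mass when $x$ is near $\partial D$. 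The correct observation is that the proof of Proposition \ref{p:boundn1} already bounds $\P_x(X_{\tau_U}\in D\cap B(z,2r))$ from below, because the test function $(\delta_D)^2{\bf 1}_{D\cap B(z,2r)}$ vanishes on $D^c$, so no $D^c$ term arises.
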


 \pf  
Fix $R  \le 1\wedge R_0$ and without loss of generality, we assume $z=0$.  
Define $h(y)=V(\delta_D(y)){\bf 1}_{D\cap B(0, R)} (y).$

Using the same approximation argument in 
the proof of Proposition \ref{p:boundn1} and the Dynkin's formula, we have that, 
for every $\lambda \ge 4$, open set $U \subset D \cap B(0, \lambda^{-1}R)$  and $x \in U$,
\begin{align*}
 \E_{x}\left[h_R\left(X_{ \tau_{U}} \right)  \right]+ C_*  \frac{\Phi(R)^{1/2}}
{\psi(R)} \E_x\left[\tau_{U}\right] \ge V(\delta_D(x)) \ge \E_{x}\left[h_R\left(X_{ \tau_{U}} \right)  \right] - C_*  \frac{\Phi(R)^{1/2}}
{\psi(R)} \E_x\left[\tau_{U}\right],
\end{align*}
where $C_*>0$ is the constant in Proposition \ref{g:st1}.

Since $j(|y-z| ) \ge  j(2|y|) \ge  c_1 |y|^{-d} \psi(|y|)^{-1}$ for any $z  \in D \,\cap\, B(0, \lambda^{-1}R)$ and $y \in D\,\cap \,(B(0, R) \setminus B(0, \lambda^{-1}R))$,  by L\'evy system we obtain 
\begin{align*}
\E_x\left[h_R\left(X_{\tau_{U}}\right)\right]
&\ge  \E_x \int_{ D\cap \(B(0,R)\setminus B(0, \lambda^{-1}R)\)} \int_0^{\tau_{U}}  j(|X_t-y|)dt h_R(y) dy \nonumber \\
&\ge  \, c_1  \, \E_x\left[\tau_{U}\right] \int_{  D\cap \(B(0, R)\setminus B(0, \lambda^{-1}R)\)}   |y|^{-d} \psi(|y|)^{-1}h_R(y) dy.
\end{align*}
Let $A:=\{(\wt{y}, y_d): 2 \Lambda |\wt y|<y_d\}$. 
Since $y_d>2 \Lambda |\wt y|>2\Lambda |\wt{y}|^{2}>\varphi(\wt y)$ for any $y\in A \cap B(0, R)$,    we have $A \cap B(0, R)\subset D\cap B(0,R)$ and for any $y\in A \cap B(0, R)$, 
 $$
\delta_D(y)\ge (2\Lambda)^{-1} \left(y_d-\varphi(\wt y)\right) \ge  (2\Lambda)^{-1} (y_d -\Lambda|\wt y|)> (4\Lambda)^{-1}y_d \ge  (4\Lambda((2\Lambda)^{-2}+1)^{1/2})^{-1}|y|.
$$
By this and changing to polar coordinates with $|y|=t$ and  \eqref{compVhp}, we obtain that
\begin{align*} 
&\int_{  D\cap \(B(0, R)\setminus B(0, \lambda^{-1}R)\)}   |y|^{-d} \psi(|y|)^{-1}h_R(y) dy 
\nonumber\\
&\ge c_2\int_{ A\cap \(B(0, R)\setminus B(0, \lambda^{-1}R)\)}|y|^{-d} \psi(|y|)^{-1}V(|y|) dy
 \ge c_3\int_{\lambda^{-1}R}^{R}\frac{\Phi(t)^{1/2}}{\psi(t)t}dt.
\end{align*}
By Lemma \ref{l:newcomp}, the above is great than  $ c_4
(\lambda^{\gamma_1}-1) \frac{\Phi(R)^{1/2}}
{\psi(R)}.$ Thus, 
we can use a $\lambda_0$ large (In fact, one can choose $\lambda_0=(1+ c_1^{-1}c_4^{-1} 2C_*)^{-\gamma_1}$.) so that for all $\lambda \ge \lambda_0$, $R  \in (0, 1\wedge R_0)$
 and for every open set $U \subset D \cap B(0, \lambda^{-1}R)$,
\begin{align}
&V(\delta_D(x))\ge \E_{x}\left[h_R\left(X_{ \tau_{U}} \right)  \right] - C_*  \frac{\Phi(R)^{1/2}}
{\psi(R)} \E_x\left[\tau_{U}\right] \ge \frac12\E_{x}\left[h_R\left(X_{ \tau_{U}} \right)  \right]\label{e:ss22}\\ 
\mbox{ and }\,\, &V(\delta_{D}(x))  \le   \E_{x}\left[h_R\left(X_{ \tau_{U}} \right)  \right]+ C_*  \frac{\Phi(R)^{1/2}}
{\psi(R)}\E_x\left[\tau_{U}\right] \le  \frac32\E_{x}\left[h_R\left(X_{ \tau_{U}} \right)  \right].\label{e:ss222}
\end{align}
By \cite[Lemma 2.4]{CK}, \eqref{e:ss222} and \eqref{compVhp}, we get 
\begin{align}
&\frac{2}{3} V(\delta_D(x))\le  \E_{x}\left[h_R\left(X_{ \tau_{U}} \right)  \right]  \le 
V(R) \P_{x}\left(X_{ \tau_{U}} \in D \cap B(0, R)\right) \nn \\
&\le c_5 V(R) \Phi(R)^{-1}\E_{x}[\tau_{U}]
\le c_6 \Phi(R)^{-1/2}\E_{x}[\tau_{U}].
\label{e:ss1new0}
\end{align}
Now, \eqref{e:n_estimates1} follows from \eqref{compVhp} and \eqref{e:ss1new0}.

Let $\lambda_1:=\lambda_0 \vee M_0$ where $M_0$ is the constant in \eqref{e:r-diM} and $U_1:=U_1(R):=D_0(\kappa\lambda_1^{-1}R, \kappa \lambda_1^{-1}R)  \subset D \cap B(0, \lambda_1^{-1}R)$. Then, 
by \eqref{e:ss22} and Proposition \ref{p:H12}
for all $x \in U_2:=D_0(2^{-3}\kappa\lambda_1^{-1}R, 2^{-4}\kappa \lambda_1^{-1}R)$
\begin{align}
&2 V(\delta_D(x))\ge  \E_{x}\left[h_R\left(X_{ \tau_{U_1}} \right) {\bf 1}_{ D_0(2\kappa\lambda_1^{-1}R, \kappa \lambda_1^{-1}R)   } \right] \nn\\& \ge 
V(\kappa\lambda_1^{-1}R) \P_{x}\left(X_{ \tau_{U_1}} \in D_0(2\kappa\lambda_1^{-1}R, \kappa \lambda_1^{-1}R)  \right)  \ge c_7 \Phi(R)^{1/2} \P_{x}\left(X_{ \tau_{U_1}} \in D \right) .
\label{e:ss1new00}
\end{align}
Recall that $\wt R$ is the constant in Lemma \ref{l:r-time3} and  Proposition \ref{p:boundn1}.
Applying  Proposition \ref{p:boundn1} and \eqref{compVhp} to \eqref{e:ss1new00}, we conclude that 
for all $R \le \wt R$ and all $x \in U_2$,
\begin{align*} \E_{x}[\tau_{U_1}] \le c_{8}   \Phi(R)\P_{x}\left(X_{ \tau_{U_1}} \in D \right)&\le c_{9}   \Phi(R)
\P_{x}\left(X_{ \tau_{U_1}} \in D_0(2\kappa\lambda_1^{-1}R, \kappa \lambda_1^{-1}R)  \right)
 \nn\\
&\le c_{10} \Phi(\delta_D(x))^{1/2} \Phi(R)^{-1/2}. \end{align*}
By taking $R_*=\wt R\lambda_1^{-1}\kappa$ we have proved \eqref{e:n_estimates2}.
\qed

\section{Upper bound estimates}\label{S:4}
In this section we discuss  the upper bound of the Dirichlet heat kernels on $C^{1,1}$ open sets. 
{\it Throughout the remainder of this paper, we always assume that   \eqref{e:jdouble} holds, that $\phi$  has no drift and that $H$ satisfies $ L_a(\gamma, C_L)$ and $ U_a(\delta, C_U)$ with $\delta<2$ and $\gamma >2^{-1}{\bf 1}_{\delta \ge 1}$
   for some $a > 0$.}

We first establish  sharp estimates on the survival probability. Lemma \ref{l:spH} is proved in \cite{BGR2} when 
weak scaling order of characteristic exponent is  strictly below $2$. We emphasize here that results in  \cite{BGR2} can not be used here.

\begin{lemma}
\label{l:spH}
 Suppose $D$ is a $C^{1,1}$ open set with the characteristic $(R_0, \Lambda)$. Then for every $T>0$ there exists 
 $C_1=C_1(T, R_0, \Lambda)>0$ such that  for $ t \in (0,  T]$,
\begin{align}
\label{e:ulbest0}
 \P_{x}(\tau_{D}>t) \,\le\, C_1\
\left(\sqrt{\frac{\Phi(\delta_{D}(x))}{t}} \wedge 1\right), \quad \text{ for all } x \in D,
\end{align}
and there exist $T_1 \in (0, \Phi(R_0)]$ and  $C_2>0$ such that   for $ t \in (0,  T_1]$,
\begin{align}
\label{e:ulbest00}
 \P_{x}(\tau_{D}>t) 
\ge C_2
\left(\sqrt{\frac{\Phi(\delta_{D}(x))}{t}} \wedge 1\right), \quad \text{ for all } x \in D.
\end{align}
\end{lemma}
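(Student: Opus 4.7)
I will first establish the upper bound \eqref{e:ulbest0} and then use it as a key input in the proof of the lower bound \eqref{e:ulbest00}.

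For the upper bound, set $R:=\Phi^{-1}(t)\wedge R_*$, where $R_*$ is the constant from Theorem \ref{t:n_estimates}. If $\delta_D(x)>2^{-3}R$, then by the scaling \eqref{eqn:poly} the ratio $\Phi(\delta_D(x))/t$ is bounded below by a positive constant (depending only on $T$), so the trivial bound $\P_x(\tau_D>t)\le 1$ already implies \eqref{e:ulbest0}. Otherwise, let $z\in\partial D$ be a point realizing $|x-z|=\delta_D(x)$; the interior ball condition forces $x$ to lie along the inward normal to $\partial D$ at $z$, so in the local coordinates $CS_z$, $x=(\widetilde{0},\delta_D(x))$ and hence $x\in D_z(2^{-3}R,2^{-4}R)$. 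Markov's inequality together with Theorem \ref{t:n_estimates}(b) yields
\begin{align*}
\P_x(\tau_D>t)\,\le\,\P_x\bigl(\tau_{D_z(R,R)}>t\bigr)\,\le\,\frac{\E_x[\tau_{D_z(R,R)}]}{t}\,\le\,\frac{C\sqrt{\Phi(\delta_D(x))\,\Phi(R)}}{t}.
\end{align*}
When $R=\Phi^{-1}(t)$ this is exactly $C\sqrt{\Phi(\delta_D(x))/t}$; when $R=R_*$ one has $t>\Phi(R_*)$, so the extra factor $\sqrt{\Phi(R_*)/t}\le 1$ and the same conclusion holds.

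For the lower bound, fix a large constant $C_0>0$ (to be chosen) and set $T_1:=\Phi(R_*)/C_0^2$. In the interior case $\delta_D(x)\ge\Phi^{-1}(t)$, the right-hand side of \eqref{e:ulbest00} equals $1$, and $\P_x(\tau_D>t)\ge c>0$ follows by integrating the interior lower bound of Proposition \ref{step1} (applied with $b=1/2$) over $B(x,\Phi^{-1}(t)/8)\subset\{y\in D:\delta_D(y)\ge\Phi^{-1}(t)/2\}$. In the boundary case $\delta_D(x)<\Phi^{-1}(t)$, let $z=z_x$ be the nearest boundary point and put $R:=\Phi^{-1}(C_0^2 t)\le R_*$. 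Lemma \ref{lem:inverse} gives $R/\Phi^{-1}(t)\ge c_0 C_0^{1/\gamma}$, which is arbitrarily large once $C_0$ is, so $\delta_D(x)<2^{-4}R$ and $x\in D_z(2^{-3}R,2^{-4}R)$. Setting $U:=D_z(R,R)$, Theorem \ref{t:n_estimates}(a) yields
\begin{align*}
\E_x[\tau_U]\,\ge\,c_1\sqrt{\Phi(\delta_D(x))\,\Phi(R)}\,=\,c_1C_0\sqrt{\Phi(\delta_D(x))\,t}.
\end{align*}

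The crux is to turn this expected-exit-time bound into a lower bound for $\P_x(\tau_U>t)$, and this is where the previously proved upper bound \eqref{e:ulbest0} re-enters. Since $\tau_U\le\tau_D$, \eqref{e:ulbest0} applied to $s\in(0,t]$ yields $\P_x(\tau_U>s)\le C'\bigl(1\wedge\sqrt{\Phi(\delta_D(x))/s}\bigr)$; integrating in $s$ and splitting at the threshold $s=\Phi(\delta_D(x))<t$ produces
\begin{align*}
\E_x[\tau_U\wedge t]\,=\,\int_0^t\P_x(\tau_U>s)\,ds\,\le\,C''\sqrt{\Phi(\delta_D(x))\,t}.
\end{align*}
Choosing $C_0$ large enough that $c_1C_0\ge 2C''$, subtraction gives $\E_x[\tau_U;\tau_U>t]\ge\tfrac{c_1C_0}{2}\sqrt{\Phi(\delta_D(x))\,t}$. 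On the other hand, the strong Markov property at time $t$ yields $\E_x[\tau_U;\tau_U>t]\le (t+M_0)\,\P_x(\tau_U>t)$ with $M_0:=\sup_{y\in U}\E_y[\tau_U]\le c_2\Phi(R)=c_2 C_0^2 t$, the last bound coming from \eqref{e:Euplo} together with $U\subset B(y,2R/\kappa)$ for every $y\in U$. Dividing these two estimates produces $\P_x(\tau_U>t)\ge C_2\sqrt{\Phi(\delta_D(x))/t}$, and $\P_x(\tau_D>t)\ge\P_x(\tau_U>t)$ finishes the argument. The main obstacle is precisely this last step: the naive Markov inequality $\P_x(\tau_U>t)\ge(\E_x[\tau_U]-t)/M_0$ only delivers $c\sqrt{\Phi(\delta_D(x))}$, losing the decisive $1/\sqrt{t}$ factor, so the upper bound must be established first and then fed back to control $\E_x[\tau_U\wedge t]$ sharply.
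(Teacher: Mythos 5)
Your proof of the lower bound \eqref{e:ulbest00} takes a genuinely different (and, granting the upper bound, workable) route from the paper's: you compare $\E_x[\tau_U]$ with $\E_x[\tau_U\wedge t]$ and bound the tail $\E_x[(\tau_U-t)_+]$ by $M_0\,\P_x(\tau_U>t)$ via the Markov property, whereas the paper instead lower-bounds $\P_x\big(X_{\tau_{U(x,t)}}\in D_{z_x}(2r_t,r_t)\big)-\P_x(\tau_{U(x,t)}\ge bt)$ using the first inequality of Theorem \ref{t:n_estimates}(b) and then confines the process in a small ball for the remaining time. However, your proof of the upper bound \eqref{e:ulbest0} contains a fatal error, and since your lower-bound argument feeds the upper bound back in through $\E_x[\tau_U\wedge t]\le\int_0^t\P_x(\tau_D>s)\,ds$, the whole proof has a genuine gap.

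The error is the inequality $\P_x(\tau_D>t)\le\P_x(\tau_{D_z(R,R)}>t)$. Since $D_z(R,R)\subset D$, one has $\tau_{D_z(R,R)}\le\tau_D$ and hence $\{\tau_{D_z(R,R)}>t\}\subset\{\tau_D>t\}$, so the inequality goes the other way; what you wrote is false, and reversing it destroys the chain. The event $\{\tau_D>t\}$ includes all paths that exit the local box $D_z(R,R)$ quickly by jumping \emph{into} $D\setminus D_z(R,R)$ — possibly deep inside $D$, where they then survive to time $t$ with probability close to $1$ — and Markov's inequality on $\tau_{D_z(R,R)}$ says nothing about these paths. This is precisely the difficulty the upper bound must address. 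The paper's proof uses the decomposition
\begin{align*}
\P_x(\tau_D>t)\le \P_x\big(\tau_{D\cap B(z_x,r_t)}>t\big)+\P_x\big(X_{\tau_{D\cap B(z_x,r_t)}}\in D\big),
\end{align*}
bounding the first term by $t^{-1}\E_x[\tau_{D\cap B(z_x,r_t)}]$ and the second by $c\,\Phi(\delta_D(x))^{1/2}\Phi(r_t)^{-1/2}$ via the second inequality in Theorem \ref{t:n_estimates}(b), which in turn rests on Proposition \ref{p:H12}. Your argument omits the exit-distribution term entirely, and without an estimate of the form $\P_x(X_{\tau_{D_z(R,R)}}\in D)\lesssim\sqrt{\Phi(\delta_D(x))/\Phi(R)}$ the upper bound cannot be recovered.
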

\pf
Recall thet $R_*>0$ is the constant in Theorem \ref{t:n_estimates}.
Let $b:=\Phi(R_*/4)/T$ and $r_t:=\Phi^{-1}(bt)$ for $t \le T$ so that  $r_t \le R_*/4$.
First note that, if $\delta_D(x) \ge 2^{-4}r_t$ then, by Lemma \ref{L:3.3},
\begin{align}
\label{e:ulbest2}
\P_{x}(\tau_{D}>t) \ge \P_{x}(\tau_{B(x, \delta_D(x))}>t) \ge \P_{0}(\tau_{B(0, 2^{-4}r_t)}>t) =c_0>0.
\end{align}

We now assume that $\delta_D(x) <2^{-4} r_t$. Let $z_x\in \partial D$ with $|x-z_x| =\delta_D(x)$. Then by 
\cite[Lemma 2.4]{CK} and Theorem \ref{t:n_estimates}(b),
\begin{align}
\label{e:ulbest21}&\P_{x}\(\tau_{D}>t\) 
 =\P_{x}\( \tau_{D \cap B(z_x, r_t)} =\tau_{D}> t\)+ \P_{x}\(\tau_{D}>\tau_{D \cap B(z_x, r_t)} > t\)
 \nn\\&\le  \P_{x}\( \tau_{D \cap B(z_x, r_t)} > t\)+ \P_{x}\(X_{ \tau_{D \cap B(z_x, r_t)}}  \in D\)
 \nn\\ &\le  t^{-1}\E_{x}\left[\tau_{D \cap B(z_x, r_t)}\right] + \P_{x}\(X_{ \tau_{D \cap B(z_x, r_t)}}  \in D\) \le c_1 \Phi(\delta_D(x))^{1/2} t^{-1/2} .
\end{align}

Recall that $D_{z}(r, r)$ is defined in \eqref{e:d_com}.
Let $U(x,t):=D_{z_x}(r_t, r_t)$.
 For the lower bound, we use the strong Markov property and Theorem \ref{t:n_estimates}(b)
to get that for any $b \ge 1$ and $t \le T/b$, 
\begin{align}\label{e:ulbest22}
&\P_x\(\tau_D>bt\)\nn\\
 \ge &\P_x\Big(\tau_{U(x,t)}<bt, X_{\tau_{U(x,t)}} \in D_{z_x}(2r_t, r_t),  |X_{\tau_{U(x,t)}}-X_{\tau_{U(x,t)}+s}| \le \frac{r_t}{4} \text{ for all } 0<s<bt\Big)\nn\\
\ge &\P_x\(\tau_{U(x,t)}<bt, X_{\tau_{U(x,t)}} \in D_{z_x}(2r_t, r_t)\) \P_0\(\tau_{B_{r_t/4}}>bt\)\nn\\
\ge &\P_0\(\tau_{B_{r_t/4}}>bt\)\(\P_x\big(X_{\tau_{U(x,t)}} \in D_{z_x}(2r_t, r_t)\big)-\P_x\big(\tau_{U(x,t)}\ge bt\big)\)\nn\\
\ge& \P_0\(\tau_{B_{r_t/4}}>bt\)\(c_2t^{-1}\E_x[\tau_{U(x,t)}]-b^{-1}t^{-1}\E_x\left[\tau_{U(x,t)}\right]\).
\end{align}
Take $b=\frac{2}{c_2} \vee 1$. Then, by Lemma \ref{L:3.3} and Theorem \ref{t:n_estimates}(a) we have
from \eqref{e:ulbest22}
 that for 
$t \le T_0:=T/b$
\begin{align}\label{e:ulbest23}
\P_x\(\tau_D>t\)\ge \P_x\(\tau_D>bt\) \ge c_3t^{-1}\E_x\left[\tau_{U(x,t)}\right] \ge \Phi(\delta_D(x))^{1/2} t^{-1/2}.
\end{align}
Combining \eqref{e:ulbest2}, \eqref{e:ulbest21} and \eqref{e:ulbest23}, we have proved the lemma.
\qed

Using 
\cite[Lemmas 2.5 and 2.8]{CK}, Lemma \ref{l:spH} and  Theorem \ref{t:n_estimates}(b)
 we  obtain the following upper bound of $p_{D}(t,x,y)$.

\begin{lemma}\label{l:u_off1p}
Suppose that   $D$ is a $C^{1,1}$ open set with characteristics $(R_0, \Lambda)$.
 For each $T>0$, there exist  constants
 $c=c(a, \phi, R_0, \Lambda, T)>0$  and $a_0=a_0(\phi, R_0, T)>0$ such that  for every $(t,x,y) \in (0, T]
\times D \times D$ with $  a_0\Phi^{-1}(t) \le |x-y|$,
\begin{align}
 p_{D}(t,x,y) \nn
 \le & c
\Big(\sqrt{\frac{\Phi(\delta_{D}(x))}{t}} \wedge 1\Big)
\Big(   \sup_{(s,z):s\le t,
 \frac{|x-y|}{2}  \leq |z-y| \le \frac{3 |x-y|}{2}}  p_{D}(s, z, y)
\\ &\qquad  \qquad      + \Big(\sqrt{t \Phi(\delta_{D}(y))} \wedge  t\Big)
j({|x-y|}/{3})\Big). \label{e:dfnewp}
\end{align}
\end{lemma}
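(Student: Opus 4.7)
The plan is to apply the strong Markov property of $X^D$ at the exit time from a small ball around $x$, followed by a L\'evy system decomposition, and to extract the product structure on the right-hand side using Lemma \ref{l:spH}, Theorem \ref{t:n_estimates}(b), and the template of \cite[Lemmas 2.5 and 2.8]{CK}.

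Set $r := |x-y|/6$ and $U := D \cap B(x, r)$. Since $y \notin \overline{U}$, the strong Markov property at $\tau_U$ yields
\[
p_D(t,x,y) = \E_x[p_D(t-\tau_U, X_{\tau_U}, y);\, \tau_U < t,\, X_{\tau_U} \in D].
\]
Partition $D\setminus U$ into the ``moderate annulus'' $A_2 := \{z : |x-y|/2 \le |z-y| \le 3|x-y|/2\}\setminus U$ and its complement $A_{13}$ (close-to-$y$ and far-from-$y$ points). The choice $r=|x-y|/6$ is made precisely so that, for $X_s\in U$ and $z\in A_{13}$, the triangle inequality yields $|X_s-z|\ge |x-y|/3$, and hence $j(|X_s-z|)\le j(|x-y|/3)$ by monotonicity of $j$.

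For the $A_{13}$ contribution, the L\'evy system formula together with $\int_D p_D(t-s,u,y)\,du = \P_y(\tau_D>t-s)$ and Fubini give
\[
\E_x[p_D(t-\tau_U, X_{\tau_U}, y); X_{\tau_U} \in A_{13}, \tau_U<t] \le j(|x-y|/3)\int_0^t \P_x(\tau_U>s)\P_y(\tau_D>t-s)\,ds.
\]
Since $\tau_U\le\tau_D$, we bound $\P_x(\tau_U>s)\le\P_x(\tau_D>s)$ and apply Lemma \ref{l:spH} to both survival probabilities. A direct case analysis on the resulting convolution $\int_0^t (\sqrt{\Phi(\delta_D(x))/s}\wedge 1)(\sqrt{\Phi(\delta_D(y))/(t-s)}\wedge 1)\,ds$, splitting according to whether $\Phi(\delta_D(x))$ and $\Phi(\delta_D(y))$ exceed $t$, bounds it by a constant multiple of $(\sqrt{\Phi(\delta_D(x))/t}\wedge 1)(\sqrt{t\Phi(\delta_D(y))}\wedge t)$, matching the jump term of the claim.

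For the $A_2$ contribution, bound $p_D(t-\tau_U,z,y)\le M$, where $M$ denotes the supremum on the right-hand side of the lemma, so the contribution is at most $M\cdot \P_x(\tau_U<t, X_{\tau_U}\in A_2)$. To produce the outer factor $\sqrt{\Phi(\delta_D(x))/t}\wedge 1$ here, we invoke \cite[Lemma 2.5]{CK}, whose application in our setting rests on the exit-time upper bound $\E_x[\tau_U]\le c\sqrt{\Phi(\delta_D(x))\Phi(r)}$ of Theorem \ref{t:n_estimates}(b) together with $\Phi(r)\lesssim t$; the latter follows from the hypothesis $a_0\Phi^{-1}(t)\le|x-y|$ once $a_0$ is taken sufficiently large relative to the scaling constants of $\Phi$. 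The main obstacle is precisely this $A_2$ step: a direct L\'evy system estimate of $\P_x(\tau_U<t, X_{\tau_U}\in A_2)$ is insufficient since $\int_{A_2} j(|X_s-u|)\,du$ is not uniformly finite; the two-step Markov argument encoded in \cite[Lemmas 2.5 and 2.8]{CK}, supplemented by Theorem \ref{t:n_estimates}(b), overcomes this and yields the desired factorization in \eqref{e:dfnewp}.
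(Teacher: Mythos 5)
Your overall architecture matches the paper's: exit from a neighborhood of $x$, a L\'evy-system bound for the far contribution giving the $j(|x-y|/3)$ term via the convolution $\int_0^t\P_x(\tau_U>s)\P_y(\tau_D>t-s)\,ds$ and Lemma \ref{l:spH}, and an exit-probability bound for the intermediate annulus via \cite[Lemmas 2.5 and 2.8]{CK} together with Theorem \ref{t:n_estimates}(b). However, your choice of inner set $U=D\cap B(x,|x-y|/6)$ creates genuine problems. First, $|x-y|$ is only bounded \emph{below} by $a_0\Phi^{-1}(t)$, so $r=|x-y|/6$ can be arbitrarily large; in particular it can exceed the threshold $R_*$ in Theorem \ref{t:n_estimates}(b), and that theorem then does not yield $\E_x[\tau_U]\le c\sqrt{\Phi(\delta_D(x))\Phi(r)}$. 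Relatedly, your claim that ``$\Phi(r)\lesssim t$ follows from $a_0\Phi^{-1}(t)\le|x-y|$ once $a_0$ is large'' is backwards: that hypothesis gives $t\le\Phi(|x-y|/a_0)$, i.e.\ $t\lesssim\Phi(r)$, and enlarging $a_0$ only makes $\Phi(r)$ larger relative to $t$. The paper avoids both issues by taking the inner set to be $B(x_0,a_0\Phi^{-1}(t)/12)\cap D$ --- radius tied to $t$ and hence $\le R_*/2$ by the choice $a_0=6R_*/\Phi^{-1}(T)$ --- and uses the hypothesis $a_0\Phi^{-1}(t)\le|x-y|$ only to obtain the geometric separation $|u-z|\ge|x-y|/3$ between this small set and the far region.

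Second, you omit the case split on $\delta_D(x)$. Theorem \ref{t:n_estimates}(b) requires $x\in D_z(2^{-3}R,2^{-4}R)$ for a boundary point $z$, i.e.\ $x$ must lie within distance of order $2^{-3}R$ of $\partial D$; for $x$ deep inside $D$ the cited exit-time bound is not available, and the inner set should be centered at the nearest boundary point $x_0$ rather than at $x$ so that it fits inside a box $D_{x_0}(R,R)$. The paper handles this by first assuming $\delta_D(x)\le 2^{-7}a_0\Phi^{-1}(t)/3$ and disposing of the complementary case trivially, since there $\sqrt{\Phi(\delta_D(x))/t}\wedge 1$ is bounded below and $p_D(t,x,y)$ is itself dominated by the supremum on the right-hand side (take $s=t$, $z=x$). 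These gaps are fixable, but as written the two key quantitative inputs --- the exit-time upper bound and the application of \cite[Lemma 2.5]{CK} --- are invoked outside their range of validity.
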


\pf 
Throughout the proof, we assume $t \in (0,  T]$ and  let $
 a_0:= {6 R_*} / {\Phi^{-1}(T)}.$ Note that $a_0\Phi^{-1}(t)/6 \le R_*$.

We first assume  $\delta_{D}(x) \le  2^{-7}a_0\Phi^{-1}(t)/3 \le 2^{-7}|x-y|/3$ and 
let 
$x_0$ be a point on $\partial D$ such that  $\delta_D(x)=|x-x_0|$ and let
$U_1:=B( x_0,
 a_0\Phi^{-1}(t)/(12)) \cap D$, $U_3:= \{z\in D: |z-x|>|x-y|/2\}$
and $U_2:=D\setminus (U_1\cup U_3)$.
Using Theorem \ref{t:n_estimates}(b) we have 
\begin{equation}\label{e:upae1p}
\E_{x}\left[ \tau_{U_1} \right]
 \le  \E_{x}\left[ \tau_{D_{x_0}(a_0\Phi^{-1}(t)/(12),a_0\Phi^{-1}(t)/(12))} \right] \le c_1 \sqrt{t \Phi(\delta_{D}(x))}.
\end{equation}
Since
$
|z-x| > 2^{-1}|x-y|  \ge a_0 2^{-1} \Phi^{-1}(t)$ for
$z\in U_3$,
we have for  $u\in U_1$ and $z\in U_3$,
$$
|u-z| \ge |z-x|-|x_0-x|-|x_0-u| \ge  \frac{1}{2}|x-y|- \frac16 a_0\Phi^{-1}(t) \ge \frac{1}{3}|x-y|.
$$
Thus, by the fact $U_1 \cap U_3 = \emptyset$ and the monotonicity of $j$,
\begin{eqnarray}
\sup_{u\in U_1,\, z\in U_3}j(|u-z|) \le \sup_{(u,z):|u-z| \ge
\frac{1}{3}|x-y|}j(|u-z|)
= j(|x-y|/3).
\label{e:n01p}
\end{eqnarray}

On the other hand,  for $z \in U_2$,
$$
\frac32 |x-y| \ge |x-y| +|x-z| \ge  |z-y| \ge |x-y| -|x-z| \ge
\frac{|x-y|}2 \ge a_02^{-1} \Phi^{-1}(t),
$$
so
\begin{align}
&\sup_{s\le t,\, z\in U_2} p_{D}(s, z, y)
\le \sup_{ s\le t,  \frac{|x-y|}{2}  \leq |z-y| \le \frac{3 |x-y|}{2}}  p_{D}(s, z, y).\label{e:n02p}
\end{align}
Furtherover, by Lemma \ref{l:spH},
\begin{align}
&\int_0^t \mathbb{P}_x\left(\tau_{U_1}>s\right) \mathbb{P}_y \left(\tau_{D}>t-s \right)ds \le
\int_0^t \mathbb{P}_x\left(\tau_{D}>s\right) \mathbb{P}_y \left(\tau_{D}>t-s \right)ds\nn \\
&\le c_3 \sqrt{\Phi(\delta_{D}(x))} \int_0^t s^{-1/2}\left(\sqrt{\frac{\Phi(\delta_{D}(y))}{t-s}} \wedge 1\right) ds\nn\\
&\le c_4 \sqrt{\Phi(\delta_{D}(x))} \left(\sqrt{\Phi(\delta_{D}(y))}  \wedge \sqrt{t}\right). \label{e:pdff} 
\end{align}
Finally,
applying \cite[Lemma 2.5]{CK}
 and then \eqref{e:upae1p}, we have
$$
\P_x \Big(X_{\tau_{U_1}}\in U_2 \Big) \le \P_x
\Big(X_{\tau_{U_1}}\in B( x_0, a\Phi^{-1}(t)/(12))^c\Big)
\,\le\,  \frac{c_5}{t}\, \E_{x}[\tau_{U_1}]
\le c_6 t^{-1/2} \,
\sqrt{\Phi(\delta_{D}(x))} .
$$

Applying 
 this and  \eqref{e:upae1p}--\eqref{e:pdff} to
\cite[Lemma 2.8]{CK}
 we conclude that 
\begin{eqnarray*}
p_{D}(t, x, y)
&\le& \left(\int_0^t \mathbb{P}_x\left(\tau_{U_1}>s\right) \mathbb{P}_y \left(\tau_{D}>t-s \right)ds \right)
\sup_{u\in U_1,\, z\in U_3}j(|u-z|) \\
&&\quad +
 \P_x\Big (X_{\tau_{U_1}}\in U_2 \Big) \sup_{s\le t,\, z\in U_2} p_D(s, z, y) \\
&\le& c_{4} \sqrt{\Phi(\delta_{D}(x))} \left(\sqrt{\Phi(\delta_{D}(y))}  \wedge \sqrt{t}\right) j(|x-y|/3)\\
&& \quad  +
c_6 t^{-1/2} \,
\sqrt{\Phi(\delta_{D}(x))} \sup_{ s\le t, \frac{|x-y|}{2}  \leq |z-y| \le \frac{3 |x-y|}{2}}  p_D(s, z, y).
\end{eqnarray*}

If $\delta_{D}(x) >   2^{-7}a_0\Phi^{-1}(t)/3$,
by Lemma \ref{lem:bf}(a),
$$
\sqrt{\frac{\Phi(\delta_{D}(x))}{t}} \ge \sqrt{\frac{\Phi( a_0\Phi^{-1}(t)/(24))}{\Phi(\Phi^{-1}(t))}}
\ge c_7>0.
$$
Thus \eqref{e:dfnewp} is clear.
Therefore we  have proved
\eqref{e:dfnewp}.
\qed

We now apply Lemma \ref{l:u_off1p} to get the upper bound of the Dirichlet heat kernel.

\noindent
\textbf{Proof of Theorem \ref{t:main0}(a)}:
We will closely follow the argument in \cite{CK}. We fix $T>0$.

By 
 \cite[Lemma 2.7]{CK}
  and Proposition  \ref{prop:on-up}, for every $(t,x,y) \in (0, T]
\times D \times D$,
$$
 p_{D}(t,x,y) \,\le\, c_1 (\Phi^{-1}(t))^{-d}
\left(\sqrt{\frac{\Phi(\delta_{D}(x))}{t}} \wedge 1\right)\left(\sqrt{\frac{\Phi(\delta_{D}(y))}{t}} \wedge 1\right)  .
$$

Recall that $a_0$ is the constant in Lemma \ref{l:u_off1p}. If $a_0\Phi^{-1}(t) \ge |x-y|$, by Proposition \ref{step1},
$ p(t,x-y) \ge  c_2  (\Phi^{-1}(t))^{-d}.$
Thus   for every $(t,x,y) \in (0, T]
\times D \times D$ with $ a_0 \Phi^{-1}(t) \ge |x-y|$,
\begin{align}
 p_{D}(t,x,y) \,\le\,c_3 \left(\sqrt{\frac{\Phi(\delta_{D}(x))}{t}} \wedge 1\right)\left(\sqrt{\frac{\Phi(\delta_{D}(y))}{t}} \wedge 1\right)  p(t,x-y). \label{e:dfghj1}
\end{align}

We extend the definition of
$p(t, w)$  by setting
 $p(t, w)=0$ for $t<0$
 and $w \in \R^d$.
 For each fixed $x, y\in \R^d$ and $t>0$ with $|x-y|> 8r$,
 one can easily check that $(s, w)\mapsto  p(s, w- y)$
is a parabolic function in $(-\infty, \infty)\times B(x, 2r)$.
Suppose  $  \Phi^{-1}(t) \le |x-y|$ and
let $(s,z)$ with $s\le t$ and  $\frac{|x-y|}{2}  \leq
 |z-y| \le \frac{3 |x-y|}{2}$. Then 
 by Theorem \ref{t:PHI},
 there is a constant
$c_4 \geq 1$
 so that for every $t \in (0,  T]$,
$$
\sup_{s \le t} p(s, z-y )\leq  c_4 p(t , z-y).
$$
Using this and the monotonicity of $ r \to p(t, r)$ we have
\begin{equation}\label{eq:ppun31}
 \sup_{ s\le t,
 \frac{|x-y|}{2}  \leq |z-y| \le \frac{3 |x-y|}{2}}
p(s, z-y)  \leq c_4\sup_{
 \frac{|x-y|}{2}  \leq |z-y| \le \frac{3 |x-y|}{2}}
 p(t, z-y)=c_4 p(t, |x-y|/2).
 \end{equation}
Combining  \eqref{eq:ppun31} and Lemma \ref{l:u_off1p} and Proposition \ref{step3} and using  the monotonicity of $ r \to p(t, r)$, we have
 for every $(t,x,y) \in (0, T]
\times D \times D$ with $ a_0 \Phi^{-1}(t) \le |x-y|$,
\begin{align*}
& p_{D}(t,x,y) \\
\le & c_5
\Big(\sqrt{\frac{\Phi(\delta_{D}(x))}{t}} \wedge 1\Big)
\left(   p(t, |x-y|/2)   +
 \Big(\sqrt{t \Phi(\delta_{D}(y))} \wedge  t\Big)
    j(|x-y|/3)\right)\\
\le & c_6
\Big(\sqrt{\frac{\Phi(\delta_{D}(x))}{t}} \wedge 1\Big)
\left( p(t, |x-y|/2) + p(t, |x-y|/3) \right)\\
\le & 2c_6
\Big(\sqrt{\frac{\Phi(\delta_{D}(x))}{t}} \wedge 1\Big)
p(t, |x-y|/3).
\end{align*}
In view of \eqref{e:dfghj1}, using the monotonicity of $ r \to p(t, r)$ again, 
the last inequality in fact holds for all $(t,x,y) \in (0, T]
\times D \times D$.  

Thus by semigroup properties of $p$ and $p_D$ and the symmetry of $(x,y) \to p_D(t,x,y)$, 
\begin{align*}
& p_{D}(t,x,y)=\int_D p_D(t/2, x,z) p_D(t/2, y,z)dz \\
\le & c_7
\Big(\sqrt{\frac{\Phi(\delta_{D}(x))}{t}} \wedge 1\Big)\Big(\sqrt{\frac{\Phi(\delta_{D}(y))}{t}} \wedge 1\Big)
 \int_D
p(t/2, |x-z|/3)p(t/2, |z-y|/3)dz\\
\le & c_8
\Big(\sqrt{\frac{\Phi(\delta_{D}(x))}{t}} \wedge 1\Big)\Big(\sqrt{\frac{\Phi(\delta_{D}(y))}{t}} \wedge 1\Big)
 \int_{\R^d}
p(t/2, x/3, z)p(t/2, z, y/3)dz\\
=&c_8
\Big(\sqrt{\frac{\Phi(\delta_{D}(x))}{t}} \wedge 1\Big)\Big(\sqrt{\frac{\Phi(\delta_{D}(y))}{t}} \wedge 1\Big)
 p(t, |x- y|/3).
\end{align*}
We have proved \eqref{e:tu1}.

\eqref{e:tu2} follows from \eqref{e:tu1}, Lemma \ref{lem:bf} and Theorem \ref{tm:main} (applying to  $p(t, |x-y|/3)$). \qed

\section{Lower bound estimates}\label{S:5}

Recall that  we always assume that   \eqref{e:jdouble} holds, that $\phi$  has no drift and that $H$ satisfies $ L_a(\gamma, C_L)$ and $ U_a(\delta, C_U)$ with $\delta<2$ and $\gamma >2^{-1}{\bf 1}_{\delta \ge 1}$
   for some $a > 0$.

Using
Lemma \ref{l:spH} from Section \ref{S:4}, 
 in this section we will prove Theorem \ref{t:main0}(b). The main ideas in this section come from \cite{CK}.
We first observe the following simple lemma.
\begin{lemma}\label{l:ne12} 
The function
  $ \H(\lambda):=
 \sup_{t \in (0,1]} \P_0 \left(|X_t|> \lambda\Phi^{-1}(t) \right)
 $
vanishes  at $\infty$, that is, $\lim_{\lambda \to  \infty} \H(\lambda)=0$.
\end{lemma}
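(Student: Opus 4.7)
My approach is to integrate the heat kernel upper bound from Theorem~\ref{tm:main} over the region $\{|x|>\lambda\Phi^{-1}(t)\}$ and to show the resulting estimate vanishes as $\lambda\to\infty$ uniformly in $t\in(0,1]$. For $t\in(0,1]$ Theorem~\ref{tm:main} gives
\[
p(t,x)\le c_{1}\Big[\,t\,|x|^{-d}H(|x|^{-2})+\Phi^{-1}(t)^{-d}\exp\big(-c_{2}|x|^{2}\phi^{-1}(1/t)\big)\Big],
\]
so I will bound $\P_{0}(|X_{t}|>\lambda\Phi^{-1}(t))$ by the sum of the two corresponding integrals in spherical coordinates and treat them separately.

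The Gaussian-type contribution, after the substitution $u=|x|/\Phi^{-1}(t)$ and the identity $\phi^{-1}(1/t)=\Phi^{-1}(t)^{-2}$, equals $c_{1}\omega_{d-1}\int_{\lambda}^{\infty}u^{d-1}e^{-c_{2}u^{2}}du$, which decays faster than any polynomial in $\lambda$ and does not depend on $t$; this piece is clearly uniformly controlled.

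The L\'evy-type contribution is $c_{1}\omega_{d-1}\,t\int_{\lambda\Phi^{-1}(t)}^{\infty}H(r^{-2})r^{-1}\,dr$. Substituting $v=r^{-2}$ turns it into $\tfrac{c_{1}\omega_{d-1}}{2}\,t\int_{0}^{\phi^{-1}(1/t)/\lambda^{2}}H(v)/v\,dv$. Using Lemma~\ref{lem:bf}(b) together with Remark~\ref{r:UL} I would derive the two-sided estimate $H(v)\le C\,v^{\gamma}$ for $v$ in the lower-scaling range of $H$ (coming from lower scaling of $\phi$), so $\int_{0}^{V}H(v)/v\,dv\le C\,V^{\gamma}/\gamma$ whenever $V\le 1/a$. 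Inserting $V=\phi^{-1}(1/t)/\lambda^{2}$ and using $t\phi^{-1}(1/t)^{\gamma}=\Phi(s)/s^{2\gamma}$ with $s=\Phi^{-1}(t)\in(0,1]$, which is uniformly bounded in $t\in(0,1]$ by the lower scaling of $\Phi$, produces a bound of order $\lambda^{-2\gamma}$, uniform in the range of $t$ where $V\le 1/a$ holds.

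The main obstacle is the complementary range $V>1/a$, which corresponds to $\lambda\Phi^{-1}(t)<1/\sqrt{a}$ (i.e.\ both $t$ and $\Phi^{-1}(t)$ rather small relative to $\lambda$). Here I would split the $v$-integral at $1/a$: the part $\int_{0}^{1/a}$ is bounded above by a fixed constant, while $\int_{1/a}^{V}H(v)/v\,dv$ is handled by the upper scaling $U_{a}(\delta,C_{U})$ of $H$, which gives $H(v)\le C\,v^{\delta}$ for $v\ge a$ and hence a bound of order $V^{\delta}$. Multiplying by $t$ and expressing $V^{\delta}=\phi^{-1}(1/t)^{\delta}\lambda^{-2\delta}$, the scaling inequalities for $\phi^{-1}$ (derived from the lower and upper scaling of $\phi$ via Lemma~\ref{lem:inverse}) control $t\,\phi^{-1}(1/t)^{\delta}$ uniformly, ultimately yielding an overall bound that vanishes as $\lambda\to\infty$ uniformly in $t\in(0,1]$. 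Passing to the supremum in $t$ therefore gives $\H(\lambda)\to 0$.
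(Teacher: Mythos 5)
Your Gaussian piece is fine, but the L\'evy piece has a genuine gap. The standing hypotheses only give scaling of $H$ \emph{near infinity}: $L_a(\gamma,C_L)$ and $U_a(\delta,C_U)$ with $a>0$ control $H(v)$ only for arguments $v\ge a$ (equivalently $H(|x|^{-2})$ only for $|x|\le a^{-1/2}$), and Remark \ref{r:UL} lets you push $a$ down to any positive constant but never to $0$. Your key step asserts $H(v)\le C v^{\gamma}$ on the range needed to conclude $\int_0^V H(v)v^{-1}\,dv\le CV^{\gamma}/\gamma$ for $V\le 1/a$ --- but that interval $(0,V]$ lies precisely \emph{outside} the scaling range, where the hypotheses say nothing about $H$. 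In fact $\int_0^{V}H(v)v^{-1}\,dv=2\int_{V^{-1/2}}^{\infty}H(r^{-2})r^{-1}\,dr$ can be $+\infty$ under the lemma's assumptions: take $\phi=\phi_1+\phi_2$ with $\phi_1(\lambda)=\lambda^{1/2}$ and $\phi_2$ a driftless Bernstein function whose L\'evy measure is supported on $[1,\infty)$ with tail $\mu_2((R,\infty))=1/\log(eR)$. Then $H$ still satisfies $L_a$, $U_a$ near infinity (where $H_1$ dominates) and \eqref{e:jdouble} holds, yet $H(v)\ge H_2(v)\ge c/\log(1/v)$ near $0$, so $\int_0 H(v)v^{-1}\,dv$ diverges and your bound on $\P_0(|X_t|>\lambda\Phi^{-1}(t))$ is identically infinite. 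The underlying reason is that the pointwise bound $p(t,x)\lesssim t|x|^{-d}H(|x|^{-2})+\dots$ is not integrable at infinity in general --- it loses the actual integrability of the L\'evy measure of large jumps. Even when the integral happens to converge, your argument still has a uniformity problem: at $t=1$ your L\'evy term is $\tfrac12\int_0^{\lambda^{-2}}H(v)v^{-1}\,dv$, whose decay as $\lambda\to\infty$ again depends on the unassumed behaviour of $H$ at $0$.

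The paper sidesteps all of this by not integrating a density bound at all: it invokes the Pruitt-type maximal estimate $\P_0(|X_t|>r)\le c_1\,t/\Phi(r)=c_1\,t\,\phi(r^{-2})$ (\cite[Theorem 2.2]{CK}), valid for every symmetric L\'evy process, which correctly encodes the total mass of large jumps through $\phi$ rather than through $H$. Plugging in $r=\lambda\Phi^{-1}(t)$ gives $c_1\,\phi(\lambda^{-2}\phi^{-1}(t^{-1}))/\phi(\phi^{-1}(t^{-1}))$, and the lower scaling of $\phi$ near infinity (which follows from that of $H$ by Lemma \ref{lem:bf}(b)) makes this $O(\lambda^{-2\gamma})$ uniformly in $t\in(0,1]$. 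If you want to salvage your route, you would have to replace the $H$-term of the off-diagonal bound by something controlled by $\phi$ (as in \eqref{e:uppcom}, which gives $p(t,x)\lesssim t|x|^{-d}\phi(|x|^{-2})$) --- and then you are essentially reconstructing the Pruitt estimate the paper uses directly.
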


\pf  
By \cite[Theorem 2.2]{CK} 
there exists a constant $c_1=c_1(d)>0$ such that
$$
\P_0 (|X_t|>r) \,\le\,  c_1\,  t/ \Phi( r)  \quad
\hbox{for } (t, r) \in (0, \infty)\times (0, \infty).
$$
Noting $\phi^{-1}(t^{-1})^{1/2}=\Phi^{-1}(t)^{-1}$, the above inequality implies that 
$$
\sup_{t \in (0,1]} \P_0 \left(|X_t|>  \lambda\Phi^{-1}(t) \right)
 \le  c_1\,   \sup_{t \in (0,1]} \frac{t}{ \Phi( \lambda\Phi^{-1}(t))}=  
  c_1\,   \sup_{t \in (0,1]} {t}{ \phi( \lambda^{-2} \phi^{-1}(t^{-1}))}.
$$
The condition $ L_a(\gamma, C_L)$ and Remark \ref{r:UL} imply that  for all $\lambda \ge 1$, 
$$
 \sup_{t \in (0,1]} {t}{ \phi(  \lambda^{-2} \phi^{-1}(t^{-1}))} \le  
 \sup_{t \in (0,1]} \frac{ \phi(  \lambda^{-2} \phi^{-1}(t^{-1}))}{ \phi(  \phi^{-1}(t^{-1}))} \le  c_2 \lambda^{-2\gamma}, 
 $$
which goes to zero as $\lambda\to \infty$.
 \qed

We now discuss some  lower bound estimates
of $p_{D}(t,x, y)$.
We first 
note that by Lemma \ref{l:spH}, 
 there exist $C_{3} \ge 1$ and $T_1 \in (0, 1 \wedge \Phi(R_0)]$ such that
\begin{align}\label{e:spH}
C_{3}^{-1}
\left(\sqrt{\frac{\Phi(\delta_{D}(x))}{t}} \wedge 1\right) \le \P_{x}(\tau_{D}>t) \,\le\, C_{3}\
\left(\sqrt{\frac{\Phi(\delta_{D}(x))}{t}} \wedge 1\right), \quad \text{ for all }x  \in D, t \in (0, T_1]\ .
\end{align}

For $x\in D$ we use $z_x$ to denote a point on $\partial D$ such that  $|z_x-x|=\delta_D(x)$ and ${\bf n}(z_x):=(x-z_x)/|z_x-x|$.
By a 
simple  geometric argument, one can easily see that  
\begin{align}
\label{e:geom}
x+r{\bf n}(z_x) \in D \quad \text{ for all } x \in D \text{ and } r \in [0, R_0/2]. 
\end{align}

\begin{lemma}\label{l:loww_01}
There exist $a_1>0$ and $M_1 >1 \vee 4a_1$ such that 
for all $a \in (0, a_1]$, $x \in D$  and $t \in (0, T_1]$,  we have that  
 $$ \P_x\left(X_t \in {D} \cap B(\xi^a_x(t), M_1\Phi^{-1}(t)) \text{ and } \Phi(\delta_{D}(X_t)) >  at \right)
   \ge   (2C_{3})^{-1} \left(\sqrt{\frac{\Phi(\delta_{D}(x))}{t}} \wedge 1\right)
   $$
   where $\xi^a_x(t):=x+a\Phi^{-1}(t){\bf n}(z_x)$ and 
   $C_{3}$ and $T_1$ are  the constants in 
   \eqref{e:spH}.
\end{lemma}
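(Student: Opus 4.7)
The plan is to decompose the survival event $\{\tau_D > t\}$ into the target event and two complementary bad events. Writing
\begin{align*}
A &:= \{\tau_D > t,\, X_t \in B(\xi^a_x(t), M_1 \Phi^{-1}(t)),\, \Phi(\delta_D(X_t)) > at\},\\
A_1 &:= \{\tau_D > t,\, X_t \notin B(\xi^a_x(t), M_1 \Phi^{-1}(t))\},\\
A_2 &:= \{\tau_D > t,\, X_t \in B(\xi^a_x(t), M_1 \Phi^{-1}(t)),\, \Phi(\delta_D(X_t)) \le at\},
\end{align*}
we have $\P_x(A) = \P_x(\tau_D > t) - \P_x(A_1) - \P_x(A_2)$. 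In view of the lower bound in \eqref{e:spH}, it suffices to choose $M_1$ and $a_1$ so that both $\P_x(A_1)$ and $\P_x(A_2)$ are bounded by $(4 C_{3})^{-1}\bigl(\sqrt{\Phi(\delta_D(x))/t}\wedge 1\bigr)$ uniformly for $x \in D$, $t \in (0, T_1]$, $a \in (0, a_1]$. The workhorse for both bounds is the already-established upper estimate \eqref{e:tu1} from Theorem \ref{t:main0}(a):
$$p_D(t, x, y) \le C_0\Bigl(\sqrt{\tfrac{\Phi(\delta_D(x))}{t}} \wedge 1\Bigr)\Bigl(\sqrt{\tfrac{\Phi(\delta_D(y))}{t}} \wedge 1\Bigr) p(t, x/3, y/3).$$

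For $\P_x(A_1) = \int_{D \setminus B(\xi^a_x(t), M_1 \Phi^{-1}(t))} p_D(t, x, y)\,dy$, I would bound the $y$-factor by $1$, perform the change of variables $u = y/3$ in the remaining integral, and observe that $|y - x| \ge (M_1 - a)\Phi^{-1}(t)$ on the region of integration. The result is
$$\P_x(A_1) \le C_0\, 3^d\, \Bigl(\sqrt{\tfrac{\Phi(\delta_D(x))}{t}} \wedge 1\Bigr) \P_0\bigl(|X_t| > (M_1-a)\Phi^{-1}(t)/3\bigr) \le C_0\, 3^d\, \H\bigl((M_1-a)/3\bigr) \Bigl(\sqrt{\tfrac{\Phi(\delta_D(x))}{t}} \wedge 1\Bigr),$$
where I have used translation invariance of $X$ and the definition of $\H$ in Lemma \ref{l:ne12}. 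Since $\H(\lambda) \to 0$ as $\lambda \to \infty$, I fix $M_1 \ge 4$ large enough that $C_0\, 3^d\, \H((M_1-1)/3) \le (4 C_{3})^{-1}$; this gives the required bound as long as $a \le 1$.

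For $\P_x(A_2)$, the constraint $\Phi(\delta_D(X_t)) \le at$ provides the additional factor $\sqrt{\Phi(\delta_D(y))/t} \wedge 1 \le \sqrt{a}$. Combining this with the on-diagonal estimate $p(t, x/3, y/3) \le c_1 \Phi^{-1}(t)^{-d}$ (Proposition \ref{prop:on-up}) and the trivial volume bound $|B(\xi^a_x(t), M_1 \Phi^{-1}(t))| \le c_2 M_1^d \Phi^{-1}(t)^d$ yields
$$\P_x(A_2) \le c_3 M_1^d \sqrt{a}\, \Bigl(\sqrt{\tfrac{\Phi(\delta_D(x))}{t}} \wedge 1\Bigr).$$
With $M_1$ already fixed, I then choose $a_1 \in (0, 1/4]$ small enough that $c_3 M_1^d \sqrt{a_1} \le (4C_{3})^{-1}$; note this automatically satisfies $M_1 > 1 \vee 4 a_1$. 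Adding the two bounds and subtracting from \eqref{e:spH} completes the proof.

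The only delicate point is the apparent circular dependence between $M_1$ and $a_1$: $M_1$ must be large enough to control the tail of $X_t$ appearing in $A_1$, while the bound on $A_2$ contains the factor $M_1^d$. This is resolved cleanly by choosing $M_1$ first (relying only on the a priori restriction $a \le 1$ to make the argument for $A_1$ uniform), and then picking $a_1$ in terms of the now-fixed $M_1$.
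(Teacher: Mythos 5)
Your proof is correct and follows essentially the same route as the paper: decompose $\{\tau_D>t\}$ into the target event plus the ``far away'' and ``close to the boundary'' events, control both via the factorized upper bound \eqref{e:tu1} together with Lemma \ref{l:ne12}, and subtract from the lower bound in \eqref{e:spH}. The only (harmless) deviations are that the paper bounds the small-$\delta_D$ event by integrating $p(t,|x-u|/3)$ over all of $\R^d$ (so its bound carries $3^d\sqrt{a}$ with no $M_1$-dependence and the constants can be chosen in either order), and centers the tail estimate at $\xi^a_x(t)$ rather than at $x$; your resolution of the $M_1$--$a_1$ dependence by fixing $M_1$ first is equally valid.
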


\pf
By  \eqref{e:tu1} and a change of variable, for every $ a>0$, $t \in (0, T_1]$ and $x \in D$,
\begin{align}\label{e:lbsp1}
&  \int_{
 \{ u \in D : \Phi(\delta_{D}(u)) \le  at\}
} p_{{D}}(t,x,u)du \nn\\
  \le & C_0\left(\sqrt{\frac{\Phi(\delta_{D}(x))}{t}} \wedge 1\right)
  \int_{
  \{ u \in D : \Phi(\delta_{D}(u)) \le  at\}} \left(\sqrt{\frac{\Phi(\delta_{D}(u))}{t}} \wedge 1\right) p(  t,  |x-u|/3)
   du  \nn\\
     \le & C_0\sqrt{a}   \left(\sqrt{\frac{\Phi(\delta_{D}(x))}{t}} \wedge 1\right)
  \int_{
  \{ u \in D : \Phi(\delta_{D}(u)) \le  at\}} p(  t,  |x-u|/3)  du  \nn\\
  \le &  C_0\sqrt{a}   \left(\sqrt{\frac{\Phi(\delta_{D}(x))}{t}} \wedge 1\right)  \int_{\R^d} p(  t, |x-u|/3)  du \nn\\
   =&  C_0 3^{d}\sqrt{a}   \left(\sqrt{\frac{\Phi(\delta_{D}(x))}{t}} \wedge 1\right)  \int_{\R^d} p(  t,  w)  dw
  =C_0 3^{d}\sqrt{a}   \left(\sqrt{\frac{\Phi(\delta_{D}(x))}{t}} \wedge 1\right).
 \end{align}
Choose $a_1>0$ small so that
$C_03^{d}\sqrt{a_1} \le (4 C_{3})^{-1}$
where $C_{3}$ is the constant in 
   \eqref{e:spH}.

For the rest of the proof, we assume that $x \in D$, $a \in (0, a_1]$ and $t \in (0, T_1]$. 
Since $\xi^a_x(t)=x+a \Phi^{-1}(t){\bf n}(z_x)$, 
for every ${\lambda} \ge 2a_1 $ and $ u \in {D} \cap B(\xi^a_x(t), \lambda\Phi^{-1}(t))^c$,  
we have
$$|x-u| \ge |\xi^a_x(t)-u|-|x-\xi^a_x(t)| \ge |\xi^a_x(t)-u|  -a_1 \Phi^{-1}(t) \ge (1-\frac{a_1}{\lambda}) |\xi^a_x(t)-u|\ge \frac12 |\xi^a_x(t)-u|.$$
Thus using this, \eqref{e:tu1} and  the monotonicity of $ r \to p(t, r)$,
 we have that for every  ${\lambda} \ge 2a_1 $,
\begin{align}
& \int_{{D} \cap B(\xi^a_x(t), \lambda\Phi^{-1}(t))^c} p_{{D}}(t,x,u)du
\nn \\
\le& C_0\left(\sqrt{\frac{\Phi(\delta_{D}(x))}{t}} \wedge 1\right)  \int_{{D} \cap B(\xi^a_x(t), \lambda\Phi^{-1}(t))^c}
p(  t,   |x-u|/3) du  \nn\\
 \le& C_0\left(\sqrt{\frac{\Phi(\delta_{D}(x))}{t}} \wedge 1\right) \int_{{D} \cap B(\xi^a_x(t), \lambda\Phi^{-1}(t))^c}p( t,  |\xi^a_x(t)-u|/6)  du  \nn\\
   \le& C_0\left(\sqrt{\frac{\Phi(\delta_{D}(x))}{t}} \wedge 1\right) \int_{B(0, \lambda\Phi^{-1}(t))^c}p( t,  6^{-1} y)
  dy   \nn\\
    \le&
 C_0 6^{d}\H(6^{-1} \lambda)\left(\sqrt{\frac{\Phi(\delta_{D}(x))}{t}} \wedge 1\right).\label{n:neq11} \end{align}
By Lemma \ref{l:ne12}, we can
choose $M_1>1 \vee 4a_1$ large so that
$
C_0 6^{d}\H(6^{-1} M_1) < (4 C_{3})^{-1}.
$
Then by 
   \eqref{e:spH}--\eqref{n:neq11} and our choice of $a_1$ and $M_1$, we conclude that
\begin{align}
 &\int_{
 \{u \in {D} \cap B(\xi^a_x(t), M_1\Phi^{-1}(t)):\Phi(\delta_{D}(u)) >  at\}} p_{{D}}(t,x,u)du \nn\\
 =&
  \int_{{D}} p_{{D}}(t,x,u)du-\int_{{D} \cap B(\xi^a_x(t), M_1\Phi^{-1}(t))^c} p_{{D}}(t,x,u)du -\int_{ \{u \in {D}: \Phi(\delta_{D}(u)) \le   at\}} p_{{D}}(t,x,u)du\nn \\
  \ge & (2 C_{3})^{-1} \left(\sqrt{\frac{\Phi(\delta_{D}(x))}{t}} \wedge 1\right).\nn
 \end{align}
\qed

The next result is easy to check (see the proof of  \cite[Lemma 2.5]{CKS6} for a similar computation). We skip the proof. 
\begin{lemma}\label{L:2.5}
For any given positive constants $c_1, r_1, T$
and $r_2>r_1$, there is a positive constant $c_2=c_2 (r_1, r_2,  T, c_1, \phi)$
so that
$$
\phi^{-1}(t^{-1})^{d/2}e^{-c_1 |x-y|^2\phi^{-1}(t^{-1})} \leq  c_2 t r^{-d}H(r^{-2})
\qquad \hbox{for every } r_1 \le r < r_2 (a\wedge 1)^{-1} \hbox{ and } t\in (0, T].
$$
\end{lemma}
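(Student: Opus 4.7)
The plan is to reduce the inequality to a uniform bound on a single-variable function via rescaling. Introduce $u := r^2 \phi^{-1}(t^{-1})$, so that the left-hand side becomes
\[
\phi^{-1}(t^{-1})^{d/2}e^{-c_1 r^2\phi^{-1}(t^{-1})} = r^{-d} u^{d/2} e^{-c_1 u},
\]
and, since $u = r^2 \phi^{-1}(t^{-1})$ is equivalent to $t = 1/\phi(u/r^2)$, the right-hand side can be written $c_2 r^{-d} H(r^{-2})/\phi(u/r^2)$. After cancelling the common factor $r^{-d}$, the claim becomes
\[
u^{d/2} e^{-c_1 u} \;\le\; c_2\,\frac{H(r^{-2})}{\phi(u/r^2)}.
\]

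The range $r \in [r_1, r_2(a\wedge 1)^{-1}]$ is a fixed compact subinterval of $(0, \infty)$. Since $\phi$ has no drift and $\mu(0,\infty) = \infty$, both $H$ and $\phi$ are strictly positive and continuous on $(0,\infty)$, hence $H(r^{-2})/\phi(r^{-2})$ is bounded below by some constant $c_0 = c_0(r_1, r_2, a, \phi) > 0$ on this range. This is the one quantitative input needed from the hypotheses.

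I would then split into two cases. When $u \le 1$, monotonicity of $\phi$ gives $\phi(u/r^2) \le \phi(r^{-2})$, so
\[
\frac{H(r^{-2})}{\phi(u/r^2)} \;\ge\; \frac{H(r^{-2})}{\phi(r^{-2})} \;\ge\; c_0,
\]
while $u^{d/2}e^{-c_1 u} \le 1$; the desired inequality holds with $c_2 = 1/c_0$. When $u \ge 1$, I would invoke Lemma~\ref{lem:bf}(a), namely $\phi(u \cdot r^{-2}) \le u\,\phi(r^{-2})$, to obtain
\[
\frac{H(r^{-2})}{\phi(u/r^2)} \;\ge\; \frac{1}{u}\cdot\frac{H(r^{-2})}{\phi(r^{-2})} \;\ge\; \frac{c_0}{u},
\]
which reduces the claim to $u^{d/2+1}e^{-c_1 u} \le c_2 c_0$, and this holds with $c_2 = c_0^{-1}\sup_{u\ge 1} u^{d/2+1}e^{-c_1 u} < \infty$.

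There is essentially no obstacle beyond this bookkeeping: the lemma is a Gaussian-versus-polynomial comparison over a bounded range of $r$, so the decisive fact is that polynomial growth is dominated by the exponential factor, while for small $u$ the uniform positive lower bound on $H(r^{-2})/\phi(r^{-2})$ (valid because $r$ stays in a compact positive interval) carries the day. The combination of the two cases yields the constant $c_2 = c_2(r_1, r_2, T, c_1, \phi)$; note that $T$ enters only implicitly, as it does not restrict $u$ once we parametrise by $u$ rather than $t$.
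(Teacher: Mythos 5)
Your proof is correct. The paper itself omits the argument (it only points to a ``similar computation'' in \cite[Lemma 2.5]{CKS6}), and your rescaling to $u=r^2\phi^{-1}(t^{-1})$, combined with the positivity and continuity of $H/\phi$ on the compact range of $r^{-2}$ and the bound $\phi(u\lambda)\le u\phi(\lambda)$ for $u\ge 1$ from Lemma \ref{lem:bf}(a), is exactly the intended computation; the identification $r=|x-y|$ that you make implicitly is also how the lemma is applied in Step 3 of the proof of Theorem \ref{t:main0}(b).
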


\noindent
\textbf{Proof of Theorem \ref{t:main0}(b)}:
It is clear that 
any  bounded $C^{1,1}$ open set has the property that the path distance in any connected component of $D$
is comparable to the Euclidean distance.

By \eqref{e:jlow} and \cite[Proposition 3.6]{M}, we have 
  \begin{align}
  \label{e:jlow1}
    j(|x-y|) \geq c_0 |x-y|^{-d}H(|x-y|^{-2}),\, \quad \text{for all } x, y \in D
  \end{align}

Recall that $a_1>0$ and $M_1 >1 \vee 4a_1$ are the constants in Lemma \ref{l:loww_01} and   $C_{3}$ and $T_1$ are  the constants in 
   \eqref{e:spH}. We also recall that for $x\in D$, $z_x\in \partial D$ such that  $|z_x-x|=\delta_D(x)$ and ${\bf n}(z_x)=(x-z_x)/|z_x-x|$.
Without loss of the generality we assume that $T>3T_1$. 

Let $a_2 := a_1 \wedge (2^{-1}R_0/\Phi^{-1}(T))$.
For $x \in D$ and $t \in (0, T]$,  let $\xi_x(t):=x+a_2\Phi^{-1}(t){\bf n}(z_x)$. 
Note that $\xi_x(t) \in D$ by \eqref{e:geom}.
Define
\begin{align}\label{e:sB(x,t)}
\sB (x, t):= \left\{ z \in {D} \cap B(\xi_x(t), M_1\Phi^{-1}(t)):\delta_{D}(z) >  a_2\Phi^{-1}(t) \right\}.
\end{align}
Observe that, we have 
\begin{align}\label{e:deltage0}
\delta_{D}(u) \wedge \delta_{D}(v) \ge
 a_2  \Phi^{-1}(t), \quad  \text{for every } (u,v) \in \sB(x,t) \times \sB(y,t), 
 \end{align}
and 
\begin{align}\label{e:deltage}
 |x-y| - 2a_2 \Phi^{-1}(t)  \le  |\xi_x(t)-\xi_y(t)|  \le |x-y| + 2a_2 \Phi^{-1}(t), \qquad \end{align}
 Using \eqref{e:deltage}  we also have  that for every $(u,v) \in \sB(x,t) \times \sB(y,t)$,
 \begin{align}\label{e:7.2} 
 &|x-y| -\frac{5}{2} M_1 \Phi^{-1}(t) \le |x-y| -2(M_1+a_2) \Phi^{-1}(t) 
 \le |u-v|\nn\\
 & \le |x-y| +|u-\xi_x(t)|+|v-\xi_y(t)| +2a_2 \Phi^{-1}(t)
 \nn\\
 &
  \le |x-y| +2(M_1+ a_2)\Phi^{-1}(t)\le |x-y| +3M_1 \Phi^{-1}(t).
 \end{align}

\noindent
{\it Step1:} Suppose $t \in (0, 3T_1]$ and $x$ and $y$ are in the same connected component.  
By the semigroup property of $p_D$,
\begin{align}
&p_{D}(t,x,y)\,
\geq\, \int_{\sB(y,t)}\int_{\sB(x,t)}
p_{D}(t/3,x,u) p_{D}(t/3,u,v)p_{D}
(t/3,v,y)dudv \nonumber\\
\geq&  \left(\inf_{(u,v) \in \sB(x,t) \times
\sB(y,t)} p_{D}(t/3,u,v)\right)
\int_{\sB(y,t)}p_{{D}}(t/3,x,u)du
\int_{\sB(x,t)}
p_{D}(t/3,v,y)dv. \label{e:deltage00}
\end{align}

 When $|x-y|  \le 3M_1 \Phi^{-1}(t)$, by \eqref{e:deltage0} and  \eqref{e:7.2} $|u-v|\le  6M_1 \Phi^{-1}(t)$ and $\delta_{D}(u) \wedge \delta_{D}(v) \ge
 a_2  \Phi^{-1}(t)$ for $(u,v) \in \sB(x,t) \times \sB(y,t)$. Thus using  Theorem \ref{t:PHI} and Lemma \ref{lem:bf}(a) and Proposition \ref{step1}, we get
\begin{align}\label{e:fl11}
p_{D}(t/3,u,v) \ge c_0 p_{D}(c_1 t,u,u)  \ge c_{2} \Phi^{-1}(t)^{-d} \quad \text{ for every } (u,v) \in \sB(x,t) \times \sB(y,t).
\end{align}

When $|x-y|  > 3M_1 \Phi^{-1}(t)$, we have by \eqref{e:7.2} that for $(u,v) \in \sB(x,t) \times \sB(y,t)$,
$$
\frac{a_2}{4} \Phi^{-1}(t/3)  \le \frac{1}{2} M_1 \Phi^{-1}(t) \le  |u-v| \le (2|x-y|) \wedge (|x-y| + 3M_1 \Phi^{-1}(T)).
$$
Thus, by Lemma \ref{lem:bf}(a),  Propositions \ref{step3} and \ref{prop:on-D-low}(a) we have that for
$|x-y|  > 3M_1 \Phi^{-1}(t)$ and $t \le 3 T_1$,
\begin{align}
&\inf_{(u,v) \in \sB(x,t) \times \sB(y,t)} p_{D}(t/3,u,v) \ge
\inf_{(u,v) : 2^{-2} a_2 \Phi^{-1}(t/3) \le  |u-v|  \le (2|x-y|) \wedge (|x-y| + 3M_1 \Phi^{-1}(T))\atop
\delta_{D}(u) \wedge \delta_{D}(v) >  a_2 \Phi^{-1}(t/3)} p_{D}(t/3,u,v) \nn \\
 \ge &c_3\inf_{(u,v) :   |u-v|  \le (2|x-y|) \wedge (|x-y| + 3M_1 \Phi^{-1}(T))}   \left( t j(|u-v|)+  \phi^{-1}((t/3)^{-1})^{d/2}e^{-c_4 |u-v|^2\phi^{-1}((t/3)^{-1})}\right)   \nn  \\
\ge & c_5\left( t j((2|x-y|) \wedge (|x-y| + 3M_1 \Phi^{-1}(T)))+  \phi^{-1}(t^{-1})^{d/2}e^{-c_6|x-y|^2\phi^{-1}(t^{-1})}\right) 
.
 \label{e:flow11}
   \end{align}
We now apply Lemma \ref{l:loww_01}, \eqref{e:flow11} and \eqref{e:fl11} to 
\eqref{e:deltage00} and use \eqref{e:jlow1} to obtain \eqref{e:pDl1} for $t \le 3 T_1$ and $x$ and $y$ in the same connected component.  

\noindent
{\it Step2:} Suppose $t \in (3T_1, T]$ and $x$ and $y$ are in the same connected component.    By semigroup property of $p_D$ and Lemma \ref{l:loww_01},
\begin{align}
&p_{D}(t,x,y)\,
\geq\, \int_{\sB(y,T_1)}\int_{\sB(x,T_1)}
p_{D}(T_1,x,u) p_{D}(t-2T_1,u,v)p_{D}
(T_1,v,y)dudv \nonumber\\
\geq&  \left(\inf_{(u,v) \in \sB(x,T_1) \times
\sB(y,T_1)} p_{D}(t-2T_1,u,v)\right)\int_{\sB(y,T_1)}
\int_{\sB(x,T_1)}
p_{{D}}(T_1,x,u)p_{D}(T_1,v,y)dudv\nn\\
\geq& (2C_{3})^{-2}
\left(\inf_{(u,v) \in \sB(x,T_1) \times
\sB(y,T_1)} p_{D}(t-2T_1,u,v)\right) \left(\sqrt{\frac{\Phi(\delta_{D}(x))}{T_1}} \wedge 1\right)
 \left(\sqrt{\frac{\Phi(\delta_{D}(y))}{T_1}} \wedge 1\right)
 \nn\\
\geq& (2C_{3})^{-2}
\left(\inf_{(u,v) \in \sB(x,T_1) \times
\sB(y,T_1)} p_{D}(t-2T_1,u,v)\right) \left(\sqrt{\frac{\Phi(\delta_{D}(x))}{t}} \wedge 1\right)
 \left(\sqrt{\frac{\Phi(\delta_{D}(y))}{t}} \wedge 1\right)
     \label{e:deltage00n}
\end{align}

 When $|x-y|  \le 3M_1 \Phi^{-1}(t)$, by \eqref{e:deltage0} and  \eqref{e:7.2} $|u-v|\le  c_7 \Phi^{-1}(T_1)$ and $\delta_{D}(u) \wedge \delta_{D}(v) \ge
 a_2  \Phi^{-1}(T_1)$ for $(u,v) \in \sB(x,T_1) \times \sB(y,T_1)$. Thus using  Theorem \ref{t:PHI}  and Lemma \ref{lem:bf}(a) and Proposition \ref{step1}, we get
that for every $(u,v) \in \sB(x,T_1) \times \sB(y,T_1)$, \begin{align}\label{e:fl11n}
p_{D}(t-2T_1,u,v) \ge c_8 p_{D}(c_9 T_1,u,u)  \ge c_{10} \Phi^{-1}(c_9T_1)^{-d} \ge c_{11} \Phi^{-1}(t)^{-d}.
\end{align}

When $|x-y|  > 3M_1 \Phi^{-1}(t)$, we have by \eqref{e:7.2} that for $(u,v) \in \sB(x,T_1) \times \sB(y,T_1)$,
$$
\frac{a_2}{4} \Phi^{-1}(t-2T_1)  \le \frac{1}{2} M_1 \Phi^{-1}(t) \le  |u-v| \le (2|x-y|) \wedge (|x-y| + 3M_1 \Phi^{-1}(T)).
$$
Thus, by Lemma \ref{lem:bf}(a), Propositions \ref{step3} and \ref{prop:on-D-low}(1) 
we have that for
$|x-y|  > 3M_1 \Phi^{-1}(t)$ and $3 T_1 < t \le T$,
\begin{align}
&\inf_{(u,v) \in \sB(x,T_1) \times \sB(y,T_1)} p_{D}(t-2T_1,u,v)\nn \\
&\ge
\inf_{(u,v) : 2^{-2}a_2 \Phi^{-1}(t-2T_1) \le  |u-v|  \le (2|x-y|) \wedge (|x-y| + 3M_1 \Phi^{-1}(T_1))\atop
\delta_{D}(u) \wedge \delta_{D}(v) >  a_2 \Phi^{-1}(T_1)} p_{D}(t-2T_1,u,v) \nn \\
&\ge
\inf_{(u,v) :  2^{-2}a_2\Phi^{-1}(t-2T_1) \le  |u-v|  \le (2|x-y|) \wedge (|x-y| + 3M_1 \Phi^{-1}(T))\atop
\delta_{D}(u) \wedge \delta_{D}(v) >  a_2 \Phi^{-1}(  (a_2T_1/T) (t-2T_1))} p_{D}(t-2T_1,u,v) \nn \\
 \ge &c_{12} \inf_{(u,v) :   |u-v|  \le (2|x-y|) \wedge (|x-y| + 3M_1 \Phi^{-1}(T))}   \left( t j(|u-v|)+  \phi^{-1}((t/3)^{-1})^{d/2}e^{-c_{13} |u-v|^2\phi^{-1}((t/3)^{-1})}\right)   \nn  \\
\ge &c_{14} \left( t j((2|x-y|) \wedge (|x-y| + 3M_1 \Phi^{-1}(T)))+  \phi^{-1}(t^{-1})^{d/2}e^{-c_{15}|x-y|^2\phi^{-1}(t^{-1})}\right) 
.
 \label{e:flow11n}
   \end{align}
Combining \eqref{e:deltage00n} and  \eqref{e:flow11n} and using \eqref{e:jlow1} we obtain \eqref{e:pDl1} for $t \in (3T_1, T]$ and $x$ and $y$ are in the same connected component.  

\noindent
{\it Step3:} Suppose $t \in (0, T]$ and $x$ and $y$ are in different  connected components.  
We use Proposition \ref{p:loww21n}.
Then, thanks to \eqref{e:jlow1} and Lemma \ref{L:2.5}, we see that \eqref{e:pDl1} still holds.
\qed

Note that, in the proof of Theorem \ref{t:main0}(b),
the assumptions that  $D$ is connected and  the path distance in $D$
is comparable to the Euclidean distance, are only used to apply Proposition \ref {prop:on-D-low}.
Thus, following the proof of Theorem \ref{t:main0}(b) without applying Proposition \ref {prop:on-D-low}, we  have the following. 
\begin{prop} \label{p:loww21n} 
For every $C^{1,1}$ open set $D$ and $T>0$, 
there exist constants $c>0, M_1>1$
 such that  for every $(t,x,y) \in (0, T]
\times D \times D$, 
\begin{align*}
&p_{D}(t,x, y)\nn\\
\ge& 
 c 
  \left(\sqrt{\frac{\Phi(\delta_{D}(x))}{t}} \wedge 1\right)
\left(\sqrt{\frac{\Phi(\delta_{D}(y))}{t}} \wedge 1\right)  \begin{cases}
t \displaystyle \frac{H(|x-y|^{-2})}{|x-y|^{d}} 
 &
\hbox{if }|x-y|  > 3M_1 \Phi^{-1}(t),\\
\Phi^{-1}(t)^{-d}& \hbox{if }|x-y|   \le  3M_1 \Phi^{-1}(t).
\end{cases}
\end{align*}
\end{prop}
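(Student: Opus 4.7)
The plan is to repeat the argument for Theorem \ref{t:main0}(b) essentially verbatim, simply omitting every invocation of Proposition \ref{prop:on-D-low}. That earlier proposition was the only ingredient whose hypotheses required $D$ to be connected and the path distance in $D$ to be comparable to the Euclidean distance, and its only role in the lower bound was to produce the Gaussian summand $\phi^{-1}(t^{-1})^{d/2}\exp[-a_L|x-y|^2\phi^{-1}(t^{-1})]$. Since that Gaussian term is absent from the present statement, no new ingredients are needed.

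Concretely, I would first decompose $p_D(t,x,y)$ via the semigroup property,
\begin{align*}
p_D(t,x,y)\,\ge\,\int_{\sB(x,t')}\int_{\sB(y,t')}p_D(t',x,u)\,p_D(t-2t',u,v)\,p_D(t',v,y)\,du\,dv,
\end{align*}
where $\sB(\cdot,\cdot)$ is defined in \eqref{e:sB(x,t)}, and take $t'=t/3$ when $t\le 3T_1$ and $t'=T_1$ when $t\in(3T_1,T]$, exactly as in Steps 1 and 2 of the proof of Theorem \ref{t:main0}(b). Applying Lemma \ref{l:loww_01} to each of the outer integrals, and absorbing the bounded ratio $t/t'$ via Lemma \ref{lem:bf}(a), reduces the task to producing a lower bound on the inner quantity $\inf_{(u,v)\in\sB(x,t')\times\sB(y,t')}p_D(t-2t',u,v)$ of the required size, multiplied by the boundary-decay product $\bigl(\sqrt{\Phi(\delta_D(x))/t}\wedge 1\bigr)\bigl(\sqrt{\Phi(\delta_D(y))/t}\wedge 1\bigr)$.

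Next, I would bound that inner quantity from below in the two regimes using only interior estimates. By \eqref{e:deltage0} and \eqref{e:7.2}, for $(u,v)\in\sB(x,t')\times\sB(y,t')$ one has $\delta_D(u)\wedge\delta_D(v)\gtrsim\Phi^{-1}(t-2t')$, together with $|u-v|\le |x-y|+3M_1\Phi^{-1}(t)$ and $|u-v|\ge \tfrac12 M_1\Phi^{-1}(t)$ in the far-diagonal regime. When $|x-y|\le 3M_1\Phi^{-1}(t)$, combining Theorem \ref{t:PHI} (to push the time scale back to something comparable with $t$) with Proposition \ref{step1} produces $p_D(t-2t',u,v)\ge c\,\Phi^{-1}(t)^{-d}$. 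When $|x-y|>3M_1\Phi^{-1}(t)$, Proposition \ref{step3} applies and gives $p_D(t-2t',u,v)\ge c\,t\,j(|u-v|)$; monotonicity of $j$ and the lower bound $j(r)\ge c\,r^{-d}H(r^{-2})$ from \eqref{e:jlow}--\eqref{e:jlow1} then yield $c\,tH(|x-y|^{-2})/|x-y|^d$. Assembling these with the outer factors gives the desired two-case estimate.

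The only real obstacle is bookkeeping. One must verify that the geometric hypotheses of Proposition \ref{step3} (namely $b\Phi^{-1}(t-2t')\le 4|u-v|$) and of Proposition \ref{step1} ($\delta_D(u)\wedge\delta_D(v)\ge b\Phi^{-1}(t-2t')\ge 4|u-v|$) hold with the constant $a_2$ fixed as in the proof of Theorem \ref{t:main0}(b), and that Lemma \ref{lem:bf}(a) absorbs the bounded ratios $\Phi^{-1}(T_1)/\Phi^{-1}(t)$ and $\Phi^{-1}(t-2t')/\Phi^{-1}(t)$ on $t\in(0,T]$ into absolute constants depending only on $T$. These are all handled exactly as in Steps 1 and 2 of that earlier proof, and nothing in them relied on connectedness of $D$ or on any path-distance comparability.
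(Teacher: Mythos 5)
Your proposal is correct and is essentially identical to the paper's own proof: the paper itself observes that the connectedness and path-distance hypotheses enter the proof of Theorem \ref{t:main0}(b) only through Proposition \ref{prop:on-D-low}, whose sole contribution is the Gaussian summand, and concludes the proposition by rerunning Steps 1--2 of that proof without it. Your execution of the near- and far-diagonal regimes via Lemma \ref{l:loww_01}, Theorem \ref{t:PHI}, and Propositions \ref{step1} and \ref{step3} matches the paper's estimates \eqref{e:fl11}--\eqref{e:flow11n} step for step.
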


\noindent
\textbf{Proof of Theorem \ref{t:main0}(c)}:
Since $D$ is bounded and $j$ is non-increasing, Theorem \ref{t:main0}(a) and Proposition \ref{p:loww21n} imply that 
for every $(x,y) \in  D \times D$, 
$$
c^{-1} \Phi(\delta_{D}(x))^{1/2}\Phi(\delta_{D}(y))^{1/2} \le p_{D}(1,x, y) \le c \Phi(\delta_{D}(x))^{1/2}\Phi(\delta_{D}(y))^{1/2}.
$$
Using this, the proof of Theorem \ref{t:main0}(c) is almost identical to that of \cite[Theorems 1.3(iii)]{CKS9}, so we omit the proofs.
\qed

\noindent
\textbf{Proof of Theorem \ref{t:main1}.}
Either by the proof of Theorem \ref{t:main0} or by applying the main result in \cite{M} and our Propositions \ref{step3} and \ref{prop:on-D-low}(1)
  to \cite[Theorem 4.1 and 4.5]{CK}, 
the theorem holds true when $D$ is an upper half space $\{x=(\wt x,x_d)\in\bR^d:x_d>0\}$.
Then using the
``push inward" method of
\cite{CT} (see also  \cite[Theorem 5.8]{BGR3})
and our short time
heat kernel estimates in Theorem \ref{t:main0},   one can obtain
global sharp two-sided Dirichlet heat kernel estimates when 
$D$ is a domain consisting of all the points above the graph of a 
bounded globally $C^{1,1}$ function. 
We skip the proof since it would be almost identical to the one of 
\cite[Theorem 5.8]{BGR3}.
\qed

\section{Green function estimates}
In next two sections we use the notation $f(x)\asymp g(x), x\in I$, which means that there exist constants $c_1,c_2>0$ such that $c_1f(x)\leq g(x)\leq c_2g(x)$ for $x\in I$.

Recall that $\Phi(r)=({\phi(1/r^2)})^{-1}$ where $\phi$ is the Laplace exponent $\phi$ of the subordinator $S$. 
When $\phi$ satisfies $ L_a(\gamma, C_L)$ and $ U_a(\delta, C_U)$ with $\delta<2$ for some $a>0$, 
Green function estimates for   the corresponding  subordinate Brownian motion were already discussed in 
\cite{CKS9}. In this section we discuss Green function estimates when $\phi$  has no drift and that $H$ satisfies $ L_a(\gamma, C_L)$ and $ U_a(\delta, C_U)$ with $\delta<2$ and $\gamma >2^{-1}{\bf 1}_{\delta \ge 1}$
   for some $a > 0$.

By the exactly same proof as the one of \cite[Lemma 7.1]{CKS9}, we have the following.
\begin{lemma}\label{l:phi_a}
For every $r\in (0, 1]$ and every open subset $U$ of $\R^d$,
\begin{align}
\frac12\left(1\wedge \frac{r^2\Phi (\delta_U(x))^{1/2}\Phi(\delta_U(y))^{1/2}}{\Phi(|x-y|)}\right)
&\leq
\left(1\wedge \frac{r \Phi (\delta_U(x))^{1/2}}{\Phi(|x-y|)^{1/2}}\right)
\left(1\wedge \frac{r \Phi (\delta_U(y))^{1/2}}{\Phi(|x-y|)^{1/2}}\right)\nonumber\\
&\leq    1\wedge \frac{r^2\Phi (\delta_U(x))^{1/2}\Phi(\delta_U(y))^{1/2}}{\Phi(|x-y|)} .\label{e:12}
\end{align}
\end{lemma}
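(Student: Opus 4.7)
The plan is to reduce the claim to a two-variable algebraic inequality. Set $a:=r\Phi(\delta_U(x))^{1/2}/\Phi(|x-y|)^{1/2}$ and $b:=r\Phi(\delta_U(y))^{1/2}/\Phi(|x-y|)^{1/2}$, so that the middle quantity equals $(1\wedge a)(1\wedge b)$ and the outer two read $\tfrac12(1\wedge ab)$ and $1\wedge ab$. The right-hand inequality $(1\wedge a)(1\wedge b)\le 1\wedge ab$ is immediate, since this product is bounded separately by $1$ and by $ab$.

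For the left-hand inequality I would assume without loss of generality $\delta_U(x)\le\delta_U(y)$, hence $a\le b$. If $b\le 1$, both sides reduce to $ab/2\le ab$; if $a\ge 1$ (hence $b\ge 1$), both sides reduce to the trivial bound $\tfrac12\le 1$. So the only nontrivial regime is $a<1\le b$, in which case $(1\wedge a)(1\wedge b)=a$ and the assertion becomes $a\ge\tfrac12(1\wedge ab)$.

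The main obstacle — and the only place the geometry of $U$ and the structure of $\Phi$ enter — is to establish $b\le 2$ in this regime. The Lipschitz property of the distance function gives $\delta_U(y)\le\delta_U(x)+|x-y|\le 2\max(\delta_U(x),|x-y|)$, so either $\delta_U(y)\le 2|x-y|$ (when $\delta_U(x)\le|x-y|$) or $\delta_U(y)\le 2\delta_U(x)$ (otherwise). Combined with monotonicity of $\Phi$ and the doubling estimate $\Phi(2s)\le 4\Phi(s)$ — which follows from the concavity of the Bernstein function $\phi$ via Lemma \ref{lem:bf}(a), since $\Phi(2s)/\Phi(s)=\phi(s^{-2})/\phi((2s)^{-2})\le 4$ — this yields $b^2\le 4r^2\le 4$ in the first subcase and $b\le 2a<2$ in the second; in either case $b\le 2$.

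It then only remains to finish the case split on $ab$. If $ab\le 1$ the required inequality $a\ge ab/2$ is equivalent to $b\le 2$, already verified; if $ab>1$ the requirement is $a\ge 1/2$, which follows from $b\le 2$ together with $ab>1$. This completes the argument, and mirrors the proof of \cite[Lemma~7.1]{CKS9} cited in the text.
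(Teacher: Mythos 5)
Your proof is correct, and it is essentially the argument of \cite[Lemma 7.1]{CKS9} that the paper invokes verbatim: the reduction to $(1\wedge a)(1\wedge b)$ versus $1\wedge ab$, the case split isolating $a<1\le b$, and the key bound $b\le 2$ obtained from the $1$-Lipschitz property of $\delta_U$ together with the doubling estimate $\Phi(2s)\le 4\Phi(s)$ (via Lemma \ref{lem:bf}(a)) and $r\le 1$. Nothing to add.
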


Since $\phi$
has no drift and  satisfies $ L_a(\gamma, C_L)$, by \cite[Lemma 1.3]{KM} for every $M>0$, we have
\bee\label{e:6.20}
 r\Phi '(r) \asymp \Phi (r) \quad \hbox{for } r\in (0, M].
 \eee
Note that, 
by Lemma
\ref{lem:inverse}, 
 for every $T>0$,
 there exists $C_T>1$ such that
\begin{equation} \label{e:77}
\frac{ \Phi^{-1} ( r)}{\Phi^{-1} ( R)} \ge C_T^{-1}\left(\frac{r}{R} \right)^{1/(2 \gamma)}  \quad \hbox{for }
0<r \le R\le T.
\end{equation}
Moreover, by Lemma \ref{lem:bf},\begin{equation} \label{e:78}
 \frac{ \Phi^{-1} ( r)}{\Phi^{-1} ( R)} \le \left(\frac{r}{R} \right)^{1/2}
 \quad \hbox{for }
0<r \le R< \infty.
\end{equation}

Recall $x_+=x\vee 0$.
\begin{lemma}\label{l:new1dim}
For $T,b,r>0$ and $d=1, 2$, set
\bee\label{e:7.4}
h_{T, d}(b,r)=b+ \Phi(r) \int_{\Phi(r)/T}^1 \left(1 \wedge \frac{ub}{\Phi(r)}\right) \frac{1}{u^2(\Phi^{-1}(u^{-1} \Phi(r)))^d}\, du+\frac{\Phi(r)}{r^d}\left(1 \wedge \frac{b}{\Phi(r)}\right).
\eee
Then,  for  $0<r\leq \Phi^{-1}(T/2)$  and $0<b \le T/2$,
$$ h_{T,d}(b, r) \asymp \frac{b}{r^d} \wedge \left( \frac{b}{\Phi^{-1}(b)^d}
+ \left( \int_{r}^{\Phi^{-1}(b)} \frac{\Phi(s)}{s^{d+1}}ds \right)_+ \, \right).
$$
\end{lemma}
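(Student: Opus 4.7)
\bigskip
\noindent\textbf{Proof proposal.}
The plan is to first simplify the integral via the substitution $v=u^{-1}\Phi(r)$, which transforms the middle term of $h_{T,d}(b,r)$ into
\[
 \Phi(r)\int_{\Phi(r)/T}^{1}\left(1\wedge\tfrac{ub}{\Phi(r)}\right)\frac{du}{u^{2}\Phi^{-1}(u^{-1}\Phi(r))^{d}}
  = \int_{\Phi(r)}^{T}\left(1\wedge\tfrac{b}{v}\right)\frac{dv}{\Phi^{-1}(v)^{d}}.
\]
Hence
\[
 h_{T,d}(b,r)\;=\;b+\int_{\Phi(r)}^{T}\Bigl(1\wedge\tfrac{b}{v}\Bigr)\frac{dv}{\Phi^{-1}(v)^{d}}+\frac{\Phi(r)}{r^{d}}\Bigl(1\wedge\tfrac{b}{\Phi(r)}\Bigr),
\]
and I then split into the two cases $b\ge\Phi(r)$ (i.e.\ $r\le\Phi^{-1}(b)$) and $b<\Phi(r)$ (i.e.\ $r>\Phi^{-1}(b)$). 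Observe that since $r,b\le\Phi^{-1}(T/2)\vee(T/2)$ are bounded, \eqref{e:78} gives $\Phi^{-1}(b)^{d}\le c$, so $b\lesssim b/\Phi^{-1}(b)^{d}$; similarly $b\lesssim b/r^{d}$. Thus the additive constant $b$ is always absorbed into whichever main term appears on the right-hand side, and I mostly have to deal with the integral and the boundary term.

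In the case $b\ge\Phi(r)$, the final factor equals $\Phi(r)/r^{d}$, and I split the integral at $v=b$. The inner piece $\int_{\Phi(r)}^{b}\Phi^{-1}(v)^{-d}dv$ is transformed by $s=\Phi^{-1}(v)$, using $s\Phi'(s)\asymp\Phi(s)$ from \eqref{e:6.20}, giving the target integral $\int_{r}^{\Phi^{-1}(b)}\Phi(s)/s^{d+1}\,ds$ up to constants. For the tail $\int_{b}^{T}bv^{-1}\Phi^{-1}(v)^{-d}dv$, the lower scaling bound \eqref{e:77} (which uses $\gamma>0$) yields $\Phi^{-1}(v)\gtrsim\Phi^{-1}(b)(v/b)^{1/(2\gamma)}$; then
\[
\int_{b}^{T}\frac{b\,dv}{v\Phi^{-1}(v)^{d}}\;\lesssim\;\frac{b}{\Phi^{-1}(b)^{d}}\int_{b}^{T}\Bigl(\tfrac{b}{v}\Bigr)^{d/(2\gamma)}\frac{dv}{v}\;\lesssim\;\frac{b}{\Phi^{-1}(b)^{d}},
\]
and the matching lower bound $\int_{b}^{2b}\gtrsim b/\Phi^{-1}(b)^{d}$ is immediate from scaling (note $2b\le T$). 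The leftover term $\Phi(r)/r^{d}$ is absorbed: if $r\le\Phi^{-1}(b)/2$ then the first piece $\int_{r}^{2r}\Phi(s)/s^{d+1}ds\gtrsim\Phi(r)/r^{d}$, while if $\Phi^{-1}(b)/2<r\le\Phi^{-1}(b)$ then $\Phi(r)/r^{d}\asymp b/\Phi^{-1}(b)^{d}$ by scaling of $\Phi$.

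In the case $b<\Phi(r)$, $1\wedge b/v=b/v$ on the whole range $[\Phi(r),T]$ and the last term becomes $b/r^{d}$. The same lower scaling argument based on \eqref{e:77} now applied at $v=\Phi(r)$ instead of $v=b$ gives $\int_{\Phi(r)}^{T}bv^{-1}\Phi^{-1}(v)^{-d}dv\lesssim b/r^{d}$, so altogether $h_{T,d}(b,r)\asymp b/r^{d}$, which matches the right-hand side since in this regime $\Phi^{-1}(b)<r$ forces $(\int_{r}^{\Phi^{-1}(b)}\cdots)_{+}=0$ and the minimum equals $b/r^{d}$.

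The only delicate step is controlling the tail integral $\int_{b}^{T}bv^{-1}\Phi^{-1}(v)^{-d}dv$ by $b/\Phi^{-1}(b)^{d}$, which requires genuine lower scaling of $\Phi$ (equivalently, of $\Phi^{-1}$ via \eqref{e:77}); this is where the positivity of the exponent $\gamma$ in $L_{a}(\gamma,C_{L})$ enters. Everything else reduces to the change-of-variable identity coming from \eqref{e:6.20}, together with the already-established boundedness of $r$ and $b$ that lets $b$ be absorbed into $b/r^{d}$ or $b/\Phi^{-1}(b)^{d}$.
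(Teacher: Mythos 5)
Your overall route is sound and genuinely different in presentation from the paper's: you substitute $v=u^{-1}\Phi(r)$ once at the start and then split the resulting integral $\int_{\Phi(r)}^{T}(1\wedge b/v)\,\Phi^{-1}(v)^{-d}dv$ at $v=b$, whereas the paper substitutes $u=\Phi(r)/\Phi(s)$ and integrates by parts to reach $\int_r^{\Phi^{-1}(b)}\Phi(s)s^{-d-1}ds$ directly (and for $d=1$ simply cites \cite[Lemma 7.2]{CKS9}). Your version treats $d=1$ and $d=2$ uniformly and avoids the integration by parts; the identification of the inner piece via $s=\Phi^{-1}(v)$ and \eqref{e:6.20}, the absorption of the free term $b$ and of $\Phi(r)/r^{d}$, and the case $b<\Phi(r)$ are all correct (you leave implicit the final observation that, when $b\ge\Phi(r)$, the sum $b\Phi^{-1}(b)^{-d}+\int_r^{\Phi^{-1}(b)}\Phi(s)s^{-d-1}ds$ is at most $b/r^{d}$ because $\Phi(s)\le b$ on the range, so it does equal the minimum).

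However, the one step you single out as delicate is justified with an inequality that goes the wrong way. \eqref{e:77} states $\Phi^{-1}(v')/\Phi^{-1}(V)\ge C_T^{-1}(v'/V)^{1/(2\gamma)}$ for $v'\le V\le T$; taking $v'=b\le V=v$ this gives the \emph{upper} bound $\Phi^{-1}(v)\le C_T\,\Phi^{-1}(b)(v/b)^{1/(2\gamma)}$, not the lower bound $\Phi^{-1}(v)\gtrsim\Phi^{-1}(b)(v/b)^{1/(2\gamma)}$ that you invoke (the latter is false in general). What you actually need in order to get $\int_b^T b\,v^{-1}\Phi^{-1}(v)^{-d}dv\lesssim b\,\Phi^{-1}(b)^{-d}$ is a lower bound on $\Phi^{-1}(v)$, and that is \eqref{e:78}: $\Phi^{-1}(v)\ge\Phi^{-1}(b)(v/b)^{1/2}$, which yields $\int_b^T(b/v)^{d/2}v^{-1}dv\le 2/d$. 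The same correction applies to the tail in the case $b<\Phi(r)$. Accordingly, your closing diagnosis is also off: convergence of the tail comes from the Bernstein-function upper scaling of $\Phi$ encoded in \eqref{e:78}, which is free, while the lower scaling \eqref{e:77} with $\gamma>0$ is what you need for the matching lower bound $\int_b^{2b}\gtrsim b\,\Phi^{-1}(b)^{-d}$ (via $\Phi^{-1}(2b)\lesssim\Phi^{-1}(b)$). With these two references swapped, the argument is complete.
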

\pf 
(a) The lemma for $d=1$ is given in \cite[Lemma 7.2]{CKS9} under the assumption that $\phi$ satisfies $ L_a(\gamma, C_L)$ and $ U_a(\delta, C_U)$ with for some $a>0$ $\delta<2$. Using \eqref{e:78} instead of the assumption $U_a(\delta, C_U)$ with $\delta<2$, the proof of (a) is the same as the that of  \cite[Lemma 7.2]{CKS9}.

\noindent
(b) We now assume that $d=2$. Using \eqref{e:6.20}--\eqref{e:78}, the proof is a simple modification of the one of \cite[Lemma 7.2]{CKS9}. We provide the proof  in details for the readers' convenience.  

For $(b, r)$ with
 $0<b <  \Phi(r)\le   T/2 $,
\begin{align*}
h_{T, 2} (b,r)& \asymp
b+ b\int_{\Phi(r)/T}^1  \frac{du}{u(\Phi^{-1}(u^{-1} \Phi(r)))^2}
  + \frac{b}{r^2}\\
  &=b+ \frac{b}{r^2}\int_{\Phi(r)/T}^1  \left(\frac{\Phi^{-1}(\Phi(r))}{\Phi^{-1}(u^{-1} \Phi(r))} \right)^2u^{-1}du + \frac{b}{r^2}.
\end{align*}
Since $\Phi(r) \le T/2$, by \eqref{e:77}--\eqref{e:78} we have
\begin{align*}
0<c_2= c_1^{-1}\int^{1}_{1/2}
 u^{\frac{1}{\gamma}-1} du  \le \int_{\Phi(r)/T}^1  \left(\frac{\Phi^{-1}(\Phi(r))}{\Phi^{-1}(u^{-1} \Phi(r))} \right)^2u^{-1}du \le c_1 \int^{1}_{0} du=c_1<\infty.
\end{align*}
Thus,
for  $0<b <  \Phi(r)\le   T/2 $, we have
\bee \label{e:7.5}
h_{T, 2} (b,r) \asymp
 \frac{b}{r^2}.\eee

On the other hand,
using the change of variable
$u=\Phi (r)/\Phi (s)$ and  integration by parts,  we have that  for $(b, r)$ with
$\Phi(r)\leq b \le T/2$,
 \begin{align}
& h_{T, 2}(b,r) \nn\\ = &
b+ \Phi(r) \int_{ \Phi(r)/b}^1 \frac{du}{u^2(\Phi^{-1}(u^{-1} \Phi(r)))^2}+b\int_{\Phi(r)/T}^{\Phi(r)/b} \frac{du}{u(\Phi^{-1}(u^{-1} \Phi(r)))^2}
+\frac{\Phi (r)}{r^2}
\nn\\
=& b+ \int_r^{\Phi^{-1}(b)} \frac{\Phi'(s)}{s^2} ds+ b\int_{\Phi^{-1}(b)}^{\Phi^{-1}(T)}
\frac{\Phi '(s)}{s^2\Phi (s)} ds + \frac{\Phi (r)}{r^2} \nn \\
=& b+ \left(\frac{b}{\Phi^{-1}(b)^2}-\frac{\Phi(r)}{r^2}\right)
+2 \int_{r}^{\Phi^{-1}(b)} \frac{\Phi(s)}{s^3}ds +
  b \int_{\Phi^{-1}(b)}^{\Phi^{-1}(T)}
\frac{\Phi '(s)}{s^2\Phi (s)} ds + \frac{\Phi (r)}{r^2} \nn \\
=&  b+ \frac{b}{\Phi^{-1}(b)^2} +2 \int_{r}^{\Phi^{-1}(b)} \frac{\Phi(s)}{s^3}ds
 +  b \int_{\Phi^{-1}(b)}^{\Phi^{-1}(T)}
\frac{\Phi '(s)}{s^2\Phi (s)} ds  .
 \label{e:gt1}
\end{align}
Since
$b\le T/2$,
by \eqref{e:78} and the fact that $\Phi^{-1}$ is increasing,
\begin{align}
 \frac{1}{\Phi^{-1}(b)^2}-\frac{1}{\Phi^{-1}(T)^2}\asymp \frac{1}{\Phi^{-1}(b)^2}\ge c_4\label{e:gt2}
\end{align}
for some $c_4>0$.
Using  \eqref{e:6.20} and  \eqref{e:gt2} in the second integral
   in \eqref{e:gt1}, we get that for $(b, r)$ with
$\Phi(r)\leq b \le  T/2$,
\begin{eqnarray}
  h_{T, 2} (b,r)&\asymp&  b+  \frac{b}{\Phi^{-1}(b)^2}
+ \int_{r}^{\Phi^{-1}(b)} \frac{\Phi(s)}{s^3}ds+
b\int_{\Phi^{-1}(b)}^{\Phi^{-1}(T)}
\frac{1}{s^3} ds  \nn \\
&=& b+  \frac{b}{\Phi^{-1}(b)^2}  +\int_{r}^{\Phi^{-1}(b)} \frac{\Phi(s)}{s^3}ds+\frac{b}{2}
 \left( \frac{1}{\Phi^{-1}(b)^2}-\frac{1}{\Phi^{-1}(T)^2} \right) \nn \\
& \asymp&  \frac{ b}{\Phi^{-1}(b)^2}
+2\int_{r}^{\Phi^{-1}(b)} \frac{\Phi(s)}{s^3}ds.  \label{e:7.8}
\end{eqnarray}

Since $\Phi (s)$ is an increasing function,
when $0<\Phi (r) \leq b$, we have
$$\frac{ b}{\Phi^{-1}(b)^2}
+2\int_{r}^{\Phi^{-1}(b)} \frac{\Phi(s)}{s^3}ds
\leq \frac{ b}{\Phi^{-1}(b)^2}
+2 b \int_{r}^{\Phi^{-1}(b)} \frac{1}{s^3}ds = \frac{b}{r^2},
$$
while when $\Phi (r)\geq b>0$,
$$\frac{ b}{\Phi^{-1}(b)^2}
+ \left( \int_{r}^{\Phi^{-1}(b)} \frac{\Phi(s)}{s^3}ds \right)_+
= \frac{ b}{\Phi^{-1}(b)^2} \geq   \frac{b}{r^2} .
$$
Thus  combining this  with \eqref{e:7.5} and \eqref{e:7.8} we establishes the lemma.
\qed

Recall that the Green function $G_D(x, y)$ of $X$ on $D$ is defined as  
$G_D(x, y)
=\int_0^\infty p_D(t, x, y)dt$.
As an application of Theorems \ref{t:main0} and  \ref{t:main1}, we derive the sharp two sided estimates on the Green functions  of $X$ on 
 bounded $C^{1, 1}$ open sets.
For notational convenience, let
\begin{align}
a(x, y):=\sqrt{\Phi(\delta_D(x))}\sqrt{\Phi(\delta_D(y))} \label{e:axy}
\end{align}
and 
\begin{align}
g(x,y):=
\begin{cases}
\displaystyle \frac{\Phi(|x-y|)}{|x-y|^{d}}\left(1\wedge \frac{\Phi(\delta_D(x))}{\Phi(|x-y|)}\right)^{1/2}  \left(1\wedge \frac{\Phi(\delta_D(y))}{\Phi(|x-y|)}\right)^{1/2},&\mbox{ when } d> 2\\
\displaystyle \frac{a(x, y)}{|x-y|^d}\wedge 
\left(\frac{a(x, y)}{\Phi^{-1}(a(x, y))^d}
+\left(\int_{|x-y|}^{\Phi^{-1}(a(x, y))} \frac{\Phi(s)}{s^{d+1}}ds\right)^{+}\right), &\mbox{ when } d \le 2.
\end{cases}\label{e:gxy}
\end{align}

\begin{thm}\label{t:green} Assume that   \eqref{e:jdouble} holds, that $\phi$  has no drift and that $H$ satisfies $ L_a(\gamma, C_L)$ and $ U_a(\delta, C_U)$ with $\delta<2$ and $\gamma >2^{-1}{\bf 1}_{\delta \ge 1}$
   for some $a > 0$.
Suppose that  $D$ is a bounded $C^{1,1}$ open set in $\R^d$, $d \ge 1$,  with characteristics $( R_0, \Lambda)$. 

\begin{itemize}
\item[\rm (i)]
There exists
$c_1 >0$ depending only on $\text{diam}(D),   R_0, \Lambda, d$ and  $\phi$
such that 
$$
G_D(x, y)\ge c_1 \frac{\Phi(|x-y|)} {|x-y|^{d}}
 \left(1\wedge \frac{  \Phi (\delta_D(x))}{ \Phi(|x-y|)}\right)^{1/2}
 \left(1\wedge \frac{  \Phi (\delta_D(y))}{ \Phi(|x-y|)}\right)^{1/2}, \quad x,y \in D .
$$

\item[\rm (ii)]
There exists
$c_2 >0$ depending only on $\text{diam}(D), R_0, \Lambda,  d$ and  $\phi$
such that
$$
G_D(x, y)\le c_2 \frac{ a(x,y)}{ |x-y|^d}, \quad x,y \in D .
$$
\item[\rm (iii)]
 If  $D$ is connected,  then 
$$
G_D(x, y)\asymp g(x,y), \quad x,y \in D.
$$

\end{itemize}
\end{thm}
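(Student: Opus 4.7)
The plan is to decompose $G_D(x,y)=I_1(x,y)+I_2(x,y)$ with $I_1=\int_0^T p_D(t,x,y)\,dt$ and $I_2=\int_T^\infty p_D(t,x,y)\,dt$ for a conveniently chosen $T>0$ (depending on the diameter of $D$, on $R_0$, $\Lambda$ and on $\phi$). Since $D$ is bounded and $\lambda^D>0$, Theorem~\ref{t:main0}(c) immediately gives $I_2(x,y)\asymp \sqrt{\Phi(\delta_D(x))\Phi(\delta_D(y))}=a(x,y)$ uniformly in $x,y\in D$. All three assertions therefore reduce to estimating $I_1$.

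For the upper bound (ii), I would use the ``$p(t,x/3,y/3)$'' form of Theorem~\ref{t:main0}(a),
\[p_D(t,x,y)\le C_0\Bigl(1\wedge\tfrac{\Phi(\delta_D(x))^{1/2}}{t^{1/2}}\Bigr)\Bigl(1\wedge\tfrac{\Phi(\delta_D(y))^{1/2}}{t^{1/2}}\Bigr)p(t,x/3,y/3),\]
combined with $(1\wedge s_1^{1/2})(1\wedge s_2^{1/2})\le 1\wedge (s_1s_2)^{1/2}$ where $s_i=\Phi(\delta_D(\cdot))/t$, and the sharp free-space upper bound $p(t,\cdot)\le c\bigl(\Phi^{-1}(t)^{-d}\wedge t\phi(|x-y|^{-2})/|x-y|^d\bigr)$ from Theorem~\ref{tm:main} and \eqref{e:uppcom}. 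A change of variable $u=t/\Phi(|x-y|)$ then produces $I_1\le c\,a(x,y)/|x-y|^d$; since $|x-y|$ is bounded in $D$, the tail $I_2$ is absorbed into this expression, giving (ii). For the lower bound (i) I would apply Proposition~\ref{p:loww21n}, which is valid on any $C^{1,1}$ open set with no connectedness needed: choose $t_\ast\asymp \Phi(|x-y|)\wedge T$ so that $|x-y|\le 3M_1\Phi^{-1}(t_\ast)$ and integrate the estimate of Proposition~\ref{p:loww21n} over the window $[t_\ast/2,t_\ast]$; using Lemma~\ref{l:phi_a} to convert the product of boundary factors (with $t$ in the denominator) into the desired square-root form (with $\Phi(|x-y|)$ in the denominator) yields exactly the lower bound of (i). When $\Phi(|x-y|)>T$, $|x-y|$ is comparable to the diameter of $D$ and the assertion reduces to $I_2\asymp a(x,y)$ together with \eqref{e:12}.

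For part (iii), $D$ is connected, its path distance is comparable to the Euclidean distance, and the matching sharp lower bound of Theorem~\ref{t:main0}(b) is available. Combined with Theorem~\ref{t:main0}(a) it makes $I_1$ comparable to
\[\int_0^T \Bigl(1\wedge\tfrac{\Phi(\delta_D(x))^{1/2}}{t^{1/2}}\Bigr)\Bigl(1\wedge\tfrac{\Phi(\delta_D(y))^{1/2}}{t^{1/2}}\Bigr)\Bigl(\Phi^{-1}(t)^{-d}\wedge\tfrac{t\,\phi(|x-y|^{-2})}{|x-y|^d}\Bigr)dt.\]
For $d>2$, a direct evaluation using $r\Phi'(r)\asymp \Phi(r)$ from \eqref{e:6.20} and the scaling estimates \eqref{e:77}--\eqref{e:78} shows this integral is comparable to the first branch of $g(x,y)$, which dominates $I_2\asymp a(x,y)$ by Lemma~\ref{l:phi_a}. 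For $d\le 2$, after the change of variable $u=\Phi(|x-y|)/t$ the integrand takes exactly the form appearing in the definition of $h_{T,d}(a(x,y),|x-y|)$ in Lemma~\ref{l:new1dim}, whose closed-form evaluation matches the second branch of $g(x,y)$; the tail $I_2=a(x,y)$ is absorbed into the first summand of that formula. The main obstacle lies in the $d\le 2$ case: $I_1$ and $I_2$ can be of the same order, so one has to control the exponential factor $\exp(-a_U|x-y|^2\phi^{-1}(t^{-1}))$ inside the heat kernel bound of Theorem~\ref{tm:main} carefully enough for the time integral to collapse to the closed form of Lemma~\ref{l:new1dim}; apart from this bookkeeping, the argument is a routine integration against the sharp heat kernel estimates of Sections~5 and 6.
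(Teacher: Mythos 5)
Your proposal follows essentially the same route as the paper: split $G_D$ at $T\asymp\Phi(\mathrm{diam}(D))$, handle the tail via Theorem \ref{t:main0}(c), get (ii) from Theorem \ref{t:main0}(a) together with \eqref{e:uppcom} after the change of variable $u=\Phi(|x-y|)/t$, get (i) by integrating Proposition \ref{p:loww21n} over a time window comparable to $\Phi(|x-y|)$, and for (iii) match the resulting expression with $h_{T,d}$ and invoke Lemma \ref{l:new1dim} when $d\le 2$ (the exponential factor is harmless there since $\int_1^\infty u^{-2}e^{-cu^{1/\gamma}}du$ is just a positive constant). The argument is correct and no genuinely different idea is introduced.
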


\pf 
Put $T=
      2\Phi({\rm diam} (D))$.
  
  \noindent  
      (i) Let $M_1>0$ be the constant in Proposition \ref{p:loww21n} with our $T$. 
By Proposition \ref{p:loww21n}
 for every $(t,x,y) \in (0, T]
\times D \times D$ with  $|x-y|   \le  3M_1 \Phi^{-1}(t)$, 
\begin{align*}
p_{D}(t,x, y)
\ge
 c_1\left(1\wedge \frac{\Phi (\delta_D(x))}t \right)^{1/2} \left(1\wedge \frac{\Phi (\delta_D(y))}t \right)^{1/2}
(\Phi^{-1}(t))^{-d}.
\end{align*}
Thus, noting that $2\Phi(|x-y|) \le T$, we have 
\begin{align*}
&G_D(x, y)  \ge c_1  \int^{2\Phi(|x-y|)}_{\Phi(|x-y|/(3M_1))}
\left(1\wedge \frac{\Phi (\delta_D(x))}t \right)^{1/2} \left(1\wedge \frac{\Phi (\delta_D(y))}t \right)^{1/2}
(\Phi^{-1}(t))^{-d}dt\\
& \ge \frac{c_1 2^{-1}}{\Phi^{-1}( 2\Phi(|x-y|))^d  } \left(1\wedge \frac{ \Phi (\delta_D(x))^{1/2}}{ \Phi(|x-y|)^{1/2} }\right) 
\left(1\wedge \frac{ \Phi (\delta_D(y))^{1/2}}{ \Phi(|x-y|)^{1/2} }\right) \int^{2\Phi(|x-y|)}_{\Phi(|x-y|/(3M_1))} dt\\
& \ge c_2 \frac{\Phi(|x-y|)} {|x-y|^{d}}  \left(1\wedge \frac{ \Phi (\delta_D(x))^{1/2}}{ \Phi(|x-y|)^{1/2} }\right) \left(1\wedge \frac{ \Phi (\delta_D(y))^{1/2}}{ \Phi(|x-y|)^{1/2} }\right).
\end{align*}
We have proved part (i) of the theorem.

\medskip

 \noindent     
   (ii)    
It follows from Theorem \ref{t:main0}(c) that
\begin{equation}\label{e:newgfe}
\int^\infty_Tp_D(t, x, y)dt \asymp \Phi (\delta_D(x))^{1/2}
  \Phi (\delta_D(y))^{1/2}, \quad x,y \in D.
\end{equation}

By Theorem \ref{t:main0}(a) and \eqref{e:uppcom},  there exists $c_3>0$ such that for $(t, x, y)\in (0, T]\times D\times D$,
\begin{align}
&p_D(t, x, y)\nn \\
\leq &c_3\left(1\wedge \frac{\Phi (\delta_D(x))}t \right)^{1/2} \left(1\wedge \frac{\Phi (\delta_D(y))}t \right)^{1/2}
\left( (\Phi^{-1}(t))^{-d}\wedge \frac{t}{|x-y|^d\Phi(|x-y|)}\right)\label{e:nup} .
\end{align}

By the change of variable
$u= \frac{\Phi(|x-y|)}{t}$ and the fact that $t\to \Phi^{-1}(t)$ is increasing, we have
\begin{eqnarray}
&& \int_0^{T} \left(1\wedge \frac{\Phi (\delta_D(x))}t \right)^{1/2} \left(1\wedge \frac{\Phi (\delta_D(y))}t \right)^{1/2}
\left( (\Phi^{-1}(t))^{-d}\wedge \frac{t}{|x-y|^d\Phi(|x-y|)}\right)  dt
\nonumber\\
&=&\frac{\Phi(|x-y|)} {|x-y|^{d}}  \left(\int_{\Phi(|x-y|)/T}^1+\int_1^\infty\right)
 u^{-2}\left(   \left( \frac{ \Phi^{-1} (  ut)}{   \Phi^{-1} (  t)}  \right)^d
\wedge u^{-1} \right)   \nonumber\\
&&\qquad \qquad \qquad \qquad  \times\left(1\wedge \frac{ {\sqrt u} \Phi (\delta_D(x))^{1/2} }{ \Phi(|x-y|)^{1/2} }\right)\left(1\wedge \frac{
{\sqrt u} \Phi (\delta_D(y))^{1/2}} {\Phi(|x-y|)^{1/2} }\right) du
\nonumber\\
&\asymp&
 \frac{\Phi(|x-y|)} {|x-y|^{d}}   \int_{\Phi(|x-y|)/T}^1  u^{-2}\left(  \frac{ |x-y|}{   \Phi^{-1} (  u^{-1} \Phi(|x-y|))} \right)^d
  \left(1\wedge \frac{  u a(x,y) }{ \Phi(|x-y|) }\right)  du
 \nonumber\\
&& + \frac{\Phi(|x-y|)} {|x-y|^{d}} \int_1^\infty
 u^{-3}  \left(1\wedge \frac{ {\sqrt u} \Phi (\delta_D(x))^{1/2} }{ \Phi(|x-y|)^{1/2} }\right) \left(1\wedge \frac{
{\sqrt u} \Phi (\delta_D(y))^{1/2}} {\Phi(|x-y|)^{1/2} }\right) du
\nonumber\\
&=:&I+II.        \label{e:2}
\end{eqnarray}
In the fourth line of the display above,
we used Lemma \ref{l:phi_a}.

Since ${ \Phi(|x-y|)}/ { a(x,y)}\geq \Phi (|x-y|)/\Phi ({\rm diam}(D))\ge 2\Phi (|x-y|)/T$,  by \eqref{e:78},
\begin{align}
 I \le & \frac{ a(x,y)}{|x-y|^d}\int^{1}_{\Phi(|x-y|)/T}
 \frac{ |x-y|^d }{   \Phi^{-1} (  u^{-1} \Phi(|x-y|))^d}\, u^{-1} du \nn\\
 = & \frac{ a(x,y)}{|x-y|^d}\int^{1}_{\Phi(|x-y|)/T}
 \left(\frac{ \Phi^{-1} (  \Phi(|x-y|))}{   \Phi^{-1} (  u^{-1} \Phi(|x-y|))}\right)^d\, u^{-1} du \nn\\
 \le& c_4 \frac{ a(x,y)}{|x-y|^d}
 \int^{1}_{0}
 u^{\frac{d}{2}-1} du  \,=\,  {2c_4}{d}^{-1} \frac{ a(x,y)}{|x-y|^d}. \label{e:555}
\end{align}

On the other hand, by Lemma \ref{l:phi_a}
\begin{align}
 & II  \,\le \,
 \frac{\Phi(|x-y|)} {|x-y|^{d}}  \int_{1}^\infty
  u^{-2} \, \left(u^{-1/2} \wedge
\frac{   \Phi (\delta_D(x))^{1/2}}{ \Phi(|x-y|)^{1/2} }\right)
\left(u^{-1/2} \wedge \frac{  \Phi (\delta_D(y))^{1/2}}{
\Phi(|x-y|)^{1/2} }\right) du \nonumber\\
&\leq
 \frac{\Phi(|x-y|)} {|x-y|^{d}}
\int_{1}^\infty  u^{-2} \,
  \left(1 \wedge
\frac{   \Phi (\delta_D(x))^{1/2}}{ \Phi(|x-y|)^{1/2} }\right) \left(1
\wedge \frac{  \Phi (\delta_D(y))^{1/2}}{
\Phi(|x-y|)^{1/2} }\right) du \nonumber\\
& = \frac{\Phi(|x-y|)} {|x-y|^{d}}    \left(1\wedge \frac{
\Phi (\delta_D(x))^{1/2}}{ \Phi(|x-y|)^{1/2} }\right) \left(1\wedge \frac{
\Phi (\delta_D(y))^{1/2}}{ \Phi(|x-y|)^{1/2} }\right)\le \frac{ a(x,y)}{|x-y|^d} . \label{e:4}
\end{align}
Part (ii) of the theorem now follows from \eqref{e:newgfe},  \eqref{e:nup},  \eqref{e:555} and  \eqref{e:4}.

\medskip
\noindent
(iii)
For the remainder of the proof we assume either that  $D$ is connected or that $H$ satisfies $ L_a(\gamma, C_L)$ and $ U_a(\delta, C_U)$ with $\delta<2$ for some  $a>0$.  Then   by Theorems \ref{t:main0}(b) and  \ref{t:main0}
\begin{eqnarray}
p_D(t, x, y) \ge c_5\left(1\wedge \frac{\Phi (\delta_D(x))}t \right)^{1/2} \left(1\wedge \frac{\Phi (\delta_D(y))}t \right)^{1/2}
(\Phi^{-1}(t))^{-d}e^{-c_0 \frac{|x-y|^2}{\Phi^{-1}(t)^2}}   \label{e:nlo}.
\end{eqnarray}

Since by \eqref{e:77}
$$
\frac{|x-y|}{\Phi^{-1}( \Phi(|x-y|)/u)} =\frac{\Phi^{-1}( \Phi( |x-y|))}{\Phi^{-1}( \Phi(|x-y|)/u)}  \le  
c_6 u^{1/(2\gamma)} \quad  \text {if } { u >1},
$$
using this, by the change of variable
$u= \frac{\Phi(|x-y|)}{t}$ and the fact that $t\to \Phi^{-1}(t)$ is increasing, we have
\begin{eqnarray}
&&\int_0^{T} p_D(t, x, y) dt \nn\\&\ge& c_5 \int_0^{T} \left(1\wedge \frac{\Phi (\delta_D(x))}t \right)^{1/2} \left(1\wedge \frac{\Phi (\delta_D(y))}t \right)^{1/2}
 (\Phi^{-1}(t))^{-d}e^{-c_0 \frac{|x-y|^2}{\Phi^{-1}(t)^2}}  dt
\nonumber\\
&=&c_5\frac{\Phi(|x-y|)} {|x-y|^{d}}  \left(\int_{\Phi(|x-y|)/T}^1+\int_1^\infty\right)
 u^{-2}  \left( \frac{ \Phi^{-1} (  ut)}{   \Phi^{-1} (  t)}  \right)^d
e^{-c_0 \frac{|x-y|^2}{\Phi^{-1}( \Phi(|x-y|)/u)^2}}  \nonumber\\
&&\qquad \qquad \qquad  \times \left(1\wedge \frac{ {\sqrt u} \Phi (\delta_D(x))^{1/2} }{ \Phi(|x-y|)^{1/2} }\right)\left(1\wedge \frac{
{\sqrt u} \Phi (\delta_D(y))^{1/2}} {\Phi(|x-y|)^{1/2} }\right) du
\nonumber\\
&\ge& c_5 e^{-c_0}
 \frac{\Phi(|x-y|)} {|x-y|^{d}}   \int_{\Phi(|x-y|)/T}^1  u^{-2}\left(  \frac{ |x-y|}{   \Phi^{-1} (  u^{-1} \Phi(|x-y|))} \right)^d
  \left(1\wedge \frac{  u a(x,y) }{ \Phi(|x-y|) }\right)  du
 \nonumber\\
&& + c_5\frac{\Phi(|x-y|)} {|x-y|^{d}} \int_1^\infty
 u^{-2} e^{-c_0 c_6^2 u^{1/\gamma}} \left(1\wedge \frac{ {\sqrt u} \Phi (\delta_D(x))^{1/2} }{ \Phi(|x-y|)^{1/2} }\right) \left(1\wedge \frac{
{\sqrt u} \Phi (\delta_D(y))^{1/2}} {\Phi(|x-y|)^{1/2} }\right) du
\nonumber\\
&=:&c_5(I+III).        \label{e:2-1}
\end{eqnarray}

Clearly, we have 
\begin{align}
&III \ge  \frac{\Phi(|x-y|)} {|x-y|^{d}}  \left(1\wedge \frac{ \Phi (\delta_D(x))^{1/2}}{ \Phi(|x-y|)^{1/2} }\right) \left(1\wedge \frac{ \Phi (\delta_D(y))^{1/2}}{ \Phi(|x-y|)^{1/2} }\right)
\int_1^\infty u^{-2}e^{-c_0 c_6^2 u^{1/\gamma}}du.  \label{e:4-1}
\end{align}

\medskip
Suppose that $d \le 2$. 
Let $h_{T, d}  (a, r)$ be defined as in \eqref{e:7.4}.
Since $a(x,y) \le \Phi({\rm diam} (D)) = T/2$,
we have by
\eqref{e:newgfe}--\eqref{e:2}, \eqref{e:4}, \eqref{e:2-1}, \eqref{e:4-1}  and Lemma  \ref{l:phi_a} that
$G_D (x, y)\asymp h_T (a(x, y), |x-y|)$. Now, part (iii) of the theorem for $d \le 2$  follows from
Lemmas \ref{l:new1dim}.

Suppose $2<d$, then we have  that
 \begin{eqnarray}
&&\frac{\Phi(|x-y|)} {|x-y|^{d}}   \int_{\Phi(|x-y|)/T}^1  u^{-2}\left(  \frac{ |x-y|}{   \Phi^{-1} (  u^{-1} \Phi(|x-y|))} \right)^d
  \left(1\wedge \frac{  u \Phi (\delta_D(x))^{1/2}
  \Phi (\delta_D(y))^{1/2} }{ \Phi(|x-y|) }\right)  du \nn \\&=&
\frac{\Phi(|x-y|)} {|x-y|^{d}}   \int_{\Phi(|x-y|)/T}^1  u^{-2}\left(  \frac{ |x-y|}{   \Phi^{-1} (  u^{-1} \Phi(|x-y|))} \right)^d
  \left(1\wedge \frac{  u \Phi (\delta_D(x))^{1/2}
  \Phi (\delta_D(y))^{1/2} }{ \Phi(|x-y|) }\right)  du\nn\\
&\leq &c_7 \frac{\Phi(|x-y|)} {|x-y|^{d}}
 \left(1\wedge \frac{   \Phi (\delta_D(x))^{1/2}
  \Phi (\delta_D(y))^{1/2} }{ \Phi(|x-y|) }\right) \int_0^1 u^{d/2-2} \, du\nonumber \\
 &= &\frac{2c_7}{d-2} \frac{\Phi(|x-y|)} {|x-y|^{d}}
  \left(1\wedge \frac{   \Phi (\delta_D(x))^{1/2}
  \Phi (\delta_D(y))^{1/2} }{ \Phi(|x-y|) }\right). \label{e:5}
\end{eqnarray}
The case $d>2$ of (iii) now follows from
part (i) of  the theorem, Lemma \ref{l:phi_a},
\eqref{e:newgfe}, \eqref{e:nup}, \eqref{e:4} and \eqref{e:5}.
\qed

We now consider the Green function  estimates for half space-like domains. Here we will give a sketch of the proofs only.

The proof of the next lemma is very similar (and simpler) to the one of Lemma \ref{l:new1dim} so we skip the proof. 

\begin{lemma}\label{l:new1dim0}
Suppose that   \eqref{e:jdouble} holds, that $\phi$  has no drift and that $H$ satisfies $ L_0(\gamma, C_L)$ and $ U_0(\delta, C_U)$ with $\delta<2$ and $\gamma >2^{-1}{\bf 1}_{\delta \ge 1}$.
For $b,r>0$ and $d=1, 2$, set
$$
h_{d}(b,r)= \Phi(r) \int_{0}^1 \left(1 \wedge \frac{ub}{\Phi(r)}\right) \frac{1}{u^2(\Phi^{-1}(u^{-1} \Phi(r)))^d}\, du+\frac{\Phi(r)}{r^d}\left(1 \wedge \frac{b}{\Phi(r)}\right).
$$
Then,   for  $r, b>0$,
$$
h_{d}(b, r) 
\asymp \frac{b}{r^d} \wedge \left( \frac{b}{\Phi^{-1}(b)^d}
+ \left( \int_{r}^{\Phi^{-1}(b)} \frac{\Phi(s)}{s^{d+1}}ds \right)_+ \, \right).$$
\end{lemma}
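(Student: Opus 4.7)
The plan is to mirror the proof of Lemma \ref{l:new1dim} essentially verbatim, exploiting two simplifications enabled by the global scaling hypotheses. Since $H$ satisfies $L_0(\gamma, C_L)$ and $U_0(\delta, C_U)$, Lemma \ref{lem:bf}(b) gives global scaling for $\phi$, so the analogues of \eqref{e:6.20}--\eqref{e:78} hold on all of $(0,\infty)$ with no cutoff. This allows the lower limit $\Phi(r)/T$ appearing in \eqref{e:7.4} to be replaced by $0$ and the upper limit $\Phi^{-1}(T)$ (after the change of variable $u = \Phi(r)/\Phi(s)$) by $\infty$, while preserving convergence thanks to the polynomial control $\Phi'(s)/\Phi(s) \asymp 1/s$ for all $s > 0$.

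I would split into two regimes. In the regime $0 < b \le \Phi(r)$, so $\Phi^{-1}(b) \le r$, the tail summand of $h_d(b,r)$ already equals $b/r^d$, which gives the lower bound. For the upper bound I would use $1 \wedge ub/\Phi(r) \le ub/\Phi(r)$, substitute $u = \Phi(r)/\Phi(s)$, and apply $\Phi'(s)/\Phi(s) \asymp 1/s$ to get
\begin{align*}
\Phi(r)\int_0^1 \frac{1 \wedge ub/\Phi(r)}{u^2\, \Phi^{-1}(u^{-1}\Phi(r))^d}\, du \,\le\, b \int_r^{\infty} \frac{\Phi'(s)}{s^d\, \Phi(s)}\, ds \,\asymp\, \frac{b}{r^d}.
\end{align*}
Hence $h_d(b,r) \asymp b/r^d$, which agrees with the claimed right-hand side, since $\Phi^{-1}(b) \le r$ forces $(\int_r^{\Phi^{-1}(b)} \Phi(s)s^{-d-1}\, ds)_+ = 0$ and $b/\Phi^{-1}(b)^d \ge b/r^d$, so the minimum selects the first argument.

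In the regime $b > \Phi(r)$, I would split the first integral at $u_0 = \Phi(r)/b$ and carry out the same change of variable, obtaining
\begin{align*}
h_d(b,r) \,\asymp\, \int_r^{\Phi^{-1}(b)} \frac{\Phi'(s)}{s^d}\, ds + b\int_{\Phi^{-1}(b)}^{\infty} \frac{\Phi'(s)}{s^d\, \Phi(s)}\, ds + \frac{\Phi(r)}{r^d}.
\end{align*}
Integration by parts on the first integral yields $b/\Phi^{-1}(b)^d - \Phi(r)/r^d + d\int_r^{\Phi^{-1}(b)} \Phi(s)s^{-d-1}\, ds$; the second integral is $\asymp \Phi^{-1}(b)^{-d}$ by \eqref{e:6.20}; and the $\Phi(r)/r^d$ contributions cancel. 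Combining, $h_d(b,r) \asymp b/\Phi^{-1}(b)^d + \int_r^{\Phi^{-1}(b)} \Phi(s)s^{-d-1}\, ds$. Since $\Phi(s) \le b$ on the integration range, this sum is bounded above by a constant multiple of $b/r^d$, so the minimum in the claimed expression selects the second argument.

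The main obstacle will be confirming convergence of the improper integrals at infinity using only the global scaling, since we no longer have the safety of the cutoff at $T$ as in \eqref{e:7.4}. This hinges on the comparability $s\Phi'(s) \asymp \Phi(s)$ throughout $(0,\infty)$ together with the polynomial upper bound on $\Phi^{-1}$, and it is precisely this step that forces the hypothesis $L_0$ rather than merely $L_a$. Once integrability is in hand, the rest is bookkeeping and the two-case analysis closes the argument uniformly in $d \in \{1,2\}$.
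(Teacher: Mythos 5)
Your proposal is correct and is essentially the argument the paper intends: the paper explicitly skips the proof, stating it is a simpler version of that of Lemma \ref{l:new1dim}, and your two-regime split at $b\lessgtr\Phi(r)$, the substitution $u=\Phi(r)/\Phi(s)$, the integration by parts with cancellation of the $\Phi(r)/r^d$ terms, and the use of the global versions of \eqref{e:6.20}--\eqref{e:78} to control the now-improper integrals reproduce exactly the computation in \eqref{e:gt1}--\eqref{e:7.8} without the cutoff at $T$.
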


Recall that  $g(x, y)$ is defined in \eqref{e:gxy}.

\begin{thm}\label{t:gmain1}
  Let $S=(S_t)_{t\geq 0}$ be a subordinator with zero drift whose Laplace  exponent  is {$\phi$}  
  and let $X=(X_t)_{t\geq 0}$ be the corresponding subordinate Brownian motion in $\R^d$.  
  Suppose that $D$ is a domain consisting of all the points above the graph of a 
bounded globally $C^{1,1}$ function and $H$ satisfies $ L_0(\gamma, C_L)$ and $ U_0(\delta, C_U)$ with $\delta<2$.  Then 
$$
G_D(x,y) \asymp g(x,y), \quad \text{for } x,y \in D.$$
 \end{thm}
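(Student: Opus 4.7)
The plan is to mimic the proof of Theorem \ref{t:green}(ii)--(iii), but using the \emph{global} heat kernel estimates of Theorem \ref{t:main1} in place of the short-time estimates of Theorem \ref{t:main0}, and integrating over the full half-line $(0,\infty)$ rather than splitting at some $T=2\Phi(\mathrm{diam}(D))$.

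For the upper bound, the upper estimate of Theorem \ref{t:main1} combined with \eqref{e:uppcom} gives, for all $(t,x,y)\in(0,\infty)\times D\times D$, the same form as \eqref{e:nup}:
\[
p_D(t,x,y)\lesssim \Bigl(1\wedge\tfrac{\Phi(\delta_D(x))}{t}\Bigr)^{1/2}\Bigl(1\wedge\tfrac{\Phi(\delta_D(y))}{t}\Bigr)^{1/2}\Bigl(\Phi^{-1}(t)^{-d}\wedge\tfrac{t}{|x-y|^d\Phi(|x-y|)}\Bigr).
\]
Integrating and making the change of variable $u=\Phi(|x-y|)/t$ exactly as in \eqref{e:2}, the resulting $u$-integral runs over $(0,\infty)$; the portion $u\ge 1$ is handled verbatim as in \eqref{e:4}, and the portion $u\in(0,1)$ is an integral of the form
\[
\frac{\Phi(|x-y|)}{|x-y|^d}\int_0^1 u^{-2}\Bigl(\tfrac{|x-y|}{\Phi^{-1}(\Phi(|x-y|)/u)}\Bigr)^{d}\Bigl(1\wedge\tfrac{u\,a(x,y)}{\Phi(|x-y|)}\Bigr)\,du.
\]
Its convergence at $u=0$ (i.e.\ $t=\infty$) is where the \emph{global} lower scaling $L_0(\gamma,C_L)$ becomes essential: by \eqref{e:77} applied globally, $|x-y|/\Phi^{-1}(\Phi(|x-y|)/u)\lesssim u^{1/(2\gamma)}$, so the integrand behaves like $u^{d/(2\gamma)-1}$ near $0$, which is integrable.

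For $d>2$, the computation in \eqref{e:5} carries over with its lower limit replaced by $0$ and yields the upper bound $g(x,y)$, which together with the matching lower bound from Theorem \ref{t:green}(i) (whose proof uses only the local lower bound on $p_D$ and hence also applies in the half-space-like setting) gives the claim. For $d=1,2$, after the change of variables the Green function is comparable to $h_d(a(x,y),|x-y|)$ in the notation of Lemma \ref{l:new1dim0}, and the lemma directly gives the required bound $g(x,y)$. The lower bound is obtained symmetrically from the lower estimate of Theorem \ref{t:main1}: we mimic the argument in \eqref{e:2-1}--\eqref{e:4-1}, noting that the Gaussian-type factor $\exp[-c_0|x-y|^2/\Phi^{-1}(t)^2]$ in the integrand provides rapid decay at $u\to\infty$ (via \eqref{e:77}) to ensure the integral converges and is comparable to the upper bound.

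The main obstacle is the absence of any spectral-gap/eigenvalue input (there is no analog of \eqref{e:newgfe} since $D$ is unbounded). This is precisely why the statement requires the \emph{global} scaling hypothesis $L_0$: it is needed both (a) to ensure that the $t\to\infty$ tail of $\int_0^\infty p_D(t,x,y)\,dt$ converges and yields the correct boundary behavior $g(x,y)$ rather than a constant multiple of $\sqrt{\Phi(\delta_D(x))\Phi(\delta_D(y))}$, and (b) to apply the global version Lemma \ref{l:new1dim0} in dimensions $d\le 2$.
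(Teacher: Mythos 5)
Your proposal matches the paper's own proof, which is itself only a sketch of exactly this argument: bound $G_D$ above and below by the two integrals coming from Theorem \ref{t:main1} together with \eqref{e:uppcom}, rerun the change-of-variable computation from the proof of Theorem \ref{t:green} with the $u$-integral now taken over $(0,\infty)$, and invoke Lemma \ref{l:new1dim0} for $d\le 2$. One small correction: the convergence of the upper-bound integral near $u=0$ follows from \eqref{e:78}, which gives $|x-y|/\Phi^{-1}(u^{-1}\Phi(|x-y|))\le u^{1/2}$ and hence an integrand of order $u^{d/2-1}$ (this is exactly \eqref{e:555} and needs no scaling hypothesis), not from \eqref{e:77}, whose inequality runs in the opposite direction; the global hypothesis $L_0$ is instead what is needed for Theorem \ref{t:main1} itself, for the $u\to\infty$ decay in the lower-bound integral via \eqref{e:77} with arbitrarily large $R$, and for Lemma \ref{l:new1dim0}.
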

\pf 
By Theorem \ref{t:main1} and \eqref{e:uppcom},
$$
G_D(x,y) \le c_1 \int_0^\infty 
\left(1\wedge \frac{\Phi (\delta_D(x))}t \right)^{1/2} \left(1\wedge \frac{\Phi (\delta_D(y))}t \right)^{1/2}
\left( (\Phi^{-1}(t))^{-d}\wedge \frac{t}{|x-y|^d\Phi(|x-y|)}\right)dt
$$
and
$$
G_D(x,y) \ge c_2 \int_0^\infty 
\left(1\wedge \frac{\Phi (\delta_D(x))}t \right)^{1/2} \left(1\wedge \frac{\Phi (\delta_D(y))}t \right)^{1/2}
(\Phi^{-1}(t))^{-d}e^{-c_0 \frac{|x-y|^2}{\Phi^{-1}(t)^2}}dt.
$$
Thus by following the argument in Theorem \ref{t:green} one can easily see that 
for $d>2$,
$$
G_D(x,y) \asymp 
\frac{\Phi(|x-y|)}{|x-y|^{d}}\left(1\wedge \frac{\Phi(\delta_D(x))}{\Phi(|x-y|)}\right)^{1/2}  \left(1\wedge \frac{\Phi(\delta_D(y))}{\Phi(|x-y|)}\right)^{1/2}.
$$
and, for $d \le 2$, 
$G_D(x,y) \asymp  h_{d}(a(x,y),|x-y|)$. Thus the theorem follows by this and Lemmas \ref{lem:bf}(b) and  \ref{l:new1dim0}.
\qed

\section{Examples}\label{s:ex}
  Suppose that  $D$ is a { bounded $C^{1,1}$} open set with diam$(D) <1/2$ and $\phi$ is either 
  $$\text{ (i) }\phi(\lambda)=\frac{\lambda}{\log(1+\lambda^{\beta/2})},  \quad \text{ where } \beta\in (0,2), \quad \text{or} \quad \text{ (ii) }\phi(\lambda)=\frac{\lambda}{\log(1+\lambda)}-1.$$ 
Then 
$\phi(\lambda)-\lambda\phi'(\lambda)$ satisfies $ L_a(\gamma, C_L)$ and $ U_a(\delta, C_U)$ with $2^{-1}<\gamma<\delta<2$
where $a=0$ for the case (i)  and  $a>0$ for the case (ii).
 It is easy to check that  we have 
$$ \phi^{-1}(\lambda)\asymp                            \lambda\log\lambda \quad \text { and  } \quad      
 H(\lambda) \asymp 
                   \frac{\lambda}{(\log\lambda)^2}, \quad \lambda\geq 2.
$$
Moreover, 
\bee \label{e:7.16}
\Phi(r)=1/ \phi(1/r^{2}) 
\asymp r^2 \log (1/r )
\quad \text{for }  0<r\leq 1/2, 
\eee
and 
$$\frac{1}{\sqrt{\phi(1/\lambda^2)}}\asymp 
                  {\lambda}\sqrt{\log(\lambda^{-1})}, \qquad  \lambda \le  1/2\,.$$
Thus by Theorem \ref{t:main0},  for $0<t<1/2$, 
\begin{align}\label{e:example1}
& p_D(t,x,y)\ge c_1
 \left(1\wedge\frac{\delta_D(x)}{\sqrt{  t}}   \sqrt{\log(1/\delta_D(x))}    \right) \left(1\wedge\frac{\delta_D(y)}{\sqrt{  t}}   \sqrt{\log(1/\delta_D(y))}    \right) \nn\\&  \times
\left[\left(t^{-d/2}\Big(\log\frac{1}{t}\Big)^{d/2}  \right)\wedge  \left( \frac{t(\log\frac{1}{|x-y|})^{-2}}{|x-y|^{d+2}}+  t^{-d/2}\Big(\log\frac{1}{t}\Big)^{-d/2}e^{-a_L\frac{|x-y|^2}{t}\log\frac{1}{t}}\right) \right]\,,
\end{align}
and 
\begin{align}\label{e:example2}
& p_D(t,x,y)\le c_2
 \left(1\wedge\frac{\delta_D(x)}{\sqrt{  t}}   \sqrt{\log(1/\delta_D(x))}    \right) \left(1\wedge\frac{\delta_D(y)}{\sqrt{  t}}   \sqrt{\log(1/\delta_D(y))}    \right) \nn\\&  \times
\left[ \left(t^{-d/2}\Big(\log\frac{1}{t}\Big)^{d/2}  \right)\wedge  \left( \frac{t(\log\frac{1}{|x-y|})^{-2}}{|x-y|^{d+2}}+  t^{-d/2}\Big(\log\frac{1}{t}\Big)^{-d/2}e^{-a_U\frac{|x-y|^2}{t}\log\frac{1}{t}}\right)\right]\,.
\end{align}

We now assume that $d=2$ and $D$ is a { bounded $C^{1,1}$} open set in $\R^2$ with sufficiently small diameter.
 We will give the sharp estimates of  the Green function on $D$.

  There is a constant $c_0\in (0, 1)$ so that
\begin{equation}\label{e:LWf}
c_0  \left(\frac{s}{\log(1/s)} \right)^{1/2} \leq \Phi^{-1} (s) \leq c_0^{-1}
  \left(\frac{s}{\log(1/s)} \right)^{1/2}
\quad \hbox{ for } s \in (0, \Phi (1/2)].
\end{equation}
Suppose   $0<r \le \Phi^{-1}(b)\le 1/2$. Then
\begin{eqnarray} \label{e:7.18}
&& \int_{r}^{\Phi^{-1}(b)}  \frac{\Phi(s)}{s^3} ds
 \asymp
\int_{r}^{\Phi^{-1}(b)}  \frac{ \log (1/s )}{s} ds \nn \\
&=&
\frac1{2}\left(   (\log(1/r))^{2}  -(\log(1/\Phi^{-1}(b)))^{2} \right) =\frac1{2}
\log^+(\Phi^{-1}(b)/r)\log^+(1/(r\Phi^{-1}(b))).
\end{eqnarray}

Let 
\begin{align}
\label{e:7.89}
b(x,y):={\delta_D(x)\delta_D(y)} \sqrt{\log(1/\delta_D(x))\log(1/\delta_D(y))}\asymp \Phi (\delta_D(x))^{1/2} \Phi (\delta_D(y))^{1/2}=a(x,y).
\end{align}
Note that by \eqref{e:LWf}
\begin{align}
\label{e:7.90}
\Phi^{-1} (a(x,y)) \asymp  
 \left(\frac{b(x,y)}{\log(1/b(x,y))} \right)^{1/2}
\end{align}
and, so 
\begin{align}
\label{e:7.91}
\frac{a(x,y)}{\Phi^{-1}(a(x,y))^2} \asymp\log\left[b(x,y)^{-1}\right].
\end{align}
Applying expressions $\Phi(|x-y|) \asymp |x-y|^2 \log (1/|x-y| )$ and \eqref{e:7.18}--\eqref{e:7.91} 
 to  Theorem  \ref{t:green}(iii), we have the following explicit estimates:
\begin{align*}
G_D(x, y)\asymp
 \frac{ b(x,y)}{|x-y|^2} \wedge \left( \log^{+}  \left[  \frac{b(x,y)}{|x-y|^2\log(1/b(x,y))}  \right]
  \log^{+}  \left[      \frac{\log(1/b(x,y))}{|x-y|^2b(x,y)}  \right] +\log\left[b(x,y)^{-1}\right]  \right).\end{align*}

\vspace{.1in}

\noindent
{\bf Acknowledgements:} The first named author is grateful to 
Joohak Bae, Renming Song and Zoran Vondra\v{c}ek for
reading the earlier drafts of this paper and giving helpful comments.
The first named author is also grateful to  Tomasz Grzywny and referees for pointing out errors in the earlier draft of this paper.

\vspace{.1in}

\small

\end{document}